\DeclareRobustCommand{\labeltext}[2]{%
  \phantomsection
  #1\def\@currentlabel{\unexpanded{#1}}\label{#2}%
}
\newcommand*{\tree}{\mathfrak{t}}
\newcommand*{\ptree}[1][]{(\tree#1,\ell)}
\newcommand*{\FPT}{\mathcal{FT}}
\newcommand{\note}[1]{} 
\renewcommand*{\Re}{\mathfrak{Re}}
\renewcommand*{\Im}{\mathfrak{Im}}
\newcommand*{\supp}{\mathrm{supp}\,}
\newcommand{\pictoSymb}[2][]{\mathchoice{%
    \raisebox{-1pt}{\includegraphics[#1,scale=1.0]{#2}}%
  }{\raisebox{-1pt}{\includegraphics[#1,scale=1.0]{#2}}%
  }{\raisebox{-1pt}{\includegraphics[#1,scale=0.7]{#2}}%
  }{\raisebox{-1pt}{\includegraphics[#1,scale=0.5]{#2}}
}}
\newcommand*{\pacman}{\pictoSymb[page=1]{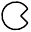}}
\DeclareRobustCommand*{\droplet}{\pictoSymb[page=1]{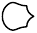}}
\newcommand*{\disk}{\mathbb{D}}
\newcommand*{\cdisk}{\overline{\mathbb{D}}\!\,}
\newcommand*{\V}{\mathbb{V}}
\newcommand*{\cV}{\overline{\mathbb{V}}\!\,}
\newcommand*{\tear}{\droplet}
\newcommand*{\ctear}{\overline{\droplet}\!\,}
\newcommand*{\ddom}{\pacman}
\newcommand*{\cdom}{\overline{\pacman}\!\,}
\newcommand*{\marg}{\epsilon}
\newcommand*{\ang}{\delta}
\newcommand*{\cone}[1][\ang]{K\!\,_{#1}}
\newcommand*{\Cone}[1][\ang]{\overline K\!\,_{#1}}
\newcommand*{\cst}{\mathtt C}
\newcommand{\partials}{\slashed{\partial}}
\title{Enumeration of {fully parked trees}}
\author{Linxiao~Chen}
\newcommand{\Addresses}{{
  \bigskip\footnotesize

\noindent  \textsc{ETH Z\"urich, Department of Mathematics, R\"amistrasse 101, 8092 Z\"urich, Switzerland}\par\nopagebreak
  \textit{E-mail address}: \texttt{linxiao.chen@math.ethz.ch}	\medskip
}}
\begin{document}

\maketitle

\begin{abstract}
We enumerate a class of \emph{fully parked trees}.
In a probabilistic context, this means computing the partition function $F(x,y)$ of the parking process where an i.i.d.\ number of cars arrives at each vertex of a Galton-Watson tree with a geometric offspring distribution, conditioned to have no unoccupied vertex in the end.
The variables $x$ and $y$ count the number of vertices in the tree and the number of cars exiting from the root, respectively.

For any car arrival distribution $\mathbf b$, we obtain an explicit parametric expression of $F(x,y)$ in terms of the probability generating function $B(y)$ of $\mathbf b$. 
We show that the model has a \emph{generic} phase where the singular behavior of $F(x,y)$ is essentially independent of $B(y)$, and a \emph{non-generic} phase where it depends sensitively on the singular behavior of $B(y)$. The non-generic phase is further divided into two cases, which we call \emph{dilute} and \emph{dense}. We give a simple algebraic description of the phase diagram, and, under mild additional assumptions on $\mathbf b$, carry out detailed singularity analysis of $F(x,y)$ in the generic and the dilute phases.
The singularity analysis uses the classical transfer theorem, as well as its generalization for bivariate asymptotics. In the process, we develop a variational method for locating the dominant singularity of the inverse of an analytic function, which is of independent interest.

The phases defined in this paper are closely related to the phases in the transition of macroscopic runoff described in \cite{CurienHenard2019} and related works. The precise relation is discussed in \Cref{sec:intro/background}.
\end{abstract}

\tableofcontents

\section{Introduction}

This paper studies the exact and asymptotic enumeration of the parking configurations on a Galton-Watson tree with geometric offspring distribution, conditioned to have no unoccupied vertices at the end. However, the main definitions and results can be stated conveniently  without explicit reference to parking processes. We will proceed in this manner, and explain the context on parking processes later in \Cref{sec:intro/background}.

\subsection{Definitions and main results}\label{sec:intro/def}

We consider finite rooted plane trees. Let $V(\tree)$ denote the set of vertices of a tree $\tree$. Given a nonnegative integer labeling $\ell\!:V(\tree)\to \natural$ of the tree $\tree$, we define the \emph{surplus} of a subtree $\tree'$ as 
\begin{equation}\label{eq:def surplus}
\mathscr s\ptree['] = \sum_{v\in V(\tree')} (\ell(v)-1) \,.
\end{equation}
We say that a \emph{labeled tree} $\ptree$ is \emph{fully packed} if $\mathscr s\ptree[']\ge 0$ for all subtree $\tree'$ of $\tree$. Let $\FPT$ denote the set of all fully packed trees. 
Consider a non-negative sequence $\mathbf b=\seq bl$, encoded by the generating series $B(y) = \sum_{l=0}^\infty b_l y^l$. We assign to each labeled tree $\ptree$ a weight:
\begin{equation}\label{eq:def weight}
w_\mathbf{b} \ptree = \prod_{v\in V(\tree)} b_{\ell(v)}
\end{equation}
We define the generating function
\begin{equation}\label{eq:def F}
F(x,y) \equiv F(x,y;\mathbf{b}) = \sum_{\ptree \in \FPT} w_\mathbf{b} \ptree \cdot x^{\abs{V(\tree)}} y^{\mathscr s\ptree}
\end{equation}
Since a rooted tree contains at least the root vertex, we have $F(0,y) = 0$. 

Our first result is an explicit parametric expression of the generating function $F(x,y)$ for a general weight sequence $\seq bl$.

\begin{proposition}[Parametrzation of $F(x,y)$]\label{prop:parametrization}
For any nonnegative sequence $\mathbf b$, there exists a power series $Y\equiv \hat Y(x)$ with nonnegative coefficients such that $F(x,y)$ satisfies the following equations in the sense of formal power series:
\begin{equation}\label{eq:F:parametrization}
x=\hat x(Y) := \frac{YB(Y)}{(B(Y)+YB'(Y))^2} \qtq{and}
F(x,y)=\hat F(Y,y) := \frac12 + \frac{(Y-y)\sqrt{q(Y,y)} - \phi(Y)}{2y}
\end{equation}
where $\phi(Y) = Y\frac{B(Y)-YB'(Y)}{B(Y)+YB'(Y)}$, and $q(Y,y)=\frac{Q(Y,y)}{(Y-y)^2}$ with $Q(Y,y) = (\phi(Y)+y)^2 -4yB(y)\cdot \hat x(Y)$.
\end{proposition}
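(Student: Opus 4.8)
The plan is to set up a functional equation for $F(x,y)$ by decomposing a fully packed tree at its root, then solve it using the kernel method (a.k.a. quadratic method, since the equation will turn out to be quadratic in $F$). First I would fix notation: a fully packed tree $\ptree$ consists of a root carrying a label $\ell(\text{root})=k$ together with an ordered sequence of subtrees hanging from the root, each of which is itself a plane tree with a labeling; the constraint $\mathscr s\ptree[']\ge 0$ for every subtree $\tree'$ of $\tree$ must be re-expressed recursively. The key observation is that the constraint at the root reads $\mathscr s\ptree = (k-1) + \sum_i \mathscr s(\tree_i,\ell)\ge 0$, and the constraints on the proper subtrees are exactly the statement that each $(\tree_i,\ell)$ is ``fully packed except possibly with a global deficit''. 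So I expect one needs a refined generating function $G(x,y)$ (or a ``catalytic'' second variable) that tracks labeled trees satisfying all \emph{strict} subtree constraints but with the total surplus allowed to be any integer $\ge -m$ — in other words one keeps track of the surplus itself as the exponent of $y$, allowing negative powers, and the ``fully packed'' series $F$ is the part with nonnegative powers of $y$, or rather $F(x,y)$ with $y$ handling the nonnegative-surplus bookkeeping. Concretely, I would introduce the generating function $R(x,y)=\sum x^{|V|}y^{\mathscr s}$ summed over labeled trees all of whose \emph{proper} subtrees are fully packed (no constraint on the whole tree), and derive
\begin{equation}\label{eq:plan:Req}
R(x,y) = x\,B\!\bigl(y\,[y^{\ge 0}]R(x,y)\bigr)\Big/ y \cdot(\text{correction}),
\end{equation}
being careful about how the label $k$ at the root contributes $y^{k-1}$ and how hanging a forest of fully packed subtrees (each contributing nonnegative surplus, hence a genuine power series in $y$) works. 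The cleanest route is probably: let $\mathbf{F}=F(x,y)$ be as defined; a fully packed tree is a root with label $k\ge 0$ and a forest of $j$ subtrees $(\tree_1,\ell),\dots,(\tree_j,\ell)$ each of which is a labeled tree whose proper subtrees are fully packed and whose total surplus $s_i$ satisfies $s_i\ge -(\text{something})$; summing the geometric-type weights should produce a quadratic equation
\begin{equation}\label{eq:plan:quad}
y\,F^2 - (\phi(Y)+\text{lin. in }F)\,F + (\text{known}) = 0
\end{equation}
after substituting the parametrization $x=\hat x(Y)$.

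The main technical step is the kernel/quadratic method. Once I have a polynomial equation $P(x,y,F,F_1)=0$ where $F_1=F(x,1)$ or some boundary value is the ``catalytic'' unknown, I would look for the curve $y=Y(x)$ along which the discriminant (in $F$) vanishes to double order; setting the partial derivative with respect to $F$ to zero simultaneously gives two equations, and eliminating yields the algebraic relation between $x$ and the parameter $Y$. I expect that $\hat x(Y)=\dfrac{YB(Y)}{(B(Y)+YB'(Y))^2}$ emerges precisely as the condition that $Q(Y,y)$, viewed as a quadratic in $y$, has $Y$ as a double root up to the stated normalization — note $Q(Y,y)=(\phi(Y)+y)^2-4yB(y)\hat x(Y)$, and one checks $Q(Y,Y)=0$ and $\partial_y Q(Y,y)|_{y=Y}=0$ are equivalent to the definition of $\phi$ and $\hat x$. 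That identity — that $(Y-y)^2$ divides $Q(Y,y)$ when $x=\hat x(Y)$, so that $q(Y,y)=Q(Y,y)/(Y-y)^2$ is a polynomial (hence $\sqrt{q}$ makes sense as a power series) — is the heart of the computation and I would verify it by a direct expansion, using $\hat x(Y)(B(Y)+YB'(Y))^2=YB(Y)$ and the definition of $\phi(Y)$.

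With the parametrization identity in hand, I would then \emph{define} $\hat F(Y,y)$ by the stated formula and check it solves the functional equation: substitute $x=\hat x(Y)$, $F=\hat F(Y,y)$ into the quadratic equation from \eqref{eq:plan:quad} and verify it reduces to $0=0$ using $Q=(Y-y)^2 q$. Conversely, to see that this $\hat F$ is \emph{the} series counting fully packed trees, I would invoke uniqueness: the functional equation, together with $F(0,y)=0$ and the requirement that $F$ be a power series in $x$ with polynomial-in-$y$ (or power-series-in-$y$) coefficients, has a unique solution by a standard contraction/degree argument (the coefficient of $x^n$ is determined by those of lower order). Finally I must exhibit the series $\hat Y(x)$: it is the power-series solution of $x=\hat x(Y)$ with $\hat Y(0)=0$, which exists and has nonnegative coefficients by Lagrange inversion since $\hat x(Y)=Y\cdot(\text{power series with nonnegative coeffs and nonzero constant term})$ — here $\hat x(Y)/Y = B(Y)/(B(Y)+YB'(Y))^2$ has constant term $1/b_0$ provided $b_0>0$; the degenerate case $b_0=0$ (forcing every label $\ge 1$) needs a separate remark or is excluded.

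The step I expect to be the main obstacle is getting the functional equation exactly right — in particular correctly handling the interaction between the per-vertex weight $b_{\ell(v)}$, the surplus bookkeeping $y^{\mathscr s}$, and the \emph{subtree} constraint (which is a constraint at every internal node, not just globally). Translating ``$\mathscr s\ptree[']\ge0$ for all subtrees'' into a clean recursive decomposition is where a catalytic variable or a truncation operator $[y^{\ge0}]$ must be introduced, and making that rigorous at the level of formal power series (so that the kernel method applies verbatim) is the delicate point. Once the equation is pinned down, the rest is the quadratic method plus the algebraic verification of the divisibility $(Y-y)^2\mid Q(Y,y)$, which is routine though lengthy.
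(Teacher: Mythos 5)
Your overall route is the same as the paper's: decompose at the root, write one functional equation with a catalytic variable, solve it by the Bousquet-M\'elou--Jehanne kernel/quadratic method \cite{BousquetMelouJehanne2005}, check that $(Y-y)^2$ divides the discriminant, and select the branch (your ``verify the formula and invoke uniqueness of the solution of the equation'' ending is a legitimate variant of the paper's direct derivation). However, two concrete points do not go through as written. The first is the functional equation itself, which you leave unpinned and whose tentative form (a truncated series substituted inside $B$) is not of the right shape. The correct equation comes from observing that a tree all of whose \emph{proper} subtrees are fully packed is a root with label $k$ (weight $b_k$, surplus contribution $y^{k-1}$) followed by a sequence of fully packed trees, and then removing the trees of total surplus $-1$; this gives
\begin{equation*}
F(x,y) \;=\; \frac{x}{y}\left(\frac{B(y)}{1-F(x,y)} - \frac{b_0}{1-F_0(x)}\right),
\end{equation*}
where the catalytic unknown is $F_0(x)=F(x,0)$ (the zero-surplus series), not $F(x,1)$: the subtracted term is exactly the $y^{-1}$-part of $\frac{xB(y)}{y}\frac{1}{1-F(x,y)}$, i.e.\ root label $0$ with all subtrees of surplus $0$. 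Once this equation is in place, your kernel step and the elimination do produce $\hat x$, $\phi$, $Q$ exactly as in the statement, so this part is a matter of carrying out what you sketched.

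The genuine gap is your argument for the nonnegativity of the coefficients of $\hat Y$, which is part of the statement. The series $\hat x(Y)/Y = B(Y)/(B(Y)+YB'(Y))^2$ does \emph{not} have nonnegative coefficients (for $B(y)=1+y$ it is $1-3y+\cdots$), and in any case Lagrange inversion would require nonnegativity of the multiplier $Y/\hat x(Y)=(B(Y)+YB'(Y))^2/B(Y)$, which can also fail: for $B(y)=1+y^3$ its coefficient of $y^9$ equals $-9$. So positivity of $\hat Y$ cannot be read off the formula for $\hat x$ alone. The paper obtains it from the kernel system itself: the first kernel equation says $\hat Y(x) = x\cdot B(\hat Y(x))/(1-F(x,\hat Y(x)))^2$, and since $B(Y)/(1-F(x,Y))^2$ has nonnegative coefficients as a double series in $(x,Y)$, the coefficients of $\hat Y$ are nonnegative by induction on the order in $x$. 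You need this (or some equivalent positivity input, e.g.\ a combinatorial interpretation of $\hat Y$), both to prove the proposition as stated and because the positivity is what later yields $|\hat Y(x)|\le \hat Y(x_c)=Y_c$ in the singularity analysis.
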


Notice that if the labeling $\ell$ is restricted to take nonzero values, then all labeled trees are full packed. Therefore when $b_0=0$, the model of fully packed trees is reduced to that of the rooted plane trees with arbitrary positive labeling.
Inversely, when $b_l=0$ for all $l\ge 2$, the condition of being fully packed forces the labeling to be $1$ on every vertex.
On the other hand, let $\rho$ be the radius of convergence of the weight generating function $B(y)$. From the functional equation \eqref{eq:F:combinatorial} on $F(x,y)$ in \Cref{sec:parametric solution}, it is not hard to see that $\rho=0$ implies $F(x,y)=\infty$ for all $x,y>0$. We will avoid these problematic cases in the following:
\begin{center}
From now on, we assume that $b_0>0$, $b_l>0$ for at least one $l\ge 2$, and $\rho>0$.
\end{center}

Our second main result is a description of the phase diagram of this model.
It is clear that the function $\hat x$ defined in \Cref{prop:parametrization} is analytic on $[0,\rho)$ and satisfies $\hat x(0)=0$ and $\hat x'(0)>0$. The singularity behavior of its inverse $\hat Y$, and hence of $F(x,y)=\hat F(\hat Y(x),y)$, depends crucially on whether $\hat x$ has a critical point on $(0,\rho)$. This motivates the following definition:

\begin{definition*}[The generic, dilute and dense\footnote{\,The names \emph{dilute} and \emph{dense} are borrowed from the terminology for the $O(n)$-loop model on random maps (see \cite{BorotBouttierDuplantier2016} and the references therein) because of similarities of the corresponding phases on the enumerative level. They are \emph{not} used to convey any geometric property of our model.} 
phases]
We say that the weight sequence $\mathbf b$ is
\begin{itemize}[itemsep=-0.5ex,topsep=0.5ex]
\item    \emph{generic} if $\hat x'(Y)=0$ for some $Y\in (0,\rho)$. In the case, let $Y_c = \min \set{Y\in (0,\rho)}{\hat x'(Y)=0}$.
\item    \emph{non-generic} if $\hat x'(Y)>0$ for all $Y\in (0,\rho)$. In this case, let $Y_c=\rho$.\\
In this case, we say that the weight sequence is \emph{dilute} if $\hat x'(\rho) := \lim_{Y\to \rho^-}\hat x'(Y) = 0$, and \emph{dense} otherwise.
\end{itemize}
~~~~In both cases, we define $x_c = \lim_{Y\to Y_c^-}\hat x(Y)$. 
\end{definition*}

\noindent It is clear that the generic, (non-generic) dilute, and (non-generic) dense phases form a partition of the phase space $\set{\mathbf b \in \real_{\ge 0}^\natural}{b_0>0,\rho>0}$. The following result gives a simpler characterization of the phases.

\begin{proposition}[Characterization of the phases]\label{prop:phase diagram}
The model is generic if $\rho=\infty$ or $B''(\rho)=\infty$. When $\rho<\infty$ and $B''(\rho)<\infty$, the model is generic (resp.\ dilute, dense) \Iff\ $\hat x'(\rho)<0$ (resp.\ $\hat x'(\rho)=0$, $\hat x'(\rho)>0$).
\end{proposition}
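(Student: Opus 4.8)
The plan is to reduce the whole statement to the monotonicity of a single auxiliary function. Set $g(Y):=YB(Y)=\sum_{l\ge0}b_lY^{l+1}$, so that the denominator appearing in \Cref{prop:parametrization} is $g'(Y)=B(Y)+YB'(Y)$; differentiating $\hat x$ then gives $\hat x(Y)=g(Y)/g'(Y)^2$ and $\hat x'(Y)=h(Y)/g'(Y)^3$, where $h(Y):=g'(Y)^2-2g(Y)g''(Y)$. Since $g$ has nonnegative coefficients with $g(0)=0$ and $g'(0)=b_0>0$, the functions $g,g',g''$ are nonnegative and nondecreasing on $[0,\rho)$, with $g'\ge b_0>0$; and the standing assumption that $b_l>0$ for some $l\ge2$ gives $g'''(Y)>0$ for $Y\in(0,\rho)$. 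The one elementary identity I will use is $h'=-2gg'''$, so $h$ is \emph{strictly decreasing} on $(0,\rho)$, starting from $h(0)=b_0^2>0$. Since $g'>0$, the signs of $\hat x'$ and $h$ coincide, so $\hat x$ is unimodal: it increases up to the unique zero of $h$ (if one exists) and strictly decreases afterwards. In particular the model is generic exactly when $h$ takes a negative value on $(0,\rho)$.

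Granting this, the second sentence of the statement --- the case $\rho<\infty$, $B''(\rho)<\infty$ --- follows quickly, since then $g,g',g''$ extend continuously to $Y=\rho$ with $g'(\rho)\ge b_0>0$, so $\hat x'(\rho)=h(\rho)/g'(\rho)^3$ has the sign of $h(\rho)$. If $h(\rho)<0$, then (using $h(0)>0$) the intermediate value theorem gives a zero of $h$, hence of $\hat x'$, in $(0,\rho)$, so the model is generic. If $h(\rho)=0$, then $h>0$ on $(0,\rho)$, so the model is non-generic, and by definition dilute. If $h(\rho)>0$, then $h>0$ on $[0,\rho]$, so the model is non-generic, and, not being dilute, dense. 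This is exactly the claimed trichotomy.

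For the first sentence I will argue the contrapositive: if the model is non-generic, then $\rho<\infty$ and $B''(\rho)<\infty$. Non-genericity means $\hat x'>0$ on $(0,\rho)$, i.e.\ $\hat x$ is strictly increasing from $\hat x(0)=0$, and equivalently $h>0$, i.e.\ $2gg''<(g')^2$, on $(0,\rho)$. Comparing coefficients gives $g(Y)\le Yg'(Y)$, whence $\hat x(Y)\le Y^2/g(Y)$; since $g$ contains a monomial of some degree $k_0\ge3$, this forces $\hat x(Y)\to0$ as $Y\to\infty$, which is incompatible with $\hat x$ being strictly increasing and positive --- so $\rho<\infty$. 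The same bound, with $g$ nondecreasing, shows $\hat x$ is bounded on $(0,\rho)$, so $x_c:=\lim_{Y\to\rho^-}\hat x(Y)$ is finite and positive. I then bootstrap finiteness at $\rho$: writing $g'(Y)^2=g(Y)/\hat x(Y)$ and using $\hat x(Y)\ge x_c/2$ near $\rho$, if $g(\rho^-)=\infty$ then $g'/g\to0$ near the finite point $\rho$, so $g'/g$ is bounded there and $\log g$ stays bounded as $Y\to\rho^-$, contradicting $g(\rho^-)=\infty$; hence $g(\rho^-)<\infty$, hence $g'(\rho^-)<\infty$ ($g'(Y)^2=g(Y)/\hat x(Y)$ stays bounded and $g'$ is nondecreasing), hence $g''(\rho^-)<\infty$ (from $2gg''<(g')^2$ and monotonicity of $g''$). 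Finiteness of $g,g',g''$ at $\rho$ is equivalent to finiteness of $B,B',B''$ there, so $B''(\rho)<\infty$, as required.

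The routine pieces are the identity $h'=-2gg'''$, the unimodality of $\hat x$, and the case analysis in the regime $\rho<\infty$, $B''(\rho)<\infty$. The one genuinely delicate step is the bootstrap at the end --- ruling out that $g$, equivalently $B$, blows up at a finite $\rho$ while the model stays non-generic. The argument handles this through the growth constraint that $\hat x=g/(g')^2$ imposes on $g$, but it requires treating $g(\rho^-)$, $g'(\rho^-)$ and $g''(\rho^-)$ in the right order, each conclusion feeding the next.
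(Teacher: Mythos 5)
Your proof is correct, but it is organized differently from the paper's. For the trichotomy at $\rho<\infty$, $B''(\rho)<\infty$, the paper proves a local statement (its Lemma on concavity at critical points: eliminating $B''$ from $\hat x'(Y)=0$ and substituting into $\hat x''$ shows $\hat x''<0$ at any interior critical point), and deduces that $\hat x'$ vanishes at most once and changes sign; you instead substitute $g(Y)=YB(Y)$, write $\hat x'=h/(g')^3$ with $h=(g')^2-2gg''$, and use the single identity $h'=-2gg'''<0$ to get global strict monotonicity of the numerator, hence unimodality of $\hat x$ — a slightly stronger and arguably cleaner statement that in fact recovers the paper's lemma (at a critical point $h=0$ and $\hat x''=h'/(g')^3<0$). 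For the first sentence the routes genuinely diverge: the paper argues directly, showing $\hat x'(\rho^-)=-\infty$ when $B''(\rho)=\infty$ but $B'(\rho)<\infty$, and $\hat x(\rho^-)=0$ via $\hat x\le 1/(4B')$ when $B'(\rho)=\infty$ or $\rho=\infty$, which forces a sign change of $\hat x'$; you argue the contrapositive, using $\hat x\le Y^2/g$ to rule out $\rho=\infty$ and then bootstrapping $g(\rho^-)<\infty\Rightarrow g'(\rho^-)<\infty\Rightarrow g''(\rho^-)<\infty$ from $2gg''<(g')^2$ and the lower bound on $\hat x$ near $\rho$. Your version is more self-contained and avoids the case split on which of $B,B',B''$ blows up; the paper's version yields the sharper quantitative facts ($\hat x'(\rho^-)=-\infty$, resp.\ $\hat x(\rho^-)=0$) and its concavity lemma is reused later in the singularity analysis (to guarantee $\hat x''(Y_c)<0$), so it earns its keep elsewhere in the paper.
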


In \Cref{sec:intro/discussion}, we will give a probabilistic interpretation for the above result (\Cref{cor:phase diagram/probabilistic}), as well as a simple way to construct one-parameter families $(\mathbf b\0p,p\in[0,1])$ of weight sequences such that $\mathbf b\0p$ is generic, dilute, and dense when $p\in[0,p_c)$, $p=p_c$ and $p\in(p_c,1]$ respectively, for some $0<p_c<1$ (\Cref{cor:phase diagram/1D transition}).

It turns out that some weight sequences in the dilute phase will lead to the same leading order asymptotics of the coefficients of $F(x,y)$ as those in the generic phase. It is convenient to regroup them together:

\begin{definition*}[The generic$^+$ and the dilute$^-$ phases]
We say that the weight sequence $\mathbf b$ is
\begin{itemize}[itemsep=-0.5ex,topsep=0.5ex]
\item    in the \emph{generic$^+$} phase if it is either generic, or non-generic dilute with $B'''(\rho)<\infty$.
\item    in the \emph{dilute$^-$} phase if it is non-generic dilute and $B'''(\rho)=\infty$.
\end{itemize}
\end{definition*}

\Cref{fig:phase-diagram} summarizes the characterization the various phases and illustrates the relation between them.

\begin{figure}
\centering
\includegraphics[scale=1]{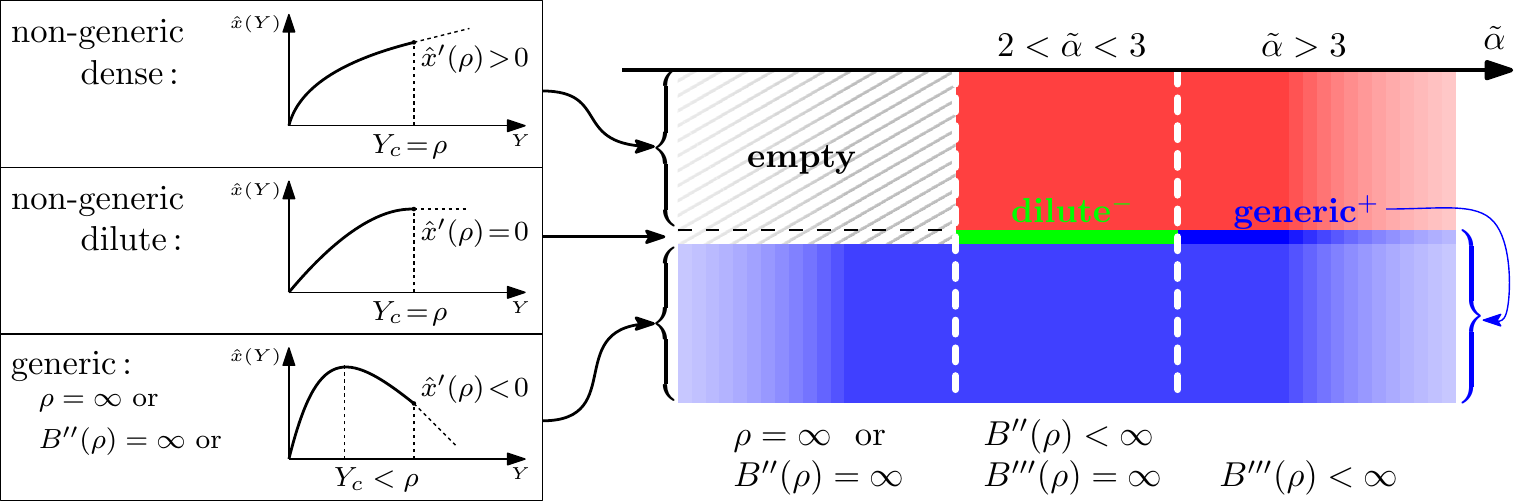}
\caption{Left: the characterizations of the generic, dilute and dense phases. While the graphs seem to suggest that $\hat x$ is concave, this is not necessarily true. We only prove a weaker property: see \Cref{lem:concave critical point}. Right: a phase diagram indicating the relation between generic$^+$, dilute$^-$ and the other phases of the model. The conditions at the bottom give the general defintions of the three columns of the diagram, while the axis at the top gives the corresponding ranges of $\tilde a$ under the assumption \eqref{*}.}
\label{fig:phase-diagram}
\end{figure}

\paragraph{Notations.}
For $r\in (0,\infty)$, let $\disk_r = \Set{z\in \complex}{|z|<r}$ be the disk of radius $r$ centered at $0$. We say that a function is \emph{$\Delta$-analytic at $r$} if it is analytic in a \emph{$\Delta$-domain at $r$} of the form
\begin{equation}
\ddom_r^{\marg,\ang} := \set{r z}{z\in \disk_{1+\marg},\,z\ne 1 \text{ and }\arg(z-1) \in (\pi/2 -\ang,3\pi/2 +\ang)}
\end{equation}
where $\marg>0$ and $\ang \in (0,\pi/2)$. We will write $\ddom_r$ instead of $\ddom_r^{\marg,\ang}$ when the values of $\marg$ and $\ang$ are unimportant. We denote by $\cdisk_r$ and $\cdom_r$ the closures of $\disk_r$ and $\ddom_r$, respectively.

For a formal power series $f(x)=\sum_{n\ge 0}f_n x^n$, we denote by $\supp f=\set{n\ge 0}{f_n\ne 0}$ the \emph{support} of (the coefficients of) $f$. We say that $f$ is \emph{aperiodic} if $\supp f$ is not contained in $m\integer+n$ for any $m\ge 2$ and $n\in \integer$. 
\bigskip

As we will see below, the asymptotics of the coefficients of $F(x,y)$ are mostly independent of the weight sequence $\seq bl$ in the generic phase, thus the name. On the other hand, they depend sensitively on the asymptotics of $\seq bl$ in the non-generic phase. In order to obtain interesting quantitative results on these asymptotics, we make the following additional assumptions on the weight sequence $\seq bl$:

\paragraph{Assumption (\labeltext{$*$}{*}).}~\!\!\!\!
We assume that $B(y)$ is aperiodic and $\abs{\supp B}=\infty$ (i.e.\ $B(y)$ not a polynomial). In addition,
in the non-generic phase, we assume that $B(y)$ is $\Delta$-analytic at $\rho$ and has the following asymptotic expansions when $y\to \rho$ in $\ddom_{\rho}$:
\begin{equation}\label{eq:non-generic B asymptotic}
\begin{aligned}
B  (y) &= B\1r  (y) + B\1s  (y)\cdot (1+o(1)) \,,\\
B' (y) &= B\1r' (y) + B\1s' (y)\cdot (1+o(1)) \,,\\
B''(y) &= B\1r''(y) + B\1s''(y)\cdot (1+o(1)) \,,
\end{aligned}
\end{equation}
where $B\1r$ is an analytic function at $\rho$, and 
$B\1s(y) = \cst_B\cdot (1-y/\rho)^{\tilde \alpha}$ for some $\cst_B \ne 0$ and $\tilde \alpha \in (2,\infty) \setminus \integer$.

We will discuss the significance and necessity of the above assumptions in \Cref{sec:intro/discussion}.

\bigskip

The third main result of this paper concerns the asymptotics of the coefficients $F_p(x) := [y^p]F(x,y)$ and $F_{n,p} := [x^ny^p] F(x,y)$ in various regimes of the limit $n,p\to \infty$. 
For the moment, we focus on the asymptotics in the generic and the dilute phases, because these are the two phases which are relevant in the probabilistic study of the model (see discussions in \Cref{sec:intro/background}), and they can mostly be treated in a unified way. A discussion about the asymptotics in the dense phase would create some additional hurdles, and is left to future work. 

Let $\alpha=3$ in the generic$^+$ phase, and $\alpha = \tilde \alpha \in (2,3)$ in the dilute$^-$ phase. 

\begin{theorem}[Coefficient asymptotics]\label{thm:coeff asymptotics}
Under Assumption~\eqref{*} and in the generic and the dilute phases, we have
\begin{align}\label{eq:coeff y asymptotics}
F_p(x_c) &\ \eqv{p}\ 
          \frac{\cst_F}{\Gamma (-\gamma_0)} 
          \cdot Y_c^{-p} \cdot p^{-\gamma_0-1} &
\partial_x F_p(x_c) &\ \eqv{p}\ 
                     \frac{\alpha}{2\mu x_c} 
                     \frac{\cst_F}{\Gamma(-\gamma_1)} 
                     \cdot Y_c^{-p} \cdot p^{-\gamma_1-1}
\\ \label{eq:coeff 2-step asymptotics}
F_{n,p} &\ \eqv{n}\ 
         \frac{G_p}{\Gamma(-\beta_0)} 
         \cdot x_c^{-n} \cdot n^{-\beta_0-1} &
G_p &\ \eqv{p}\ 
     \frac{\alpha-1}{2\,\mu^{\beta_0}} 
     \frac{\cst_F}{\Gamma(-\beta_1)} 
     \cdot Y_c^{-p} \cdot p^{-\beta_1-1}
\\ \label{eq:coeff bivariate asymptotics}
\text{and when }&n \sim v\cdot p^{1/\theta}
\text{ for some }v \in (0,\infty): &
F_{n,p}\ &\eqv{n,p}\ 
          \mu\, C_F \cdot I_\alpha(\mu v) 
          \cdot x_c^{-n} Y_c^{-p} 
          \cdot p^{-(\gamma_0+1+1/\theta)} \,.
\end{align}
where the exponents $\gamma_0,\gamma_1,\beta_0,\beta_1$ and $\theta$ are universal (i.e.\ they only depend on $\alpha$), and are given by
\begin{align}\label{eq:exponents values}
\gamma_0=  \frac{\alpha}{2}        \,,\qquad
\gamma_1=1-\frac{\alpha}{2}        \,,\qquad
\beta_0 =  \frac{\alpha}{\alpha-1} \,,\qquad
\beta_1 = -\frac{\alpha}{2}        \qtq{and}
\theta  =  \frac{1}{\alpha-1}      \,.
\end{align}
The scaling function $I_\alpha:\real_{>0}\to \real_{>0}$ is also universal. Its expression is given (without proof) in the remark below.
The non-universal constants $\mu$ and $\cst_F$ in the expansions \eqref{eq:coeff y asymptotics}--\eqref{eq:coeff bivariate asymptotics} are given by
\begin{equation}
\mu = \begin{cases}
-\frac{Y_c^2}2 \frac{\hat x''(Y_c)}{x_c} 
             & \text{in the generic$^+$ phase} \\
 \frac{2\tilde \alpha \, \cst_B}{B(Y_c)+Y_c B'(Y_c)} 
             & \text{in the dilute$^-$ phase}
\end{cases}
\qtq{and}
\cst_F = \frac12 \sqrt{ \frac{2\mu}{\alpha \m({ 1+\frac{Y_c B'(Y_c)}{B(Y_c)} }} } \,.
\end{equation}
\end{theorem}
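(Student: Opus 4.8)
The plan is to push the parametrization of \Cref{prop:parametrization} through the classical and bivariate transfer theorems: everything reduces to extracting the singular expansion of $\hat Y$ at $x_c$, the singular expansion of $\hat F$ near the critical point $(Y_c,Y_c)$, and then composing. Throughout write $g(Y)=YB(Y)$, so that $B(Y)+YB'(Y)=g'(Y)$, $\hat x=g/(g')^2$ and $\phi(Y)=2g(Y)/g'(Y)-Y$. The key elementary identities are $\phi'(Y)=g'(Y)\,\hat x'(Y)$ and $q(Y,Y)=\phi'(Y)$ for \emph{all} $Y$, and consequently $\partial_Y\hat F(Y,y)\big|_{Y=Y_c}=0$: the second identity says $y=Y$ is a double root of $Q(Y,\cdot)$, so that $q$ is as regular as $Q$; the first gives $\phi'(Y_c)=0$ (because $\hat x'(Y_c)=0$ in both phases), which is exactly what forces the third. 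In the generic$^{+}$ phase one checks moreover that $Q$ together with all its first and second partial derivatives vanishes at $(Y_c,Y_c)$, so that $q(Y_c,Y_c)=0$ and $q(Y,y)=a(Y_c-Y)+b(Y_c-y)+O(|Y_c-Y|^2+|Y_c-y|^2)$ near $(Y_c,Y_c)$, with $a+b=-g'(Y_c)\hat x''(Y_c)>0$ (using $\hat x''(Y_c)<0$, cf.\ \Cref{lem:concave critical point}) and $b=\tfrac23 x_c g'''(Y_c)>0$. In the dilute$^{-}$ phase one instead feeds the expansion \eqref{eq:non-generic B asymptotic} of $B$ into the formulas for $\hat x$, $\phi$ and $q$: since the analytic part of $Q(\rho,\cdot)$ still vanishes to order $\ge 3$ at $y=\rho$ while the $\tilde\alpha$-singular part does not, and $\tilde\alpha<3$, this gives $q(\rho,y)=-\tfrac{4x_c\cst_B}{\rho}(1-y/\rho)^{\tilde\alpha-2}(1+o(1))$, and likewise a singular expansion of $\hat x'$ near $\rho$.

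Next I locate and describe the dominant singularity of $\hat Y$. Using the variational method for inverses developed earlier in the paper together with the aperiodicity from Assumption~\eqref{*}, I claim that $\hat Y$ is $\Delta$-analytic at $x_c$, that $x_c$ is its \emph{unique} dominant singularity, and that $\hat Y(x)=Y_c\bigl(1-\mu^{-\theta}(1-x/x_c)^{\theta}(1+o(1))\bigr)$ as $x\to x_c$, with $\theta=1/(\alpha-1)$ and $\mu$ as in the statement. In the generic$^{+}$ phase this is the inversion of $x_c-\hat x(Y)=\tfrac{\mu x_c}{Y_c^2}(Y-Y_c)^2(1+o(1))$ at a nondegenerate maximum; in the dilute$^{-}$ phase one integrates the singular expansion of $\hat x'$ found above to get $x_c-\hat x(Y)\asymp(1-Y/\rho)^{\tilde\alpha-1}$ and then inverts. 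The first half of \eqref{eq:coeff y asymptotics} is now immediate: composing $F(x_c,y)=\hat F(Y_c,y)$ and substituting the above expansion of $q$ into $\hat F(Y_c,y)=\tfrac12-\tfrac{\phi(Y_c)}{2y}+\tfrac{(Y_c-y)\sqrt{q(Y_c,y)}}{2y}$ shows that $\hat F(Y_c,\cdot)$ is $\Delta$-analytic at $Y_c$ with singular part $\cst_F(1-y/Y_c)^{\gamma_0}(1+o(1))$, $\gamma_0=\alpha/2$ — in the generic case the extra factor $Y_c-y$ lifts $\sqrt{Y_c-y}$ to $(Y_c-y)^{3/2}$, in the dilute case the exponent $\tilde\alpha/2$ is read off directly — and $\cst_F$ works out to the stated value; a classical transfer finishes it. For $\partial_xF_p(x_c)$ one uses $\partial_Y\hat F(Y_c,\cdot)=0$: this cancels the would-be $(1-x/x_c)^{\theta}$ term, so $\partial_xF(x,y)$ extends continuously to $x=x_c$ with value $\hat x''(Y_c)^{-1}\partial_Y^2\hat F(Y_c,y)$ (and an analogous limit in the dilute$^{-}$ phase), and differentiating the joint expansion of $\hat F$ near $(Y_c,Y_c)$ twice in $Y$ exhibits a $(1-y/Y_c)^{\gamma_1}$ singularity with $\gamma_1=1-\alpha/2$; a second transfer, with $\hat x''(Y_c)=-2\mu x_c/Y_c^2$, produces the second half of \eqref{eq:coeff y asymptotics}.

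For the remaining asymptotics, combine $\partial_Y\hat F(Y_c,\cdot)=0$ with the expansion of $\hat Y$ to get $F(x,y)=\hat F(Y_c,y)+\tfrac12\partial_Y^2\hat F(Y_c,y)\,(\hat Y(x)-Y_c)^2+\cdots$ and identify the leading non-analytic term in $x$ as $(1-x/x_c)^{\beta_0}$ with $\beta_0=\alpha/(\alpha-1)=1+\theta$; in the dilute$^{-}$ phase this term comes from the $(1-Y/\rho)^{\tilde\alpha-1}$-singular part of $\hat F$ composed with $\hat Y$, which beats the $(1-x/x_c)^{2\theta}$ contribution precisely because $\tilde\alpha>2$. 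A classical transfer in $x$ (uniformly for $y$ near $0$) gives $F_{n,p}\sim\Gamma(-\beta_0)^{-1}G_p\,x_c^{-n}n^{-\beta_0-1}$ with $G_p=[y^p]\tilde H(y)$, where $\tilde H$ is the coefficient of $(1-x/x_c)^{\beta_0}$; $\tilde H$ has a $(1-y/Y_c)^{\beta_1}$ singularity at $Y_c$ with $\beta_1=-\alpha/2$, and a further transfer in $y$ gives the second half of \eqref{eq:coeff 2-step asymptotics}. Finally, substituting the full singular expansions of $\hat Y$ and of $\hat F$ near $(Y_c,Y_c)$ yields a local scaling form $F(x,y)=(\text{analytic})+\cst_F\,\Phi\bigl((1-x/x_c)^{\theta},\,1-y/Y_c\bigr)\,(1+o(1))$, valid uniformly on a product of $\Delta$-domains; applying the bivariate transfer theorem (the generalization mentioned in the abstract) to $\Phi$ gives \eqref{eq:coeff bivariate asymptotics}, the scaling function $I_\alpha$ being the resulting Hankel-type double-contour integral, and the exponent $-(\gamma_0+1+1/\theta)$ coming from the balance $1-x/x_c\asymp(1-y/Y_c)^{1/\theta}$, i.e.\ $n\asymp p^{1/\theta}$.

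The two genuinely hard points are, first, the dominant-singularity bookkeeping: proving via the variational method and the aperiodicity of Assumption~\eqref{*} that $x_c$, respectively $Y_c$, really is the unique dominant singularity of $\hat Y$ and of the various sections of $F$. This is most delicate in the dilute$^{-}$ phase, where $B$ is itself singular at $\rho=Y_c$, so that one must separate the analytic and $(1-\cdot/\rho)^{\tilde\alpha-1}$-singular parts of $\hat F$ consistently through the composition. Second, the bivariate step requires a uniform version of the scaling form together with a bivariate transfer theorem, and then the identification of the resulting integral with the claimed $I_\alpha$. Propagating the non-universal constants $\mu$ and $\cst_F$ through the nested substitutions, by contrast, is lengthy but entirely mechanical.
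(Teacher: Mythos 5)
Your overall architecture is the same as the paper's: extract the singular expansion of $\hat x$/$\hat Y$ at the critical point, the singular behaviour of $q$ near $(Y_c,Y_c)$, compose to get univariate and bivariate singular expansions of $F$, and finish with classical transfer (plus a Vitali-type/uniformity argument for the two-step asymptotics) and a double-Hankel-contour bivariate transfer, with the $\Delta$-analyticity and uniqueness of the dominant singularities correctly flagged as the separate hard input (this is the content of \Cref{prop:Delta-analyticity}, \Cref{lem:technical} and \Cref{sec:variational method}). Most of your local computations check out: $\partial_Y \hat F(Y_c,y)=0$, the linearization of $q$ in the generic$^+$ phase with $a+b=-g'(Y_c)\hat x''(Y_c)$ and $b=\tfrac23 x_c g'''(Y_c)$, and $q(\rho,y)\sim -\tfrac{4x_c\cst_B}{\rho}(1-y/\rho)^{\tilde\alpha-2}$ in the dilute$^-$ phase all agree with \Cref{lem:q asymptotics}.

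The genuine gap is your derivation of the $s^{\beta_0}$ term (the key to \eqref{eq:coeff 2-step asymptotics}) in the dilute$^-$ phase. The formula $F(x,y)=\hat F(Y_c,y)+\tfrac12\partial_Y^2\hat F(Y_c,y)(\hat Y(x)-Y_c)^2+\cdots$ is unusable there: $\partial_Y^2\hat F(Y,y)=\partial_Y\partials_x\hat F\cdot\hat x'+\partials_x\hat F\cdot\hat x''$ and $\hat x''(Y)\asymp(Y_c-Y)^{\tilde\alpha-3}$ blows up, so $\partial_Y^2\hat F(Y_c,y)=\infty$. If instead you compose the singular expansion of $\hat F$ in $Y$ with $\hat Y$, the $(1-Y/\rho)^{\tilde\alpha-1}$ part gives exponent $\theta(\tilde\alpha-1)=1$ (the linear term), not $\beta_0$, and both it and the analytic-in-$Y$ part produce competing contributions at exponent $2\theta=\tfrac{2}{\tilde\alpha-1}$; since $\tilde\alpha>2$ one has $2\theta<1+\theta=\beta_0$, so your statement that the $\beta_0$ term ``beats the $(1-x/x_c)^{2\theta}$ contribution precisely because $\tilde\alpha>2$'' is backwards — a surviving $s^{2\theta}$ term would dominate $s^{\beta_0}$ and change the exponent in \eqref{eq:coeff 2-step asymptotics}. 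What actually happens is that the $s^{2\theta}$ contributions cancel, and this cancellation is invisible in your bookkeeping. The paper avoids the issue by never expanding $F$ itself in $Y$: it expands $\partial_xF(x,y)=\partials_x\hat F(Y,y)$, which is a rational expression in $(Y,B(Y),B'(Y),y)$ and hence $C^1$ in $Y$ at $Y_c$ with correction $O(S^{\tilde\alpha-1})=o(S)$ (\Cref{lem:Fhat asymptotics}), and then integrates in $x$ to get \eqref{eq:F:x asymptotics} with $G(y)\propto\partial_Y\partials_x\hat F(Y_c,y)$. You should replace your second-order-Taylor step by this (or prove the $s^{2\theta}$ cancellation explicitly); the rest of your outline, including the constants, then goes through as in the paper.
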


\begin{remark*}
In an upcoming paper, we will explain how to carry out singularity analysis and to compute $I_\alpha$ for a fairly large class of bivariate generating functions. By applying this method to $F(x,y)$, we obtain that
\begin{equation}
I_\alpha(\lambda) =\,\sum_{n=0}^\infty \, \frac{c_n}{\Gamma(\sigma_n-\gamma_0) \Gamma(-\theta \sigma_n)} \,\lambda^{-\theta \sigma_n-1}
\end{equation}
where the constants $\sigma_n,c_n \in \real$ are determined by $\sum \limits_{n=0}^\infty \!c_n x^{\sigma_n} = \sqrt{1 -\alpha x^{\alpha-1} +(\alpha-1) x^\alpha}$ as $x\to 0$. Or, explicitly
\begin{equation}
I_\alpha(\lambda) 
=\, \sum_{p=0}^\infty \sum_{q=1}^\infty \,
  \frac{(-1)^{q+1}}{2\sqrt \pi}
  \frac{\Gamma \m({ p+q-\frac12 } 
      }{\Gamma \m({ (\alpha-1)p+\alpha(q-\frac12) } 
        \Gamma \m({ -p -\frac{\alpha}{\alpha-1} q }  
      } \frac{\alpha^p (\alpha-1)^q}{p!\,q!}
 \, \lambda^{-(p+\frac{\alpha}{\alpha-1} q)-1} \,.
\end{equation}
Assuming the increasing order $\sigma_0<\sigma_1<\cdots$, it is an elementary exercise to show that $\limsup_{n\to \infty} |c_n|^{\frac1{\sigma_n}} <\infty$. On the other hand, by Euler's reflexion formula, we have $\frac1{\Gamma(-\theta \sigma_n)} = \frac{\sin(\pi \theta \sigma_n)}{\pi} \Gamma(\theta \sigma_n+1)$. It follows that
\begin{equation}
\limsup_{n\to \infty} \abs{ \frac{c_n}{\Gamma(\sigma-\gamma_0) 
\Gamma(-\theta \sigma_n)} }^{\frac1{\sigma_n}}
\le 
\limsup_{n\to \infty} |c_n|^{\frac1{\sigma_n}} \ \cdot\
\limsup_{n\to \infty} \abs{ \frac{\Gamma(\theta \sigma_n+1)}{\Gamma(\sigma_n-\gamma_0)} }^{\frac1{\sigma_n}}
\end{equation}
Since $\theta=\frac{1}{\alpha-1}<1$, Stirling's formula implies that the second limsup on the \rhs\ is equal to zero.
By a harmless generalization of the root test, we see that the series of power functions which defines $I_\alpha$ is absolutely convergent for all $\lambda\in \complex\setminus \{0\}$.

The various asymptotic formulas in \Cref{thm:coeff asymptotics} are related to each other by a number of heuristic scaling relations. For instance, by plugging the second asymptotics of \eqref{eq:coeff 2-step asymptotics} into the first one, we see that
\begin{equation}
F_{n,p} \eqv{n,p} \frac{\alpha-1}{2} \frac{\mu\,\cst_F}{\Gamma(-\beta_0) \Gamma(-\beta_1)} \cdot x_c^{-n} Y_c^{-p} \cdot (\mu n)^{-\beta_0-1} p^{-\beta_1-1}
\end{equation}
when $n$ tends to $\infty$ sufficiently fast compared to $p$. On the other hand, the asymptotics \eqref{eq:coeff bivariate asymptotics} can be rewritten as
\begin{equation}
F_{n,p}\ \eqv{n,p}\ \mu\, C_F
       \cdot (\mu v)^{(\gamma_0-\beta_1)\theta+1}
             I_\alpha(\mu v)
       \cdot x_c^{-n} Y_c^{-p} 
       \cdot (\mu n)^{-(\gamma_0-\beta_1)\theta-1}
             p^{-\beta_1-1}
\end{equation}
when $n \sim v \cdot p^{1/\theta}$ for $v \in (0,\infty)$.
Although the two asymptotics are valid for different regimes of the limit $n,p\to \infty$, they sugguest heuristically the scaling relations $\beta_0=(\gamma_0-\beta_1)\theta$ and 
$\lim_{\lambda \to \infty} \lambda^{\beta_0+1} I_\alpha(\lambda) = \frac{\alpha-1}{2 \Gamma(-\beta_0) \Gamma(-\beta_1)}$. Both relations can be verified using the explicit expression of the exponents and of $I_\alpha$. Another heuristic scaling relation is $\gamma_1 = \gamma_0 - \frac1\theta$. It is a bit harder to explain, and will be discussed in the upcoming paper containing the derivation of the expression of $I_\alpha$. 
\end{remark*}

One last result that we would like to mention here is a variational method for finding equations which constrain the dominant singularities of the inverse of an analytic function.
It is used in the proof of \Cref{thm:coeff asymptotics}, but applies in a general setting. We explain this method in detail in \Cref{sec:variational method}.

\subsection{Discussions and corollaries}\label{sec:intro/discussion}

\paragraph{About on the technical assumption~\eqref{*}.\!}
Since we assumed $b_0>0$, the series $B(y)$ is periodic \Iff\ $\supp B\subseteq m\integer$ for some $m\ge 2$. 
A simple rewriting of the definition of the surplus gives that
\begin{equation}\label{eq:def surplus equiv}
\sum_{v\in V(\tree)} \ell(v) = |V(\tree)|+\mathscr s \ptree \,.
\end{equation}
So if $\supp B\subseteq m\integer$, then all fully packed tree $\ptree$ such that $|V(\tree)|+\mathscr s \ptree \not \in m\integer$ would have zero weight. This would cause complications in the asymptotic analysis of the coefficients of $F(x,y)$, which we prefer to avoid. 
Both the aperiodicity of $B(y)$ and the assumption $|\supp B|=\infty$ are only used in \Cref{sec:technical lemma} to prove the uniqueness of dominant singularity of the series $\hat Y=\hat x^{-1}$. The above discussion shows that the aperiodicity is necessary for that conclusion to be true. On the other hand, we do not believe that the condition $|\supp B|=\infty$ is necessary. But currently we do not have a proof that bypasses it.

The assumptions in the non-generic phase contain two parts: First, we assume that $B$ is $\Delta$-analytic and that the expansions \eqref{eq:non-generic B asymptotic} hold in $\ddom_\rho$. This is necessary for having the corresponding $\Delta$-analyticity and asymptotic expansion in $\ddom_\rho$ of the function $y\mapsto F(x,y)$. Our asymptotic analysis of $F_{n,p}$ relies heavily on these ingredients.
Second, we assume that the dominant singular term $B\1s(y)$ in the asymptotic expansion of $B$ is a power function. While our method is applicable to more general $B\1s(y)$ (e.g.\ power function with logarithmic corrections), allowing such terms would greatly complicate the singularity analysis of $F(x,y)$ with little benefit. So we choose not to do so.
Finally, remark that the assumption $\tilde \alpha>2$ is not restrictive, since according to \Cref{prop:phase diagram}, we must have $B''(\rho)<\infty$ in the non-generic phase.

\paragraph{Random fully packed trees and equivalent weight sequences.\!}
When $F(x,y;\mathbf{b})<\infty$, we can define a probability measure on $\FPT$ by
\begin{equation}\label{eq:def random FTP}
\prob^{x,y}_\mathbf{b}\ptree = \frac{ w_\mathbf{b}\ptree \cdot x^{\abs{V(\tree)}} y^{\mathscr s \ptree} }{ F(x,y;\mathbf b) } .
\end{equation}
The definition of $w_{\mathbf b}$ and the relation \eqref{eq:def surplus equiv} imply that for any $\lambda,r>0$, the weight sequence $\tilde b_l = \lambda r^l \cdot b_l$ satisfies
\begin{equation}
F(x,y; \mathbf{\tilde b}) = F(\lambda r\cdot x,\,r\cdot y;\,\mathbf{b})
\qtq{and}
\prob^{x,y}_\mathbf{\tilde b} = \prob^{\lambda r \cdot x,r \cdot y}_{\mathbf b} \,.
\end{equation}
That is, the weight sequences $\mathbf{\tilde b}$ and $\mathbf b$ define the same family of probability measures up to a change of indices. We say that they are \emph{equivalent}. Alternatively, two weight sequences are equivalent \Iff\ $\tilde B(y) = \lambda B(r y)$ for some $\lambda,r>0$.
It is not hard to see that equivalent weight sequences always belong to the same phase.

\paragraph{Probabilistic characterization of the phases.\!}
Under the assumptions $\rho>0$ and $b_0>0$, every weight sequence has an equivalent in exactly one of the three categories: $(\rho=\infty$ and $B(1)=1)$, $(\rho=1$ and $B(1)=\infty)$, or $\rho=1=B(1)$.
\Cref{prop:phase diagram} ensures that the weight sequences in the first two categories are always in the generic phase. On the other hand, each weight sequence satisfying $\rho=1=B(1)$ defines a probability measure on $\{0,1,2,\cdots\}$ \emph{with sub-exponential tail} (in the sense that $b_n=o(r^n)$ and $b_n\ne O(r^{-n})$ as $n\to \infty$ for all $r>1$). Moreover, this probability distribution has a finite second moment \Iff\ $B''(1)<\infty$, and when this is the case, the condition $\hat x'(1)<0$ simplies to $2(B''(1)+B'(1)-B'(1)^2) + B'(1)^2 > 1$. This gives the following probabilistic reformulation of \Cref{prop:phase diagram}.

\begin{corollary}\label{cor:phase diagram/probabilistic}
Assume that $\mathbf b\equiv \seq bl$ is a probability distribution on $\{0,1,2,\cdots \}$ with a sub-exponential tail.
If $\,\mathbf b$ has infinite second moment, the it is in the generic phase. Otherwise, it is in the generic (resp.\ dilute, dense) phase \Iff\ \,$2\sigma^2 + m^2>1$ (resp.\ $=1$, $<1$), where $m$ and $\sigma^2$ are the mean and the variance of $\,\mathbf b$.
\end{corollary}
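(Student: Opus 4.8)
The plan is to read off the corollary from \Cref{prop:phase diagram} after rewriting both its hypotheses and its conclusions in terms of $m$ and $\sigma^2$. The first step is to locate the radius of convergence $\rho$. Since $\mathbf b$ is a probability distribution, $B(1)=\sum_n b_n=1<\infty$, so $\rho\ge 1$; and the sub-exponential tail hypothesis --- concretely, $b_n\ne O(r^{-n})$ for every $r>1$ --- rules out $\rho>1$, since for any $r\in(1,\rho)$ one would have $b_n=O(r^{-n})$. Hence $\rho=1$, and in particular we always fall in the regime $\rho<\infty$ in which \Cref{prop:phase diagram} describes the phase through $B''(\rho)$ and $\hat x'(\rho)$.

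The second step is to turn $B''(\rho)$ into a moment. By Abel's theorem (equivalently, monotone convergence, since $B'$ and $B''$ have non-negative coefficients and hence are non-decreasing on $(0,1)$), $B'(1^-)=\sum_n n\, b_n=m$ and $B''(1^-)=\sum_n n(n-1)\, b_n$, both valued in $[0,\infty]$. From $\sum_n n^2 b_n=\sum_n n(n-1)b_n+\sum_n n\, b_n$ and the inequality $n\le n(n-1)$ valid for $n\ge 2$, one checks that $\mathbf b$ has a finite second moment \Iff\ $B''(1)<\infty$ (and then automatically $m<\infty$). So the alternative $B''(\rho)=\infty$ of \Cref{prop:phase diagram} is exactly the case of infinite second moment, which is declared generic; this is the first assertion of the corollary.

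The third step handles $B''(1)<\infty$. Then $\hat x'$ extends continuously to $Y=\rho=1$, and differentiating $\hat x(Y)=YB(Y)/(B(Y)+YB'(Y))^2$ and evaluating at $Y=1$ (using $B(1)=1$ and $B'(1)=m$) gives
\[
\hat x'(\rho)=\frac{(1-m)^2-2B''(1)}{(1+m)^3}.
\]
Since the denominator is positive, $\hat x'(\rho)$ has the sign of $(1-m)^2-2B''(1)$; and writing $\sigma^2=B''(1)+m-m^2$ for the variance, we have $2B''(1)-(1-m)^2=2\sigma^2+m^2-1$, so $\hat x'(\rho)<0$ (resp.\ $=0$, $>0$) \Iff\ $2\sigma^2+m^2>1$ (resp.\ $=1$, $<1$). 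Inserting this into \Cref{prop:phase diagram}, which applies since $\rho=1<\infty$ and $B''(\rho)<\infty$, yields the generic/dilute/dense trichotomy and finishes the proof.

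The only point needing real care --- hence the main, if minor, obstacle --- is the boundary behaviour at $Y=\rho=1$: one must justify that $B'$ and $B''$ extend continuously to $1$ with the stated series values, and that $\hat x'(\rho)=\lim_{Y\to 1^-}\hat x'(Y)$ indeed equals the value obtained by formal substitution into the quotient rule. Both follow from the monotonicity of $B'$ and $B''$ on $(0,1)$ together with Abel's theorem, but they should be spelled out rather than taken for granted; the rest is routine arithmetic, and no ingredient beyond \Cref{prop:phase diagram} is used.
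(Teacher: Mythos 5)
Your proposal is correct and follows essentially the same route as the paper's own derivation (given in the discussion preceding the corollary): the sub-exponential tail forces $\rho=1=B(1)$, infinite second moment is exactly $B''(1)=\infty$ and hence generic by \Cref{prop:phase diagram}, and otherwise the sign of $\hat x'(1)=\bigl((1-m)^2-2B''(1)\bigr)/(1+m)^3$ translates into the sign of $2\sigma^2+m^2-1$. Your explicit evaluation of $\hat x'(1)$ via \eqref{eq:x rational derivative} and the Abel/monotone-convergence justification at the boundary are exactly the details the paper leaves to the reader, so there is nothing to add.
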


Using \Cref{cor:phase diagram/probabilistic}, one can easily construct examples of weight sequences in each of the three phases, or a continuous family of weight sequences that passes through the generic, dilute and dense phases consecutively. We leave the reader to verify the following particular construction.

\begin{corollary}\label{cor:phase diagram/1D transition}
Let $\mathbf b\00$ and $\mathbf b\01$ be two probability distributions on $\{0,1,2,\cdots\}$ with sub-exponential tails such that $\mathbf b\00$ is generic and $\mathbf b\01$ is dense. Then there exists $p_c\in (0,1)$, such that the weight sequence $\mathbf b\0p:= (1-p)\mathbf b\00 + p \mathbf b\01$ is generic, dilute and dense when $p\in [0,p_c)$, $p=p_c$ and $p\in (p_c,1]$, respectively.
\end{corollary}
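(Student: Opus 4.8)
The plan is to reduce everything to the sign of $\hat x'(1)$ as a function of $p$, using the probabilistic characterization in \Cref{cor:phase diagram/probabilistic}. First I would record that a convex combination of two probability distributions on $\natural$ is again a probability distribution on $\natural$, and that if both $\mathbf b\00$ and $\mathbf b\01$ have sub-exponential tails then so does $\mathbf b\0p$ for every $p\in[0,1]$: indeed $b_n\0p = (1-p)b_n\00 + p\,b_n\01$, so $b_n\0p = o(r^n)$ for all $r>1$ is immediate, and $b_n\0p \ne O(r^{-n})$ follows because $b_n\0p \ge (1-p) b_n\00$ (for $p<1$) resp.\ $b_n\0p \ge p\,b_n\01$ (for $p>0$), and at least one of the two summands already fails to be $O(r^{-n})$. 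Hence for every $p\in[0,1]$ the sequence $\mathbf b\0p$ falls under the hypotheses of \Cref{cor:phase diagram/probabilistic}, so its phase is governed entirely by its first two moments (when the second moment is finite) or is automatically generic (when it is infinite).

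Next I would set up the relevant scalar quantity. In terms of the probability generating function $B_p(y) := \sum_n b_n\0p y^n = (1-p) B_0(y) + p\,B_1(y)$, we have $B_p(1)=1$ and $B_p'(1) = (1-p)B_0'(1) + p\,B_1'(1)$, both finite and affine in $p$; likewise $B_p''(1) = (1-p)B_0''(1) + p\,B_1''(1)$, which is affine in $p$ but possibly $+\infty$ (this happens precisely on a final segment $p\in(p^*,1]$ if $B_1''(1)=\infty$, or never if $B_1''(1)<\infty$). By \Cref{cor:phase diagram/probabilistic}, on the set where $B_p''(1)<\infty$ the phase is decided by the sign of
\begin{equation}
\Phi(p) := 2\sigma_p^2 + m_p^2 - 1 = 2\big(B_p''(1) + B_p'(1) - B_p'(1)^2\big) + B_p'(1)^2 - 1 = 2B_p''(1) + 2B_p'(1) - B_p'(1)^2 - 1,
\end{equation}
generic/dilute/dense according to $\Phi(p)>0$, $=0$, $<0$; and where $B_p''(1)=\infty$ the phase is generic. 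Since $B_1$ is dense we have (if $B_1''(1)<\infty$) $\Phi(1)<0$, and if $B_1''(1)=\infty$ this contradicts $\mathbf b\01$ dense, so necessarily $B_1''(1)<\infty$; in particular $B_p''(1)<\infty$ for \emph{all} $p\in[0,1]$, and $\Phi$ is a genuine real-valued function on $[0,1]$. It is a polynomial of degree $\le 2$ in $p$ (the only nonlinear term is $-B_p'(1)^2$, quadratic with leading coefficient $-(B_1'(1)-B_0'(1))^2 \le 0$), hence continuous, with $\Phi(0)>0$ (as $\mathbf b\00$ is generic) and $\Phi(1)<0$.

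Now I would exploit concavity. Because $-B_p'(1)^2$ is a concave function of $p$ and the remaining terms of $\Phi$ are affine, $\Phi$ is concave on $[0,1]$. A concave function with $\Phi(0)>0$ and $\Phi(1)<0$ has a unique zero $p_c\in(0,1)$, with $\Phi>0$ on $[0,p_c)$ and $\Phi<0$ on $(p_c,1]$ (concavity rules out a return to positivity after the first sign change: if $\Phi(p_c)=0$ and $\Phi(q)>0$ for some $q>p_c$, then the point $p_c$ lies strictly between $0$ and $q$ yet $\Phi(p_c)$ is below the chord joining $(0,\Phi(0))$ and $(q,\Phi(q))$, contradicting $\Phi(p_c)=0$ while both endpoints are positive). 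Translating through \Cref{cor:phase diagram/probabilistic} gives exactly the claim: $\mathbf b\0p$ is generic for $p\in[0,p_c)$, dilute at $p=p_c$, dense for $p\in(p_c,1]$. The only subtle point — and the step I would be most careful about — is the handling of a possibly infinite $B_1''(1)$: one must observe at the outset that $\mathbf b\01$ being \emph{dense} forces finite second moment (an infinite second moment would, by \Cref{cor:phase diagram/probabilistic}, put it in the generic phase), which is what legitimizes treating $\Phi$ as a finite concave function throughout; everything else is elementary.
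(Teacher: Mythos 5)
Your route is the one the paper intends: the paper gives no separate proof of this corollary (it says ``we leave the reader to verify'' and points at \Cref{cor:phase diagram/probabilistic}), and your reduction to the sign of the concave quadratic $\Phi(p)=2B_p''(1)+2B_p'(1)-B_p'(1)^2-1$, together with the chord argument for the uniqueness of the sign change, is the expected computation. The bookkeeping for the mixture (sub-exponential tail, affine dependence of the moments on $p$) and the sign analysis of $\Phi$ are correct as far as they go; the only cosmetic slips are the phrase ``below the chord'' (concavity puts the graph \emph{above} chords, which is what your contradiction actually uses) and the fact that the same chord inequality is also needed to exclude $\Phi(q)=0$ for $q>p_c$, not only $\Phi(q)>0$.

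There is, however, one genuine gap, and it sits exactly at the step you singled out as the subtle point. From denseness of ${\mathbf b}^{(1)}$ you correctly get $B_1''(1)<\infty$, but the clause ``in particular $B_p''(1)<\infty$ for all $p\in[0,1]$'' does not follow: it also needs $B_0''(1)<\infty$, and genericity of ${\mathbf b}^{(0)}$ does \emph{not} provide this --- by \Cref{cor:phase diagram/probabilistic} (or \Cref{prop:phase diagram}), an infinite second moment is compatible with, indeed forces, the generic phase. If $B_0''(1)=\infty$ (for instance $b^{(0)}_n\asymp n^{-3}$, suitably normalized), then every mixture with $p<1$ has infinite second moment, hence is generic, and the family jumps from generic on $[0,1)$ to dense at $p=1$ with no dilute point; so in that case no $p_c\in(0,1)$ with the asserted three-phase structure exists, and no argument can repair the step. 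In other words, the corollary implicitly requires the generic endpoint ${\mathbf b}^{(0)}$ to have finite second moment (i.e.\ $2\sigma_0^2+m_0^2>1$ rather than mere genericity), and your write-up should either add this as a hypothesis or state explicitly that the conclusion forces it; once $B_0''(1)<\infty$ is granted, the rest of your argument is correct.
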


If we drop the condition of sub-exponential tail (that is, $\rho=1$) in \Cref{cor:phase diagram/probabilistic}, then the sign of $2\sigma^2+m^2-1$ and the phase of $\mathbf b$ no longer determine each other. However, not all combinations of these two properties are possible: 
For any $\mathbf b$ representing a probability distribution, we have $\rho\ge 1$.
By definition, $\hat x'(Y)>0$ for all $Y\in [0,\rho)$ in the non-generic phase, which implies that either $\hat x'(1)>0$, or $\rho=1$ and $\hat x'(0)=0$ and $\mathbf b$ is dilute. 
After simple rearrangements, the previous statement is equivalent to: for any probability distribution $\mathbf b$ on $\natural$\,:

\begin{itemize}[topsep=2pt,itemsep=0pt]
\item
If $2\sigma^2+m^2<1$, that is, $\hat x'(1)>0$, then $\mathbf b$ can be in the generic, dilute or dense phase.
\item
If $2\sigma^2+m^2=1$, that is, $\hat x'(1)=0$, then $\mathbf b$ can be in the generic or the dilute phase.
\item
If $2\sigma^2+m^2>1$ (or if $B''(1)=\infty$), then $\mathbf b$ can only be in the generic phase.
\end{itemize}
It is also worth noting that while all the other five cases can be realized by a probability distribution with exponential tail (that is, $\rho>1$), we must have $\rho=1$ to realize the case where $2\sigma^2+m^2=1$ and $\mathbf b$ is dilute.
We will explain in the next subsection the significance of the above observations in the context of the phase transition of parking processes on trees.

\subsection{Motivation and background}\label{sec:intro/background}

\paragraph{Fully packed trees as fully parked trees.\!}
This work is motivated by the following interpretation of labeled  trees as the initial configuration of a \emph{parking process} on trees: Given a labeled tree $\ptree$, we view each vertex $v\in V(\tree)$ as a parking spot that can accommodate at most one car. The label $\ell(v)$ represents the number of cars that arrive at the vertex $v$. The parking process assumes that each car attempts to park at its vertex of arrival, and if that vertex is occupied, travels towards the root until it finds an unoccupied vertex. 
If all vertices on its way are occupied, then the car exits the tree through the root. The final configuration of the parking process is encoded by the function $\chi:V(t)\to \natural$, where $\chi(v)$ is the total number of cars that visited the vertex $v$ (either parking there, or passing by) after all the cars have either parked or exited the tree. An important observation is that $\chi$ does not depend on the order in which one chooses to park the cars. Indeed, one can check that $\chi$ satisfies the recursion relation
\begin{equation}
\chi(v) = \ell(v) + \sum_{u \in \mathfrak C_v} (\chi(u)-1)_+ 
\end{equation}
where $\mathfrak C_v$ denotes the set of children of the vertex $v$, and $(x)_+ \!\equiv\! \max(x,0)$ is the positive part of a real number~$x$. Since the tree is finite, the above recursion relation completely determines $\chi$.
Another presentation of the final configuration consists of recording whether each vertex $v$ is occupied at the end of the parking process, and the \emph{flux} $\varphi(v)$ of cars that went from $v$ to its parent vertex during the process. 
The relation between the two presentations is simple: a vertex $v$ is occupied at the end \Iff $\,\chi(v)\ge 1$, and we have $\varphi(v) = (\chi(v)-1)_+$. 
The flux of cars $\varphi(\emptyset)$ going out from the root vertex $\emptyset$ is called the \emph{overflow} of the parking process.

With a bit of thought, it is not hard to see that a labeled tree is \emph{fully packed} \Iff\ the corresponding parking configuration $\chi$ is \emph{fully parked}, that is, every vertex is occupied, or equivalently, $\chi(v)\ge 1$ for all $v$.
In~this case, the parking configuration $\chi$ and the flux $\varphi$ are related to the surplus by $\chi(v)-1 = \varphi(v) = \mathscr s(\tree_v,\ell)$, where $\tree_v$ is the subtree rooted at $v$.

\paragraph{Previous works on the parking process on trees.}
The parking problem was first introduced by Konheim and Weiss \cite{KonheimWeiss1966} to model the linear probing scheme of hash collision resolution in computer science.
%
In their model, the parking process takes place on a directed  linear graph (i.e.\ a rooted tree with a single branch).
Parking processes on non-degenerate trees was introduced more recently by Lackner and Panholzer~\cite{LacknerPanholzer2015}, who enumerated the \emph{parking functions} on Cayley trees of size $n$. In our terminology, a parking function is an initial configuration of the parking process which produces no overflow at the root, and in which the cars are labeled from $1$ to $m$. It is represented by a function from $\{1,\ldots,m\}$ to the vertex set of the tree, thus the name. Using analytic combiantorics methods, it was shown in \cite{LacknerPanholzer2015} that when $m=\floor{\alpha n}$ labeled cars arrive independently at uniformly chosen vertices of a random Cayley tree with $n$ vertices, the probability that there is no macroscopic overflow at the root undergoes a continuous phase transition. More precisely, as $n\to\infty$, this probability converges to a continuous limit $p(\alpha)$ which is positive if $\alpha<\alpha_c$, and zero if $\alpha\ge \alpha_c$, for some $\alpha_c\in (0,1)$. This result was later generalized to many other classes of trees and to variants of the parking functions \cite{Panholzer2020}.

A probabilistic explanation of the phase transition in \cite{LacknerPanholzer2015} was given by Goldschmidt and Przykucki~\cite{GoldschmidtPrzykucki2016} using the \emph{objective method} \cite{AldousSteele2004}. Their key observation is that the parking process of \cite{LacknerPanholzer2015} has a nice local limit in distribution, and that the probability of having no macroscopic overflow at the root is continuous \wrt\ this limit. More precisely, the limit parking process lives on a \emph{Kesten's tree} (i.e.\ \emph{critical} Galton-Watson tree conditioned to survive forever, see \cite{AbrahamDelmas2015}), and an i.i.d.\ number of cars arrives at each vertex of this tree. Chen and Goldschmidt~\cite{ChenGoldschmidt2019} later used the same idea to study the parking process on uniform random rooted plane trees, which also gives rise to a limit process on a Kesten's tree, but with a different offspring distribution. This motivates the study of parking processes with i.i.d.\ car arrivals on general critical Galton-Watson trees. Interestingly, the same type of parking processes was also proposed and studied independently as a good model of rainfall runoff from hillsides, where the aforementioned phase transition is of practical importance. See Jones \cite{Jones2018} and the references therein.

In both \cite{GoldschmidtPrzykucki2016} and \cite{ChenGoldschmidt2019}, the derivation of the phase transition relies on computing explicitly the probability of macroscopic overflow in the limit model. The offspring distributions of critical Galton-Watson trees involved are Poissonian and geometric respectively, while the car arrival distribution is Poissonian in both cases.
Using a more flexible argument involving the \emph{spinal decomposition} of Galton-Watson trees \cite[Chapter 12.1]{LyonsPeres2016}, Curien and H\'enard \cite{CurienHenard2019} generalized  these phase transition results to parking processes on critical Galton-Watson trees of any offspring distribution $\nu$ and with any car arrival distribution $\mu$. They also found a simple algebraic characterization the phase transition involving only the first and second moments of $\nu$ and $\mu$. 
The results in \cite{CurienHenard2019} are later  further generalized by Contat \cite{Contat2020} to the case where the car arrival distribution may depend on the degree of the vertex, and refined with some large deviation estimates for the sharpness of the phase transition.
There has also been a recent work \cite{BahlBarnetJunge2019} focusing on parking processes with i.i.d.\ car arrivals on a \emph{supercritical} Galton-Watson tree, which makes an interesting link with the \emph{Derrida-Retaux model} \cite{DerridaShi2020}.

\paragraph{Phases of the unconditioned parking process.}
The fully parked trees in this paper are derived from a special case of the parking model of \cite{CurienHenard2019} described in the previous paragraph. More precisely, if we choose the geometric offspring distribution $\nu_k = 2^{-k-1}$ and let $\mu_k = b_k$ for the law of car arrivals, then the parking process of \cite{CurienHenard2019}, conditioned on the event that its final configuration is \emph{fully parked}, follows the law $\prob^{x,y}_{\mathbf b}$ defined in \eqref{eq:def random FTP} with $(x,y)=(1/4,1)$.
(Notice that when $x<1/4$, the model of fully parked trees in this paper is derived from a parking process with i.i.d.\ car arrivals on a \emph{subcritical} Galton-Watson tree with geometric offspring distribution. But we shall not pursue this link further here.)

In \cite{CurienHenard2019}, the parking process is called \emph{supercritical} if the overflow of cars at the root of a Galton-Watson tree conditioned to have $n$ vertices (without the conditioning of being fully parked) scales linearly with \mbox{$n$ as $n\to \infty$}. The process is called \emph{critical} if the overflow scales sublinearly but is unbounded, and \emph{subcritical} if it is bounded.
(Other descriptions of the phase transition are also available in \cite{CurienHenard2019}.) In our special case of geometric offspring distribution, the characterization of phase transition in \cite{CurienHenard2019} simplifies to:

\begin{citetheorem}[\cite{CurienHenard2019}]
~\hspace{-2mm}The model is subcritical (resp.\ critical, supercritical) \Iff\ $2\sigma^2+m^2<\!1$ (resp.\
$=\!1$, $>\!1$), where $m$ and $\sigma^2$ denote the mean and the variance of the car arrival distribution.
\end{citetheorem}

\noindent
Comparing this result to \Cref{cor:phase diagram/probabilistic}, we see that when the law $\mathbf b$ of car arrivals has a subexponential tail, the subcritical, critical, and supercritical phases described in \cite{CurienHenard2019} correspond precisely to the generic, dilute, and dense phases defined in this paper. When $\mathbf b$ does not have a subexponential tail, the possible combinations of the two notions of phase are given in the discussion after \Cref{cor:phase diagram/1D transition}.

\paragraph{Relation to upcoming works, and the reason to skip the dense phase.}
Of course, the behaviors of the parking process decribed in \cite{CurienHenard2019} do not directly apply to the model conditioned to be fully parked. 
Instead, fully parked trees appear as geometric building blocks of the final configuration of an unconditioned parking process on Galton-Watson trees.
More precisely, the \emph{clusters of occupied vertices} in a such configuration are distributed according to the law of a fully parked tree with no overflow (i.e.\ $\prob^{x,0}_{\mathbf b}$ for some $x>0$, the cluster of the root requires some special treatment since it may have a nonzero overflow). The full configuration can then be generated as a multitype Galton-Watson tree whose vertices represent either an unoccupied vertex or a fully parked cluster of the original parking process. 

This decomposition will be used in an upcoming work to study the scaling limit of the parking process on Galton-Watson trees. This paper provides the necessary asymptotic enumeration results in order to understand the law of the fully parked clusters in its final configuration.
As explained in the concluding remarks of \cite{CurienHenard2019}, this scaling limit of the parking process is most interesting when the car arrival distribution $\nu=\mathbf b$ is critical. According to the discussion below \Cref{cor:phase diagram/1D transition}, the fully parked trees can only be in the generic or the dilute phases in this case. This explains why we decide to skip the dense phase at first approach: 
while interesting from a combinatorial point of view, the asymptotic enumeration of fully parked trees in the dense phase is not relevant for the study of critical parking processes.

\paragraph{Outline of sections.}
\Cref{sec:parametric solution} derives the parametrization of $F(x,y)$ in \Cref{prop:parametrization} from its combinatorial definition.
\Cref{sec:phase diagram} proves the characterization of the generic, dilute and dense phase given in \Cref{prop:phase diagram}.
\Cref{sec:algebraic properties} gathers some useful algebraic properties (\Cref{lem:algebraic properties}) of the parametrization of $F(x,y)$.
Based on these properties, \Cref{sec:func asymptotics} derives the asymptotic expansions (\Cref{prop:func asymptotics}) of $F(x,y)$, which is then used in \Cref{sec:coeff asymptotics} to prove the coefficient asymptotics in \Cref{thm:coeff asymptotics}. The proof assumes that $F(x,y)$ has a \emph{double $\Delta$-analyticity property} (\Cref{prop:Delta-analyticity}). This assumption is verified in \Cref{sec:Delta-analyticity}, with the proof of a technical lemma (\Cref{lem:technical}) being deferred to \Cref{sec:technical lemma}.
Finally, \Cref{sec:variational method,sec:generalized IFTs} 
contains some analysis results that are used in the proofs of \Cref{lem:technical,lem:def tear}.
As mentioned before, \Cref{sec:variational method} provides a variational method for finding equations which constrain the dominant singularities of the inverse of an analytic function, which is considered another main result of this paper. \Cref{sec:generalized IFTs} provides modified versions of the (analytic) inverse function theorem and implicit function theorem, in situations where the conditions of the classical versions break down.

\paragraph{Acknowledgement.}
I would like to thank Nicolas~Curien and Olivier~H\'enard for introducing the parking problem on random trees to me and for sharing the progress of their recent and ongoing works. Their insight into the probabilistic properties of the model greatly helped the formulation of this work. I am also grateful to Mireille Bousquet-M\'elou for explaining to me how the generalized kernel method can be applied to this problem, and for many other discussions. The author was affiliated to the Univerity of Helsinki during the initial stage of this work, and would like to thank the hospitality of his colleagues there. This work has been supported by the ERC Advanced Grant (QFPROBA) and Swiss National Science Foundation (SNF) Grant 175505.

\section{Parametrization of the generating function $F(x,y)$}%
\label{sec:parametric solution}

In this section, we first derive the following functional equation on $F(x,y)$ from the recursive decomposition of labeled trees:
\begin{equation}\label{eq:F:combinatorial}
F(x,y) = \frac{x}{y} 
\m({ \frac{B(y)}{1-F(x,y)} - \frac{b_0}{1-F_0(x)} }
\end{equation}
where $F_0(x) = F(x,0)$. Then, we solve the above equation using the \emph{generalized kernel method} explained in \cite{BousquetMelouJehanne2005} in order to deduce the parametrization of $F(x,y)$ in \Cref{prop:parametrization}.

\paragraph{Derivation of \eqref{eq:F:combinatorial}.\!\!}
Recall that $\FPT$ is the set of all \emph{fully packed trees}, i.e., labeled (rooted plane) trees $\ptree$ such that $\mathscr s\ptree['] \ge 0$ for all $\tree'\subseteq \tree$, where $\mathscr s\ptree['] := \sum_{v\in V(\tree')} (\ell(v)-1)$ is the surplus of a subtree $\tree'$. To expand the generating function of $\FPT$ using the recursive decomposition of trees, let us consider the slightly larger class 
\begin{equation}
\FPT^\dagger := \set{\ptree}{\,\mathscr s\ptree[']\ge 0 \text{ for all \emph{proper} subtree }\tree' \varsubsetneq \tree \,} .
\end{equation}
It is clear that a labeled tree belongs to $\FPT^\dagger$ \Iff\ the subtrees rooted at the children of the root vertex are all fully packed. Therefore 
\begin{equation}\label{eq:FT dagger decomposition}
\FPT^\dagger \cong (\{\emptyset\} \times \natural) \times \mathtt{SEQ}(\FPT)
\end{equation}
where $\{\emptyset\} \times \natural$ represents the root vertex $\emptyset$ with its integer label $\ell(\emptyset)$, and $\mathtt{SEQ}(\FPT)$ is the class of (finite) sequences of fully packed trees.
The $\cong$ sign denotes an equivalence of combinatorial classes, that is, there is a bijection between the two sides that preserves the vertex count $\ptree \mapsto |V(\tree)|$, the surplus $\ptree \mapsto \mathscr s\ptree$, and the weight function $\ptree \mapsto w_{\mathbf b}\ptree$. In terms of generating functions, \eqref{eq:FT dagger decomposition} translates to
\begin{equation}\label{eq:FT dagger combinatorial}
\mH[{ \ \sum_{\ptree \in \FPT^\dagger} w_{\mathbf b}\ptree \cdot x^{|V(\tree)|} y^{\mathscr s\ptree} =: \ }\  F^\dagger(x,y)= \frac{xB(y)}{y} \cdot \frac{1}{1-F(x,y)}
\end{equation}
where $\frac{xB(y)}{y}$ is the generating function of the class $\{\emptyset\} \times \natural$ with the surplus being defined by $\mathscr s(\emptyset,l) =l-1$. (We refer readers unfamiliar with the formalism to \cite[Chapter~I.2]{FlajoletSedgewick2009}.)

On the other hand, $\FPT$ is simply the subset of $\FPT^\dagger$ defined by the condition $\mathscr s\ptree \ge 0$. Moreover, the elements in its complement all satisfy $\mathscr s\ptree =-1$. Therefore $\FPT = \FPT^\dagger \setminus \setn{\ptree \in \FPT^\dagger}{\mathscr s\ptree = -1}$, or in terms of the generating function, $F(x,y) = F^\dagger(x,y) - [y^{-1}]F^\dagger(x,y)$. With \eqref{eq:FT dagger combinatorial}, this gives \eqref{eq:F:combinatorial} in the sense of formal power series.

\paragraph{Solution of \eqref{eq:F:combinatorial}.\!\!}
First, notice that the coefficient of $[x^n]$ of the \rhs\ of \eqref{eq:F:combinatorial} only depends on the coefficients of $F(x,y)$ up to order $[x^{n-1}]$ in $x$. Therefore \Cref{eq:F:combinatorial} uniquely determines the series $F(x,y)$ order by order in $x$ (with the initial condition $F(0,y)=0$). 

\Cref{eq:F:combinatorial} involves not only the unknown function $F(x,y)$, but also its specialization at $y=0$. Equations of this form are called \emph{equations with one catalytic variable} $y$, and a general method for solving them --- which is a generalization of the \emph{kernel method} and the \emph{quadratic method} --- is given in \cite{BousquetMelouJehanne2005}. In the following, we apply this method to solve \Cref{eq:F:combinatorial}, while keeping the presentation self-contained.

Let $\Phi(f,f_0,x,y) = \frac{B(y)}{1-f} - \frac{b_0}{1-f_0} - x^{-1} y f$. Then \Cref{eq:F:combinatorial} is equivalent to $\Phi(F(x,y), F_0(x),x,y)=0$. Its partial derivative \wrt\ $y$ gives
\begin{equation}
\partial_y F(x,y)\cdot \partial_f \Phi\mb({ F(x,y),F_0(x),x,y } 
                     + \partial_y \Phi\mb({ F(x,y),F_0(x),x,y } = 0\,.
\end{equation}
Assume that there exists a formal power series $\hat Y(x) \in \complex[[x]]$ such that
\begin{equation}\label{eq:Yhat def}
\partial_f \Phi\mb({ F(x,\hat Y(x)) , F_0(x), x, \hat Y(x) } = 0.
\end{equation}
Then the formal power series $F \equiv F(x,\hat Y(x))$, $F_0 \equiv F_0(x)$ and $Y \equiv \hat Y(x)$ must satisfy the system of equations
\begin{equation}
\partial_f \Phi (F,F_0,x,Y) = 0\,,\qquad
\partial_y \Phi (F,F_0,x,Y) = 0\,,\qtq{and}
           \Phi (F,F_0,x,Y) = 0\,,
\end{equation}
or, explicitly
\begin{equation}\label{eq:kernel method explicit}
\qquad          \frac{B(Y)}{(1-F)^2} = x^{-1} Y \,,\qquad
                \frac{B'(Y)}{1-F}    = x^{-1} F \,,\qtq{and}
\frac{B(Y)}{1-F} - \frac{b_0}{1-F_0} = x^{-1} YF\,.
\end{equation}
The first equation, which is equivalent to \eqref{eq:Yhat def}, determines the coefficients of $\hat Y(x)$ inductively starting from $\hat Y(0)=0$ in the same way that \eqref{eq:F:combinatorial} determines $F(x,y)$. This shows the existence of the series $\hat Y(x)$ assumed above. Moreover, since the expansion of $\frac{B(Y)}{(1-F)^2} \equiv \frac{B(Y)}{(1-F(x,Y))^2}$ as a power series of $x$ and $Y$ has nonnegative coefficients, the above inductive definition shows that all the coefficients of $\hat Y(x)$ are nonnegative.

One can eliminate $F=F(x,Y)$ from the system \eqref{eq:kernel method explicit}, and express $x$ and $F_0(x)$ explicitly in terms of $Y=\hat Y(x)$:
\begin{equation}\label{eq:F0 parametrization}
x     = \hat x(Y)   := \frac{Y}{B(Y)\cdot (1+\psi(Y))^2} \qtq{and}
F_0(x)= \hat F_0(Y) := 1-\frac{b_0}{B(Y)\cdot (1-\psi(Y)^2)} 
\end{equation}
where $\psi(Y) = \frac{YB'(Y)}{B(Y)}$.
Plugging these into the original combinatorial equation \eqref{eq:F:combinatorial} gives a quadratic equation for $F(x,y)$, whose two solutions are
\begin{equation}
F(x,y) = \frac12 + \frac{ \pm \sqrt{ Q(Y,y)} - \phi(Y) }{2y}
\qtq{where}
Q(Y,y):=(\phi(Y)+y)^2 - 4y B(y)\cdot \hat x(Y)
\end{equation}
and $\phi(Y) = Y \frac{1- \psi(Y)}{1+ \psi(Y)}$.
One can check directly that $Q(Y,Y)=\partial_y Q(Y,Y)=0$ for all $Y$ (see also Lemma~\refp{2}{lem:algebraic properties}). This means that $Q(Y,y)=(Y-y)^2 q(Y,y)$ for some series $q(Y,y)\in \complex[[Y]][[y]]$. Moreover, we have $q(0,0)= \frac12 \partial_y^2 Q(0,0) =1$. Therefore the square root of $Q(Y,y)$ has two analytic determinations in a neighborhood of $(0,0)$, given by $\pm (Y-y)\sqrt{q(Y,y)}$. Since $F(x,y)$ is a power series of $y$, we must choose the plus sign, which gives the parametrization of $F(x,y)$ in \Cref{prop:parametrization}.
This finishes the proof of \Cref{prop:parametrization}.

Notice that $\hat F(Y,0) = 1-b_0 \frac{\hat x(Y)}{\phi(Y)}$, and this is in agreement with the second equation in \eqref{eq:F0 parametrization}.

From now on, we make the following distinction between the notations $Y$ and $\hat Y(x)$: we treat $Y$ as an independent formal or complex variable, and treat $\hat Y(x)$ as a formal power series or complex function of $x$. Notice that $\hat Y(0)=0$, and $\hat Y$ is the functional inverse of $\hat x$ in the sense that $\hat Y(\hat x(Y))=Y$ and $\hat x(\hat Y(x))=x$ as formal power series.

\section{The phase diagram}\label{sec:phase diagram}

In this short section, we prove the characterization of the phases given in \Cref{prop:phase diagram}.
As illustrated in \Cref{fig:phase-diagram}, \Cref{prop:phase diagram} would follow almost directly from the definition of the phases if the function $\hat x$ was concave. While $\hat x$ is not always concave, the following lemma gives a weaker property  (i.e.\ local concavity at the critical points) of $\hat x$, which suffices for the proof of \Cref{prop:phase diagram}. It will also be used in the proof of \Cref{lem:x asymptotics} to show that the asymptotic expansion of $\hat x$ in the generic phase is indeed \emph{generic}.

Recall that $\hat x(Y) = \frac{YB(Y)}{(B(Y)+YB'(Y))^2}$ and the model is said to be in the generic phase if $\hat x'$ vanishes on $(0,\rho)$.

\begin{lemma}\label{lem:concave critical point}
If $\hat x'(Y)=0$ for some $Y\in (0,\rho)$, then $\hat x''(Y)<0$. When $\rho<\infty$, the same implication holds for $Y=\rho$.
\end{lemma}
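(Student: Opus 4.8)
The plan is to work directly with the logarithmic derivative of $\hat x$. Write $\hat x(Y) = \frac{YB(Y)}{D(Y)^2}$ where $D(Y) := B(Y) + YB'(Y) = (YB(Y))'$. Since $\hat x(Y) > 0$ on $(0,\rho)$, we may set $L(Y) := \log \hat x(Y) = \log Y + \log B(Y) - 2\log D(Y)$, so that $\hat x'(Y) = \hat x(Y) L'(Y)$ and, at any point where $L'(Y) = 0$ (equivalently $\hat x'(Y) = 0$), we have $\hat x''(Y) = \hat x(Y) L''(Y)$; hence it suffices to prove $L''(Y) < 0$ at every zero of $L'$ in $(0,\rho)$, and likewise at $Y = \rho$ when $\rho < \infty$. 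Differentiating, $L'(Y) = \frac1Y + \frac{B'(Y)}{B(Y)} - \frac{2D'(Y)}{D(Y)}$, and since $D'(Y) = 2B'(Y) + YB''(Y)$, a short computation should give a clean expression; then $L''(Y) = -\frac1{Y^2} + \left(\frac{B'}{B}\right)' - 2\left(\frac{D'}{D}\right)'$.

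The key step will be to show that $L''(Y) < 0$ holds \emph{unconditionally} on $(0,\rho)$, not merely at critical points — this is cleaner and still suffices. I expect that after clearing denominators the inequality $L''(Y) < 0$ reduces to a polynomial inequality in $B, B', B'', B'''$ with positive coefficients, which holds because all the $b_l$ are nonnegative and, by our standing assumptions, $b_0 > 0$ and $b_l > 0$ for some $l \ge 2$. Concretely, $\left(\frac{B'}{B}\right)' = \frac{B''B - (B')^2}{B^2}$ and $\left(\frac{D'}{D}\right)' = \frac{D''D - (D')^2}{D^2}$ with $D'' = 3B'' + YB'''$; substituting and multiplying through by $Y^2 B^2 D^2 > 0$, the claim becomes a statement of the form $\mathrm{(positive\ combination\ of\ products\ } b_i b_j Y^{i+j}) < 0$ after sign bookkeeping — more likely, one rearranges to show a certain manifestly nonnegative quantity is \emph{strictly} positive, using that $B$ is not a monomial. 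A useful intermediate identity: $YB'(Y)$, $Y^2B''(Y)$, etc.\ are themselves power series with nonnegative coefficients, and products/Cauchy-products of such series have nonnegative coefficients, so many cross-terms can be grouped and compared coefficient-by-coefficient (a Cauchy–Schwarz-type inequality on the coefficient sequences, i.e.\ $\left(\sum a_i\right)\left(\sum i^2 a_i\right) \ge \left(\sum i a_i\right)^2$ with equality iff the support is a single point, will likely be the engine).

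For the boundary case $Y = \rho < \infty$: the hypothesis is $\hat x'(\rho) = 0$, interpreted as the limit $\lim_{Y\to\rho^-}\hat x'(Y)$, which by Abel-type monotonicity (all series involved have nonnegative coefficients) forces $B(\rho), B'(\rho), B''(\rho)$ to be finite and $D(\rho) > 0$; then $L, L', L''$ extend continuously to $\rho$ and the unconditional inequality $L''(Y) < 0$ on $(0,\rho)$ passes to the limit to give $L''(\rho) \le 0$. To upgrade $\le$ to $<$ at $\rho$ one re-runs the strict coefficient inequality at $Y = \rho$: since $B$ has at least two distinct terms in its support (indices $0$ and some $l \ge 2$), the Cauchy–Schwarz inequality is strict, so $L''(\rho) < 0$, and therefore $\hat x''(\rho) = \hat x(\rho) L''(\rho) < 0$. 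The main obstacle I anticipate is purely organizational: carrying out the substitution $D = B + YB'$, $D' = 2B' + YB''$, $D'' = 3B'' + YB'''$ and reducing $L'' < 0$ to the right positivity statement without sign errors, and correctly identifying which grouping of terms makes the nonnegativity manifest; the conceptual content (a weighted Cauchy–Schwarz on the nonnegative coefficient sequence, strict because $B$ is not a monomial) is straightforward once the algebra is lined up.
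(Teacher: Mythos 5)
Your reduction to the logarithmic derivative is fine as far as it goes: since $\hat x>0$ on $(0,\rho)$, at a zero of $\hat x'$ one has $\hat x''=\hat x\cdot(\log\hat x)''$, so it would indeed suffice to control $(\log\hat x)''$ at critical points. The genuine gap is your declared key step, namely that $(\log\hat x)''<0$ holds \emph{unconditionally} on $(0,\rho)$. That statement is false, so no amount of coefficient bookkeeping or Cauchy--Schwarz rearrangement can establish it. For instance, take $B(y)=1+y^2$ (allowed: $b_0>0$, $b_2>0$, $\rho=\infty$). Then $D=B+YB'=1+3Y^2$ and
\[
(\log\hat x)''(Y)\;=\;-\frac1{Y^2}+\frac{2-2Y^2}{(1+Y^2)^2}-\frac{12-36Y^2}{(1+3Y^2)^2},
\]
which at $Y=2$ equals $-\tfrac14-\tfrac{6}{25}+\tfrac{132}{169}\approx +0.29>0$. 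More generally, for any polynomial $B$ of degree $d\ge 2$ one has $\hat x(Y)\sim \mathrm{const}\cdot Y^{1-d}$ as $Y\to\infty$, so $(\log\hat x)''(Y)\sim (d-1)/Y^2>0$ eventually: $\log\hat x$ is \emph{convex} far beyond the critical point. This is consistent with the paper's own warning (caption of the phase-diagram figure) that $\hat x$ is not concave in general and that only a local statement at critical points is available.

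The missing ingredient is that you never use the hypothesis $\hat x'(Y)=0$ except to pass from $\hat x''$ to $(\log\hat x)''$; the negativity of $\hat x''$ at a critical point is a genuinely conditional fact. The paper's proof exploits exactly this: $\hat x'(Y)$ is affine in $B''(Y)$,
\[
\hat x'(Y) = \frac{(B-YB')^2}{(B+YB')^3}-\frac{2Y^2B}{(B+YB')^3}\,B''(Y),
\]
so the equation $\hat x'(Y)=0$ lets one solve for $B''(Y)$ and substitute it into $\hat x''(Y)$, which then collapses to
$\hat x''(Y)=-\bigl(3(B-YB')^2+2Y^3B'''\bigr)/\bigl(YB\,(B+YB')^3\bigr)<0$, the sign coming from $B'''\ge 0$ (nonnegative coefficients) together with the standing assumption that some $b_l>0$ with $l\ge 2$. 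If you want to salvage your plan, you must inject the critical-point relation (equivalently, eliminate $B''$ using $L'(Y)=0$) before attempting any positivity argument; a pointwise inequality in $B,B',B'',B'''$ alone cannot be true, as the example above shows. Your treatment of the boundary case $Y=\rho$ inherits the same problem, since it rests on passing the (false) unconditional inequality to the limit.
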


\begin{proof}
Notice that $\hat x$ is a rational function of $Y$, $B(Y)$ and $B'(Y)$. Therefore $\hat x'(Y)$ depends linearly on $B''(Y)$. More precisely,
\begin{equation}\label{eq:x rational derivative}
\hat x'(Y) = \frac{(B(Y) - Y B'(Y))^2}{(B(Y) + Y B'(Y))^3} -\frac{2 Y^2 B(Y)}{(B(Y) + Y B'(Y))^3} \cdot B''(Y)
\end{equation}
By solving $B''(Y)$ from the equation $\hat x'(Y)=0$ and plugging the result into the expression of $\hat x''(Y)$, we obtain after simplification:
\begin{equation*}
\hat x''(Y) \, =\, -\, \frac{ 3 (B(Y)-Y B'(Y))^2 + 2 Y^3 B'''(Y) }{ Y B(Y) \cdot (B(Y)+Y B'(Y))^3 } \, <\, 0 \,.
\qedhere
\end{equation*}
\end{proof}

\begin{proof}[Proof of \Cref{prop:phase diagram}]
\Cref{lem:concave critical point} implies that $\hat x'$ vanishes at most once on $(0,\rho)$, and when it does, it changes sign. Therefore the model is in the generic (resp.\ dilute, dense) phase \Iff\ $\hat x(\rho^-) := \lim_{x\to \rho^-} \hat x'(Y)<0$ (resp.\ $=0$, $>0$).
This proves \Cref{prop:phase diagram} when $\rho<0$ and $B''(\rho)<\infty$, where $\hat x'(\rho)$ is well-defined and finite.

When $B''(\rho)=\infty$ but $B'(\rho)<\infty$, the expression \eqref{eq:x rational derivative} shows that $\hat x'(\rho^-)=-\infty$. When $B'(\rho)=\infty$ or $\rho=\infty$, we have $B'(\rho^-)=\infty$. (Recall that the case $B(Y) = b_0+b_1 Y$ is excluded by assumption.) This implies $\hat x(\rho^-)=0$, because $\hat x(Y) = \frac{\psi}{(1+\psi)^2} \frac{1}{B'(Y)} \le \frac{1}{4}\frac{1}{B'(Y)}$ for all $Y\in (0,\rho)$, where $\psi = \frac{YB'(Y)}{B(Y)}$. Combining the two cases, we see that $\hat x'$ vanishes at least once on $(0,\rho)$ when $B''(\rho)=\infty$ or $\rho=\infty$, so the model is in the genric phase.
\end{proof}

\section{Basic algebraic properties of the parametrization of $F(x,y)$}%
\label{sec:algebraic properties}

In this section, we gather some useful algebraic properties of the parametrization of $F(x,y)$ in \Cref{prop:parametrization}. 
All of these properties can be verified by direct computation.
However, we will provide a proof that relies as little as we can on the explicit expressions of the parametrization, with the hope that it would shed some light on the combinatorial origin of these properties.

To help organize the statement and the proof of these properties, we introduce several differential operators: 
For any function $\hat f(Y,y)$, define 
\begin{equation}
\partials_x \hat f(Y,y) := \frac{\partial_Y \hat f(Y,y)}{\hat x'(Y)} \,.
\end{equation}
This operator has a simple meaning: if a function $f(x,y)$ is parametrized by $x=\hat x(Y)$ and $f(x,y) = \hat f(Y,y)$, then its $x$-derivative is parametrized by $x=\hat x(Y)$ and $\partial_x f(x,y) = \partials_x \hat f(Y,y)$.

Now consider a function of the form 
\begin{equation}\label{eq:Rf representation}
f(Y,y) = R_f(Y,B(Y),B'(Y),\ldots,y,B(y),B'(y),\ldots)
\end{equation}
where  $R_f(Y,U_0,U_1,\ldots,y,u_0,u_1,\ldots)$ is some algebraic function that depends only on finitely many of the variables $U_k$ and $u_k$. (In practice we will only need $k\le 2$.) 
We define
\begin{equation}
\partials_{U_k} f(Y,y) := \partial_{U_k} R_f(Y,B(Y),\ldots; y,B(y),\ldots)
\end{equation}
For generic values of $Y$, $y$ and $\seq bl$, the variables $Y, B(Y), B'(Y), \ldots$, and $y, B(y), B'(y), \ldots$ are algebraically independent. Hence the representation \eqref{eq:Rf representation} of $f$ is unique, and the above definition of $\partials_{U_k}$ is non-ambiguous. We define $\partials_Y f$, $\partials_{u_k} f$ and $\partials_y f$ similarly. We have the operator relations
\begin{equation}
\partial_Y = \partials_Y + U_1 \partials_{U_0} + U_2 \partials_{U_1} + \cdots
\qtq{and}
\partial_y = \partials_y + u_1 \partials_{u_0} + u_2 \partials_{u_1} + \cdots
\end{equation}
Notice that while the operators $\partials_{U_0}$, $\partials_{U_1}$, $\ldots$ commute with each other and with $\partials_Y$, they do not commute with $\partial_Y$.
The same remark holds for the operators $\partials_{u_0}$, $\partials_{u_1}$, $\ldots$ and $\partials_y$, $\partial_y$.

\begin{lemma}[Algebraic properties of the parametrization of $F(x,y)$]~\label{lem:algebraic properties}
\begin{enumerate}[(\arabic*)]
\item
$\partials_x \phi(Y) = B(Y)+Y\cdot B'(Y)$.
\item
For all $Y$, we have~
$Q(Y,Y) = \partials_{U_1} Q(Y,Y) = \partials_x Q(Y,Y) = \partial_Y Q(Y,Y) = \partial_y Q(Y,Y) = 0$.\\
On the other hand, ~$\partial_Y \partials_x Q(Y\!,\!Y)
=-\partial_y \partials_x Q(Y\!,\!Y)
= 2\partials_x \phi(Y)$, ~so~ $\partial_Y^2 Q(Y,Y) = -\partial_y\partial_Y Q(Y,Y) = 2\phi'(Y)$.
\item
$\displaystyle q(Y,y) =
- \int_0^1 \m({ \int_0^{\lambda_1} \partial_Y \partial_y
            Q \mb({ Y+\lambda_2 (y-Y), Y+\lambda_1 (y-Y)} 
    \cdot \dd \lambda_2 } \dd \lambda_1$.
In particular, ~$q(Y,Y)= \phi'(Y)$.
\item
$ \partial_Y \mb({ (Y-y)\sqrt{q(Y,y)} }
= \frac{\partial_Y Q(Y,y)}{2(Y-y)\sqrt{q(Y,y)}}$ \,and \ 
$\partial_y \mb({ (Y-y)\sqrt{q(Y,y)} }
= -\frac{\partial_y Q(Y,y)}{2(Y-y)\sqrt{q(Y,y)}}$.
\end{enumerate}
\end{lemma}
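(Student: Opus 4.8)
The plan is to verify the four identities in order, organizing the work around two structural facts rather than brute substitution of the explicit formulas: (a) the kernel-method relations \eqref{eq:kernel method explicit} read along the catalytic curve, which pin down $\hat F(Y,Y)$, $\hat x(Y)$ and $\phi(Y)$ in terms of $B(Y)$ and $B'(Y)$; and (b) the elementary observation that if a function of $(Y,y)$ vanishes identically on the diagonal $\{y=Y\}$, then the total $Y$-derivative of its diagonal restriction vanishes, i.e. $(\partial_Y+\partial_y)(\cdot)(Y,Y)=0$; applying this to $Q$, to $\partials_x Q$, and to $\partial_Y Q$ turns a few "seed" vanishings into all the rest. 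Since each identity is a rational identity in $Y,B(Y),B'(Y),B''(Y),\dots$, it suffices to prove it for generic $Y$ and generic $\mathbf b$. For (1): letting $y\to Y$ in the parametrization of $\hat F$ gives $\phi(Y)=Y(1-2\hat F(Y,Y))$. Writing $w=\hat Y(x)$ and $u=F(x,w)=\hat F(w,w)$, the first two equations of \eqref{eq:kernel method explicit} read $xB(w)=w(1-u)^2$ and $xB'(w)=u(1-u)$; differentiating the first in $x$ and using the second to remove $xB'(w)w'$ gives $B(w)=(1-u)\frac{d}{dx}(w(1-2u))$, i.e.\ $\frac{d}{dx}\phi=\frac{B(w)}{1-u}$, while dividing the two relations gives $u=\frac{wB'(w)}{B(w)+wB'(w)}$, hence $1-u=\frac{B(w)}{B(w)+wB'(w)}$ and $\frac{d}{dx}\phi=B(w)+wB'(w)$. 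Reparametrized by $Y$ this is $\partials_x\phi(Y)=B(Y)+YB'(Y)$.

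For (2): the seeds $Q(Y,Y)=0$ and $\partial_yQ(Y,Y)=0$ follow by a one-line substitution from $\phi(Y)+Y=\frac{2YB(Y)}{B(Y)+YB'(Y)}$ and $\hat x(Y)=\frac{YB(Y)}{(B(Y)+YB'(Y))^2}$ (conceptually, they encode that $y=\hat Y(x)$ is a double root of the $F$-discriminant of \eqref{eq:F:combinatorial}, which is how the catalytic value was chosen). By fact (b), $\partial_YQ(Y,Y)=0$, hence $\partials_xQ(Y,Y)=\partial_YQ(Y,Y)/\hat x'(Y)=0$; and $\partials_{U_1}Q(Y,Y)=0$ follows by differentiating the identity $Q(Y,Y)\equiv 0$ and noting that $Q$ does not involve $B'(y)$, so $\partials_{u_1}Q\equiv 0$. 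For the second-order part, (1) and the definition of $\partials_x$ give the clean expression $\partials_xQ(Y,y)=2(\phi(Y)+y)(B(Y)+YB'(Y))-4yB(y)$; differentiating in $y$ and setting $y=Y$ gives $\partial_y\partials_xQ(Y,Y)=-2\partials_x\phi(Y)$, and fact (b) applied to $\partials_xQ$ then gives $\partial_Y\partials_xQ(Y,Y)=2\partials_x\phi(Y)$. Finally, differentiating $\partial_YQ=\hat x'\cdot\partials_xQ$ once more and evaluating at $y=Y$ (using $\partials_xQ(Y,Y)=0$) yields $\partial_Y^2Q(Y,Y)=\hat x'(Y)\cdot 2\partials_x\phi(Y)=2\phi'(Y)$ and $\partial_y\partial_YQ(Y,Y)=\hat x'(Y)\cdot\partial_y\partials_xQ(Y,Y)=-2\phi'(Y)$.

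For (3): fix $Y,y$, set $a(s)=Y+s(y-Y)$ and $h(s,t)=Q(a(s),a(t))$ on $[0,1]^2$. By (2), $h$ vanishes on $\{s=t\}$ and, along it, $\partial_sh=\partial_th=0$. Then $Q(Y,y)=h(0,1)=h(1,1)-\int_0^1\partial_sh(s,1)\,ds=-\int_0^1\partial_sh(s,1)\,ds$, and for each $s$, $\partial_sh(s,1)=\partial_sh(s,s)+\int_s^1\partial_t\partial_sh(s,t)\,dt=\int_s^1\partial_t\partial_sh(s,t)\,dt$; since $\partial_s\partial_th(s,t)=(y-Y)^2(\partial_Y\partial_yQ)(a(s),a(t))$ and $q=Q/(Y-y)^2$, renaming $(s,t)=(\lambda_2,\lambda_1)$ produces exactly the stated double integral over $\{0\le\lambda_2\le\lambda_1\le 1\}$. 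Putting $y=Y$ collapses the integrand to $\partial_Y\partial_yQ(Y,Y)=-2\phi'(Y)$, so $q(Y,Y)=-\int_0^1\lambda_1(-2\phi'(Y))\,d\lambda_1=\phi'(Y)$. For (4): from $(Y-y)^2q(Y,y)=Q(Y,y)$, the function $g=(Y-y)\sqrt{q(Y,y)}$ satisfies $g^2=Q$; differentiating this relation in $Y$ and in $y$ and solving for $\partial_Yg$ and $\partial_yg$, with denominator $2g=2(Y-y)\sqrt{q(Y,y)}$, gives the two identities.

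The computations are all short; the step demanding real care is (3), where one must fix the triangular domain $\{\lambda_2\le\lambda_1\}$ and the correct assignment of the two interpolation parameters to the slots of $\partial_Y\partial_yQ$, and must genuinely use that the \emph{gradient} of $h$ (not merely $h$ itself) vanishes on the diagonal --- this is what forces the inner integral to start at $s$ rather than $0$. The other mild hazard is keeping signs straight in (2) across the repeated use of fact (b).
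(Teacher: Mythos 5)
Your proposal is correct, and for points (2)--(4) it follows essentially the paper's own route: a few seed vanishings plus the observation that the total derivative along the diagonal of a function vanishing there also vanishes, then the integral formula in (3) by two applications of the fundamental theorem of calculus using exactly those diagonal vanishings (the paper collapses the double integral down to $q$, you expand $Q$ up to the double integral --- the same computation run in opposite directions), and (4) by differentiating the relation between $Q$ and $(Y-y)\sqrt{q}$. The genuinely different piece is (1): the paper obtains $\partials_x\phi=B+YB'$ by comparing the logarithmic derivatives of the explicit formulas for $\phi$ and $\hat x$, whereas you re-derive it from the kernel system \eqref{eq:kernel method explicit} along the catalytic curve, using $\phi(\hat Y(x))=\hat Y(x)\bigl(1-2F(x,\hat Y(x))\bigr)$. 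That derivation avoids the closed forms of $\phi$ and $\hat x$ (closer to the paper's stated aim of minimizing reliance on explicit expressions), at the cost of first proving the identity only for $Y$ in the range of $\hat Y$ and then invoking the generic/analytic-continuation remark you state at the outset --- make that extension step explicit. Your ordering in (2) also differs slightly (you verify $Q(Y,Y)=\partial_yQ(Y,Y)=0$ by substitution and get $\partials_xQ(Y,Y)=\partial_YQ(Y,Y)/\hat x'(Y)=0$, while the paper first proves $\partials_{U_1}Q(Y,Y)=\partials_xQ(Y,Y)=0$ via a $U_2$-coefficient extraction and only then the diagonal derivatives), but the two chains are logically equivalent, and your second-order computations agree with the paper's.

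One point in (4) needs attention: differentiating $g^2=Q$ with $g=(Y-y)\sqrt{q(Y,y)}$ gives $\partial_y g=\partial_yQ/(2g)=+\,\partial_yQ\big/\bigl(2(Y-y)\sqrt{q}\bigr)$, so your computation does \emph{not} reproduce the minus sign printed in the second identity, even though you claim it "gives the two identities". The printed sign is in fact a slip: on the diagonal one has $\partial_y\bigl((Y-y)\sqrt{q}\bigr)\big|_{y=Y}=-\sqrt{q(Y,Y)}=-\sqrt{\phi'(Y)}$, while the right-hand side as printed tends to $+\sqrt{\phi'(Y)}$ as $y\to Y$, so the two cannot agree (and the paper's own "proved similarly" yields the same plus sign you get). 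State the corrected identity --- equivalently, write the denominator as $2(y-Y)\sqrt{q(Y,y)}$ --- rather than silently matching the statement as printed.
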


\begin{proof}
\begin{enumerate}[(\arabic*)]
\item
By definition, we have $\phi = Y\frac{1-\psi}{1+\psi}$ and $\hat x=\frac{Y}{(1+\psi)^2 B}$ with $\psi=\frac{YB'}{B}$. 
By comparing the logarithmic derivatives
\begin{equation}
\frac{\phi'}{\phi} = \frac{1}{Y} - \frac{2}{1-\psi^2} \cdot \psi'
\qtq{and}
\frac{\hat x'}{\hat x} = \frac{1-\psi}{Y} - \frac{2}{1+\psi} \cdot \psi'\,,
\end{equation}
we see that $\partials_x \phi = \frac{\phi}{(1-\psi) \hat x} = B+YB'$.

\item
Notice that $Q(Y,y)$ is the discriminant of the quadratic equation \eqref{eq:F:combinatorial} satisfied by $\hat F(Y,y) = F(\hat x(Y),y)$. 
When applying the generalized kernel method in \Cref{sec:parametric solution}, we have seen that this quadratic equation and its derivative share the same solution $F\equiv \hat F(Y,Y)$ if $y$ is set to $Y$. It follows that $Q(Y,Y)\equiv 0$.

By differentiating the function $\Delta(Y) := Q(Y,Y) \equiv 0$, we see that
\begin{equation}\label{eq:Q diagonal derivatives}
\Delta'(Y)=\partials_Y Q(Y,Y) +\partials_y Q(Y,Y) \equiv 0 
\qtq{and}
\partials_{U_1} \Delta(Y)= \partials_{U_1} Q(Y,Y) + \partials_{u_1} Q(Y,Y) \equiv 0
\,.
\end{equation}
Since $Q(Y,y)$ is independent of the variable $u_1=B'(y)$, the second identity is reduced to $\partials_{U_1}Q(Y,Y)=0$. 
By plugging the operator relation $\partial_Y = \partials_Y + U_1 \partials_{U_0} + \cdots$ into the identity
$\partial_Y Q = \partials_x Q \cdot \hat x'$, we get
\begin{equation}
(\partials_Y + U_1 \partials_{U_0} + U_2 \partials_{U_1}) Q = \partials_x Q \cdot (\partials_Y + U_1 \partials_{U_0} + U_2 \partials_{U_1}) \hat x \,.
\end{equation}
According Point (1), $\partials_x Q(Y,y) = 2(\phi(Y)+y)\cdot \partials_x \phi(Y)+ 4yB(y)$ is independent of the variable $U_2=B''(Y)$. 
Hence we can extract the coefficient of $U_2$ from both sides of the equation, which gives $\partials_{U_1} Q = \partials_x Q \cdot \partials_{U_1} \hat x$. 
Since $\partials_{U_1} \hat x$ is not identically zero, we must have $\partials_x Q(Y,Y)\equiv 0$.
It follows that $\partial_Y Q(Y,Y) = \partials_x Q(Y,Y) \cdot \hat x'(Y) \equiv 0$. Then, the first identity of \eqref{eq:Q diagonal derivatives} shows that $\partial_y Q(Y,Y) \equiv 0$ as well.

The total derivative of the identity $\partials_x Q(Y,Y) \equiv 0$ gives that $\partial_Y \partials_x Q(Y,Y) = - \partial_y \partials_x Q(Y,Y)$ for all $Y$. 
Thanks to the general formula $\partial_y \partials_x Q(Y,y) = 2 \partials_x \phi(Y) - 4 \partials_x \phi(y)$, we have
$\partial_y \partials_x Q(Y,Y) = -2\partials_x \phi(Y)$.
Finally, the formula $\partial_Y^2 Q(Y,Y) = -\partial_y \partial_Y Q(Y,Y) = 2\phi'(Y)$ follows from the previous one by the definition of $\partials_x$. 

\item
Thanks to the identities $Q(Y,Y) = \partial_Y Q(Y,Y)\equiv 0$, we have
\begin{align*}
&- \int_0^1 \m({ \int_0^{\lambda_1} \partial_Y \partial_y
            Q \mb({ Y+\lambda_2 (y-Y), Y+\lambda_1 (y-Y)} 
    \cdot \dd \lambda_2 } \dd \lambda_1 \\
=&\ \int_0^1 \frac{ 
          \partial_Y Q(Y,Y\!+\!\lambda_1 (y-Y)) - 
          \cancel{ \partial_Y Q(Y \!+\! \lambda_1 (y-Y} 
                     \cancel{), Y \!+\! \lambda_1 (y-Y)) } 
     }{y-Y} \dd \lambda_1 
= \frac{Q(Y,y)- \cancel{Q(Y,Y)} }{(Y-y)^2}
= q(Y,y) \,.
\end{align*}

\item
The first identity is simply the correct analytic branch of the formula $\partial_Y \sqrt{Q(Y,y)} = \frac{\partial_Y Q(Y,y)}{2\sqrt{Q(Y,y)}}$. It can be obtained by dividing $\partial_Y Q(Y,y) = 2(Y-y)q(Y,y) + (Y-y)^2 \partial_Y q(Y,y)$ \,by\, $2(Y-y)\sqrt{q(Y,y)}$. The second identity is proved similarly. \qedhere
\end{enumerate}
\end{proof}

\begin{remark*}
Most of the above proof does not rely on the explicit expression of parametrization of $F(x,y)$.
But the precise formulas of $\hat x$ and $\phi$ are crucial for $\partials_x \phi$ to \emph{not} depend on $U_2=B''(Y)$ in Point (1). Indeed, for a generic function $\tilde \phi(Y)$ that depends on $(Y,U_0,U_1)$, the derivative $\partials_x \tilde \phi$ will depend on $U_2$\,:
\begin{equation}
\partials_x \tilde \phi \,\equiv\, \frac{\tilde \phi'}{\hat x'} \,=\, \frac{ 
    (\partials_Y + U_1 \partials_{U_0}) \tilde \phi 
           + U_2 \cdot \partials_{U_1}  \tilde \phi 
}{  (\partials_Y + U_1 \partials_{U_0}) \hat x 
           + U_2 \cdot \partials_{U_1}  \hat x    } \,.
\end{equation}
So the fact that $\partials_x \phi$, and therefore $\partials_x \hat F(Y,y)$, does not depend on $U_2$ reflects some property that is proper to our model. Combinatorially, this means that the generating function $\partial_x F(x,y)$ of fully packed trees \emph{with a distinguished vertex} also has a relatively simple expression under the parametrization $x=\hat x(Y)$.
\end{remark*}

\section{Asymptotic expansions of $F(x,y)$}%
\label{sec:func asymptotics}

In this section we compute the asymptotic expansions of $F(x,y)$ necessary for establishing the coefficient asymptotics in \Cref{thm:coeff asymptotics}. 
We start with the corresponding asymptotic expansions of $\hat x(Y)$ and $\hat F(Y,y)$, then combine them to get the desired expansions of $F(x,y)$.

To this end, we first need to locate the singularities of $F(x,y)$ that are relevant for its coefficient asymptotics. The definition of the generic and non-generic phases already hints that the values $x_c$ and $Y_c$ play a role. We will prove in \Cref{sec:Delta-analyticity} that the bivariate function $F(x,y)$ actually has a unique dominant singularity at $(x_c,Y_c)$. In this section, we take this information as granted, and focus on the asymptotics of $F(x,y)$ when $(x,y)\to (x_c,Y_c)$ in $\cdom_{x_c} \times \cdom_{Y_c}$. 
By definition, $x=x_c$ is parametrized by $Y=Y_c$. We will use the following change of variables 
\begin{equation}
x=x_c \cdot (1-s)\,,\qquad
y=Y_c \cdot (1-t)\,,\qquad    Y=Y_c \cdot (1-S)\,,
\end{equation}
so that the limit to be taken becomes $(s,t)\to (0,0)$, or $(S,t)\to (0,0)$ under the parametrization $x=\hat x(Y)$.

Recall that the $\Delta$-domain $\ddom_r\equiv \ddom_r^{\marg,\ang}$ depends on two positive parameters $\marg$ and $\ang$. But we choose to often omit them from the notation, and their values may change from one place to another.
Define the cone
\begin{equation}
\cone = \set{z\in \complex}{ z\ne 0 \text{ and } \arg(z) \in (-\pi/2-\ang, \pi/2+\ang) } \,.
\end{equation}
For $|s|$ and $|t|$ small enough, we have $x\in \cdom^{\marg,\ang}_{x_c}$ \Iff\ $s\in \Cone$, and $y\in \cdom^{\marg,\ang}_{Y_c}$ \Iff\ $t\in \Cone$.

Recall that we restrict our attention to the generic$^+$ and the dilute$^-$ phases, in both of which $\hat x'(Y_c)=0$. Recall also that we define $\alpha=3$ in the generic$^+$ phase and $\alpha=\tilde \alpha \in (2,3)$ in the dilute$^-$ phase, where $\tilde \alpha$ is the singular exponent in Assumption~\eqref{*}.

\begin{lemma}[Asymptotics of $\hat x(Y)$]%
\label{lem:x asymptotics}
When $S\to 0$ in the closed cone $\Cone$, we have
\begin{equation}
1 - \frac{\hat x(Y)}{x_c} \sim \mu \cdot S^{\alpha-1}
\qtq{and}
\frac{\hat x'(Y)}{x_c} \sim \frac{\alpha-1}{Y_c}\mu \cdot S^{\alpha-2}
\end{equation}
where $\mu=-\frac{Y_c^2}2 \frac{\hat x''(Y_c)}{x_c}$ in the generic$^+$ phase and $\mu=\frac{\tilde \alpha \cst_B}{Y_c} \cdot \frac{\partials_{U_1} \hat x(Y_c)}{x_c}$ in the dilute$^-$ phase. In both phases, $\mu>0$.
\end{lemma}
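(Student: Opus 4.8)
The idea is to treat the two phases in parallel as far as possible, using the Taylor/singular expansion of $\hat x$ at $Y_c$. In the generic$^+$ phase, $\hat x$ is analytic at $Y_c$, $\hat x'(Y_c)=0$, and $\hat x''(Y_c)<0$ by \Cref{lem:concave critical point}; so the Taylor expansion reads $\hat x(Y) = x_c + \tfrac12 \hat x''(Y_c)(Y-Y_c)^2 + O((Y-Y_c)^3)$. Substituting $Y = Y_c(1-S)$ gives $Y-Y_c = -Y_c S$, hence $1-\hat x(Y)/x_c = -\tfrac{Y_c^2}{2}\tfrac{\hat x''(Y_c)}{x_c} S^2 + O(S^3) = \mu S^2(1+o(1))$ with $\alpha-1 = 2$, and $\mu>0$ precisely because $\hat x''(Y_c)<0$. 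Differentiating the same expansion gives $\hat x'(Y) = \hat x''(Y_c)(Y-Y_c)+O((Y-Y_c)^2) = -Y_c\hat x''(Y_c)S(1+o(1))$, i.e.\ $\hat x'(Y)/x_c \sim \tfrac{\alpha-1}{Y_c}\mu S^{\alpha-2}$ with $\alpha-2=1$. The error terms are genuinely $o(\cdot)$ uniformly as $S\to 0$ in $\Cone$ because $\hat x$ is analytic at $Y_c$, so the closed-cone statement is automatic. One subtlety to record here: a priori the expansion only guarantees $\hat x''(Y_c)\le 0$ from the definition of $Y_c$ as the first critical point, so \Cref{lem:concave critical point} is exactly what upgrades this to a strict inequality and makes the leading term nonzero — this is the place that lemma is invoked.

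\textbf{The dilute$^-$ case.} Here $Y_c=\rho<\infty$, $B$ is $\Delta$-analytic at $\rho$ with the singular expansion from Assumption~\eqref{*}, and $\hat x'(\rho)=0$ (by \Cref{prop:phase diagram}, since dilute means exactly $\hat x'(\rho)=0$, and $B''(\rho)<\infty$ there). I would plug the expansions \eqref{eq:non-generic B asymptotic} of $B,B',B''$ into the rational formula $\hat x(Y) = \tfrac{YB(Y)}{(B(Y)+YB'(Y))^2}$. Write $\hat x$ as $R_{\hat x}(Y,B(Y),B'(Y))$ (it does not involve $B''$); its regular part $\hat x\1r(Y)$, obtained by replacing $B,B'$ by their analytic parts $B\1r,B\1r'$, is analytic at $\rho$, and the singular contribution comes from the term $B\1s(y)=\cst_B(1-y/\rho)^{\tilde\alpha}$ in $B$ and its derivative $B\1s'(y) = -\tfrac{\tilde\alpha\cst_B}{\rho}(1-y/\rho)^{\tilde\alpha-1}$. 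A first-order Taylor expansion of $R_{\hat x}$ in its second and third arguments gives
\begin{equation*}
\hat x(Y) = \hat x\1r(Y) + \bigl(\partials_{U_0}\hat x\cdot B\1s(Y) + \partials_{U_1}\hat x\cdot B\1s'(Y)\bigr)(1+o(1)) + \text{(smaller)}.
\end{equation*}
Since $\tilde\alpha\in(2,3)$, the dominant singular term is the one carrying $(1-Y/\rho)^{\tilde\alpha-1} = S^{\tilde\alpha-1}$, namely the $B\1s'$ contribution, and its coefficient is $\partials_{U_1}\hat x(\rho)\cdot(-\tfrac{\tilde\alpha\cst_B}{\rho})$. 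I'd need to reconcile this with the stated $\mu=\tfrac{\tilde\alpha\cst_B}{Y_c}\cdot\tfrac{\partials_{U_1}\hat x(Y_c)}{x_c}$: this requires tracking signs (the $(1-Y/\rho)^{\tilde\alpha-1} = ((-1)(Y/\rho-1))^{\tilde\alpha-1}$ branch, and that $1-\hat x/x_c$ rather than $\hat x - x_c$ is expanded, contributing a further sign) and verifying $\partials_{U_1}\hat x(\rho)>0$ so that $\mu>0$. The latter is a direct computation: $\partials_{U_1}\hat x = \partial_{U_1}\tfrac{YU_0}{(U_0+YU_1)^2} = -\tfrac{2Y^2 U_0}{(U_0+YU_1)^3}$ evaluated at $U_0=B(\rho),U_1=B'(\rho)$, which is negative, so the product of the three signs must come out to give $\mu>0$; I expect this to work out exactly because $\hat x$ is increasing on $(0,\rho)$ and reaches its max $x_c$ at $\rho$, forcing $1-\hat x/x_c\ge 0$ near $\rho$. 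The derivative asymptotics $\hat x'(Y)/x_c \sim \tfrac{\alpha-1}{Y_c}\mu S^{\alpha-2}$ follows by differentiating the singular expansion term-by-term — legitimate because differentiation of the asymptotic expansion of a $\Delta$-analytic function is valid (one can also read it directly off \eqref{eq:x rational derivative} combined with the expansion of $B''$, whose singular part is $\propto S^{\tilde\alpha-2}$, consistent with $\alpha-2=\tilde\alpha-2$).

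\textbf{Uniformity and the main obstacle.} In both phases the final step is to check that the expansions are uniform as $S\to 0$ in the \emph{closed} cone $\Cone$, not merely along the real axis. In the generic$^+$ phase this is immediate from analyticity. In the dilute$^-$ phase it follows from $\Delta$-analyticity of $B$ at $\rho$ together with the standard fact that the singular expansions \eqref{eq:non-generic B asymptotic} hold uniformly in a $\Delta$-domain; since $\Cone$ (intersected with a small disk) is contained in the image of such a $\Delta$-domain under $Y\mapsto Y_c(1-S)$, the composition inherits uniformity, and the Taylor expansion of the rational function $R_{\hat x}$ in its $B,B'$ slots is manifestly uniform on compact sets of the remaining variables. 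I expect the genuine difficulty to be bookkeeping rather than conceptual: correctly identifying which of the several singular contributions (from $B\1s$ in the numerator vs.\ $B\1s,B\1s'$ in the denominator) dominates given $\tilde\alpha\in(2,3)$, getting every sign right so that the stated closed-form value of $\mu$ emerges, and confirming positivity of $\mu$ from the monotonicity of $\hat x$ near $Y_c$. A secondary point to be careful about is that in the dilute$^-$ phase one must separately rule out the degenerate possibility that $\partials_{U_1}\hat x(Y_c)=0$; this cannot happen since, as computed above, $\partials_{U_1}\hat x = -2Y^2 B(Y)/(B(Y)+YB'(Y))^3 \ne 0$ because $B(Y_c)>0$.
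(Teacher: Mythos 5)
Your proposal follows essentially the same route as the paper: Taylor expansion of $\hat x$ at $Y_c$ in the generic$^+$ phase (with \Cref{lem:concave critical point} supplying $\hat x''(Y_c)<0$), and in the dilute$^-$ phase a first-order expansion of $R_{\hat x}$ around the regular parts $(B\1r,B\1r')$ with the singular contribution carried by $B\1s'(Y)\propto S^{\tilde\alpha-1}$. Two points, however, need repair or completion.

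First, your claim that ``in the generic$^+$ phase, $\hat x$ is analytic at $Y_c$'' is false for part of that phase: generic$^+$ also includes the non-generic dilute case with $B'''(\rho)<\infty$, where $Y_c=\rho$ and $B$ has a singular term $\cst_B(1-y/\rho)^{\tilde\alpha}$ with $\tilde\alpha>3$ non-integer, so $\hat x$ is only $C^2$ at $Y_c$ (from $C^3$-continuity of $B$ in $\cdom_{Y_c}$), not analytic. Consequently the remainder in your expansion is only $o(S^2)$, not $O(S^3)$ (e.g.\ for $\tilde\alpha\in(3,4)$ the error is of order $S^{\tilde\alpha-1}$), and the uniformity of the expansion as $S\to 0$ in the closed cone $\Cone$ is not ``automatic from analyticity'' but comes from the fact that the expansions in Assumption~\eqref{*} hold in a $\Delta$-domain. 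The conclusion survives because only a Peano-type $o(S^2)$ remainder is needed, but the justification as written does not cover this sub-case. Second, in the dilute$^-$ phase you compare only the singular contributions; to conclude $1-\hat x(Y)/x_c\sim\mu S^{\tilde\alpha-1}$ you must also control the regular part, i.e.\ check that $\hat x\1r(Y_c)=x_c$ and $\hat x\1r'(Y_c)=\hat x'(Y_c)=0$ (the latter transfers from $\hat x'$ to $\hat x\1r$ because the singular corrections to $\hat x'$ vanish at $Y_c$ when $\tilde\alpha>2$), so that $\hat x\1r(Y)-x_c=O(S^2)=o(S^{\tilde\alpha-1})$; without this the linear term of the regular part would dominate. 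On the positivity of $\mu$ in the dilute$^-$ phase, your argument via monotonicity of $\hat x$ on $(0,\rho)$ (nonnegativity of $1-\hat x/x_c$ on the real axis plus a nonzero leading coefficient, since $\cst_B\ne 0$ and $\partials_{U_1}\hat x(Y_c)=-2Y_c^2B(Y_c)/(B(Y_c)+Y_cB'(Y_c))^3\ne 0$) is a legitimate alternative to the paper's route, which instead deduces $\cst_B<0$ from the nonnegativity of the coefficients of $B$; both give $\mu>0$.
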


\begin{proof}
The definition of $\hat x$ can be written as $\hat x(Y)= R_{\hat x}(Y,B(Y),B'(Y))$ with $R_{\hat x}(Y,U_0,U_1) = \frac{Y U_0}{(U_0+Y U_1)^2}$. 

\textbf{In the generic$^+$ phase,} $B(Y)$ is $C^3$-continuous as $Y\to Y_c$ in $\cdom_{Y_c}$. (Indeed, $B(Y)$ is analytic at $Y_c$ in the generic case, while in the dilute but generic$^+$ case, $C^3$-continuity follows from Assumption~\eqref{*} and $B'''(\rho)<\infty$.) It follows that $\hat x$ is $C^2$-continuous at $Y_c$. Since $\hat x(Y_c)\!=\!x_c$ and $\hat x'(Y_c)\!=\!0$, the Taylor expansion of $\hat x$ gives
\begin{equation}
\hat x(Y) = x_c + \frac12 \hat x''(Y_c) \cdot (Y_c S)^2 + o(S^2) \,.
\end{equation}
That is, $1 - \frac{\hat x(Y)}{x_c} \sim \mu \cdot S^2$ with $\mu = -\frac{Y_c^2}{2} \frac{\hat x''(Y_c)}{x_c}$. Similarly, the Taylor expansion of $\hat x'(Y)$ gives $\frac{\hat x'(Y)}{x_c} \sim \frac{2}{Y_c}\mu \cdot S$.

\textbf{In the dilute$^-$ phase,} recall the decomposition $B(Y) = B\1r(Y)+B\1s(Y) + o(B\1s(Y))$ in Assumption~\eqref{*}. Let $\hat x\1r(Y)= R_{\hat x}(Y,B\1r(Y),B\1r'(Y))$. 
By expanding the rational function $R_{\hat x}$ around $(Y,B\1r(Y),B\1r'(Y))$, we get
\begin{align*}
\hat x(Y) &= \hat x\1r(Y) 
+ \partials_{U_1} \hat x\1r(Y)\cdot B\1s'(Y) + o(B\1s'(Y))
+ \partials_{U_0} \hat x\1r(Y)\cdot B\1s (Y) + o(B\1s (Y))
       \\ &= \hat x\1r(Y)
           + \partials_{U_1} \hat x\1r(Y_c)\cdot B\1s'(Y) 
           + o((Y_c-Y)^{\tilde \alpha-1})
\end{align*}
as $Y\to Y_c$. The function $\hat x\1r$ is analytic at $Y_c$. It is not hard to see that $\hat x\1r(Y_c)\!=\! \hat x(Y_c)\!=\! x_c$ and $\hat x\1r'(Y_c)\!=\! \hat x'(Y_c) \!=\! 0$. Also, we have $B\1s'(Y) = -\frac{\tilde \alpha \cst_B}{Y_c}\cdot  S^{\tilde \alpha-1}$. It follows that
\begin{equation}
\hat x(Y) = x_c - \partials_{U_1} \hat x(Y_c) \cdot \frac{\tilde \alpha \cst_B}{Y_c} \cdot S^{\tilde \alpha-1} + o(S^{\tilde \alpha-1}) \,,
\end{equation}
that is, $1-\frac{\hat x(Y)}{x_c} \sim \mu \cdot S^{\tilde \alpha-1}$ with $\mu = \frac{\tilde \alpha \cst_B}{Y_c} \cdot \frac{\partials_{U_1}\hat x(Y_c)}{x_c}$. A similar computation shows that $\frac{\hat x'(Y)}{x_c} \sim \frac{ \alpha-1}{Y_c}\mu \cdot S^{\tilde \alpha-2}$.

All of the above asymptotics are valid when $Y\to Y_c$ in $\cdom_{Y_c}$, or equivalently $S\to 0$ in $\Cone$, thanks to the domain of validity of the expansions in Assumption~\eqref{*}. 
In the generic$^+$ phase, we have $\hat x''(Y_c)<0$ by \Cref{lem:concave critical point}. In the dilute$^-$ phase, one can check that the asymptotic positivity of the coefficients $b_l=[Y^n]B(Y)$ implies that $\cst_B<0$ when $2<\alpha<3$. It follows that $\mu>0$ in both phases.
This completes the proof of the lemma.
\end{proof}

\Cref{lem:x asymptotics} is the only place where we do calculations separately in the generic$^+$ phase and the dilute$^-$ phase. From now on, the two phases will be treated in a unified way (except for a technical proof in \Cref{sec:Delta-analyticity/uniqueness} which verifies the $\Delta$-analyticity of $\hat Y= \hat x^{-1}$).

Now we turn to the asymptotic expansions of $\hat F(Y,y)$.
Since \Cref{thm:coeff asymptotics} contains both univariate and bivariate asymptotics of the coefficients of $F(x,y)$, we need both univariate and bivariate asymptotic expansions of $\hat F(Y,y)$ to derive it.
The univariate expansions are straightforward to compute, and it is not hard to see --- given our assumption~\eqref{*} on $B(y)$ --- that the dominant singular term must be of the classical \mbox{power-law type}.
In the bivariate case, the classification of dominant singular terms is much less studied.
For the particular example of $\hat F(Y,y)$,\footnote{\,--- and also for some examples related to Ising-decorated planar maps, see \cite{ChenTurunen2020} ---} it seems that the correct generalization of power functions in the multivariate world is the following concept of \emph{homogenous functions}:

\begin{definition*}
We say that a function $H$ defined on some domain $\Omega\subseteq \complex \times \complex$ is \emph{homogenous of degree $\gamma$} if for all $\sigma \in \complex \setminus \{0\}$, we have $H(\sigma z,\sigma w) = \sigma^\gamma \cdot H(z,w)$ whenever both $(z,w)$ and $(\sigma z, \sigma w)$ are in $\Omega$.
\end{definition*}

In the formula of $\hat F(Y,y)$, the only term where $Y$ and $y$ cannot be easily separated is the square root of $q(Y,y)$.
\Cref{lem:q asymptotics} below gives the homogenous function $H_\alpha$ that is asymptotic equivalent to $q$ as $(Y,y)\to (Y_c,Y_c)$. 
The next lemma (\Cref{lem:H_alpha bounds}) provides uniform bounds of $H_\alpha$ by a power function of the vector norm $\norm{(S,t)}$.

Notice that when $x$ is tied to $Y$ by $x\equiv (1-s)x_c  = \hat x(Y)$, the asymptotics of $\hat x$ in \Cref{lem:x asymptotics} becomes $s\sim \mu \cdot S^{\alpha-1}$. Therefore, at first order approximation, $x\to x_c$ in $\cdom_{x_c}$ is equivalent to $S\to 0$ in 
$\Cone^\theta := \setn{z^\theta}{z\in \Cone}$,
where $\theta=\frac{1}{\alpha-1}$ as defined in \Cref{thm:coeff asymptotics}. Hence the right domain for taking the limit $(S,t) \to (0,0)$ is $\Cone^\theta \times \Cone$.

\begin{lemma}[Asymptotics of $q(Y,y)$]\label{lem:q asymptotics}
When $(S,t)\to (0,0)$ in $\Cone^\theta \times \Cone$ for some $\ang=\ang(\alpha)>0$, we have
\begin{equation}\label{eq:q asymptotics}
q(Y,y) \sim \cst_q \cdot H_\alpha(S,t)
\end{equation}
where the constant $\cst_q := 2\partials_x \phi(Y_c) \cdot \frac{\mu x_c}{\alpha Y_c}$ is positive, and $H_\alpha$ is the homogenous function of degree $\alpha-2$ defined by
\begin{equation}
H_\alpha(S,t) = \frac{t^\alpha - S^\alpha - \alpha S^{\alpha-1} (t-S)}{(t-S)^2} \,.
\end{equation}
\end{lemma}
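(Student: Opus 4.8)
The plan is to use the integral representation of $q$ from Point (3) of \Cref{lem:algebraic properties}, namely
\[
q(Y,y) = -\int_0^1\!\!\int_0^{\lambda_1} \partial_Y\partial_y Q\bigl(Y+\lambda_2(y-Y),\,Y+\lambda_1(y-Y)\bigr)\,\dd\lambda_2\,\dd\lambda_1 ,
\]
and to substitute the explicit expression $Q(Y,y) = (\phi(Y)+y)^2 - 4yB(y)\hat x(Y)$. First I would compute $\partial_Y\partial_y Q$: only the cross terms survive, giving $\partial_Y\partial_y Q(a,b) = 2\phi'(a) - 4\bigl(B(b)+bB'(b)\bigr)\hat x'(a)$. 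The key simplification is that near the diagonal $a,b\approx Y_c$, the factor $\hat x'(a)$ is small — by \Cref{lem:x asymptotics} it is $O(S^{\alpha-2})$ in the scaled variable — while $B(b)+bB'(b)\to B(Y_c)+Y_cB'(Y_c)$ and $\phi'(a)\to \phi'(Y_c)$. Since $\phi'(Y_c) = \partials_x\phi(Y_c)\cdot\hat x'(Y_c) = 0$ by Point (1) and $\hat x'(Y_c)=0$, the term $2\phi'(a)$ is also small; in fact $\phi'$ and $\hat x'$ are \emph{proportional} up to the non-vanishing factor $\partials_x\phi$, so $\partial_Y\partial_y Q$ is governed entirely by the asymptotics of $\hat x'$.

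The second step is to make this precise. Writing $a = Y_c(1-\lambda_2' S)$ and $b = Y_c(1-\lambda_1' S)$ for the arguments above (with $\lambda_i'$ the affine combinations of $\lambda_i$ and the ratio $t/S$ coming from the change of variables), I would plug in $\hat x'(Y) \sim \frac{\alpha-1}{Y_c}\mu x_c\,S^{\alpha-2}$ from \Cref{lem:x asymptotics}, valid uniformly as $S\to 0$ in $\Cone$. This requires that all the intermediate points $a,b$ lie in a cone of the same type, which is where the restriction to $\Cone^\theta\times\Cone$ and a suitably small $\ang=\ang(\alpha)$ enters: one checks that the convex combinations $Y+\lambda(y-Y)$ stay in $\cdom_{Y_c}$ (equivalently the scaled points stay in $\Cone$) provided the opening angles are small enough, using $s\sim\mu S^{\alpha-1}$ so that $x\to x_c$ in $\cdom_{x_c}$ translates to $S\in\Cone^\theta$. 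Using $2\phi'(a) = 2\partials_x\phi(a)\hat x'(a) \to 2\partials_x\phi(Y_c)\hat x'(a)$ and collecting the constant prefactor $2\partials_x\phi(Y_c) - 4(B(Y_c)+Y_cB'(Y_c)) = 2\partials_x\phi(Y_c) - 4\partials_x\phi(Y_c) = -2\partials_x\phi(Y_c)$ (again by Point (1)), the integrand becomes $\sim -2\partials_x\phi(Y_c)\cdot\frac{\alpha-1}{Y_c}\mu x_c\cdot(Y_c\text{-scaled } S\text{-power})$.

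The third step is the bookkeeping of the double integral. After pulling out the constant $\cst_q := 2\partials_x\phi(Y_c)\cdot\frac{\mu x_c}{\alpha Y_c}$ (positive since $\partials_x\phi(Y_c) = B(Y_c)+Y_cB'(Y_c)>0$ and $\mu>0$), what remains is to verify the purely algebraic identity
\[
-\int_0^1\!\!\int_0^{\lambda_1} \alpha\bigl((\lambda_2 t + (1-\lambda_2)S)\ldots\bigr)^{\alpha-2}\,\dd\lambda_2\,\dd\lambda_1
= \frac{t^\alpha - S^\alpha - \alpha S^{\alpha-1}(t-S)}{(t-S)^2},
\]
i.e.\ that integrating $(\text{affine interpolant})^{\alpha-2}$ twice and dividing appropriately reproduces $H_\alpha$. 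This is elementary: the inner integral of $(Y_c + u(y-Y))^{\alpha-2}$-type terms in $u$ gives a difference of $(\,\cdot\,)^{\alpha-1}$ divided by $(\alpha-1)(y-Y)$, matching the structure already exhibited in the proof of Point (3) but with exponent $\alpha-2$ in place of $1$; the outer integral then produces the $(\,\cdot\,)^\alpha$ difference over $\alpha(\alpha-1)(y-Y)^2$, and re-expressing in the scaled variables $S,t$ gives exactly $H_\alpha(S,t)$ (homogeneous of degree $\alpha-2$, as claimed, since the numerator is homogeneous of degree $\alpha$ and the denominator of degree $2$).

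The main obstacle I anticipate is not the algebra but the \emph{uniformity}: justifying that $\partial_Y\partial_y Q$ at the interpolated points is asymptotic to its leading term uniformly over $(\lambda_1,\lambda_2)\in[0,1]^2$ and over $(S,t)$ in the prescribed cones, so that the $\sim$ survives passage under the double integral. This needs (i) a uniform version of \Cref{lem:x asymptotics} — which is available because the expansions in Assumption~\eqref{*} hold in a full $\Delta$-domain — together with (ii) a geometric lemma that the segment from $Y$ to $y$, and more precisely all points $Y+\lambda(y-Y)$, remain within a cone $\Cone'$ of comparable aperture when $(S,t)\in\Cone^\theta\times\Cone$; this is exactly what forces the choice of $\ang(\alpha)$ small. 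Once these are in place, dominated convergence (or a direct $\varepsilon$-estimate splitting the integrand into leading term plus $o(\cdot)$ remainder, the remainder bounded by a fixed power of $\norm{(S,t)}$) finishes the argument.
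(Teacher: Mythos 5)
Your route is the same as the paper's: take the integral representation of $q$ from Point (3), write $\partial_Y\partial_y Q(a,b)=\bigl(2\partials_x\phi(a)-4(B(b)+bB'(b))\bigr)\hat x'(a)$, let the bracket tend to $-2\partials_x\phi(Y_c)$, insert the asymptotics of $\hat x'$ from \Cref{lem:x asymptotics} uniformly over the interpolated points, and integrate the power function twice to produce $H_\alpha$; the constant bookkeeping and the uniformity concerns you flag are exactly the ones the paper handles. (Minor nitpick: your displayed ``algebraic identity'' carries a stray minus sign and drops a factor $(\alpha-1)$, though your surrounding description of the two integrations is consistent with the correct computation, namely $\int_0^1\int_0^{\lambda_1}\alpha(\alpha-1)(S+\lambda_2(t-S))^{\alpha-2}\,\dd\lambda_2\,\dd\lambda_1=H_\alpha(S,t)$.)

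There is, however, one genuine gap at the very end. Your argument (dominated convergence or an $\varepsilon$-splitting) delivers the additive estimate $q(Y,y)=\cst_q\,H_\alpha(S,t)+o\m({\norm{(S,t)}^{\alpha-2}}$, but the lemma asserts the multiplicative equivalence $q\sim \cst_q H_\alpha$. These are not the same unless one knows that $\abs{H_\alpha(S,t)}\ge c\,\norm{(S,t)}^{\alpha-2}$ on $\Cone^\theta\times\Cone$ --- i.e.\ that $H_\alpha$ has no zeros there and does not degenerate along any direction of the cone (note $H_\alpha$ vanishes at $S=t$ only to the extent that the numerator does; a priori there could be complex zeros of $h_\alpha(z)=H_\alpha(1,z)$ in the relevant sector). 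The paper closes this step by invoking a separate result (\Cref{lem:H_alpha bounds}), whose proof is not a formality: it reduces to showing $h_\alpha$ has no zero in $\complex\setminus(-\infty,0)$ for all $\alpha\in(2,3]$, via a compactness argument in $(z,\alpha)$ combined with the modified implicit function theorem of \Cref{lem:generalized ImplicitFT}, together with the choice of $\ang(\alpha)$ making $(\theta+1)(\tfrac\pi2+\ang)<\pi$. Your proposal never addresses this lower bound, so as written it proves a weaker statement than the lemma claims; you would need to add this ingredient (or cite it) to finish.
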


\begin{proof}
By Lemma~\refp{4}{lem:algebraic properties}, we have $\partials_x \partial_y Q(Y_c,Y_c) = -2\partials_x \phi(Y_c) \ne 0$. Hence \Cref{lem:x asymptotics} implies that
\begin{equation}
\partial_Y \partial_y Q(Y,y) 
 \,=\,\partials_x \partial_y Q(Y,y) \cdot \hat x'(Y)
\,\eqv[Y_c]{Y}\,-2\partials_x \phi(Y_c) \cdot 
          \frac{\alpha-1}{Y_c}\mu x_c \cdot S^{\alpha-2}
 \,=\,-\alpha(\alpha-1) \cst_q \cdot S^{\alpha-2}\,.
\end{equation}
With the change of variables $S=1-\frac{Y}{Y_c}$ and $t=1-\frac{y}{Y_c}$, this gives
\begin{align*}
\partial_Y \partial_y Q\mb({ Y+\lambda_2(y-Y),
                             Y+\lambda_1(y-Y) }
  \,&=\,- \alpha(\alpha-1) \cst_q \cdot 
        \m({ 1- \frac{Y+ \lambda_2(y-Y)}{Y_c} }^{\alpha-2}
       \!\!\!\! \cdot \m({ 1+o(1) }
\\\,&=\,- \alpha(\alpha-1) \cst_q \cdot
          \mb({S+\lambda_2(t-S)}^{\alpha-2} 
        + o\m({ \norm{(S,t)}^{\alpha-2} }
\end{align*}
where $\norm{\,\cdot\,}$ is any norm on the vector space $\complex^2$, and the little-o is uniform over $(\lambda_1,\lambda_2)\in [0,1]^2$. Plug this into the integral formula of $q(Y,y)$ in Lemma~\refp{3}{lem:algebraic properties}, we get
\begin{align*}
q(Y,y) 
&= \cst_q \cdot \int_0^1 \m({
      \int_0^{\lambda_1} \alpha(\alpha-1) \,
        \mb({ S+\lambda_2(t-S) }^{\alpha-2} \dd \lambda_2 
    } \dd \lambda_1 
    + o\m({ \norm{(S,t)}^{\alpha-2} }   \\
&= \cst_q \cdot \int_0^1 \alpha \,
     \frac{ (S+\lambda_1(t-S))^{\alpha-1} - S^{\alpha-1}
         }{ t-S }
    \dd \lambda_1
    + o\m({ \norm{(S,t)}^{\alpha-2} }   \\
&= \cst_q \cdot 
   \frac{ t^\alpha - S^\alpha - \alpha S^{\alpha-1}(t-S)
       }{ (t-S)^2 }
    + o\m({ \norm{(S,t)}^{\alpha-2} } \,.
\end{align*}
Thanks to the lower bound of $H_\alpha(t,S)$ in \Cref{lem:H_alpha bounds} below, this implies the asymptotic equivalence \eqref{eq:q asymptotics}.
\end{proof}

\begin{lemma}[`` $H_\alpha(S,t) \asymp \norm{(S,t)}^{\alpha-2}$ '']\label{lem:H_alpha bounds}
For each $\alpha\in (2,3]$, there exist $\ang>0$ and $c,c'>0$ such that
\begin{equation}\label{eq:H_alpha bounds}
    c  \cdot \norm{(S,t)}^{\alpha-2} 
\le \abs{H_\alpha(S,t)} 
\le c' \cdot \norm{(S,t)}^{\alpha-2}
\end{equation}
for all $(S,t) \in \Cone^\theta \times \Cone$.
\end{lemma}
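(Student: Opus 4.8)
The plan is to estimate $H_\alpha(S,t)$ separately in two regions of the cone $\Cone^\theta \times \Cone$: the region where $t/S$ is bounded away from $1$, and the region where $t/S$ is close to $1$. By the homogeneity of $H_\alpha$ of degree $\alpha-2$ (this is immediate from its explicit formula: numerator is homogeneous of degree $\alpha$, denominator of degree $2$), it suffices to control the function $g(u) := H_\alpha(1,u) = \frac{u^\alpha - 1 - \alpha(u-1)}{(u-1)^2}$ for $u=t/S$ ranging over the relevant set, and then write $|H_\alpha(S,t)| = |S|^{\alpha-2}|g(t/S)|$. The key point will be to choose the cone half-angle $\ang>0$ small enough that $u=t/S$ stays in a region where $g$ is bounded above and below.

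First I would handle the regime $|u-1|\ge \tfrac12$ (say). Here the denominator $(u-1)^2$ is bounded below, and one checks that $g$ extends continuously to $u=\infty$ with $g(u)\to 0$ only in the sense that $u^{\alpha-\,}$... more precisely $g(u) = u^{\alpha-2}(1+o(1))$ as $u\to\infty$, so $g$ does not vanish at infinity but grows; in any case $g$ is a ratio whose only possible zeros come from the numerator $N(u) := u^\alpha - 1 - \alpha(u-1)$. One computes $N(1)=0$, $N'(1)=0$, $N''(u)=\alpha(\alpha-1)u^{\alpha-2}>0$ for $u>0$, so $N$ is strictly convex on $(0,\infty)$ with a double zero at $u=1$ and no other zero there; hence $g$ is continuous and strictly positive on $(0,\infty)\setminus\{1\}$ and extends to a positive continuous function on $(0,\infty)$ via $g(1)=\tfrac12 N''(1)=\tfrac{\alpha(\alpha-1)}2$. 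The obstruction is that $u=t/S$ need not be real: $S$ ranges over $\Cone^\theta$ and $t$ over $\Cone$, so $u$ ranges over a sector around the positive real axis of half-angle $\theta\ang+\ang = \ang(\theta+1)$. Since $N$ and $g$ are real-analytic (indeed holomorphic for $u$ off the cut, using the principal branch of $u^\alpha$) and positive on the positive real axis, by compactness and continuity they stay bounded away from $0$ and $\infty$ on a closed sub-sector, provided $\ang$ is small; this handles $u$ in the annular sector $\{|u|\in[\epsilon_0,\epsilon_0^{-1}]\}$, and a separate elementary estimate $g(u)\sim u^{\alpha-2}$ handles $|u|$ large (equivalently $|S|\ll|t|$, where $|H_\alpha(S,t)|\sim |t|^{\alpha-2}\asymp \norm{(S,t)}^{\alpha-2}$) and $g(u)\sim -\alpha u + O(u^2) \to$ nonzero constant... rather $g(u)\to g(0)=1$ as $u\to 0$ handles $|S|\gg|t|$.

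The delicate regime is $u$ near $1$, i.e. $t$ near $S$, where both numerator and denominator of $g$ vanish to order $2$. Here I would Taylor-expand: writing $u = 1+\eta$ with $|\eta|$ small, $N(1+\eta) = \tfrac{\alpha(\alpha-1)}{2}\eta^2 + \tfrac{\alpha(\alpha-1)(\alpha-2)}{6}\eta^3 + \cdots$, so $g(1+\eta) = \tfrac{\alpha(\alpha-1)}{2} + O(\eta)$, which is bounded above and below by positive constants once $|\eta|$ is small enough — and crucially the region $|u-1|$ small corresponds to $|t-S|$ small \emph{relative to} $|S|\asymp\norm{(S,t)}$, on which $|H_\alpha(S,t)| = |S|^{\alpha-2}|g(u)| \asymp \norm{(S,t)}^{\alpha-2}$ as desired. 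I expect this near-diagonal expansion to be the routine part; the main obstacle is the bookkeeping of the cone angles — verifying that the sector swept out by $u = t/S$ as $(S,t)$ ranges over $\Cone^\theta\times\Cone$ avoids the branch cut of $u^\alpha$ and the zero of $g$ at $u=1$ uniformly, which forces the constraint $\ang = \ang(\alpha)$ sufficiently small (one needs $\ang(\theta+1) < \pi$ at minimum, and small enough that $N(u)\ne 0$ for $u\ne 1$ in the sector, which follows from $N$ being a nonzero holomorphic function vanishing only at $u=1$ near the positive axis together with a growth bound at infinity). Patching the three regimes with a continuity/compactness argument then yields the constants $c,c'$ and completes the proof.
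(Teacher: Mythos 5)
There is a genuine gap, and it sits exactly where the paper's proof does its real work: the geometry of the cones. In this paper $\cone$ is not a thin cone of half-angle $\ang$ around the positive axis; it is $\set{z\ne 0}{\arg z\in(-\pi/2-\ang,\pi/2+\ang)}$, of half-angle $\pi/2+\ang$ (it is the image of a $\Delta$-domain near its tip). Hence as $(S,t)$ ranges over $\Cone^\theta\times\Cone$, the ratio $u=t/S$ sweeps the sector $\setn{re^{i\tau}}{r\ge 0,\ |\tau|\le(\theta+1)(\pi/2+\ang)}$, whose half-angle is close to $\pi$; it is smaller than $\pi$ only because $\theta=\frac1{\alpha-1}<1$ and $\ang$ is taken small. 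Your computation ``half-angle $\ang(\theta+1)$'' drops the $\pi/2$ contributions, and with it the whole difficulty. Consequently your lower-bound strategy --- positivity of $N(u)=u^\alpha-1-\alpha(u-1)$ on $(0,\infty)$ by convexity, plus ``continuity and compactness on a closed sub-sector, provided $\ang$ is small'' --- does not close: the relevant sub-sector is essentially the entire slit plane $\complex\setminus(-\infty,0)$, where continuity off the positive real axis gives no control. What is actually needed (and is the heart of the paper's proof) is the global statement that $h_\alpha(z)=\frac{z^\alpha-1-\alpha(z-1)}{(z-1)^2}$ has no zero anywhere in $\complex\setminus(-\infty,0)$ (principal branch) for every $\alpha\in(2,3]$. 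Your parenthetical claim that this ``follows from $N$ being a nonzero holomorphic function vanishing only at $u=1$ near the positive axis together with a growth bound at infinity'' assumes precisely this zero-free property rather than proving it; a holomorphic function positive on $(0,\infty)$ can perfectly well vanish elsewhere in a wide sector.

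For comparison, the paper establishes the zero-free property by a deformation in $\alpha$: for $\alpha=3$ one has $h_3(z)=z+2$, which is zero-free in the slit plane; if some $\alpha\in(2,3)$ admitted a zero, one takes the infimum $\alpha_*$ of such $\alpha$, extracts a limiting zero $z_*$ (zeros stay bounded), checks that $h_\alpha$ does not vanish at $0$ nor on the boundary ray $re^{\pm i\pi}$ (its imaginary part is nonzero there), notes $\alpha_*>2$ since $h_2\equiv 1$, and then continues the zero to $\alpha<\alpha_*$ via the modified implicit function theorem (Lemma~\ref{lem:generalized ImplicitFT}), contradicting minimality. Your treatment of the remaining regimes --- $u$ near $1$ via the Taylor expansion giving the value $\frac{\alpha(\alpha-1)}2$, $u$ near $0$, and $|u|\to\infty$ via $h_\alpha(u)\sim u^{\alpha-2}$ --- is fine and matches the paper's use of continuity of $h_\alpha$ at $0$ and $1$; but without the global non-vanishing statement on the near-full slit plane, the lower bound in \eqref{eq:H_alpha bounds} is not established.
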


\begin{proof}
When $S=0$, we have $H_\alpha(0,t) = t^{\alpha-2}$ and \eqref{eq:H_alpha bounds} is obvious.
When $S\ne 0$, because $H_\alpha$ is homogenous of degree $\alpha-2$, \eqref{eq:H_alpha bounds} is equivalent to
\begin{equation}\label{eq:h_alpha bounds}
    c \cdot (1+|z|^{\alpha-2})
\le \abs{h_\alpha(z)} 
\le c'\cdot (1+|z|^{\alpha-2})
\end{equation}
for all $z\in \mathcal K:= \Setn{t/S}{(S,t) \in \Cone^\theta \times \Cone,\, S\ne 0}$, where $h_\alpha(z) := H_\alpha(1,z) = \frac{z^\alpha -1 -\alpha(z-1)}{(z-1)^2}$.
Due to the $z^\alpha$ term, $\mathcal K$ should be viewed as a subdomain of the universal cover of $\complex \setminus \{0\}$, completed by a single point at $0$.
Notice that $h_\alpha$ is continuous on this completed universal cover, because $\lim_{z\to 0} h_\alpha(z) = \alpha$ and $\lim_{z\to 1}h_\alpha(z) = \frac{\alpha(\alpha-1)}{2}$.

Since $h_\alpha(z) \sim z^{\alpha-2}$ when $z\to \infty$, for any $c<1<c'$, there exists $R>0$ such that \eqref{eq:h_alpha bounds} holds for all $|z|>R$. On the other hand, the continuity of $h_\alpha$ implies that it is bounded on the compact set $\Set{z\in \mathcal K}{|z|\le R}$. This proves the upper bound in \eqref{eq:h_alpha bounds}. For the lower bound, it suffices to show that $h_\alpha$ have no zeros in $\mathcal K$. 

From its definition, we see that $\mathcal K = \setn{r e^{i \tau}}{r\ge 0\,, |\tau|\le (\theta+1)(\frac\pi2 +\ang) }$. For all $\alpha\in (2,3]$, since $\theta = \frac{1}{\alpha-1}<1$, we can choose $\ang = \ang(\alpha)>0$ such that $(\theta+1)(\frac\pi2 +\ang) <\pi$. Then $\mathcal K$ is contained in $\complex \setminus (-\infty,0)$, the principal branch of the universal cover of $\complex \setminus\{0\}$.

Now let us show that $h_\alpha$ has no zero in $\complex \setminus (-\infty,0)$ for all $\alpha\in (2,3]$. 
This is clear for $\alpha=3$, \mbox{since $h_3(z)=z+2$}. 
Assume that $h_\alpha$ has a zero on $\complex \setminus (-\infty,0)$ for some $\alpha\in (2,3)$. Let $\alpha^*$ be the infimum of such $\alpha$. By definition, there exists a sequence of pairs $(z_n,\alpha_n) \in (\complex \setminus (-\infty,0)) \times (2,3)$ such that $h_{\alpha_n}(z_n)=0$ for all $n$, and $\alpha_n\searrow \alpha_*$ as $n\to \infty$. Using the equation $h_{\alpha_n}(z_n)=0$, it is not hard to see that the sequence $(|z_n|)_{n\ge 0}$ is bounded. Thus up to extracting a subsequence, we can assume that $z_n\to z_*$ as $n\to \infty$ for some $z_*$ in the closure of $\complex \setminus (-\infty,0)$. By the continuity of $(z,\alpha) \mapsto h_\alpha(z)$, we have $h_{\alpha_*}(z_*)=0$.
However, we can check $h_\alpha$ has no zero on the boundary of $\complex \setminus (-\infty,0)$: we have $h_\alpha(0)=\alpha$, and $h_\alpha(re^{\pm i\pi}) = \frac{r^\alpha e^{\pm i\alpha\pi} -1+\alpha(r+1)}{(r+1)^2} \ne 0$ for all $\alpha \in (2,3)$ and $r\in (0,\infty)$ because the imaginary part of the \lhs\ is nonzero. It follows that $z_*\in \complex \setminus (-\infty,0)$. 
In addition, we have $\alpha_*>2$ because $h_2(z)\equiv 1$. 
Since the mapping $(z,\alpha) \mapsto h_\alpha(z)$ is analytic in $z$, and jointly continuous in both variables, 
a version of the implicit function theorem (Lemma~\ref{lem:generalized ImplicitFT}) implies that there exists a continuous function $\hat z: (\alpha_*-\varepsilon,\alpha_*+\varepsilon) \to \complex \setminus (-\infty,0)$ such that $\hat z(\alpha_*)=z_*$ and $h_\alpha(\hat z(\alpha))=0$ for all $\alpha$. This contradicts the minimality of $\alpha_*$. Therefore $h_\alpha$ has no zero in $\complex \setminus (-\infty,0)$ for all $\alpha\in (2,3]$, and this concludes the proof.
\end{proof}

With the asymptotic expansions of $\hat x(Y)$ and $q(Y,y)$ in \Cref{lem:x asymptotics,lem:q asymptotics}, we can now derive the desired asymptotic expansions of $\hat F(Y,y)$ and $F(x,y)$ by elementary calculations.

\begin{lemma}[Asymptotics of $\hat F(Y,y)$]%
\label{lem:Fhat asymptotics}
Let $\cst_F = \sqrt{\cst_q}/2$.
\begin{align}\label{eq:Fhat bivariate asymptotics}
&\text{When }(S,t) \to (0,0) 
 \text{ in } \Cone^\theta \times \Cone:
&  \hat F(Y,y) - \m({ \tfrac12 - \tfrac{\phi(Y)}{2y} }
&\,\sim\, \cst_F \cdot (t-S) \sqrt{ H_\alpha(S,t) } \,.
\\ \label{eq:Fhat Y asymptotics}
&\text{When }S \to 0 \text{ in }\Cone^\theta 
 \text{ for fixed }y\in \cdom_{Y_c}:
&  \partials_x \hat F(Y,y) - \partials_x \hat F(Y_c,y) 
&\,\sim\,-Y_c \!\cdot 
         \partial_Y \partials_x \hat F(Y_c,y) \cdot S \,.
\\ \label{eq:Fhat y asymptotics}
&\text{When }t\to 0 \text{ in }\Cone:
&    \hspace{-5mm} 
  Y_c \!\cdot \partial_Y \partials_x \hat F(Y_c,y)
\,\sim\, \tfrac{\alpha \cdot \cst_F}{2\mu x_c} \!\cdot\! t^{-\frac \alpha2} \quad & \text{and}\quad
         \partials_x \hat F(Y_c,y) 
\,\sim\, \tfrac{\alpha \cdot \cst_F}{2\mu x_c}\!\cdot\! t^{1-\frac \alpha2} \,.
\end{align}
\end{lemma}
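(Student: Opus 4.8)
The plan is to feed the already-established asymptotics of $\hat x$ (\Cref{lem:x asymptotics}), of $q$ (\Cref{lem:q asymptotics}), and the pointwise bounds on $H_\alpha$ (\Cref{lem:H_alpha bounds}) into the explicit parametric formula $\hat F(Y,y) = \frac12 - \frac{\phi(Y)}{2y} + \frac{(Y-y)\sqrt{q(Y,y)}}{2y}$ from \Cref{prop:parametrization}, and to differentiate it using the algebraic identities in \Cref{lem:algebraic properties}. The three displays are essentially the bivariate expansion, its $\partials_x$-derivative in the $Y$-direction at fixed $y$, and the one-variable specialization of the latter at $Y=Y_c$; I would prove them in that order since each feeds into the next.

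For \eqref{eq:Fhat bivariate asymptotics}: write $\hat F(Y,y) - \bigl(\tfrac12 - \tfrac{\phi(Y)}{2y}\bigr) = \frac{Y-y}{2y}\sqrt{q(Y,y)}$. Under the change of variables $x = x_c(1-s)$, $y = Y_c(1-t)$, $Y = Y_c(1-S)$ we have $Y - y = Y_c(t-S)$ and $\frac{1}{2y} = \frac{1}{2Y_c}(1+o(1))$, so the left side equals $\frac{t-S}{2}\sqrt{q(Y,y)}\,(1+o(1))$. Now \Cref{lem:q asymptotics} gives $q(Y,y) \sim \cst_q\, H_\alpha(S,t)$ on $\Cone^\theta \times \Cone$, and since $\sqrt{\cdot}$ is continuous and $H_\alpha(S,t)$ stays in a sector bounded away from the negative reals (this is where the choice of $\ang = \ang(\alpha)$ from \Cref{lem:H_alpha bounds}, together with the fact that $h_\alpha$ has no zeros in $\mathbb C \setminus (-\infty,0]$, is used) we may pass the asymptotic equivalence through the square root to get $\sqrt{q(Y,y)} \sim \sqrt{\cst_q}\,\sqrt{H_\alpha(S,t)}$. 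With $\cst_F = \sqrt{\cst_q}/2$ this yields the claim; the lower bound $|H_\alpha(S,t)| \ge c\,\|(S,t)\|^{\alpha-2}$ guarantees the error term is genuinely negligible and not swallowed by a vanishing main term.

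For \eqref{eq:Fhat Y asymptotics}: apply the operator $\partials_x = \partial_Y/\hat x'(Y)$ to $\hat F$, using Point (1) of \Cref{lem:algebraic properties} ($\partials_x \phi = B + Y B'$) and Point (4) to differentiate $(Y-y)\sqrt{q(Y,y)}$, so that $\partials_x \hat F(Y,y)$ is again an explicit algebraic expression in $Y,y,B(Y),B'(Y)$ and $\sqrt{q}$. Crucially, as noted in the remark after \Cref{lem:algebraic properties}, $\partials_x \hat F$ does not involve $B''(Y)$, so it is $C^1$ in $Y$ as $Y \to Y_c$ in $\cdom_{Y_c}$ in the generic$^+$ phase (and has the corresponding expansion in the dilute$^-$ phase via the $\partials_{U_0},\partials_{U_1}$ expansion of its algebraic representation). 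A first-order Taylor expansion in $S$ at $S=0$ then gives $\partials_x \hat F(Y,y) - \partials_x \hat F(Y_c,y) \sim Y_c \cdot \partial_S\big|_{S=0}[\,\cdot\,]$, and rewriting $\partial_S = -\frac{1}{Y_c}\partial_Y$ at $Y=Y_c$ produces the stated $-Y_c\,\partial_Y\partials_x \hat F(Y_c,y)\cdot S$. One should keep track that the validity domain $\Cone^\theta$ for $S$ is exactly the one inherited from \Cref{lem:x asymptotics} via $s \sim \mu S^{\alpha-1}$.

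For \eqref{eq:Fhat y asymptotics}: set $Y = Y_c$ (i.e. $S=0$, so $H_\alpha(0,t) = t^{\alpha-2}$) and let $t \to 0$ in $\Cone$. Then $\hat F(Y_c,y) - \bigl(\tfrac12 - \tfrac{\phi(Y_c)}{2y}\bigr) \sim \cst_F\, t \cdot t^{(\alpha-2)/2} = \cst_F\, t^{\alpha/2}$, and since $\tfrac12 - \tfrac{\phi(Y_c)}{2y}$ is analytic in $y$ at $Y_c$, the singular part of $\partials_x \hat F(Y_c,y)$ as a function of $y$ is obtained by applying $\partials_x$ — again using Points (1) and (4) — which near $Y=Y_c$ behaves, because $\hat x'(Y_c)=0$ and by \Cref{lem:x asymptotics} $\hat x'(Y)/x_c \sim \frac{\alpha-1}{Y_c}\mu S^{\alpha-2}$, so as to convert the $t^{\alpha/2}$ singularity through the division by $\hat x'$: a short computation using $\partial_Y\bigl[(Y-y)\sqrt{q}\bigr] = \frac{\partial_Y Q}{2(Y-y)\sqrt q}$ and $\partials_x \partial_y Q(Y_c,Y_c) = -2\partials_x\phi(Y_c)$ pins the constant to $\frac{\alpha\,\cst_F}{2\mu x_c}$, giving both $Y_c \partial_Y\partials_x\hat F(Y_c,y) \sim \frac{\alpha\cst_F}{2\mu x_c} t^{-\alpha/2}$ and, integrating the $S$-Taylor coefficient once more (or directly expanding $\partials_x \hat F(Y_c,y)$), $\partials_x\hat F(Y_c,y) \sim \frac{\alpha\cst_F}{2\mu x_c} t^{1-\alpha/2}$.

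The main obstacle I anticipate is the bookkeeping around the square root in \eqref{eq:Fhat bivariate asymptotics}: one must know that $H_\alpha(S,t)$ (equivalently $q(Y,y)$) takes values in a region where a single analytic branch of $\sqrt{\cdot}$ applies uniformly over the whole limiting domain $\Cone^\theta \times \Cone$, so that $q \sim \cst_q H_\alpha$ really does imply $\sqrt q \sim \sqrt{\cst_q}\sqrt{H_\alpha}$ with control on the error; this is precisely what \Cref{lem:H_alpha bounds} (zero-freeness of $h_\alpha$ off the negative axis, plus the constraint $(\theta+1)(\tfrac\pi2+\ang)<\pi$) was set up to supply, so the lemma is not hard so much as it is a careful assembly. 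The differentiations in \eqref{eq:Fhat Y asymptotics}--\eqref{eq:Fhat y asymptotics} are routine given \Cref{lem:algebraic properties}, the one point requiring care being the justification that $C^2$-continuity of $\hat x$ (resp. the dilute$^-$ analogue) lets one differentiate the asymptotic expansions term by term.
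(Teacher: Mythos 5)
Your proposal is correct and follows essentially the same route as the paper: read off \eqref{eq:Fhat bivariate asymptotics} from the parametric formula together with \Cref{lem:q asymptotics} (with \Cref{lem:H_alpha bounds} controlling the square root), obtain \eqref{eq:Fhat Y asymptotics} as a first-order Taylor expansion of $Y\mapsto \partials_x \hat F(Y,y)$ at $Y_c$, and derive \eqref{eq:Fhat y asymptotics} from the explicit expression of $\partials_x \hat F$ via Points (2) and (4) of \Cref{lem:algebraic properties}, the case $S=0$ of \Cref{lem:q asymptotics}, and the identity $\tfrac{\partials_x \phi(Y_c)}{2Y_c\sqrt{\cst_q}}=\tfrac{\alpha \cst_F}{2\mu x_c}$. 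The only cosmetic difference is that the paper also records the simplification $\partial_Y \tfrac{1}{(Y-y)\sqrt{q(Y,y)}}\big|_{Y=Y_c}=0$ (a consequence of $\hat x'(Y_c)=0$) before expanding $\partial_Y\partials_x\hat F(Y_c,y)$, a routine step implicit in your "short computation".
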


\begin{proof}
The asymptotic expansion \eqref{eq:Fhat bivariate asymptotics} follows directly from the definition \eqref{eq:F:parametrization} of $\hat F(Y,y)$ and \Cref{lem:q asymptotics}:
\begin{equation}
         \hat F(Y,y) -\mB({ \frac12 - \frac{\phi(Y)}{2y} }
\,=\,    \frac{Y-y}{2y} \sqrt{q(Y,y)} 
\,\sim\, \frac{t-S}{2} \sqrt{\cst_q H_\alpha(S,t)} 
\end{equation}
as $(S,t)\to (0,0)$ in $\Cone^\theta \times \Cone$. Relation \eqref{eq:Fhat Y asymptotics} is simply the first order Taylor expansion of $Y\mapsto \partials_x \hat F(Y,y)$ at $Y_c$.
For the two asymptotics in \eqref{eq:Fhat y asymptotics}, we first use the derivative formula $\partials_x \mb({(Y-y)\sqrt{q(Y,y)}} = \frac{\partials_x Q(Y,y)}{2(Y-y)\sqrt{q(Y,y)}}$ from Lemma~\refp{5}{lem:algebraic properties} to compute $\partials_x \hat F(Y,y)$:
\begin{equation}\label{eq:partials x Fhat}
\partials_x \hat F(Y,y) = \frac{1}{2y} \m({ 
      \frac{\partials_x Q(Y,y)}{2(Y-y) \sqrt{q(Y,y)}} -
      \partials_x \phi(Y) } \,.
\end{equation}
The same derivative formula implies that $\partial_Y \frac{1}{(Y-y)\sqrt{q(Y,y)}} \big|_{Y=Y_c} \!= - \frac{\partials_x Q(Y_c,y)}{2 \m({ (Y_c-y) q(Y_c,y)}^{3/2}} \cdot \hat x'(Y_c) = 0$. Hence the $Y$-derivative of $\partials_x F(Y,y)$ simplifies to
\begin{equation}
\partial_Y \partials_x \hat F(Y_c,y) = 
\frac{1}{2y} \m({ 
    \frac{ \partial_Y \partials_x Q(Y_c,y)
        }{ 2(Y_c-y) \sqrt{q(Y_c,y)}
        } - \partial_Y \partials_x \phi(Y_c) } \,.
\end{equation}
On the one hand, Lemma~\refp{2}{lem:algebraic properties} tells us that $\partials_x Q(Y_c,y) \,\sim\, -\partial_y \partials_x Q(Y_c,Y_c) \cdot  (Y_c-y) \,=\, 2\partial_x \phi(Y_c) Y_c \cdot t$ as $y\to Y_c$, and $\partial_Y \partials_x Q(Y_c,Y_c) = 2\partial_x \phi(Y_c)\ne 0$. 
On the other hand, \Cref{lem:q asymptotics} implies
$(Y_c-y) \sqrt{q(Y_c,y)} \sim Y_c \sqrt{\cst_q} \cdot t^{\alpha/2}$ in the special case where $S=0$.
Plugging these asymptotics into the expressions of $\partials_x \hat F(Y_c,y)$ and $\partial_Y \partials_x \hat F(Y_c,y)$ gives \eqref{eq:Fhat y asymptotics}. The expression of the constant follows from the identity $\frac{\partials_x \phi(Y_c)}{2Y_c \sqrt{\cst_q}} = \frac{\alpha \sqrt{\cst_q}}{4\mu x_c} = \frac{\alpha \cst_F}{2\mu x_c}$.
\end{proof}

Recall the definitions $\beta_0=\frac{\alpha}{\alpha-1}$, $\beta_1=-\frac{\alpha}{2}$ and $\gamma_0=\frac{\alpha}{2}$, $\gamma_1 = 1-\frac{\alpha}{2}$ from \Cref{thm:coeff asymptotics}. The following proposition translates the asymptotic expansions of $\hat F(Y,y)$ in \Cref{lem:Fhat asymptotics} to asymptotic expansions of $F(x,y)$.

\begin{proposition}[Asymptotics of $F(x,y)$]%
\label{prop:func asymptotics}
Let $F\1{reg}(x,y)\!=\! \frac12 - \frac{\phi(\hat Y(x))}{2y}$ and $F\1{hom}(S,t)\!=\!(t-S) \sqrt{ H_\alpha(S,t) }$.
\begin{align}\label{eq:F:bivariate asymptotics}
&\text{When }(x,y)\to (x_c,Y_c)
 \text{ in } \cdom_{x_c}\times \cdom_{Y_c}:&
F(x,y) &= F\1{reg}(x,y) + \cst_F \cdot
          F\1{hom} \m({ (s/\mu)^\theta,t }
      + o \m({ \|(s^\theta,t)\|^{\gamma_0} } \,.
\\ \label{eq:F:x asymptotics}
&\text{When }x\to x_c \text{ in }\cdom_{x_c}
 \text{ for fixed }y\in \cdom_{Y_c}\!:&
F(x,y) &= F(x_c,y) - \partial_x F(x_c,y)(x_c-x) 
        + G(y) \cdot s^{\beta_0} + o\mb({ s^{\beta_0} }\,.
\\\label{eq:F:y asymptotics}
&\text{When }y\to Y_c \text{ in }\cdom_{Y_c}:&
G(y) &\sim \tfrac{\alpha-1}{2\mu^{\beta_0}} \cst_F \cdot t^{\beta_1}  \!\!\qtq{and}
\partial_x F(x_c,y) \sim \tfrac{\alpha}{2\mu x_c} \cst_F \cdot t^{\gamma_1} \,,
\end{align}
where $G(y):=\frac{\mu x_c}{\beta_0 \cdot \mu^{\beta_0}} \cdot \partial_Y \partials_x \hat F(Y_c,y)$.
\end{proposition}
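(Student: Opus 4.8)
The plan is to derive all three displays by composing the expansions of $\hat F$ from \Cref{lem:Fhat asymptotics} with the parametric substitution $Y=\hat Y(x)$, using \Cref{lem:x asymptotics} to translate between $S=1-\hat Y(x)/Y_c$ and $s=1-x/x_c$. Throughout I would take for granted the double $\Delta$-analyticity established in \Cref{sec:Delta-analyticity}, so that $F(x,y)=\hat F(\hat Y(x),y)$ and $\partial_x F(x,y)=\partials_x\hat F(\hat Y(x),y)$ for $(x,y)$ near $(x_c,Y_c)$ in $\cdom_{x_c}\times\cdom_{Y_c}$, with $\hat Y$ continuous there and $\hat Y(x_c)=Y_c$. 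The first step is to record the behaviour of $S$ in terms of $s$: by \Cref{lem:x asymptotics}, $s=1-\hat x(Y)/x_c=\mu S^{\alpha-1}(1+o(1))$ uniformly as $S\to0$ in $\Cone$, and since $S\to0$ when $x\to x_c$, solving for $S$ and taking the branch of the fractional power that is positive on $\real_{>0}$ (which is forced, both $s$ and $S$ staying in cones around $\real_{>0}$) gives $S=(s/\mu)^{\theta}(1+o(1))$ uniformly as $s\to0$ in $\Cone$, with $\theta=\tfrac1{\alpha-1}$. After possibly shrinking the aperture of $\cdom_{x_c}$, this keeps $S$ inside the cone $\Cone^{\theta}$ on which \Cref{lem:q asymptotics,lem:H_alpha bounds} apply, and yields $\norm{(S,t)}\asymp\norm{((s/\mu)^{\theta},t)}\asymp\norm{(s^{\theta},t)}$ with $t=1-y/Y_c$.

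For \eqref{eq:F:bivariate asymptotics} I would substitute $Y=\hat Y(x)$ into \eqref{eq:Fhat bivariate asymptotics}, giving $F(x,y)=F\1{reg}(x,y)+\cst_F\,F\1{hom}(S,t)\,(1+o(1))$ as $(s,t)\to(0,0)$. The upper bound of \Cref{lem:H_alpha bounds} gives $\lvert F\1{hom}(S,t)\rvert=\lvert t-S\rvert\,\lvert H_\alpha(S,t)\rvert^{1/2}\le C\,\norm{(S,t)}^{\gamma_0}$ with $\gamma_0=\alpha/2$, so the relative error contributes only $o(\norm{(s^{\theta},t)}^{\gamma_0})$; it then remains to replace $F\1{hom}(S,t)$ by $F\1{hom}((s/\mu)^{\theta},t)$. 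Since $H_\alpha$ has no zero on $\Cone^{\theta}\times\Cone$ (\Cref{lem:H_alpha bounds}), a branch of $\sqrt{H_\alpha}$, hence $F\1{hom}$, is analytic there, and being homogeneous of degree $\gamma_0$ its partial derivative $\partial_1 F\1{hom}$ with respect to the first argument is homogeneous of degree $\gamma_0-1$; integrating $\partial_1 F\1{hom}$ along the short segment from $((s/\mu)^{\theta},t)$ to $(S,t)$ (which lies in the region of analyticity for $s$ small) bounds the difference by $C\,\norm{((s/\mu)^{\theta},t)}^{\gamma_0-1}\,\lvert S-(s/\mu)^{\theta}\rvert=o(\norm{(s^{\theta},t)}^{\gamma_0})$, using $\lvert S-(s/\mu)^{\theta}\rvert=o(\norm{((s/\mu)^{\theta},t)})$ from the first step. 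Adding the two errors gives \eqref{eq:F:bivariate asymptotics}.

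For \eqref{eq:F:x asymptotics} I would fix $y\in\cdom_{Y_c}$, so $t$ is a fixed nonzero element of $\Cone$, and insert $S=(s/\mu)^{\theta}(1+o(1))$ into the first-order Taylor expansion \eqref{eq:Fhat Y asymptotics} to obtain $\partial_x F(x,y)-\partial_x F(x_c,y)=\partials_x\hat F(\hat Y(x),y)-\partials_x\hat F(Y_c,y)\sim-Y_c\,\partial_Y\partials_x\hat F(Y_c,y)\,(s/\mu)^{\theta}$ as $s\to0$ in $\Cone$ (here $\partials_x\hat F(\cdot,y)$ is bounded near $Y_c$ by the formula \eqref{eq:partials x Fhat} in the proof of \Cref{lem:Fhat asymptotics}, despite $\hat x'(Y_c)=0$). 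Since $\partial_x F(\cdot,y)$ is then bounded near $x_c$, integrating this asymptotic relation along the segment from $x_c$ to $x$ (which stays in $\cdom_{x_c}$) and using $\int_0^s u^{\theta}\,\dd u=s^{\theta+1}/(\theta+1)$ together with $\theta+1=\beta_0$ yields \eqref{eq:F:x asymptotics}; the prefactor of $s^{\beta_0}$ comes out as $\tfrac{x_c Y_c}{\beta_0}\,\mu^{-\theta}\,\partial_Y\partials_x\hat F(Y_c,y)$, which is $G(y)$ once one uses $\mu^{-\theta}=\mu^{1-\beta_0}$, while the regular part and the higher singular terms are swept into $o(s^{\beta_0})$ automatically by the integration. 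Finally \eqref{eq:F:y asymptotics} is read off from \eqref{eq:Fhat y asymptotics}: the second relation is just $\partial_x F(x_c,y)=\partials_x\hat F(Y_c,y)\sim\tfrac{\alpha\cst_F}{2\mu x_c}\,t^{1-\alpha/2}$ with $\gamma_1=1-\alpha/2$, and the first follows by substituting $Y_c\,\partial_Y\partials_x\hat F(Y_c,y)\sim\tfrac{\alpha\cst_F}{2\mu x_c}\,t^{-\alpha/2}$ into $G(y)$ and simplifying with $\alpha/\beta_0=\alpha-1$ and $\beta_1=-\alpha/2$.

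The hard part, and the only genuinely non-formal point, is legitimising the replacement of $S$ by $(s/\mu)^{\theta}$: one must invert the asymptotic relation for $\hat x$ uniformly over a sector, choose the correct branch of the fractional power, keep $S$ inside the cone on which \Cref{lem:q asymptotics,lem:H_alpha bounds} are valid, and then transport the resulting $o(1)$ error through the homogeneous function $F\1{hom}$ — which is exactly where the non-vanishing of $H_\alpha$ (hence the analyticity and the controlled homogeneity of $F\1{hom}$) is used. Everything else reduces to bookkeeping of $\Delta$-domains, a first-order Taylor expansion, and one elementary integration.
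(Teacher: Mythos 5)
Your proposal is correct and takes essentially the same route as the paper: it pushes the expansions of \Cref{lem:Fhat asymptotics} through the substitution $S\sim (s/\mu)^\theta$ from \Cref{lem:x asymptotics}, bounds the error of replacing $S$ by $(s/\mu)^\theta$ inside $F\1{hom}$ via the lower bound of \Cref{lem:H_alpha bounds} (the paper uses the explicit formula for $\partial_S F\1{hom}$, you use homogeneity of $\partial_S F\1{hom}$ plus non-vanishing of $H_\alpha$ --- the same estimate), and integrates the expansion of $\partial_x F$ along the segment to $x_c$ to produce the $s^{\beta_0}$ term. One remark: the prefactor you obtain, $\tfrac{x_c Y_c}{\beta_0}\mu^{-\theta}\,\partial_Y\partials_x\hat F(Y_c,y)$, is exactly what the paper's own integration yields and is the one consistent with \eqref{eq:F:y asymptotics} and \Cref{thm:coeff asymptotics}; it differs from the displayed definition of $G(y)$ in the proposition by the factor $Y_c$, which appears to be a typo in the statement rather than a gap in your argument.
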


\begin{proof}
Under the change of variable $x=\hat x(Y)$, \Cref{eq:Fhat bivariate asymptotics} in \Cref{lem:Fhat asymptotics} reads: $F(x,y)-F\1{reg}(x,y) \sim \cst_F \cdot F\1{hom}(S,t)$.
To prove \eqref{eq:F:bivariate asymptotics}, we just need to show that the error induced when replacing $F\1{hom}(S,t)$ by $F\1{hom}((s/\mu)^\theta,t)$ is of order $o\mn({ \|(s^\theta,t)\|^{\gamma_0} }$. 
Recall from \Cref{lem:x asymptotics} that $S\sim (s/\mu)^\theta$ when $s\to 0$.
For general values $S_1,S_2\in \complex$, we have:
\begin{equation}
    \mb|{ F\1{hom}(S_1,t) - F\1{hom}(S_2,t) } \,
\le\, \abs{S_1-S_2} \cdot \!\! \sup_{S\in [S_1,S_2]} \!
                        \mb|{ \partial_S F\1{hom}(S,t) }
\,=\, \abs{S_1-S_2} \cdot \!\! \sup_{S\in [S_1,S_2]} \!
    \frac{\alpha(\alpha-1)}{2} \m|{ 
        \frac{ S^{\alpha-2} }{ \sqrt{H_\alpha(S,t)} } }
\end{equation}
When $S_1,S_2\to 0$ and $S_1/S_2\to 1$, we have $|S_1-S_2|=o(S_1)$, whereas the supremum on $[S_1,S_2]$ is bounded by a constant times $\frac{|S_1|^{\alpha-2} }{\norm{(S_1,t)}^{\alpha/2-1}}$ (the denominator is estimated using \Cref{lem:H_alpha bounds}). It follows that 
\begin{equation}
F\1{hom}(S_1,t) - F\1{hom}(S_2,t) = \frac{o(|S_1|^{\alpha-1})}{ \norm{(S_1, t)}^{\alpha/2-1}} = o(\norm{(S_1,t)}^{\alpha/2}) \,.
\end{equation}
Taking $S_1=(s/\mu)^\theta$ and $S_2=S$ in the above formula gives the necessary estimate for proving \eqref{eq:F:bivariate asymptotics}.

Under the change of variable $x=\hat x(Y)$, the asymptotics \eqref{eq:Fhat Y asymptotics} in \Cref{lem:Fhat asymptotics} reads:
\begin{equation}
\partial_x F(x,y) - \partial_x F(x_c,y)  
= - Y_c \cdot \partial_Y \partials_x \hat F(Y_c,y) 
    \cdot (s/\mu)^\theta + o\mb({ (s/\mu)^\theta }\,.
\end{equation}
Since $s = 1-\frac{x}{x_c}$ and $\beta_0 = \theta+1$, by integrating the above equation from $x$ to $x_c$, we get
\begin{equation}
F(x_c,y) - F(x,y) - \partial_x F(x_c,y) \cdot (x_c-x)
= - Y_c \cdot \partial_Y \partials_x \hat F(Y_c,y) \cdot
    \frac{\mu x_c}{\beta_0} \m({\frac s\mu}^{\beta_0}
    + o\m({s^{\beta_0}} \,.
\end{equation}
This is \eqref{eq:F:x asymptotics} after rearrangement.
Finally, \eqref{eq:F:y asymptotics} is the direct translation of \eqref{eq:Fhat y asymptotics} in  \Cref{lem:Fhat asymptotics}.
\end{proof}

\begin{remark*}
In the bivariate expansion \eqref{eq:F:bivariate asymptotics} of $F(x,y)$, the mapping $y\mapsto F\1{reg}(x,y)$ is analytic at $Y_c$, and $F\1{hom}$ is a homogenous function of degree $\gamma_0$.
These are the essential features of \eqref{eq:F:bivariate asymptotics} that will be used to prove the bivariate asymptotics \eqref{eq:coeff bivariate asymptotics} of $F_{n,p}$ in \Cref{thm:coeff asymptotics}.
\end{remark*}

\section{Proof of \Cref{thm:coeff asymptotics}}%
\label{sec:coeff asymptotics}

In this section, we prove the coefficient asymptotics stated in \Cref{thm:coeff asymptotics} under the assumption that $F(x,y)$ is $\Delta$-analytic in both variables. More precisely, we assume that $F$ has an analytic continuation in some double $\Delta$-domain $\ddom_{x_c} \times \ddom_{Y_c}$ which is continuous on the boundary, and that $y\mapsto F(x_c,y)$ and $y\mapsto \partial_x F(x_c,y)$ are analytic in $\ddom_{Y_c}$. Using \eqref{eq:F:x asymptotics}, it is not hard to see that these assumptions imply that $G(y)$ is also analytic in $\ddom_{Y_c}$. We will verify the above $\Delta$-analyticity assumptions in \Cref{sec:Delta-analyticity}.

\begin{proof}[Proof of \Cref{thm:coeff asymptotics}]
\textbf{Asymptotics of $F_p(x_c)$ and $F_p'(x_c)$.}
When $x=x_c$, the bivariate asymptotics \eqref{eq:F:bivariate asymptotics} reads $F(x_c,y) = F\1{reg}(x_c,y) + \cst_F \cdot t^{\gamma_0} + o(t^{\gamma_0})$, where $y\mapsto F\1{reg}(x_c,y)$ is analytic at $y=Y_c$. Together with the second asymptotics in \eqref{eq:F:y asymptotics}, this gives the asymptotic expansion of $y\mapsto F(x_c,y)$ and $y\mapsto \partial_x F(x_c,y)$ at their domiannt singularity $Y_c$. By assumption, these functions are analytic in $\ddom_{Y_c}$. Thus, by the classical transfer theorem:
\begin{equation}
F_p (x_c)\ \eqv{p}\ \frac{\cst_F}{\Gamma(-\gamma_0)} 
                    \cdot Y_c^{-p} \cdot p^{-\gamma_0-1}
\qtq{and}
F_p'(x_c)\ \eqv{p}\ \frac{\alpha}{2\mu x_c} 
                    \frac{\cst_F}{\Gamma(-\gamma_1)} 
                  \cdot Y_c^{-p} \cdot p^{-\gamma_1-1} \,.
\end{equation}

\paragraph{Asymptotics of $F_{n,p}$ as $n\to \infty$ for fixed $p$, and then $p\to \infty$.} 
For each $y\in \ddom_{Y_c}$, \eqref{eq:F:x asymptotics} and the $\Delta$-analyticity of $x\mapsto F(x,y)$ imply that
\begin{equation}
F\0n(y)\ \eqv{n}\ \frac{G(y)}{\Gamma(-\beta_0)} 
                  \cdot x_c^{-n} \cdot n^{-\beta_0-1} \,,
\end{equation}
where $F\0n(y) := [x^n]F(x,y) = \sum_{p=0}^\infty F_{n,p} y^p$. Dividing the above asymptotics by its special case at $y=Y_c$ gives
\begin{equation}
\frac{F\0n(y)}{F\0n(Y_c)} \cv[]n \frac{G(y)}{G(Y_c)}
\end{equation}
According to Vitali's theorem \cite[p. 624]{FlajoletSedgewick2009}, the uniform convergence of a sequence of analytic functions in a neighborhood of zero implies the convergence of each coefficient in their Taylor expansions. Therefore
\begin{equation}
[y^p]\m({ \frac{F\0n(y)}{F\0n(Y_c)} } 
\,=\, \frac{F_{n,p}}{F\0n(Y_c)}
\ \cv[]n\ \frac{G_p}{G(Y_c)}
\end{equation}
for each fixed $p$, where $G_p=[y^p]G(y)$. Multiply this by the asymptotics of $F\0n(Y_c)$, and we obtain
\begin{equation}
F_{n,p}\ \eqv{n}\ \frac{G_p}{\Gamma(-\beta_0)} 
                  \cdot x_c^{-n} \cdot n^{-\beta_0-1}
\end{equation}
for each fixed $p$. And thanks to the first asymptotics of \eqref{eq:F:y asymptotics} and the $\Delta$-analyticity of $G(y)$, we have
\begin{equation}
G_p\ \eqv{p}\ \frac{\alpha-1}{2\, \mu^{\beta_0}}
              \frac{\cst_F}{\Gamma(-\beta_1)} 
              \cdot Y_c^{-p} \cdot p^{-\beta_1-1} \,.
\end{equation}

\paragraph{Asymptotics of $F_{n,p}$ as $n,p\to \infty$ while $n \sim v \cdot p^{1/\theta}$.}
According to the Cauchy integral formula, we have
\begin{equation}
F_{n,p} = \m({ \frac{1}{2\pi i} }^2 \oiint
\frac{F(x,y)}{x^{n+1} y^{p+1}} \dd x\, \dd y \,,
\end{equation}
where the integral is performed on the product of two small circles around the origin.
Since $F$ is analytic in $\ddom_{x_c} \times \ddom_{Y_c}$ and continuous on the boundary, we can deform the contour of integration to $\partial \ddom_{x_c} \times \partial \ddom_{Y_c}$.
The contour $\partial \ddom_1$ can be decomposed into a circular part $\mathcal C := \partial \ddom_1^{\marg,\ang} \cap \partial \disk_{1+\marg}$ and a $V$-shaped part $\mathcal V := \partial \ddom_1 \setminus \mathcal C$.
For $x$ on the circular part $x_c\! \cdot \mathcal C$ of its contour, we have 
\begin{equation}
\abs{ \frac{F(x,y)}{x^{n+1} y^{p+1}} } \le \frac{ \sup_{\partial \ddom_{x_c}\times \partial \ddom_{Y_c}} |F| }{ x_c^{n+1}(1+\marg)^{n+1} Y_c^{p+1} } = x_c^{-n} Y_c^{-p} \cdot O\m({ (1+\marg)^{-n} } \,.
\end{equation}
Similarly, when $y\in Y_c\! \cdot \mathcal C$, the integrand decays exponentially fast \wrt\ $p\to \infty$. It follows that
\begin{equation}
x_c^n Y_c^p \cdot F_{n,p}
= \m({ \frac{1}{2\pi i} }^2 
  \iint_{\mathcal V \times \mathcal V}
  \frac{F(x_c u,Y_c v)}{u^{n+1} v^{p+1}}\dd u \, \dd v 
  + O\m({ (1+\marg)^{-n} } + O\m({ (1+\marg)^{-p} }
\end{equation}
when $n,p\to \infty$. Thanks to \eqref{eq:F:bivariate asymptotics}, we have
\begin{equation}
\m({ \frac{1}{2\pi i} }^2 
  \iint_{\mathcal V \times \mathcal V}
  \frac{F(x_c u,Y_c v)}{u^{n+1} v^{p+1}}\dd u \, \dd v 
= I\1{reg} + \cst_F \cdot I\1{hom} + I\1{rem}\,,
\end{equation}
where $I\1{reg}$, $I\1{hom}$ and $I\1{rem}$ are defined by replacing $F(x,y)$ in the integral on the \lhs\ by $F\1{reg}(x,y)$, $F\1{hom}((s/\mu)^\theta,t)$ and $o\mn({ \| (s^\theta,t) \|^{\gamma_0} }$, respectively. (Recall that $s:=1-\frac{x}{x_c}$ and $t:=1-\frac{y}{Y_c}$.)
Since $y\mapsto F\1{reg}(x,y)$ is analytic in a neighborhood of $Y_c$, one can deform the second component of the contour of integration of $I\1{reg}$ from $\mathcal V$ to $\mathcal C^c := \partial \disk_{1+\epsilon} \setminus \mathcal C$. Moreover, $F\1{reg}(x_c u,Y_c v)$ is bounded on $\mathcal V \times \mathcal C^c$. So the same argument as before implies that $I\1{reg} = O((1+\marg)^{-p})$. We conclude that when $n,p\to \infty$ at any speed, we have
\begin{equation}\label{eq:coeff asymptotics exp decay}
x_c^n Y_c^p \cdot F_{n,p} \ =\ \cst_F \cdot I\1{hom} + I\1{rem} + O\m({ (1+\marg)^{-n} } + O\m({ (1+\marg)^{-p} } \,.
\end{equation}

Now assume $n\sim v\cdot p^{1/\theta}$. The change of variable $u=1-s$ maps $\mathcal V$ to $\Set{s\in \partial \cone}{|s|\le \tilde{\marg}}$, where $\tilde{\marg} = O(\marg)$. Therefore
\begin{equation}
I\1{hom} \,=\,
\m({ \frac{1}{2\pi i} }^2 \iint_{(\partial \cone)^2} 
     \frac{ F\1{hom} \m({ (s/\mu)^\theta,t }
         }{ (1-s)^{n+1} (1-t)^{p+1} } 
     \idd{|s|\le \tilde{\marg}, |t|\le \tilde{\marg}}
     \dd s \, \dd t \,.
\end{equation}
Using the fact that $F\1{hom}$ is homogenous of degree $\gamma_0$, we get after the rescaling $s \leftarrow s/n$ and $t \leftarrow t/p$ :
\begin{equation}
I\1{hom} \,=\,\frac{1}{n\, p^{1+\gamma_0}} 
\m({ \frac{1}{2\pi i} }^2 \iint_{(\partial \cone)^2} 
     \frac{ F\1{hom} \m({ p\cdot (s/(\mu n))^\theta,t }
         }{ \m({1-s/n}^{n+1} \m({1-t/p}^{p+1} } 
     \idd{|s|\le \tilde{\marg}n, |t|\le \tilde{\marg}p}
     \dd s \, \dd t  \,.
\end{equation}
For $s\in \partial \cone$ and $|s|\le \tilde{\marg} n$, we have $-\Re(s) = |s|\cdot \sin \ang \le \tilde{\marg} \sin \ang \cdot n$. Then, using the estimate $\log(1+x)\ge x-x^2/2$, one can show that
$\mn|{ \m({1-s/n}^{n+1} } \ge \mn|{ 1+\frac{-\Re(s)}{n} }^n \ge \exp (c_1 \cdot |s|)$ with $c_1=\mn({1-\frac12 \tilde{\marg} \sin \ang} \sin \ang$. The same bound holds for $(1-t/p)^{p+1}$. Then it follows from the upper bound \eqref{eq:H_alpha bounds} of $H_\alpha$ that there exists $M<\infty$ such that
\begin{equation}
\abs{ \frac{ F\1{hom} \m({ p\cdot (s/(\mu n))^\theta,t }
          }{ \m({1-s/n}^{n+1} \m({1-t/p}^{p+1} } 
     \idd{|s|\le \tilde{\marg}n, |t|\le \tilde{\marg}p}  }
\ \le\ M\cdot \m({ |\Lambda s|^\theta +|t| }^{\gamma_0}
                e^{-c_1\cdot (|s|+|t|)}
\end{equation}
for all $n,p$ such that $\frac{p^{1/\theta}}{\mu n} \le \Lambda$. The \rhs\ of the abouve inequality is integrable on $(\partial \cone)^2$ and independent of $n,p$. Thus by the dominanted convergence theorem, we have
\begin{equation}
n\, p^{1+\gamma_0} \cdot I\1{hom} 
\cv[n\sim v\cdot p^{1/\theta}]{n,p}
\m({ \frac{1}{2\pi i} }^2 \iint_{(\partial \cone)^2} 
     F\1{hom} \m({ \m({\frac{s}{\mu v}}^\theta,t }
     e^{s+t} \dd s \, \dd t,
\end{equation}
which gives after simplification
\begin{equation}\label{eq:I_alpha integral formula}
I\1{hom} \sim \mu\, I_\alpha(\mu v) 
              \cdot p^{-(\gamma_0+1+1/\theta)}
\qtq{with}
I_\alpha(\lambda) := \m({ \frac{1}{2\pi i} }^2 
                     \iint_{(\partial \cone)^2} 
F\1{hom} \m({s^\theta,t} e^{\lambda s+t} \dd s\,\dd t\,.
\end{equation}
It is not hard to see that $I_\alpha(\lambda)$ is a well-defined analytic function of $\lambda$ for all $\lambda>0$. (Recall that its expression is given without proof in the remark after \Cref{thm:coeff asymptotics}.)

By definition, for all $c_0>0$, one can find $\marg>0$ such that $\abs{ o\m({ \|(s^\theta,t)\|^{\gamma_0} } } \le c_0 \cdot (|s|^\theta+|t|)^{\gamma_0}$ for all $|s|\le \tilde{\marg}$ and $|t|\le \tilde{\marg}$. Then, using similar estimates as for $I\1{hom}$, it is not hard to see that
\begin{equation}
\abs{I\1{rem}} \le c_0 \cdot \m({ 
  M \iint_{(\partial \cone)^2}
  \m({ |s|^\theta+|t| }^{\gamma_0} e^{-c_1\cdot (|s|+|t|)}
  \dd s\,\dd t 
  } \cdot p^{-(\gamma_0+1+1/\theta)} \,.
\end{equation}
The integral is independent of $\marg$. The constant $c_0$ can be made arbitrarily small by taking smaller and smaller~$\marg$. It follows that $I\1{rem} = o\mn({ p^{-(\gamma_0 +1+1/\theta)} }$. Combining this estimate with \eqref{eq:coeff asymptotics exp decay} and \eqref{eq:I_alpha integral formula}, we obtain the bivariate asymptotics \eqref{eq:coeff bivariate asymptotics} of $F_{n,p}$ when $n,p\to \infty$ and $n\sim v\cdot p^{1/\theta}$. This concludes the proof of \Cref{thm:coeff asymptotics}.
\end{proof}

\section{$\Delta$-analyticity of $F(x,y)$}%
\label{sec:Delta-analyticity}

The rest of this paper is devoted to the proof of the following $\Delta$-analyticity result used in the proof \Cref{thm:coeff asymptotics}. 

\begin{proposition}\label{prop:Delta-analyticity}
The function $F(x,y)$ has an analytic continuation in some double $\Delta$-domain $\ddom_{x_c} \times \ddom_{Y_c}$ which is continuous on the boundary. And $y\mapsto F(x_c,y)$ and $y\mapsto \partial_x F(x_c,y)$ are analytic in $\ddom_{Y_c}$. 
\end{proposition}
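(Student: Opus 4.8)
The plan is to write $F(x,y)=\hat F(\hat Y(x),y)$, with $\hat x,\hat F$ as in \Cref{prop:parametrization}, and deduce the claim by composing two facts: first, that $\hat Y=\hat x^{-1}$ extends analytically to a $\Delta$-domain $\ddom_{x_c}$, is continuous on $\cdom_{x_c}$, and maps it into a region $\Omega_{Y_c}$ pinched at $Y_c$; and second, that the parametrized function $\hat F(Y,y)$, as well as the function $\partials_x\hat F(Y,y)$ of \eqref{eq:partials x Fhat}, are analytic on $\Omega_{Y_c}\times\ddom_{Y_c}$ and extend continuously to $\overline{\Omega_{Y_c}}\times\cdom_{Y_c}$. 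Granting these, $F=\hat F\circ(\hat Y,\mathrm{id})$ and $\partial_x F=\partials_x\hat F\circ(\hat Y,\mathrm{id})$ are analytic on $\ddom_{x_c}\times\ddom_{Y_c}$ with continuous boundary extension, and since $\hat Y(x_c)=Y_c$ the two slices $F(x_c,y)=\hat F(Y_c,y)$ and $\partial_x F(x_c,y)=\partials_x\hat F(Y_c,y)$ are just the restrictions of the second fact to $Y=Y_c$, hence analytic in $y$ on $\ddom_{Y_c}$.

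\textbf{Step 1: $\hat Y$ is $\Delta$-analytic at $x_c$.} The function $\hat x$ is analytic on $[0,Y_c)$ (and $\Delta$-analytic at $\rho=Y_c$ in the non-generic phase, by Assumption~\eqref{*}), with $\hat x'(Y_c)=0$ by the definition of the generic$^+$/dilute$^-$ phases, so the classical inverse function theorem breaks down at $Y_c$. I would invoke the variational method of \Cref{sec:variational method} together with the generalized inverse function theorems of \Cref{sec:generalized IFTs} to construct a ``tear-shaped'' domain $\tear$ attached to $[0,Y_c]$ on which $\hat x$ is injective and whose image contains a $\Delta$-domain at $x_c$ --- this is the content of \Cref{lem:def tear}. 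Inverting on $\tear$ produces the continuation of $\hat Y$ to $\ddom_{x_c}$, continuous on $\cdom_{x_c}$, and the local model $\hat Y(x)-Y_c\approx -Y_c(s/\mu)^\theta$ from \Cref{lem:x asymptotics} (with $s=1-x/x_c$, $\theta=\tfrac1{\alpha-1}\in(\tfrac12,1)$) shows that $\Omega_{Y_c}:=\hat Y(\ddom_{x_c})$ lies in a sector at $Y_c$ of half-angle $\theta(\pi/2+\ang)<\pi/2+\ang$; after matching $\Delta$-parameters this gives $\Omega_{Y_c}\subseteq\ddom_{Y_c}$ and places $\Omega_{Y_c}$ inside the domain where $B$ is $\Delta$-analytic. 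The second, independent, ingredient is that $x_c$ is the \emph{only} singularity of $\hat Y$ on the circle $|x|=x_c$; since $\hat Y$ has nonnegative coefficients this is exactly where the aperiodicity and $\abs{\supp B}=\infty$ hypotheses enter, via the deferred technical \Cref{lem:technical} --- concretely, one rules out that for $|x|=x_c$, $x\ne x_c$, the boundary value $\hat Y(x)$ is a critical point of $\hat x$ or sits on the boundary of analyticity of $\hat x$.

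\textbf{Step 2: analyticity of $\hat F$ and $\partials_x\hat F$.} Recall $\hat F(Y,y)=\tfrac12+\tfrac{(Y-y)\sqrt{q(Y,y)}-\phi(Y)}{2y}$. Here $\phi$ is rational in $Y,B(Y),B'(Y)$ with denominator $B(Y)+YB'(Y)$, which is nonzero for $Y\ge 0$ since $b_0>0$ and $\mathbf b\ge 0$; by Lemma~\refp{3}{lem:algebraic properties}, $q$ is an iterated integral of $\partial_Y\partial_y Q$, hence a polynomial in $\phi(Y),\hat x(Y),y,B(y)$, and Lemma~\refp{2}{lem:algebraic properties} (second-order vanishing of $Q$ on the diagonal) makes $q$ analytic even across $Y=y$. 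So $q$ is analytic wherever $B$ is analytic in each slot, which covers $\Omega_{Y_c}\times\ddom_{Y_c}$ in both phases (using $\Omega_{Y_c}\subseteq\ddom_{Y_c}$ in the non-generic case). It remains to fix the branch of $\sqrt q$, for which I would show $q\ne 0$ on $\overline{\Omega_{Y_c}}\times\cdom_{Y_c}$: on the diagonal, $q(Y,Y)=\phi'(Y)=\partials_x\phi(Y)\cdot\hat x'(Y)>0$ for $Y\in(0,Y_c)$ (as $\partials_x\phi=B+YB'>0$ and $\hat x'>0$ there); near the corner $(Y_c,Y_c)$, the asymptotics $q\sim\cst_q H_\alpha(S,t)$ of \Cref{lem:q asymptotics} and the lower bound of \Cref{lem:H_alpha bounds} keep $q$ away from $0$ off the corner (which is excluded from the product domain); and a positivity-and-compactness argument in the spirit of the proof of \Cref{lem:H_alpha bounds}, plus a final shrinking of $\marg,\ang$, covers the rest. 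Fixing $\sqrt{q(0,0)}=1$ then pins down a single-valued branch along the connected region $\{q\ne 0\}$ joining $(0,0)$ to $\Omega_{Y_c}\times\ddom_{Y_c}$, so $\hat F$ is analytic there; the factor $1/y$ is harmless after shrinking $\marg$, and continuity to the closure follows from that of the ingredients. Exactly the same applies to $\partials_x\hat F(Y,y)$ via formula \eqref{eq:partials x Fhat}, the only extra point being that $\partials_x Q$ vanishes on the diagonal (Lemma~\refp{2}{lem:algebraic properties}), so the factor $1/(Y-y)$ there is harmless.

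\textbf{Main obstacle.} The hard part is Step 1: upgrading $\hat Y$ from a function on a slit disk to one on a genuine $\Delta$-domain at $x_c$. This needs both (i) the generalized inverse function theorem and the tear-shaped-domain construction of \Cref{lem:def tear} to continue the inverse slightly past the critical point $Y_c$ of $\hat x$ --- delicate especially in the dilute$^-$ case, where $Y_c=\rho$ is at once a critical point of $\hat x$ and the edge of analyticity of $B$ --- and (ii) the uniqueness of the dominant singularity of $\hat Y$ on $|x|=x_c$ (\Cref{lem:technical}), which is the only place the arithmetic hypotheses on $\supp B$ are used. Everything in Step 2, by contrast, is a routine propagation of analyticity through rational operations, one integration, and one square root taken on a domain where its radicand does not vanish.
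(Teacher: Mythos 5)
Your overall route is the one the paper itself takes: write $F(x,y)=\hat F(\hat Y(x),y)$, extend $\hat Y$ to $\cdom_{x_c}$ via the tear-domain construction of \Cref{lem:def tear} (generalized inverse function theorem), use the uniqueness of the dominant singularity from \Cref{lem:technical} on the rest of the circle, and treat the slices $F(x_c,y)$, $\partial_xF(x_c,y)$ through $\partials_x\hat F(Y_c,y)$ as in \eqref{eq:partials x Fhat}. The genuine gap is in your Step 2, where you declare the nonvanishing of $q$ away from the corner $(Y_c,Y_c)$ to be a ``routine positivity-and-compactness'' matter. It is not: $q$ is a complex-valued function of two complex variables, and the set on which you need $q\ne 0$ (essentially $\cV\times\cdisk_{Y_c}$, and then $\ctear\times\cdom_{Y_c}$) is four-real-dimensional; positivity of $q(Y,Y)=\phi'(Y)$ on the real segment $(0,Y_c)$ together with the corner asymptotics of \Cref{lem:q asymptotics,lem:H_alpha bounds} controls only a thin slice of it, and compactness by itself cannot exclude off-diagonal complex zeros (say $Y\in\partial\V$ with $|Y|$ close to $Y_c$ and $y\in\partial\disk_{Y_c}$). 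In the paper this statement is exactly the third clause of \Cref{lem:technical}, $q(Y,y)\ne0$ on $\cV\times\cdisk_{Y_c}\setminus\{(Y_c,Y_c)\}$, and its proof (\Cref{sec:technical lemma/unique zero}) is one of the main technical efforts: it uses the absolute convergence of $F$ and $\partial_yF$ on the closed bidisk (\Cref{lem:domain of convergence}, which your argument never touches) to run a quadratic-method argument showing any zero of $Q$ is simultaneously a zero of $\partial_yQ$ and $\partials_xQ$ (\Cref{lem:Q double zero}, including a delicate boundary/Newton--Puiseux analysis), and then an elimination argument (\Cref{lem:q no zero}) showing a zero would force $\phi(Y)+y=B(y)=B'(y)=0$ and deriving a contradiction. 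Since you invoke only the ``critical point / circle'' part of \Cref{lem:technical} in Step 1 and supply no substitute for the $q\ne0$ part, the single-valued branch of $\sqrt{q}$ on the product domain --- hence the analytic continuation of $\hat F$ and $\partials_x\hat F$ --- is not justified; your closing remark that Step 2 is routine is precisely where the paper's hardest work lives.

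A secondary, smaller issue: you justify $B(Y)+YB'(Y)\ne0$ (the denominator of $\phi$) by positivity of the weights, but that only covers $Y\in[0,\rho)$; for the complex domain $\cV$ (and then $\ctear$) the paper needs the pole analysis in \Cref{lem:def V} and, later, the variational argument of \Cref{lem:B no zero}. Modulo these two points --- i.e.\ if you cite the full statement of \Cref{lem:technical} rather than only its first two clauses, and replace the positivity claim by the argument of \Cref{lem:def V} --- your proof coincides with the paper's \Cref{sec:Delta-analyticity/existence,sec:Delta-analyticity/uniqueness,sec:Delta-analyticity/extension}.
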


We prove this result under the general assumptions specified in the introduction (in particular, we still restrict ourselves to the generic and the dilute phases). The proof comes in three steps, which are organized as follows:
In \Cref{sec:Delta-analyticity/existence}, we prove that $F(x,y)$ is absolutely convergent on the double disk $\cdisk_{x_c}\times \cdisk_{Y_c}$, so in this sense $(x_c,Y_c)$ is indeed a dominant singularity of $F(x,y)$.
In \Cref{sec:Delta-analyticity/uniqueness}, we check that $(x_c,Y_c)$ is essentially the only dominant singularity, in the sense that $F(x,y)$ is analytic everywhere on the boundary of $\cdisk_{x_c}\times \cdisk_{Y_c}$ except when $x=x_c$. This part relies crucially on \Cref{lem:technical} which, in spite of its simple statement, has a quite long and technical proof. We postpone the proof of \Cref{lem:technical} to \Cref{sec:technical lemma}. 
\Cref{sec:technical lemma} make use of some analysis results in \Cref{sec:variational method,sec:generalized IFTs}, which are organized separately because they are not specific to the parking model, and is of independent interest.
Finally, in \Cref{sec:Delta-analyticity/extension}, we combine the conclusion of \Cref{sec:Delta-analyticity/uniqueness} with some asymptotic expansions from \Cref{sec:func asymptotics} to construct the global analytic continuation of $F(x,y)$ claimed in \Cref{prop:Delta-analyticity}.

\subsection{Domain of convergence of $F(x,y)$}\label{sec:Delta-analyticity/existence}

Recall from \Cref{sec:parametric solution} that $\hat Y$ is a power series with nonnegative coefficients, and is the functional inverse of $\hat x$. From the definition of $Y_c$, we see that $\hat Y$ induces a homeomorphism from $[0,x_c]$ to $[0,Y_c]$ that is analytic on $[0,Y_c)$. In particular, the series $\hat Y$ converges absolutely at $x_c$ and $\hat Y(x_c)=Y_c$. 

\begin{lemma}[Domain of convergence of $F(x,y)$]\label{lem:domain of convergence}
The power series $F(x,y)$ and $\partial_y F(x,y)$ are absolutely convergent on $\cdisk_{x_c} \times \cdisk_{Y_c}$, and $\partial_x F(x,y)$ is absolutely convergent on $\disk_{x_c} \times \cdisk_{Y_c}$.
\end{lemma}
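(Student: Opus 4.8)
The plan is to reduce all three convergence claims to finiteness at positive real ``corner'' points — exploiting that $F$, $\partial_xF$ and $\partial_yF$ all have nonnegative coefficients — and then to read off the boundary values from the parametrization $F(x,y)=\hat F(\hat Y(x),y)$. Since $F$ has nonnegative coefficients by definition, so do $\partial_xF$ and $\partial_yF$ (differentiation multiplies the coefficient of $x^ny^p$ by $n$, resp.\ by $p$). Hence it suffices to show $F(x_c,Y_c)<\infty$, $\partial_yF(x_c,Y_c)<\infty$, and $\partial_xF(x,Y_c)<\infty$ for every $x\in[0,x_c)$: the first two then force absolute convergence of $F$ and $\partial_yF$ on $\cdisk_{x_c}\times\cdisk_{Y_c}$ via $\sum a_{n,p}|x|^n|y|^p\le\sum a_{n,p}x_c^nY_c^p$, and the third forces it for $\partial_xF$ on $\disk_{x_c}\times\cdisk_{Y_c}$. (Throughout, these ``values'' are the a priori possibly infinite sums of nonnegative terms, equal by monotone convergence to the suprema of the corresponding functions over the real sub-rectangles.) Recall that $\hat Y$ is a homeomorphism $[0,x_c]\to[0,Y_c]$, analytic on $[0,x_c)$ and convergent at $x_c$ with $\hat Y(x_c)=Y_c$.

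The crucial input, which I would establish first, is
\begin{equation}\label{eq:plan-q-pos}
q(Y,y)>0 \qquad\text{for all } (Y,y)\in[0,Y_c]^2\setminus\{(Y_c,Y_c)\};
\end{equation}
this makes $\hat F(Y,y)$ — with the branch of $\sqrt{q}$ equal to $1$ at the origin — a genuine real-analytic function on a complex neighbourhood of $[0,Y_c]^2\setminus\{(Y_c,Y_c)\}$, rather than a merely formal object. To prove \eqref{eq:plan-q-pos}, note $q(0,y)\equiv1$, and for $Y\in(0,Y_c]$ set $u(Y)=YB(Y)$ and $h(Y)=u'(Y)=B(Y)+YB'(Y)$. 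Differentiating $\hat x=u/h^2$ yields $2u''u-(u')^2=-h^3\hat x'$, and since $\hat x'\ge0$ on $[0,Y_c]$ with $\hat x'>0$ on $(0,Y_c)$ (valid in the generic and in the dilute phase alike, by the definition of $Y_c$ and \Cref{lem:concave critical point}), the map $Y\mapsto\sqrt{u(Y)}$ is concave on $[0,Y_c]$, strictly concave on $(0,Y_c)$. Substituting $\hat x(Y)=(\phi(Y)+Y)^2/(4YB(Y))$ — valid because $Q(Y,Y)=0$ — turns the definition of $Q$ into
\begin{equation}
Q(Y,y)=(\phi(Y)+Y)^2\bigl(L(y)^2-v(y)^2\bigr), \qquad L(y)=\frac{\phi(Y)+y}{\phi(Y)+Y}, \quad v(y)=\sqrt{u(y)/u(Y)},
\end{equation}
and the identities $v(Y)=L(Y)=1$ and $v'(Y)=h(Y)/(2YB(Y))=1/(\phi(Y)+Y)=L'(Y)$ exhibit $L$ as precisely the tangent line to $v$ at $y=Y$. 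Concavity then gives $0\le v(y)\le L(y)$ on $[0,Y_c]$ with equality only at $y=Y$, so $Q(Y,y)\ge0$ with strict inequality for $y\ne Y$, while $q(Y,Y)=\phi'(Y)=h(Y)\hat x'(Y)>0$ for $Y<Y_c$ by Lemma~\refp{3}{lem:algebraic properties} and Lemma~\refp{1}{lem:algebraic properties}. (For $\phi(Y)+Y>0$ I use, separately, that $g(Y):=B(Y)-YB'(Y)$ is nonincreasing with $g(0)=b_0>0$ and that $\hat x'<0$ wherever $g$ vanishes, which forces $g>0$ — hence $\phi>0$ — on $[0,Y_c]$.) Granting \eqref{eq:plan-q-pos}, the identity $F(x,y)=\hat F(\hat Y(x),y)$, which holds near $(0,0)$ as convergent power series and globally as formal power series, extends by the identity theorem to an identity of analytic functions on a complex neighbourhood of $[0,x_c)\times[0,Y_c)$; in particular $F$ is finite there, and passing to the limit $x\uparrow x_c$ (using continuity of $\hat Y$ and of $\hat F$ along the real axis) gives also $F(x_c,y)=\hat F(Y_c,y)$ for $y\in[0,Y_c)$ and $\partial_xF(x,y)=\partials_x\hat F(\hat Y(x),y)$ for $x<x_c$, $y<Y_c$.

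It then remains to take three one-sided limits. First, by \Cref{lem:q asymptotics} and \Cref{lem:H_alpha bounds}, $(Y-y)\sqrt{q(Y,y)}=O\bigl(\norm{(S,t)}^{\alpha/2}\bigr)\to0$ as $(Y,y)\to(Y_c,Y_c)$, so $\hat F(Y,y)\to\tfrac12-\tfrac{\phi(Y_c)}{2Y_c}$, which is finite ($Y_c<\rho$ in the generic phase, $B''(\rho)<\infty$ in the non-generic one); hence $F(x_c,Y_c)=\tfrac12-\tfrac{\phi(Y_c)}{2Y_c}<\infty$. Second, \eqref{eq:F:parametrization} gives $\partial_y\hat F(Y_c,y)=\frac{yA'(y)-A(y)+\phi(Y_c)}{2y^2}$ with $A(y):=(Y_c-y)\sqrt{q(Y_c,y)}$; writing $A'(y)=-\partial_yQ(Y_c,y)/(2A(y))$ via Lemma~\refp{4}{lem:algebraic properties}, and using $A(y)=O(t^{\alpha/2})$ (\Cref{lem:q asymptotics}) together with $\partial_yQ(Y_c,y)=O(t^{\alpha-1})$ — obtained from the Taylor expansion of $Q(Y_c,\cdot)$ at $Y_c$ when $Y_c<\rho$ (using $Q(Y_c,Y_c)=\partial_yQ(Y_c,Y_c)=\partial_y^2Q(Y_c,Y_c)=0$), and from Assumption~\eqref{*} when $Y_c=\rho$ — one gets $A(y),A'(y)\to0$ as $y\uparrow Y_c$ (here $\tfrac\alpha2>1$), whence $\partial_yF(x_c,Y_c)=\lim_{y\uparrow Y_c}\partial_y\hat F(Y_c,y)=\tfrac{\phi(Y_c)}{2Y_c^2}<\infty$. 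Third, for $x<x_c$ the point $(Y,Y_c)$ with $Y:=\hat Y(x)<Y_c$ is off the corner, so $q(Y,Y_c)>0$, $Y-Y_c\ne0$, and $\partials_xQ(Y,Y_c)$, $\partials_x\phi(Y)$ are finite; then \eqref{eq:partials x Fhat} gives $\partials_x\hat F(Y,y)\to\partials_x\hat F(Y,Y_c)<\infty$ as $y\uparrow Y_c$, i.e.\ $\partial_xF(x,Y_c)=\partials_x\hat F(\hat Y(x),Y_c)<\infty$.

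The main obstacle is \eqref{eq:plan-q-pos}: the whole argument collapses unless $\hat F$ is a bona fide analytic function on the real sub-rectangle, and its proof hinges on the concavity of $Y\mapsto\sqrt{YB(Y)}$ together with the (apparently model-specific) coincidence that the affine function $(\phi(Y)+y)/(\phi(Y)+Y)$ is exactly its tangent line at $y=Y$.
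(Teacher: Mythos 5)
Your argument is correct, but it takes a genuinely different route from the paper. The paper's proof is a two-line trick: it considers the nonnegative-coefficient combination of $\partial_y F$ and $F$, substitutes the diagonal $y=\hat Y(x)$, and observes that under the parametrization this collapses (via Lemma~\ref{lem:algebraic properties}) to the explicit one-variable series $\sqrt{\phi'(\hat Y(x))}\big/\big(2\hat Y(x)\big)$, which stays bounded as $x\uparrow x_c$ because $\phi'=(B+YB')\,\hat x'$ is positive and bounded on $[0,Y_c)$; Pringsheim plus monotone convergence then give absolute convergence of the double series at $(x_c,Y_c)$, and the statement for $\partial_x F$ on the open disk in $x$ follows automatically. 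This avoids any information about $q(Y,y)$ off the diagonal and needs no corner asymptotics. You instead prove the real-positivity statement $q>0$ on $[0,Y_c]^2\setminus\{(Y_c,Y_c)\}$ — your concavity/tangent-line argument for $\sqrt{YB(Y)}$ is correct and rather elegant, and it is a real-variable shadow of what the paper only establishes much later, in the complex domain, via \Cref{lem:technical} and \Cref{lem:q no zero} — and then you read off finite boundary values of $F$, $\partial_y\hat F(Y_c,\cdot)$ and $\partials_x\hat F(\cdot,Y_c)$ from the parametrization, using the corner asymptotics of \Cref{lem:q asymptotics,lem:H_alpha bounds} (no circularity: those results do not rely on this lemma). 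Your route is longer but buys more: explicit corner values, positivity of $q$ on the real square, and a concrete analytic handle on $\hat F$ there; the paper's route buys brevity and independence from Section~\ref{sec:func asymptotics}.

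One step you should make explicit: the passage from ``$\hat F(\hat Y(x),y)$ is an analytic continuation of the germ of $F$ at the origin on a neighbourhood of $[0,x_c)\times[0,Y_c)$'' to ``the power series $F$ converges there'' is not the identity theorem alone (compare $\sum x^n$ at $x=2$). It needs the nonnegativity of the coefficients through a Pringsheim-type ray argument: for $(x_0,y_0)$ in the open rectangle, the series $\sum_m\big(\sum_{n+p=m}a_{n,p}x_0^ny_0^p\big)r^m$ has nonnegative coefficients and its sum continues analytically to a neighbourhood of $[0,1]$ along the segment $r\mapsto(rx_0,ry_0)$, so its radius of convergence exceeds $1$. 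This is the same mechanism the paper invokes in one variable (``finite limit at $x_c^-$ implies absolute convergence at $x_c$''), so it is a presentational rather than a substantive gap, but as written your ``in particular $F$ is finite there'' skips it.
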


\begin{proof}
Since $F(x,y)$ has nonnegative coefficients, to prove the lemma it suffices to show that the series $\partial_y F(x,y) + y\cdot F(x,y)$ converges absolutely at $(x_c,Y_c)$. Thanks to the parametrization of $F(x,y)$ and Lemma~\refp{4}{lem:algebraic properties}, we have
\begin{equation}
\partial_y F(\hat x(Y),y) + y\cdot F(\hat x(Y),y)
\ =\ \partial_y \hat F(Y,y) + y\cdot \hat F(Y,y)
\ =\ \frac1{2y} \cdot \frac{-\partial_y Q(Y,y)}{2(Y-y)\sqrt{q(Y,y)}} 
\,.
\end{equation}
Using the identities in Lemma~\refp{2}{lem:algebraic properties}, it is not hard to see that $q(Y,Y) = \phi'(Y)$ and $-\frac{\partial_y Q(Y,y)}{2(Y-y)} \to \phi'(Y)$ as $y\to Y$. Therefore
\begin{equation}
\partial_y F(\hat x(Y),Y) + Y\cdot F(\hat x(Y),Y) 
\ =\ \frac{1}{2Y} \cdot \sqrt{\phi'(Y)} \,.
\end{equation}
By taking $Y=\hat Y(x)$ in the above equation, we obtain that $f(x) := \partial_y F(x,\hat Y(x)) + \hat Y(x)\cdot F(x,\hat Y(x)) = \frac{ \sqrt{\phi'(\hat Y(x))} }{ 2\hat Y(x) }$. It is clear that the series $f(x)$ also has nonnegative coefficients.

By Lemma~\refp{1}{lem:algebraic properties}, we have $\phi'(Y) = (B(Y)+YB'(Y))\cdot \hat x'(Y)$, which is analytic and strictly positive on $[0,Y_c)$. Therefore $f(x)$ has an analytic continuation on $[0,x_c)$ with a finite limit at $x_c^-$. It follows that it converges absolutely at $x_c$. 
This implies that the double series $\partial_y F(x,y) + y\cdot F(x,y)$ converges absolutely at $(x_c,Y_c)$, and completes the proof of the lemma.
\end{proof}

\begin{figure}[t]
\centering
\includegraphics[scale=1]{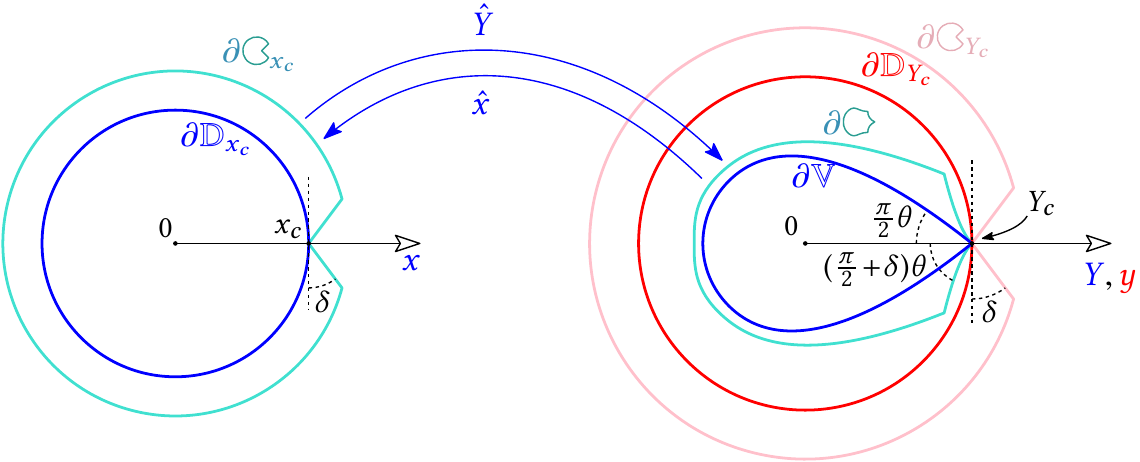}
\caption{The boundaries of various domains. The angles indicate the directions of their half tangents at $x_c$ (for $\ddom_{x_c}$) or at $Y_c$ (for $\V, \tear, \ddom_{Y_c}$). The function $\hat x$ induces a conformal bijection from $\V$ to $\disk_{x_c}$, and a conformal bijection from $\tear\equiv \tear^{\marg,\ang}$ to $\ddom_{x_c} \equiv \ddom_{x_c}^{\marg,\ang}$. Its inverse is $\hat Y$. 
}\label{fig:delta-domain-etc}
\end{figure}

\subsection{Uniqueness of dominant singularity of $F(x,y)$}\label{sec:Delta-analyticity/uniqueness}

By convention, we say that a function is \emph{holomorphic} (resp.\ \emph{meromorphic}) on an arbitrary set $D \subseteq \complex^n$ if it is continuous on $D$ and holomorphic (resp.\ meromorphic) in the interior of $D$. 
A function is a \emph{conformal bijection} from $D$ to $D'$ if it is bijective and holomorphic on $D$, and its inverse is holomorphic on $D'$.

As the series $\hat Y$ has nonnegative coefficients and converges  at $x_c$, it defines a holomorphic function on $\cdisk_{x_c}$. Let $\V \!=\! \hat Y(\disk_{x_c})$ and $\cV \!=\! \hat Y(\cdisk_{x_c})$. It is a simple exercise to show that $\V$ is open and $\cV$ is indeed the closure of $\V$. 
\Cref{fig:delta-domain-etc} depicts the shape of $\V$ and its relation to various other domains, some of which will be defined later.
The set $\cV$ is a natural domain for the variable $Y$, in the following sense:

\begin{lemma}[Analyticity w.r.t.\ $Y\in \cV$]\label{lem:def V} We have $\cV \subseteq \cdisk_{Y_c}$, and $\hat x$ induces a conformal bijection from $\cV$ to $\cdisk_{x_c}$. 
Moreover, the function $\phi$ is holomorphic on $\cV$, and $q(Y,y)$ is holomorphic on $\cV \times \cdisk_{Y_c}$.
\end{lemma}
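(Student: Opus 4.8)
The plan is to derive every assertion from the single fact that $\hat Y$ is a homeomorphism of $\cdisk_{x_c}$ onto $\cV$, holomorphic in the interior, with $\hat x$ for its inverse. Two ingredients are immediate: since $\hat Y$ has nonnegative Taylor coefficients and converges at $x_c$, its defining series converges absolutely and uniformly on $\cdisk_{x_c}$, so $\hat Y$ is holomorphic there; and for $|x|\le x_c$ one has $|\hat Y(x)|\le\hat Y(|x|)\le\hat Y(x_c)=Y_c$, whence $\cV=\hat Y(\cdisk_{x_c})\subseteq\cdisk_{Y_c}$, with strict inequality when $|x|<x_c$, so that also $\V=\hat Y(\disk_{x_c})\subseteq\disk_{Y_c}$. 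Since $Y_c\le\rho$, the rational expression $\hat x(Y)=YB(Y)/(B(Y)+YB'(Y))^2$ defines a meromorphic function on $\disk_{Y_c}$ (in the dilute phase, where $Y_c=\rho$, one uses in addition that $B,B',B''$ extend continuously to $\cdisk_\rho$, which holds because $B''(\rho)<\infty$).

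The crux is to show that $\hat x$ has no poles on $\cV$. If $Y_0=\hat Y(x_0)$ with $x_0\in\disk_{x_c}$ were a pole, then, since $\hat Y$ is non-constant and hence $\hat Y^{-1}$ of the discrete pole set of $\hat x$ is discrete, the relation $\hat x(\hat Y(x))=x$ — valid near $0$ because $\hat Y$ is the formal inverse of $\hat x$ and $\hat x'(0)=1/b_0\ne0$ — would extend over a punctured neighbourhood of $x_0$, and letting $x\to x_0$ there would force $x_0=\lim\hat x(\hat Y(x))=\infty$, absurd. Hence $\hat x$ is holomorphic on $\V$ and $\hat x\circ\hat Y=\mathrm{id}$ on all of $\disk_{x_c}$; in particular $\hat Y$ has a left inverse, is injective on $\disk_{x_c}$, and $\hat Y\colon\disk_{x_c}\to\V$ is a biholomorphism whose inverse $\hat x|_{\V}$ is therefore $\disk_{x_c}$-valued, hence bounded by $x_c$ on $\V$. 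A pole of $\hat x$ at a point of $\partial\V$ would make $|\hat x|$ blow up along any sequence in $\disk_{Y_c}$ tending to that point, contradicting this boundedness along the points of $\V$ accumulating there. (A zero of $B+YB'$ on $\cV$ compensated by a zero of $YB$ of at least twice the order is not a pole and is harmless: a one-line order count, using $B+YB'$ rather than its square, shows $\hat x,\phi,q$ all stay holomorphic.) Thus $\hat x$ has no poles on $\cV$, so it is holomorphic on $\cV$; continuity then propagates $\hat x\circ\hat Y=\mathrm{id}$ to $\cdisk_{x_c}$, making $\hat Y\colon\cdisk_{x_c}\to\cV$ a continuous bijection with holomorphic inverse $\hat x|_{\cV}$. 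That is exactly the statement that $\hat x$ induces a conformal bijection $\cV\to\cdisk_{x_c}$.

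For $\phi$, I would invoke the identity $\hat F(Y,0)=1-b_0\,\hat x(Y)/\phi(Y)$, i.e.\ $\phi(Y)=b_0\,\hat x(Y)/(1-\hat F(Y,0))$. Here $\hat F(Y,0)=F_0(\hat x(Y))$ with $F_0(x)=F(x,0)$, which has nonnegative coefficients and (by \Cref{lem:domain of convergence}) converges on $\cdisk_{x_c}$; moreover $\sup_{\cdisk_{x_c}}|F_0|=F_0(x_c)=1-\frac{b_0}{B(Y_c)(1-\psi(Y_c)^2)}<1$, the inequality because $\psi(Y_c)^2<1$, which in turn follows from $\phi(Y_c)>0$ — indeed $\phi(0)=0$ and $\phi'=(B+YB')\,\hat x'>0$ on $[0,Y_c)$ by \refp{1}{lem:algebraic properties}, while $B+YB'>0$ on $[0,Y_c]$. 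Hence $1-\hat F(Y,0)$ is holomorphic and bounded away from $0$ on $\cV$, so $\phi$ is holomorphic on $\cV$. For $q$: $B,B',B''$ are holomorphic on $\cdisk_{Y_c}$ (again using $B''(\rho)<\infty$ in the dilute phase), so $Q(Y,y)=(\phi(Y)+y)^2-4yB(y)\hat x(Y)$ and all its $y$-derivatives are holomorphic on $\cV\times\cdisk_{Y_c}$; since $\cdisk_{Y_c}$ is convex and $Q(Y,Y)=\partial_yQ(Y,Y)=0$ for every $Y$ by \refp{2}{lem:algebraic properties}, Taylor's formula with integral remainder in the variable $y$ yields
\[
q(Y,y)=\frac{Q(Y,y)}{(Y-y)^2}=\int_0^1(1-s)\,\partial_y^2Q(Y,\,Y+s(y-Y))\,ds,
\]
whose right-hand side is manifestly holomorphic on $\cV\times\cdisk_{Y_c}$.

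The only real difficulty lies in the second paragraph — upgrading the formal relation $\hat x\circ\hat Y=\mathrm{id}$ to a genuine conformal correspondence and ruling out poles of $\hat x$ on $\cV$. The interior case is dispatched by the open-mapping/blow-up argument and the boundary case by the boundedness of $\hat x=\hat Y^{-1}$; the point demanding the most care is the dilute phase, where $\cV$ may abut $\partial\disk_\rho$. There one first checks, from the nonnegativity (in fact aperiodicity) of the coefficients of $\hat Y$ together with the continuity of $B,B',B''$ up to $\rho$, that $\cV$ meets $\partial\disk_\rho$ only at $Y_c$, so that $\hat x$, $\phi$ and $q$ still extend across that contact point.
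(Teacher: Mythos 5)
Your overall strategy for the conformal bijection is the paper's: extend the formal identity $\hat x\circ\hat Y=\mathrm{id}$ by analytic continuation, deduce injectivity of $\hat Y$ and exclude poles of $\hat x$ on $\cV$ by a blow-up-versus-boundedness argument. Where you genuinely diverge you are on solid ground: for $\phi$ you use $\phi=b_0\,\hat x/(1-F_0\circ\hat x)$ together with $\sup_{\cdisk_{x_c}}|F_0|=F_0(x_c)<1$ (which rests on \Cref{lem:domain of convergence}, proved before this lemma, so no circularity), whereas the paper runs a multiplicity count at common zeros of $B$ and $B+YB'$ — your parenthetical order count is in fact that argument; and for $q$ your one-variable Taylor formula $q(Y,y)=\int_0^1(1-s)\,\partial_y^2Q(Y,Y+s(y-Y))\,ds$ is a clean alternative to the paper's split into off-diagonal/diagonal cases and its use of \Cref{lem:q asymptotics} to get continuity at $(Y_c,Y_c)$; it only needs $B,B',B''$ continuous on $\cdisk_{Y_c}$, which you correctly extract from $B''(\rho)<\infty$ and nonnegativity of the $b_l$.

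The gap is exactly at the point you flag at the end: points of $\cV$ on the circle $\partial\disk_{Y_c}$ in the non-generic phase. Having set up $\hat x$ as meromorphic only on the open disk, you dispose of such points by asserting that $\cV$ meets $\partial\disk_\rho$ only at $Y_c$, ``from the nonnegativity (in fact aperiodicity) of the coefficients of $\hat Y$ together with the continuity of $B,B',B''$ up to $\rho$''. That statement is true, but it is not a routine check and does not follow from nonnegativity or from continuity of $B$: it is the first assertion of \Cref{lem:technical}, proved in \Cref{sec:technical lemma/aperiodicity} by deducing aperiodicity of $\hat Y$ from aperiodicity of $B$ via Lagrange inversion and \Cref{lem:aperiodicity transfer}. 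As written, you are leaning on a later, nontrivial lemma without proof and with the wrong provenance. The repair is easy and is what the paper does: since $B,B',B''$ are continuous on $\cdisk_{Y_c}$ (and, by $\Delta$-analyticity, $B$ is analytic in a neighborhood of every point of the circle except $Y_c=\rho$), the functions $\hat x$ and $\phi$ are meromorphic on the closed disk $\cdisk_{Y_c}$ in the paper's sense, so the identity $\hat x\circ\hat Y=\mathrm{id}$ on $\cdisk_{x_c}$, your boundedness argument, and the order count apply verbatim at circle points of $\cV$ (at $Y_c$ itself there is no denominator issue since $B(\rho)+\rho B'(\rho)>0$); no information on where $\cV$ touches the circle is needed. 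With that substitution your proof is complete.
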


\begin{proof}
The series $\hat Y$ has nonnegative coefficients. Hence $|\hat Y(x)|\le \hat Y(x_c) = Y_c$ for all $x\in \cdisk_{x_c}$, that is, $\cV \subseteq \cdisk_{Y_c}$. 
By Assumption~\eqref{*}, $B(y)$ has an analytic continuation on $\cdisk_{Y_c}$ that is $C^2$-continuous at $Y_c$. Since $\hat x(Y)$ is a rational function of $Y$, $B(Y)$ and $B'(Y)$, it is well-defined and meromorphic in $\cdisk_{Y_c}$. We have $\hat x(\hat Y(x))=x$ for all $x$ in some neighborhood of $0$. Thanks to the uniqueness of analytic continuation, the same identity holds for all $x\in \cdisk_{x_c}$. It follows that $\hat Y$ is injective on $\cdisk_{x_c}$, hence defines a bijection from $\cdisk_{x_c}$ to $\cV$. This implies that its inverse $\hat x$ has no pole on $\cV$, and therefore induces a conformal bijection from $\cV$ to $\cdisk_{x_c}$. 

Recall that $\hat x(Y) \!=\! \frac{YB(Y)}{(B(Y)+YB'(Y))^2}$ and $\phi(Y) \!=\! Y\frac{B(Y)-YB'(Y)}{B(Y)+YB'(Y)}$. Like $\hat x$, the function $\phi$ is also meromorphic on $\cdisk_{Y_c} \supset \cV$. Assume that it has a pole $Y_*\in \cV$, that is, $B(Y_*)+Y_*B'(Y_*)=0$. Then $Y_*$ is at least a double zero of $(B(Y)+YB'(Y))^2$. Since $\hat x(Y_*)$ is finite, we must have $B(Y_*)=0$. This implies that $Y_*\ne 0$ and therefore $Y_*$ is a zero of the same multiplicity of $B(Y)-YB'(Y)$ and of $B(Y)+YB'(Y)$. It follows that $\phi$ is finite at $Y_*$. This contradicts the assumption that $Y_*$ is a pole of $\phi$. Hence $\phi$ is holomorphic on $\cV$.

Recall that $Q(Y,y)=(\phi(Y)+y)^2 - 4yB(y)\cdot \hat x(Y)$ and $q(Y,y) = \frac{Q(Y,y)}{(Y-y)^2}$ for $Y\ne y$. Since $\hat x$ and $\phi$ are holomorphic on $\cV$ and $B$ is holomrphic on $\cdisk_{Y_c}$, the function $q$ is holomorphic on $\cV \times \cdisk_{Y_c}$ away from the diagonal. 
For $(y,y)\ne (Y_c,Y_c)$ on the diagonal of $\cV \times \cdisk_{Y_c}$, the function $Q$ is analytic in a neighborhood of $(y,y)$. This is true even if $(y,y)$ is on the boundary of $\cV \times \cdisk_{Y_c}$, because by assumption $B$ is analytic in a $\Delta$-domain~$\ddom_{Y_c}$.
Then the integral formula of Lemma~\refp{3}{lem:algebraic properties} shows that $q$ is also analytic at $(y,y)$.
Finally, by \Cref{lem:q asymptotics} we have $q(Y,y)\to q(Y_c,Y_c)=0$ when $(Y,y)\to (Y_c,Y_c)$ in $\cV \times \cdisk_{Y_c}$. 
Hence $q$ is continuous at $(Y_c,Y_c)$ in $\cV \times \cdisk_{Y_c}$.
This shows that $q(Y,y)$ is analytic in the interior, and continuous on the boundary of $\cV \times \cdisk_{Y_c}$.
\end{proof}

The proof of the following lemma is rather long and technical, and is deferred to \Cref{sec:technical lemma}.

\begin{lemma}\label{lem:technical}
We have $\cV \setminus \{Y_c\} \subseteq \disk_{Y_c}$. Moreover,
$\hat x'(Y)\ne 0$ on $\cV \setminus \{Y_c\}$ and $q(Y,y)\ne 0$ on $\cV \times \cdisk_{Y_c} \setminus \{(Y_c,Y_c)\}$.
\end{lemma}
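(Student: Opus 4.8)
The plan is to treat the three assertions of \Cref{lem:technical} as intertwined statements about the behavior of $\hat x$ and $q$ on the boundary circle $\partial \disk_{x_c}$, pulled back to $\cV$ via the conformal bijection $\hat x\colon\cV\to\cdisk_{x_c}$ from \Cref{lem:def V}. Write $\hat Y$ for the inverse map, so that a point $Y\in\cV\setminus\{Y_c\}$ corresponds to $x=\hat x(Y)\in\cdisk_{x_c}$ with $x\neq x_c$; the claim $\cV\setminus\{Y_c\}\subseteq\disk_{Y_c}$ then amounts to: for $|x|=x_c$, $x\neq x_c$, one has $|\hat Y(x)|<Y_c$. Since $\hat Y$ has nonnegative coefficients and $\hat Y(x_c)=Y_c$, the triangle inequality already gives $|\hat Y(x)|\le Y_c$; equality for some $x$ with $|x|=x_c$ would force, by the equality case of the triangle inequality applied to a power series with nonnegative coefficients, that $\supp\hat Y$ is concentrated on an arithmetic progression, i.e.\ that $\hat Y$ (equivalently $\hat x$, equivalently $B$, using the parametrization) is periodic. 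This is exactly where Assumption~\eqref{*} enters — $B$ is aperiodic and $|\supp B|=\infty$ — and this is the one place the analysis is genuinely delicate, because one must propagate aperiodicity of $B$ through the rational relation $x=\hat x(Y)$ to aperiodicity of the coefficients of $\hat Y$. I expect the technical heart of the deferred proof in \Cref{sec:technical lemma} to be precisely this periodicity-exclusion argument, combined with showing that even the point $x=x_c$ itself is approached only along $\cV$ (not from a second preimage), which is why the lemma is stated as $\cV\setminus\{Y_c\}\subseteq\disk_{Y_c}$ rather than the weaker inclusion $\subseteq\cdisk_{Y_c}$.

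For the second assertion, $\hat x'(Y)\neq 0$ on $\cV\setminus\{Y_c\}$: the first assertion places $\cV\setminus\{Y_c\}$ inside the open disk $\disk_{Y_c}$, and by \Cref{lem:def V} the map $\hat x\colon\cV\to\cdisk_{x_c}$ is a conformal bijection, hence locally injective at every interior point of $\cV$. A zero of $\hat x'$ at an interior point $Y_*$ would make $\hat x$ locally $k$-to-$1$ there for some $k\ge 2$, contradicting injectivity. (One should note that $Y_c$ itself lies on $\partial\cV$, and indeed $\hat x'(Y_c)=0$ by the very definition of the phases, so the exclusion of $Y_c$ is necessary and not an artifact.) The only subtlety is to confirm that $\cV\setminus\{Y_c\}$ consists entirely of interior points of $\cV$, i.e.\ that $\cV$ contains no boundary point other than $Y_c$; but this again follows from the first assertion together with the fact that $\hat Y$ maps the open disk $\disk_{x_c}$ into the open set $\V$, and maps $\partial\disk_{x_c}\setminus\{x_c\}$ into $\disk_{Y_c}$, which is open, so these images are interior to $\cV$.

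For the third assertion, $q(Y,y)\neq 0$ on $\bigl(\cV\times\cdisk_{Y_c}\bigr)\setminus\{(Y_c,Y_c)\}$: recall $Q(Y,y)=(\phi(Y)+y)^2-4yB(y)\hat x(Y)$ is, for fixed $Y$, the discriminant of the quadratic \eqref{eq:F:combinatorial} satisfied by $\hat F(Y,\cdot)$, and $q(Y,y)=Q(Y,y)/(Y-y)^2$. I would split into the diagonal and off-diagonal cases. Off the diagonal, a zero of $q$ is a zero of $Q$, i.e.\ a genuine branch point of the algebraic function $y\mapsto \hat F(Y,y)$; one argues that such a branch point in $\cdisk_{Y_c}$ would produce a singularity of the power series $y\mapsto F(\hat x(Y),y)$ strictly inside its disk of convergence (whose radius is $\ge Y_c$ by \Cref{lem:domain of convergence}, using that $\hat x(Y)\in\cdisk_{x_c}$), a contradiction. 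On the diagonal, $q(Y,Y)=\phi'(Y)$ by Lemma~\refp{3}{lem:algebraic properties}, and $\phi'(Y)=(B(Y)+YB'(Y))\hat x'(Y)$ by Lemma~\refp{1}{lem:algebraic properties}; on $\cV\setminus\{Y_c\}$ the factor $\hat x'(Y)$ is nonzero by the second assertion, while $B(Y)+YB'(Y)$ is nonzero on $\cV$ since it appears squared in the denominator of $\hat x$ (a zero there would force $\hat x$ to have a pole, contradicting holomorphy on $\cV$ established in \Cref{lem:def V}) — so $\phi'(Y)\neq 0$ there too. The remaining pieces — that the limit $q(Y_c,Y_c)=0$ is genuinely approached (handled by \Cref{lem:q asymptotics} and \Cref{lem:H_alpha bounds}, which give $q\neq 0$ in a punctured neighborhood of $(Y_c,Y_c)$) and that the boundary point $y=Y_c$ with $Y\neq Y_c$ causes no trouble — are then routine compactness-and-continuity arguments.
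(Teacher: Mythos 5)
Your outline goes wrong at the two places where the paper has to work hardest, so the gaps are genuine. For the second assertion, your reduction to local injectivity rests on the claim that $\cV\setminus\{Y_c\}$ consists of interior points of $\cV$, and that claim is false: $\cV=\hat Y(\cdisk_{x_c})$ is the closure of $\V$, its boundary is a whole curve (the image of $\partial\disk_{x_c}$), and the fact that this curve lies in the open disk $\disk_{Y_c}$ does not make its points interior to $\cV$ — being interior to $\disk_{Y_c}$ is not the same as being interior to the strictly smaller set $\cV$. Injectivity of $\hat x$ on $\cV$ cannot exclude critical points on $\partial\V$ (the point $Y_c$ itself is a boundary critical point of this injective map in the phases considered), so the boundary case is exactly the content of the assertion. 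The paper handles it with the variational method of \Cref{prop:variational method}: perturbing each weight $b_k\mapsto b_k+\varepsilon$ with $b_k>0$ yields, for any critical point $Y_*\in\partial\V\setminus\{Y_c\}$, a family of equations indexed by $k$, and letting $k\to\infty$ (this is where $|\supp B|=\infty$ from Assumption~\eqref{*} is used) forces $|Y_*|\ge Y_c$, contradicting the first assertion (\Cref{lem:x unique critical point}). Nothing in your argument replaces this step. Relatedly, your claim that $B(Y)+YB'(Y)\ne0$ on $\cV$ because a zero would create a pole of $\hat x$ fails when $B$ has a double zero at the same point (then numerator and denominator vanish to the same order and $\hat x$ stays finite); excluding that scenario is \Cref{lem:B no zero}, which again needs a variational argument.

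For the third assertion, the off-diagonal argument is also unsound: a zero of the discriminant $Q$ does not produce a singularity of the chosen branch, because $Q(Y,y)=T(Y,y)^2$ identically, with $T(Y,y)=2y\bigl(F(\hat x(Y),y)-\tfrac12\bigr)+\phi(Y)$ analytic in $y$ on $\disk_{Y_c}$ for $Y\in\cV$ (by \Cref{lem:domain of convergence}); hence $(Y-y)\sqrt{q(Y,y)}=\pm T(Y,y)$ continues analytically through any zero of $q$, and no contradiction with the radius of convergence arises. The paper instead exploits exactly this identity to prove that every zero of $Q$ in $\cV\times\cdisk_{Y_c}$ is a simultaneous zero of $\partial_yQ$ and $\partials_xQ$ (\Cref{lem:Q double zero}) — with a genuinely delicate boundary analysis via the modified implicit function theorem, since $\partial_xF$ is only known to converge on $\disk_{x_c}\times\cdisk_{Y_c}$ — and then eliminates: such zeros must satisfy $\phi(Y)+y=B(y)=B'(y)=0$, are therefore isolated, and an isolated zero is ruled out by a local Newton--Puiseux/argument-type analysis (\Cref{lem:q no zero}). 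None of this is a ``routine compactness-and-continuity argument.'' Finally, for the first assertion you correctly reduce to aperiodicity of $\hat Y$, but you do not prove the propagation from $B$ to $\hat Y$; the paper does this via Lagrange inversion ($[x^n]\hat Y=\frac1n[Y^{n-1}]W$ with $W=Y/\hat x(Y)$) together with showing that $W$ determines $B$ through a substitution-equivariant map, so periodicity of $W$ would force periodicity of $B$ (\Cref{lem:aperiodicity transfer,lem:aperiodicity Y}).
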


It is not hard to deduce from \Cref{lem:technical} that $F(x,y)$ is analytic everywhere on the boundary of $\cdisk_{x_c} \times \cdisk_{Y_c}$ except when $x=x_c$. But we shall not insist on this here, since the next subsection will provide stronger results.

\subsection{Analytic continuation of $F(x,y)$ to a double $\Delta$-domain}\label{sec:Delta-analyticity/extension}

The first part of \Cref{lem:technical} tells us that $\hat x$ is analytic and locally invertible at every point of $\cV$ except~$Y_c$. This~allows us to analytically extend its inverse $\hat Y$ to a neighborhood of each point on the circle $\partial \disk_{x_c}$ except~$x_c$. The following lemma says that we can also extend $\hat Y$ analytically to a neighborhood of $x_c$ \emph{in some $\Delta$-domain~$\ddom_{x_c}$}, and thus extend the conformal bijection $\hat x:\cV \to \cdisk_{x_c}$ to a conformal bijection onto $\cdom_{x_c}$.

%
%

\begin{lemma}[Definition of $\ctear^{\marg,\ang}$]\label{lem:def tear}
$\hat Y$ extends to a holomorphic function on some closed $\Delta$-domain $\cdom_{x_c} \!\equiv \cdom_{x_c}^{\marg,\ang}$.\\
Let $\ctear \equiv \ctear^{\marg,\ang} \!= \hat Y(\cdom_{x_c}^{\marg,\ang})$. Then for all $\marg,\ang>0$ small enough, $\hat x$ induces a conformal bijection from $\ctear$ to $\cdom_{x_c}$. 
\end{lemma}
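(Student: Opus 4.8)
The plan is to build the analytic continuation of $\hat Y$ across the circle $\partial\disk_{x_c}$ in two regimes: away from $x_c$, and in a small sector around $x_c$. Away from $x_c$: by the first part of \Cref{lem:technical}, $\cV\setminus\{Y_c\}\subseteq\disk_{Y_c}$ and $\hat x'(Y)\ne 0$ there, so $\hat x$ is locally biholomorphic at each point of $\partial\cV\setminus\{Y_c\}$; moreover $\hat x$ is meromorphic on all of $\cdisk_{Y_c}$ (shown in \Cref{lem:def V}) and, being injective on $\cdisk_{x_c}$ with $\hat x(\hat Y(x))=x$, its inverse $\hat Y$ extends holomorphically to an open neighborhood of each point of $\partial\disk_{x_c}\setminus\{x_c\}$, with image staying inside a neighborhood of $\partial\cV\setminus\{Y_c\}$. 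By compactness of $\partial\disk_{x_c}$ minus a small disk around $x_c$, these local extensions patch together (using uniqueness of analytic continuation, and that $\hat Y$ is already globally defined and single-valued on $\cdisk_{x_c}$) to a holomorphic extension of $\hat Y$ to $\cdisk_{x_c}\cup U$, where $U$ is an open neighborhood of $\partial\disk_{x_c}\setminus\{x_c\}$.

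The delicate part is the local picture at $x_c$. Here $\hat x'(Y_c)=0$, so $\hat x$ is not invertible at $Y_c$, and one must use the asymptotic expansion from \Cref{lem:x asymptotics}: $1-\hat x(Y)/x_c\sim\mu\cdot S^{\alpha-1}$ as $S=1-Y/Y_c\to 0$ in $\Cone$, with $\mu>0$ and $\alpha-1\in(1,2]$. The idea is that $\hat x$ behaves near $Y_c$ like the map $S\mapsto x_c(1-\mu S^{\alpha-1})$, which sends a sufficiently narrow cone $\Cone^\theta$ (with $\theta=\frac1{\alpha-1}$) in the $S$-plane biholomorphically onto a cone in the $s=1-x/x_c$ plane of half-opening close to $(\alpha-1)\cdot(\text{half-opening of }\Cone^\theta)$, which can be made to contain a $\Delta$-domain's $V$-shaped notch $(\pi/2-\ang,3\pi/2+\ang)$ once $\marg,\ang$ are small. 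To make this rigorous I would invoke the generalized inverse function theorem of \Cref{sec:generalized IFTs} (referenced as available for exactly this purpose in the outline, and also used — via \Cref{lem:def tear} — in the excerpt's own cross-references): near a point where the leading behavior is a genuine power $S^{\alpha-1}$ with $\alpha-1$ not an integer, the function $\hat x$ restricted to a narrow cone has a well-defined holomorphic inverse given asymptotically by $s\mapsto(s/\mu)^\theta$. This produces a holomorphic extension of $\hat Y$ to a set of the form $\{x_c(1-s): s\in\cone^{\text{narrow}},\,|s|\le\marg'\}$, i.e.\ to a small closed "pie slice" $\cdom_{x_c}^{\marg',\ang'}\cap\{|x-x_c|\le\marg'\}$.

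Gluing the two extensions: the pie slice at $x_c$ and the neighborhood $U$ of the rest of the circle overlap in two small sectors, and on each overlap both extensions agree with the original $\hat Y$ on a piece of $\disk_{x_c}$ adjacent to the boundary, hence agree everywhere by uniqueness of analytic continuation. Shrinking $\marg,\ang$ if necessary, the union contains a full closed $\Delta$-domain $\cdom_{x_c}^{\marg,\ang}$, giving the first assertion. For the second assertion, set $\ctear^{\marg,\ang}=\hat Y(\cdom_{x_c}^{\marg,\ang})$. One checks $\hat x\circ\hat Y=\mathrm{id}$ on $\cdom_{x_c}$ by analytic continuation of the identity valid near $0$, so $\hat Y$ is injective on $\cdom_{x_c}$ and thus a bijection onto $\ctear$; its inverse $\hat x$ is holomorphic on $\ctear$ (it is meromorphic on $\cdisk_{Y_c}$, and on the part of $\ctear$ near $Y_c$ holomorphy follows because $\hat x(\hat Y(x))=x$ forces $\hat x$ to have no pole there), so $\hat x$ is a conformal bijection $\ctear\to\cdom_{x_c}$, for $\marg,\ang$ small enough that all the above local constructions apply. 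The main obstacle is the local analysis at $x_c$: one must track carefully how the power map $S\mapsto S^{\alpha-1}$ opens up cone angles so as to be sure that, for appropriately small $\ang$, the image cone genuinely covers the $\Delta$-domain notch, and one must control the error terms in \Cref{lem:x asymptotics} uniformly in the cone $\Cone^\theta$ — which is exactly where the $\Delta$-analyticity of $B$ from Assumption~\eqref{*} (hence the validity of the expansion in a full $\ddom_\rho$, not merely on an interval) enters, together with the generalized inverse function theorem of \Cref{sec:generalized IFTs}.
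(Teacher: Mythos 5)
Your proposal is correct and follows essentially the same route as the paper: local invertibility of $\hat x$ at every boundary point of $\cV$ except $Y_c$ (from \Cref{lem:technical}) to extend $\hat Y$ across $\partial\disk_{x_c}\setminus\{x_c\}$, the asymptotics of \Cref{lem:x asymptotics} combined with the cone inverse function theorem (\Cref{lem:generalized InverseFT}) to invert $\hat x$ in a cone at $Y_c$ whose image opens by the factor $\alpha-1>1$ and hence covers the $\Delta$-domain notch, and finally $\hat x\circ\hat Y=\mathrm{id}$ by uniqueness of analytic continuation to get the conformal bijection. The only difference is presentational: the paper makes the cone IFT applicable by the explicit change of variables $z=(Y_c-Y)^{\alpha-1}$, $\hat w(z)=x_c-\hat x(Y)$, which is the precise form of the step you describe asymptotically.
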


\begin{proof}
When $Y\to Y_c$ in $\cdisk_{Y_c}$, we have by \Cref{lem:x asymptotics}:
\begin{equation}
x_c - \hat x(Y) \sim c \cdot (Y_c-Y)^{\alpha-1}
\qtq{and}
\hat x'(Y) \sim c\cdot (\alpha-1) \cdot (Y_c-Y)^{\alpha-2}
\end{equation}
for some $c>0$. Under the change of variables $z=(Y_c-Y)^{\alpha-1}$ and $\hat w(z) = x_c - \hat x(Y)$, the above asymptotics imply that $\hat w(z) \to 0$ and $\hat w'(z) \to c$ as $z\to 0$ in the cone $K=\setn{re^{i \tau}}{r>0, |\tau|<\tau_0}$, for any $\tau_0<(\alpha-1)\frac \pi2$. 
If $\hat w$ were defined in a neighborhood of $0$, then the inverse function theorem would imply that it has a local inverse $\hat z$ such that $\hat z(w)\to 0$ and $\hat z(w)\to c^{-1}$ when $w\to 0$. In \Cref{sec:generalized IFTs}, we will show that this is still true when $\hat w$ is only defined in a cone. More precisely, \Cref{lem:generalized InverseFT} implies that for any $K'=\setn{re^{i\tau}}{ r>0,|\tau|<\tau_0'}$ with $\tau_0'<\tau_0$, there exist a neighborhood $\mathcal V$ of $0$ and an analytic function $\hat z:\mathcal V \cap K' \to K$, such that $\hat w(\hat z(w))=w$ for all $w\in \mathcal V \cap K'$. By going back through the change of variables $z=(Y_c-Y)^{\alpha-1}$ and $w=x_c-x$, we see that $\hat Y(x) = Y_c-(\hat z(x_c-x))^{\frac1{\alpha-1}}$ is an analytic continuation of $\hat Y$ on $\Set{x}{x_c-x\in \mathcal V\cap K'}$. 
Since $\alpha>2$, we can choose $\tau_0>\tau_0'>\frac\pi2$. Then the set $\Set{x}{x_c-x\in \mathcal V\cap K'}$ can be written as $\mathcal U_{x_c} \!\cap \ddom_{x_c}^{\marg,\ang}$, where $\marg>0$ is arbitrary, $\ang = \tau_0'-\frac\pi2$, and $\mathcal U_{x_c}$ is a sufficiently small neighborhood of $x_c$. 

We have constructed an analytic continuation of $\hat Y$ on $\mathcal U_{x_c} \!\cap \ddom_{x_c}^{\marg,\ang}$. On the other hand, by the remark preceding \Cref{lem:def tear}, $\hat Y$ also has an analytic continuation in a neighborhood $\mathcal U_x$ of each point $x \in \cdisk_{x_c}\setminus \{x_c\}$. When $\marg>0$ is sufficiently small, we have $\ddom_{x_c}^{\marg,\ang} \subseteq \bigcup _{x\in \cdisk_{x_c}} \!\mathcal U_x$. Then $\hat Y$ has an analytic continuation on $\ddom_{x_c}^{\marg,\ang}$. Moreover, \Cref{lem:generalized InverseFT} used in the previous paragraph also ensures that $\hat z(w)\to 0$ as $w\to 0$ in $K'$, or equivalently $\hat Y(x)\to Y_c$ as $x\to x_c$ in $\ddom_{x_c}^{\marg,\ang}$. Thus by decreasing slightly both $\marg$ and $\ang$, we may assume that $\hat Y$ is continuous on the boundary of $\ddom_{x_c}^{\marg,\ang}$, hence holomorphic on $\cdom_{x_c}^{\marg,\ang}$ in our terminology.

By the uniqueness of analytic continuation, we have $\hat x(\hat Y(x))=x$ for all $x\in \cdom_{x_c}^{\marg,\ang}$. It follows that $\hat x$ is injective on $\ctear^{\marg,\ang}:=\hat Y(\cdom_{x_c}^{\marg,\ang})$ and induces a conformal bijection from $\ctear^{\marg,\ang}$ to $\cdom_{x_c}^{\marg,\ang}$.
\end{proof}

In the proof of \Cref{lem:def V}, we deduced the holomorphicity of $\phi$ and $q(Y,y)$ on their respective domains from the holomorphicity of $\hat x$ on $\cV$. Now we know that $\hat x$ is holomorphic on the larger domain $\ctear$. The exact same argument can be used to show the following corollary. We leave the reader to check the details.

\begin{corollary}\label{cor:conformal bijection onto V/phi q holomorphic}
For $\marg,\ang>0$ small enough,
$\phi$ is holomorphic on $\ctear$ and $q(Y,y)$ is holomorphic on $\ctear \times \cdom_{Y_c}$.
\end{corollary}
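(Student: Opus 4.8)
The plan is to run the argument of \Cref{lem:def V} verbatim, replacing the domain $\cV$ by the larger domain $\ctear \equiv \ctear^{\marg,\ang}$. The single new input is \Cref{lem:def tear}, which for $\marg,\ang>0$ small enough gives that $\hat x$ induces a conformal bijection from $\ctear$ onto $\cdom_{x_c}$; in particular $\hat x$ is holomorphic (hence finite) on all of $\ctear$. I will also need that $\ctear \subseteq \cdisk_{Y_c}$ after possibly shrinking $\marg,\ang$: this follows because $\cV \subseteq \cdisk_{Y_c}$ with $\cV \setminus \{Y_c\} \subseteq \disk_{Y_c}$ by \Cref{lem:technical}, and $\ctear$ is obtained from $\cV$ by the analytic continuation of $\hat Y$ along a small $\Delta$-domain at $x_c$; since $\hat Y(x) \to Y_c$ as $x\to x_c$ in $\cdom_{x_c}$ (also from \Cref{lem:def tear}), the set $\ctear$ stays inside $\cdisk_{Y_c}$ for $\marg,\ang$ small. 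Consequently $B$ is holomorphic on a neighborhood of $\ctear$ in $\cdisk_{Y_c}$, with the $\Delta$-analyticity at $\rho=Y_c$ coming from Assumption~\eqref{*}.

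First I would show $\phi$ is holomorphic on $\ctear$. Recall $\phi(Y) = Y\frac{B(Y)-YB'(Y)}{B(Y)+YB'(Y)}$, which is meromorphic on $\cdisk_{Y_c} \supseteq \ctear$. Suppose $\phi$ had a pole at some $Y_* \in \ctear$; then $B(Y_*)+Y_*B'(Y_*)=0$, so $Y_*$ is a zero of multiplicity $\ge 1$ of $B+YB'$, hence $(B+YB')^2$ vanishes there to order $\ge 2$. But $\hat x(Y_*)=\frac{Y_*B(Y_*)}{(B(Y_*)+Y_*B'(Y_*))^2}$ is finite by \Cref{lem:def tear}, so the numerator $Y_*B(Y_*)$ must vanish to order $\ge 2$ as well, forcing $B(Y_*)=0$ (note $Y_*\ne 0$ since $b_0>0$). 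Then $Y_*$ is a zero of the same multiplicity of $B-YB'$ and of $B+YB'$, so these cancel in $\phi$ and $\phi$ is finite at $Y_*$ --- a contradiction. Hence $\phi$ is holomorphic on $\ctear$.

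Next I would show $q(Y,y)$ is holomorphic on $\ctear \times \cdom_{Y_c}$. Since $Q(Y,y)=(\phi(Y)+y)^2 - 4yB(y)\hat x(Y)$ and $q(Y,y)=Q(Y,y)/(Y-y)^2$ for $Y\ne y$, and since $\hat x$, $\phi$ are holomorphic on $\ctear$ while $B$ is holomorphic on $\cdom_{Y_c}$, $q$ is holomorphic on $(\ctear \times \cdom_{Y_c})$ away from the diagonal. On the diagonal, for $(y,y)\ne (Y_c,Y_c)$ the function $Q$ is analytic near $(y,y)$ (using $\Delta$-analyticity of $B$ at $Y_c$ when $(y,y)$ lies on the boundary), so the integral formula of Lemma~\refp{3}{lem:algebraic properties} shows $q$ is analytic there too. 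At $(Y_c,Y_c)$, \Cref{lem:q asymptotics} gives $q(Y,y)\to 0 = q(Y_c,Y_c)$ as $(Y,y)\to(Y_c,Y_c)$ within $\ctear\times\cdom_{Y_c}$, so $q$ is continuous there. Thus $q$ is analytic in the interior and continuous on the boundary of $\ctear \times \cdom_{Y_c}$, i.e.\ holomorphic in our sense. The only subtlety --- and the place requiring a little care rather than a routine repetition --- is checking that the $\Delta$-domain at $Y_c$ built from Assumption~\eqref{*} for $B$ is compatible with the shape of $\ctear$ near $Y_c$, so that all the ``boundary'' invocations of analyticity of $B$ are legitimate; this is handled by taking $\marg,\ang$ small enough, exactly as in \Cref{lem:def tear}.
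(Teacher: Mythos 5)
Your proposal is correct and is essentially the paper's own argument: the paper proves this corollary by simply re-running the proof of \Cref{lem:def V} with $\cV$ replaced by $\ctear$, using \Cref{lem:def tear} for the holomorphicity of $\hat x$ on $\ctear$, and leaves the details to the reader — which is exactly what you carried out (pole argument for $\phi$, off-diagonal holomorphicity of $q$, the integral formula of Lemma~\refp{3}{lem:algebraic properties} on the diagonal, and \Cref{lem:q asymptotics} at $(Y_c,Y_c)$), including the correct observation that $\marg,\ang$ must be taken small enough so that $\ctear$ fits inside the domain of ($\Delta$-)analyticity of $B$.
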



The second part of \Cref{lem:technical} asserts that $q(Y,y)$ has no zero on $\cV \times \cdisk_{Y_c}$ except $(Y_c,Y_c)$. By continuity, for any neighborhood $\mathcal U$ of $(Y_c,Y_c)$, there exists $\marg>0$ such that $q(Y,y)$ has no zero on $\ctear \times \cdom_{Y_c} \setminus \mathcal U$ neither. The following lemma states that for $\marg,\ang>0$ small enough, $(Y_c,Y_c)$ is actually the only zero.

\begin{lemma}[Analyticity of $\sqrt{q(Y,y)}$]\label{lem:q sqrt analytic}
For $\marg,\ang>0$ small enough, $(Y_c,Y_c)$ is the only zero of $q(Y,y)$ on $\ctear \times \cdom_{Y_c}$.
\end{lemma}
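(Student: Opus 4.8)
The plan is to leverage the two facts about $q$ already available: first, by \Cref{lem:technical}, $q(Y,y)\ne 0$ on $\cV\times\cdisk_{Y_c}$ except at $(Y_c,Y_c)$; second, by \Cref{lem:q asymptotics} together with the lower bound in \Cref{lem:H_alpha bounds}, there are constants $c>0$ and a neighborhood $\mathcal U_0$ of $(Y_c,Y_c)$ in $\cV\times\cdisk_{Y_c}$ on which $|q(Y,y)|\ge c\,\|(S,t)\|^{\alpha-2}$, so in particular $q$ has no zero on $\mathcal U_0\setminus\{(Y_c,Y_c)\}$. Combining these, $q$ has no zero on $(\cV\times\cdisk_{Y_c})\setminus\{(Y_c,Y_c)\}$ at all, and moreover $|q|$ is bounded below by a positive constant on the compact complement $(\cV\times\cdisk_{Y_c})\setminus\mathcal U_0$. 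The goal is to transfer this non-vanishing and quantitative lower bound from $\cV$ to the slightly larger domain $\ctear$, using the continuity of $q$ on $\ctear\times\cdom_{Y_c}$ established in \Cref{cor:conformal bijection onto V/phi q holomorphic} and the fact that $\ctear^{\marg,\ang}$ and $\cdom_{Y_c}^{\marg,\ang}$ shrink down to $\cV$ and $\cdisk_{Y_c}$ as $\marg,\ang\to 0$.

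First I would fix the neighborhood $\mathcal U_0$ of $(Y_c,Y_c)$ on which the asymptotic lower bound $|q(Y,y)|\ge c\,\|(S,t)\|^{\alpha-2}$ from \Cref{lem:q asymptotics}/\Cref{lem:H_alpha bounds} holds; crucially, that estimate is valid on all of $\Cone^\theta\times\Cone$ in the rescaled variables, hence it holds on a full $\Delta$-shaped neighborhood $\ctear\times\cdom_{Y_c}\cap\mathcal U_0$, not just inside $\cV\times\cdisk_{Y_c}$, provided $\ang$ is chosen as in those lemmas. So $q$ has no zero on $(\ctear\times\cdom_{Y_c})\cap\mathcal U_0$ other than $(Y_c,Y_c)$, for every small $\marg,\ang$. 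It remains to rule out zeros on the ``far'' part $K_{\marg,\ang}:=(\ctear^{\marg,\ang}\times\cdom_{Y_c}^{\marg,\ang})\setminus\mathcal U_0$.

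For the far part I would argue by contradiction/compactness. On the compact set $(\cV\times\cdisk_{Y_c})\setminus\mathcal U_0^{\circ}$, which carries no zero of $q$ by the combination of \Cref{lem:technical} and the choice of $\mathcal U_0$, continuity of $q$ gives $\min|q|=:m>0$. Suppose no admissible pair $(\marg,\ang)$ works: then there is a sequence $\marg_n,\ang_n\to 0$ and points $(Y_n,y_n)\in K_{\marg_n,\ang_n}$ with $q(Y_n,y_n)=0$. Since $\ctear^{\marg_n,\ang_n}\to\cV$ and $\cdom_{Y_c}^{\marg_n,\ang_n}\to\cdisk_{Y_c}$ in the Hausdorff sense, any subsequential limit $(Y_*,y_*)$ of $(Y_n,y_n)$ lies in $(\cV\times\cdisk_{Y_c})\setminus\mathcal U_0^{\circ}$ — here one uses that $K_{\marg_n,\ang_n}$ stays at fixed positive distance from $(Y_c,Y_c)$ since $\mathcal U_0$ is a fixed neighborhood. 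By continuity of $q$ on $\ctear^{\marg_1,\ang_1}\times\cdom_{Y_c}^{\marg_1,\ang_1}$ (all the domains for $n\ge 1$ sit inside the $n=1$ one), $q(Y_*,y_*)=\lim q(Y_n,y_n)=0$, contradicting $|q|\ge m>0$ there. Hence for $\marg,\ang$ small enough $q$ has no zero on $K_{\marg,\ang}$, and together with the first paragraph, $(Y_c,Y_c)$ is the only zero of $q$ on $\ctear\times\cdom_{Y_c}$.

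The main obstacle I anticipate is purely bookkeeping rather than conceptual: one must make sure the $\Delta$-domains $\ctear^{\marg,\ang}$ and $\cdom_{Y_c}^{\marg,\ang}$ genuinely decrease to $\cV$ and $\cdisk_{Y_c}$ as $\marg,\ang\to0$, and that $q$ is simultaneously holomorphic (hence continuous) on a fixed product $\Delta$-domain containing all the ones used in the limiting argument — this is exactly what \Cref{cor:conformal bijection onto V/phi q holomorphic} provides, so one should invoke it at the outset with a fixed small $(\marg_1,\ang_1)$ and only ever shrink within it. A secondary point of care is matching the angle $\ang$ in the Hausdorff-convergence/compactness step with the angle required for the asymptotic lower bound of \Cref{lem:H_alpha bounds} near $(Y_c,Y_c)$; taking $\ang$ to be the smaller of the two causes no trouble.
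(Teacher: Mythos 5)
Your proposal is correct and follows essentially the same route as the paper: near $(Y_c,Y_c)$ one combines the asymptotics $q\sim \cst_q\, H_\alpha$ of \Cref{lem:q asymptotics} with the lower bound of \Cref{lem:H_alpha bounds} (after checking the angular containment of $\ctear\times\cdom_{Y_c}$ in the cone product), and away from $(Y_c,Y_c)$ one transfers the non-vanishing of $q$ on $\cV\times\cdisk_{Y_c}$ from \Cref{lem:technical} by continuity as $\marg,\ang\to 0$, which is exactly the compactness argument you spell out. The paper merely states this last step as a one-line continuity remark preceding the lemma, so your added detail changes nothing substantive.
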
 

\begin{proof}
Recall from \Cref{sec:func asymptotics} that $\cone$ is the cone $\Setn{z}{|\arg(z)|<\frac\pi2+\ang}$ and $\theta=\frac{1}{\alpha-1}$. As shown in \Cref{fig:delta-domain-etc}, the boundaries of $\cdom_{Y_c}$ and $\ctear$ each have two half tangents at $Y_c$ forming an angle of $2(\pi+\ang)$ and $2(\pi+\ang)\theta$, respectively. It follows that for any $\ang'>\ang$, there exists a neighborhood $\mathcal U_0$ of $(Y_c,Y_c)$ such that
\begin{equation}
\ctear \times \cdom_{Y_c} \cap \mathcal U_0 \ \subseteq \
\setB{(Y,y)}{ \mb({1-\frac{Y}{Y_c},1-\frac{y}{Y_c}}\in \Cone[\ang']^\theta \times \Cone[\ang'] } \,.
\end{equation}
Therefore by \Cref{lem:q asymptotics}, for $\ang$ and $\ang'$ small enough, we have $q(Y,y) \sim \cst_q \cdot H_\alpha(1-\frac{Y}{Y_c}, 1-\frac{y}{Y_c})$ when $(Y,y)\to (Y_c,Y_c)$ in $\ctear \times \cdom_{Y_c}$, where $\cst_q>0$ and $H_\alpha(S,t) = \frac{t^\alpha-S^\alpha - \alpha S^{\alpha-1} (t-S)}{(t-S)^2}$. The lower bound in \Cref{lem:H_alpha bounds} implies that $(S,t)=(0,0)$ is the only zero of $H_\alpha(S,t)$ in $\Cone[\ang']^\theta \times \Cone[\ang']$.
It follows that there exists a neighborhood $\mathcal U$ of $(Y_c,Y_c)$, such that $(Y_c,Y_c)$ is the only zero of $q$ in $\ctear \times \cdom_{Y_c} \cap \mathcal U$. 
On the other hand, by the remark preceding \Cref{lem:q sqrt analytic}, there exists $\marg>0$ such that $q$ has no zero on $\ctear \times \cdom_{Y_c} \cap \mathcal U$. It follows that for $\marg,\ang>0$ small enough, $(Y_c,Y_c)$ is the only zero of $q$ in $\ctear \times \cdom_{Y_c}$.
\end{proof}

\begin{proof}[Proof of \Cref{prop:Delta-analyticity}]
By \Cref{cor:conformal bijection onto V/phi q holomorphic} and \Cref{lem:q sqrt analytic}, the function $\hat F(Y,y) = \frac12 + \frac{1}{2y} \m({ (Y-y)\sqrt{q(Y,y)} - \phi(Y)}$ is holomorphic on $\ctear \times \cdom_{Y_c}$. (The factor $y$ in the denominator is not a problem, since we know that $\hat F(Y,y)$ defines a formal power series in $y$.)
And $\hat Y$, the inverse of $\hat x$, induces a holomorphic function from $\cdom_{x_c}$ onto $\ctear$ according to \Cref{lem:def tear}. 
It follows that $F(x,y) = \hat F(\hat Y(x),y)$ is holomorphic on (i.e.\ analytic in the interior and continuous on the boundary of) $\cdom_{x_c} \times \cdom_{Y_c}$.

When $x=x_c$, we have $F(x_c,y) = \hat F(Y_c,y)$ and $\partial_x F(x_c,y) = \partials_x \hat F(Y_c,y)$, where $\partials_x \hat F(Y,y)$ is given by \eqref{eq:partials x Fhat}. Then, thanks to the $\Delta$-analyticity of $B(y)$ and the fact that $q(Y_c,y)\ne 0$ for all $y\in \ddom_{Y_c}$, both $y\mapsto F(x_c,y) $ and $y \mapsto \partial_x F(x_c,y)$ are analytic on $\ddom_{Y_c}$.
\end{proof}

\section{Proof of \Cref{lem:technical}}\label{sec:technical lemma}

\subsection{The inclusion $\cV \setminus \{Y_c\} \subseteq \disk_{Y_c}$}\label{sec:technical lemma/aperiodicity}

Recall that $|\hat Y(x)|\le \hat Y(x_c) = Y_c$ for all $x\in \cdisk_{x_c}$ because the series $\hat Y$ has nonnegative coefficients. To prove the inclusion $\cV \setminus \{Y_c\} \subseteq \disk_{Y_c}$, it suffices to show that the inequality is strict for all $|x|\le x_c$ different from $x_c$. With a bit of thought, one sees that this is true \Iff\ the series $\hat Y$ is aperiodic.

Recall that $\supp B = \Set{l\in \natural}{b_l\ne 0}$ denotes the support of the coefficients of $B$. Since $b_0\ne 0$, the series $B$ is aperiodic \Iff\ $\supp B \not \subseteq m\integer$ for all $m\ge 2$. Similarly, since $\hat Y(0)=0$ and $\hat Y'(0)=\frac1{\hat x'(0)} \ne 0$, the series $\hat Y$ is aperiodic \Iff\ $\supp (x^{-1}\hat Y) \not \subseteq m\integer$ for all $m\ge 2$. 
The following simple lemma provides a method to relate the (a)periodicity of one formal power series to another. 
We will use it to deduce the aperiodicity of $\hat Y(x)$ from that of $B(Y)$.

\begin{lemma}[Heredity of periodicity]\label{lem:aperiodicity transfer}
If\, $\Phi: \Omega \subseteq \complex[[x]] \to \complex[[x]]$ is a mapping between formal power series such that $\Phi(S(\omega x)) = \Phi(S)(\omega x)$ for all roots of unity $\omega \!\in\! \setn{e^{i 2\pi q}}{q\in \rational}$, then for all integer  $m\ge 1$, $\,\supp S \subseteq m \integer$ \,implies\, $\supp \Phi(S) \subseteq m \integer$.
\end{lemma}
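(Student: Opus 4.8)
The plan is to characterize the condition $\supp S\subseteq m\integer$ purely in terms of the dilation $x\mapsto\omega x$, and then to transport it through $\Phi$ using the equivariance hypothesis. The case $m=1$ is vacuous, so I would assume $m\ge 2$ and fix $\omega=e^{i2\pi/m}$, a primitive $m$-th root of unity (which indeed belongs to $\setn{e^{i2\pi q}}{q\in\rational}$).

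First I would record the elementary observation that for any $S(x)=\sum_{n\ge 0}s_nx^n\in\complex[[x]]$ one has $\supp S\subseteq m\integer$ if and only if $S(\omega x)=S(x)$ as formal power series. This is immediate by comparing coefficients: $S(\omega x)=\sum_n s_n\omega^n x^n$, so $S(\omega x)=S(x)$ amounts to $s_n(\omega^n-1)=0$ for every $n$, i.e.\ $s_n=0$ whenever $m\nmid n$, which is exactly $\supp S\subseteq m\integer$.

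Then, granting $\supp S\subseteq m\integer$, the observation gives $S(\omega x)=S(x)$; in particular $S(\omega x)=S\in\Omega$, so the hypothesis $\Phi(S(\omega x))=\Phi(S)(\omega x)$ is applicable and yields $\Phi(S)(\omega x)=\Phi(S(\omega x))=\Phi(S)(x)$. Applying the observation a second time, now to the series $\Phi(S)$, gives $\supp\Phi(S)\subseteq m\integer$, which is the desired conclusion. I do not anticipate a genuine obstacle in this argument; the only point that deserves a moment of care is checking that $S(\omega x)$ lies in the domain $\Omega$ before invoking the equivariance of $\Phi$, but this is automatic here since $S(\omega x)$ coincides with $S$.
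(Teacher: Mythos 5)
Your proof is correct and is essentially identical to the paper's: both characterize $\supp S\subseteq m\integer$ by the invariance $S(e^{i2\pi/m}x)=S(x)$ and then transport this invariance through $\Phi$ via the equivariance hypothesis. Your extra remarks on the case $m=1$ and on $S(\omega x)\in\Omega$ are harmless refinements of the same argument.
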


\begin{proof}
Observe that $\supp S \subseteq m \integer$ \Iff\ $S(e^{i \frac{2\pi}m} x) = S(x)$. The property of the mapping $\Phi$ ensures that if $S(e^{i \frac{2\pi}m} x) = S(x)$, then $\Phi(S)(e^{i \frac{2\pi}m} x) = \Phi(S(e^{i \frac{2\pi} m} x)) = \Phi(S)(x)$. Hence $\supp S \!\subseteq\! m \integer$ implies $\supp \Phi(S) \!\subseteq\! m \integer$.
\end{proof}

\begin{lemma}\label{lem:aperiodicity Y}
The power series $\hat Y(x)$ is aperiodic, and therefore $\cV \setminus \{Y_c\} \subseteq \disk_{Y_c}$.
\end{lemma}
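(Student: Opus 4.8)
The plan is to apply the heredity lemma (\Cref{lem:aperiodicity transfer}) to an explicit map that recovers $B$ from the data of $\hat Y$, and which commutes with scaling by roots of unity; the aperiodicity of $\hat Y$ will then follow from that of $B$ by contraposition. Recall that the preceding paragraph has already reduced the statement to showing that $\hat Y$ is aperiodic, i.e.\ that $\supp(x^{-1}\hat Y)\not\subseteq m\integer$ for every $m\ge 2$.

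First I would pass from $\hat Y$ to $\hat x$, and from $\hat x$ to the power series $h(Y):=Y^{-1}\hat x(Y)=\frac{B(Y)}{(B(Y)+YB'(Y))^2}$ (well-defined, with $h(0)=1/b_0\ne 0$ since $b_0>0$). Since $\hat x$ and $\hat Y$ are mutually inverse power series with nonzero linear term, one checks directly that for a root of unity $\omega$ one has $\hat Y(\omega x)=\omega\hat Y(x)$ iff $\hat x(\omega Y)=\omega\hat x(Y)$: if $\hat Y(\omega x)=\omega\hat Y(x)$ then $Y\mapsto\omega^{-1}\hat x(\omega Y)$ is also a compositional inverse of $\hat Y$, hence equals $\hat x$; the converse is symmetric. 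Rewriting $\hat x(\omega Y)=\omega\hat x(Y)$ as $h(\omega Y)=h(Y)$, this says precisely that $\hat Y$ is aperiodic if and only if $h$ is aperiodic. So it suffices to prove that $h$ is aperiodic.

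Next I would construct the map $\Phi$ for \Cref{lem:aperiodicity transfer}. On the set $\Omega=\{S\in\complex[[Y]]:S(0)\ne 0\}$, define $\Phi(S)$ to be the power series $\tilde B$ satisfying $S(Y)\cdot\bigl(\tilde B(Y)+Y\tilde B'(Y)\bigr)^2=\tilde B(Y)$; such $\tilde B$ exists and is unique, by solving this relation coefficient by coefficient: the order $Y^0$ forces $\tilde B(0)=1/S(0)\ne 0$, and for $N\ge 1$ the order $Y^N$ determines $[Y^N]\tilde B$ from the lower-order coefficients, the relevant prefactor being $2-\frac1{N+1}\ne 0$. By construction $\Phi(h)=B$. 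The crucial property is that $\Phi$ commutes with scaling: for any $\omega\ne 0$, if $\tilde B$ satisfies the relation with respect to $S$, then, using $\frac{d}{dY}\tilde B(\omega Y)=\omega\tilde B'(\omega Y)$, the series $Y\mapsto\tilde B(\omega Y)$ satisfies the same relation with respect to $Y\mapsto S(\omega Y)$; by uniqueness $\Phi(S(\omega\,\cdot\,))=\Phi(S)(\omega\,\cdot\,)$, so the hypothesis of \Cref{lem:aperiodicity transfer} holds.

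Finally I would conclude: by \Cref{lem:aperiodicity transfer} applied to $\Phi$, for every $m\ge 1$, $\supp h\subseteq m\integer$ implies $\supp B\subseteq m\integer$. Since $b_0>0$, the aperiodicity of $B$ in Assumption~\eqref{*} means exactly that $\supp B\not\subseteq m\integer$ for all $m\ge 2$; contraposing, $\supp h\not\subseteq m\integer$ for all $m\ge 2$, so (as $h(0)\ne 0$) $h$ is aperiodic. By the second paragraph $\hat Y$ is aperiodic, and then $\cV\setminus\{Y_c\}\subseteq\disk_{Y_c}$ by the reduction recalled at the outset. The only mildly delicate points are the coefficient-by-coefficient solvability of the defining relation of $\Phi$ (where $2-\frac1{N+1}\ne 0$ enters) and keeping track of the $\omega$-factors when transferring periodicity between $\hat Y$, $\hat x$ and $h$; the rest is routine.
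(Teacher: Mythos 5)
Your proof is correct and takes essentially the same route as the paper: the paper also reduces to the aperiodicity of the kernel series (it works with $W(Y)=Y/\hat x(Y)$, the reciprocal of your $h$, passing through Lagrange inversion rather than your inverse-series symmetry) and then applies \Cref{lem:aperiodicity transfer} to the map recovering $B$ from that series, with the same non-vanishing factor $2N+1$ making the coefficient recursion solvable. One small touch-up: the relation $S\cdot(\tilde B+Y\tilde B')^2=\tilde B$ is also satisfied by $\tilde B\equiv 0$, so the order-$Y^0$ equation does not by itself force $\tilde B(0)=1/S(0)$; define $\Phi(S)$ as the unique solution with nonzero constant term (any solution with $\tilde B(0)=0$ is identically zero), after which your uniqueness and $\omega$-equivariance arguments go through unchanged.
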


\begin{proof}
The fact that $\hat x(Y)$ and $\hat Y(x)$ are inverse of each other can be written as $\hat Y(x) = x\cdot W(\hat Y(x))$, where $W(Y) = \frac{Y}{\hat x(Y)} = B(Y)\cdot \mb({ 1+ \frac{YB'(Y)}{B(Y)} }^2$. The Lagrange inversion formula states that $[x^n] \hat Y(x) = \frac{1}{n} [Y^{n-1}] W(Y)$ for all $n\ge 1$. Therefore the series $\hat Y(x)$ is aperiodic \Iff\ $W(Y)$ is. 
Moreover, since $W(0)=B(0)=b_0$ is nonzero by assumption, the series $W(Y)$ is aperiodic \Iff\ $\supp W \not \subseteq m \integer$ for all $m\ge 2$.

We know that $B(Y)$ is aperiodic. In particular, 
$\supp B \not \subseteq m \integer$ for all $m\ge 2$.
Hence to prove that $W(Y)$ is also aperiodic, it suffices to show that $\Phi:W(Y)\mapsto B(Y)$ is a well-defined mapping that satisfies the assumption of \Cref{lem:aperiodicity transfer}. The mapping $\Phi$ is well-defined if the relation between $B(Y)$ and $W(Y)$, which can be written as
\begin{equation}\label{eq:relation W->B}
B(Y) = W(Y)\cdot \m({ 1+\frac{YB'(Y)}{B(Y)} }^{-2} \,,
\end{equation}
uniquely determines the coefficients of $B(Y)$ for any given $W(Y)$. Let us prove this by induction: Write $W(Y) = \sum_{n\ge 0} w_n Y^n$ and $B(Y) = \sum_{n\ge 0} b_n Y^n$. We see easily that $b_0=w_0$. For $n\ge 1$, \Cref{eq:relation W->B} gives:
\begin{equation}
b_n = \sum_{m=0}^n w_{n-m} \cdot [Y^m] \m({ 1+\frac{YB'(Y)}{B(Y)} }^{-2} 
= \sum_{m=0}^n w_{n-m} \sum_{k=0}^\infty (-1)^k (k+1) \cdot [Y^m] \m({ \frac{Y B'(Y)}{B(Y)} }^k \,.
\end{equation}
Since $\frac{YB'(Y)}{B(Y)} = Y \frac{\hspace{12.5pt}b_1\hspace{6.0pt} + \cdots + n b_n Y^{n-1} + \cdots}{b_0 + b_1 Y + \cdots + \hspace{5.8pt} b_n Y^n \hspace{5.8pt} + \cdots}$ and $b_0\ne 0$, it is not hard to see that the coefficient $[Y^m] \m({ \frac{YB'(Y)}{B(Y)} }^k$ in the double sum is a function of $(b_0,\dots, b_{n-1})$ unless $m=n$ and $k=1$. It follows that $b_n$ can be written as
\begin{equation}
b_n = f(b_0,\ldots,b_{n-1};w_0,\ldots,w_n) - 2 w_0 \cdot [Y^n] \frac{YB'(Y)}{B(Y)} \,.
\end{equation}
Since $[Y^n] \frac{YB'(Y)}{B(Y)} = \frac{nb_n}{b_0}$ and $w_0=b_0$, the above formula gives $(1+2n) b_n = f(b_0,\ldots,b_{n-1};w_0,\ldots,w_n)$. By induction, this implies that all the coefficients $b_n$ are determined by $W(Y)$, i.e.\ the mapping $\Phi$ is well-defined.
By repalcing $B(Y)$ with $B(\omega W)$ in the definition of $W(Y)$, we see that $\Phi(W(\omega Y)) = B(\omega Y) = \Phi(W)(\omega x)$ for any $\omega \in \complex \setminus \{0\}$. This shows that $\Phi$ satisfies the assumption of \Cref{lem:aperiodicity transfer}. It follows that the series $W$, and hence $\hat Y$, is aperiodic.
As explained at the beginning of \Cref{sec:technical lemma/aperiodicity}, this implies that $\cV \setminus \{Y_c\} \subseteq \disk_{Y_c}$.
\end{proof}

\subsection{$\hat x$ has no critical point in $\cV \setminus \{Y_c\}$}\label{sec:technical lemma/unique critical point}

We have seen in \Cref{lem:def V} that $\hat x$ induces a conformal bijection from $\cV$ to $\cdisk_{x_c}$, so it has no critical point in the interior $\V$. In this subsection, we check that the same is true on the boundary $\partial \V$ except at $Y_c$. We will use a variational method which provides additional equations on the critical points of $\hat x$ on $\partial \V$ by considering perturbations of the parameters $b_k=[y^k]B(y)$. 
This variational method uses very little information on the specific function $\hat x$ and applies in a much more general setting in analytic combinatorics. For this reason, we will discuss it in full detail in \Cref{sec:variational method}. The method itself is summarized as \Cref{prop:variational method}.

We highlight the fact that this variational method can provide an additional equation by perturbing $b_k$ only if $b_k>0$. When $|\supp B|=\infty$, we obtain an infinite sequence of equations. It turns out that the asymptotics of these equations as $k\to \infty$ is quite simple, and the proof of \Cref{lem:x unique critical point} below make use of this asymptotics. 
When $|\supp B|<\infty$, our method provides only finitely many equation.
While there is still in theory enough equations for eliminating the critical points of $\hat x$ on $\partial \V \setminus\{Y_c\}$ (see \Cref{rmk:applications in AC} for a detailed count), we did not find a proof that works in general (we verified that \Cref{lem:x unique critical point} remains true when $\supp B=\{0,1,2\}$, $\{0,1,3\}$ and $\{0,2,3\}$). This is why we assumed in Assumption~\eqref{*} that $|\supp B|=\infty$. 

\begin{lemma}\label{lem:x unique critical point}
$\hat x'(Y)\ne 0$ for all $Y\in \cV \setminus \{Y_c\}$.
\end{lemma}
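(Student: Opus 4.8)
The plan is to apply the variational method of \Cref{sec:variational method} (\Cref{prop:variational method}) to the analytic function $\hat x$ and its inverse $\hat Y$, which maps $\cdisk_{x_c}$ conformally onto $\cV$. Suppose, for contradiction, that $\hat x$ has a critical point $Y_*\in \cV\setminus\{Y_c\}$; by \Cref{lem:def V} it must lie on the boundary $\partial\V$, and by \Cref{lem:aperiodicity Y} it satisfies $|Y_*|<Y_c$, so the corresponding $x_*=\hat x(Y_*)$ lies on $\partial\disk_{x_c}$ with $x_*\neq x_c$. First I would record the two ``obvious'' equations: $\hat x(Y_*)=x_*$ with $|x_*|=x_c$, and $\hat x'(Y_*)=0$, the latter being (by \eqref{eq:x rational derivative}) a polynomial relation among $Y_*,B(Y_*),B'(Y_*),B''(Y_*)$. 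Then I would invoke \Cref{prop:variational method}: perturbing each weight $b_k$ with $b_k>0$ produces, because $x_*$ is a dominant singularity of $\hat Y$ of the same modulus $x_c$ as the ``true'' singularity at $x_c$, an additional constraint. Concretely, writing $\hat x = R_{\hat x}(Y,B(Y),B'(Y))$ with $R_{\hat x}(Y,U_0,U_1)=\frac{YU_0}{(U_0+YU_1)^2}$, a perturbation $b_k\mapsto b_k+\epsilon$ changes $\hat x$ by $\epsilon\cdot(\partials_{U_0}\hat x)(Y)\,Y^k + \epsilon\cdot(\partials_{U_1}\hat x)(Y)\,kY^{k-1}$; the variational principle forces the ratio of this quantity at $Y_*$ to the same quantity at $Y_c$ to be a positive real, for every $k\in\supp B$. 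After dividing out the common factor one obtains, for every $k\in\supp B$,
\begin{equation}\label{eq:variational constraint}
\frac{Y_*^{k-1}\bigl(Y_*\,\partials_{U_0}\hat x(Y_*) + k\,\partials_{U_1}\hat x(Y_*)\bigr)}{Y_c^{k-1}\bigl(Y_c\,\partials_{U_0}\hat x(Y_c) + k\,\partials_{U_1}\hat x(Y_c)\bigr)} \in \real_{>0}\,.
\end{equation}

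The heart of the argument is to extract a contradiction from the infinitely many constraints \eqref{eq:variational constraint} as $k\to\infty$ (this is exactly where $|\supp B|=\infty$ is used). Dividing the $k$-th constraint by the $(k{+}1)$-th and letting $k\to\infty$, the ratios of the $\partials_{U_j}\hat x$ terms tend to $1$, so the dominant behavior is governed by $(Y_*/Y_c)^{k}\cdot (Y_c/Y_*) \in \real_{>0}$ in the limit — more precisely, one gets that $(Y_*/Y_c)^{k}$ times a factor tending to a fixed nonzero complex number must be a positive real for all large $k$. Since $|Y_*|<Y_c$, the modulus $|Y_*/Y_c|^k\to 0$ harmlessly, but the \emph{argument} $k\cdot\arg(Y_*/Y_c)$ must then stabilize modulo $2\pi$ as $k$ ranges over $\supp B$ — which is impossible unless $\arg(Y_*)=\arg(Y_c)=0$, i.e.\ $Y_*$ is a positive real. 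But the only positive real point of $\cV$ with $|Y_*|<Y_c$ on which $\hat x'$ could vanish would contradict the definition of $Y_c$ as the \emph{first} critical point on $(0,\rho)$ (and $\hat x'>0$ on $[0,Y_c)$ in the non-generic case). This yields the desired contradiction.

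I expect the main obstacle to be making the limiting argument in the previous paragraph fully rigorous: one must be careful that the denominators $Y_c\,\partials_{U_0}\hat x(Y_c)+k\,\partials_{U_1}\hat x(Y_c)$ and the corresponding expressions at $Y_*$ do not vanish for infinitely many $k$ (they are affine in $k$, so each has at most one integer root, which is harmless for $|\supp B|=\infty$), and that the ``ratio of consecutive constraints'' manipulation is valid — i.e.\ that consecutive elements of $\supp B$ can indeed be used, or, failing that, that one argues with $b_k$ and $b_{k'}$ for $k,k'$ in $\supp B$ with $k'-k$ ranging over a set with $\gcd 1$ (possible precisely by aperiodicity of $B$, already assumed). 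A secondary technical point is checking that the hypotheses of \Cref{prop:variational method} genuinely apply at $x_*$: one needs that $x_*$ is a singularity of $\hat Y$ on $\partial\disk_{x_c}$, which follows from $\hat x'(Y_*)=0$ together with $\hat x$ being locally invertible elsewhere on $\partial\V$, and that the relevant non-degeneracy conditions of the variational principle hold; these should be routine given $\hat x''(Y_c)\neq 0$ (\Cref{lem:concave critical point}) and the structure of $R_{\hat x}$.
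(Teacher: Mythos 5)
Your overall strategy --- perturb each weight $b_k$ with $b_k>0$, feed the perturbation into \Cref{prop:variational method}, and exploit $|\supp B|=\infty$ through a $k\to\infty$ limit --- is exactly the paper's. The gap is in what you extract from the variational principle. \Cref{prop:variational method} gives an equality of real parts,
\[
\Re\left(\frac{\partial_\varepsilon \hat x(Y_*,0)}{\hat x(Y_*,0)}\right)
=\frac{x_c'(0)}{x_c(0)}
=\frac{\partial_\varepsilon \hat x(Y_c,0)}{\hat x(Y_c,0)}\,,
\]
in which the perturbation derivative at $Y_*$ is normalized by $\hat x(Y_*)$ (a boundary point of $\disk_{x_c}$, generally non-real); it does \emph{not} say that the ratio of the perturbation derivatives at $Y_*$ and $Y_c$ is a positive real, which is what your displayed constraint asserts. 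Your phase argument (ratios of consecutive $k$'s, stabilization of $k\arg(Y_*/Y_c)$ modulo $2\pi$, hence $Y_*\in(0,Y_c)$) is built on that misstated constraint, and you yourself flag that it is not rigorous. With the correct constraint no phase analysis is needed: writing $\psi(Y)=YB'(Y)/B(Y)$, the equality becomes
$\Re\bigl(\frac{Y_*^k}{B(Y_*)}\frac{2k+1-\psi(Y_*)}{1+\psi(Y_*)}\bigr)
=\frac{Y_c^k}{B(Y_c)}\frac{2k+1-\psi(Y_c)}{1+\psi(Y_c)}$,
and since the right-hand side is real and positive for large $k$, the bound $|z|\ge\Re(z)$ gives a modulus inequality whose $k\to\infty$ limit along $\supp B$ forces $|Y_*|\ge Y_c$, contradicting $|Y_*|<Y_c$ from \Cref{lem:aperiodicity Y}. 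So the contradiction comes directly from the modulus, not from forcing $Y_*$ onto the positive axis, and the aperiodicity-of-differences machinery you sketch is unnecessary (aperiodicity has already been spent in proving $\cV\setminus\{Y_c\}\subseteq\disk_{Y_c}$).

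The second gap is that you declare the hypotheses of \Cref{prop:variational method} "routine", while the one that genuinely requires work is the differentiability of $\varepsilon\mapsto x_c(\varepsilon)$ at $0$ together with the identification $x_c'(0)=\partial_\varepsilon\hat x(Y_c,0)$ --- which you use implicitly the moment you write the quantity at $Y_c$ in the denominator of your constraint. In the generic phase this follows from \Cref{lem:critical locus}, but in the non-generic dilute phase $\hat x(\,\cdot\,,\varepsilon)$ is not analytic at $Y_c=\rho$, the perturbed model can switch between the generic and non-generic regimes as $\varepsilon$ varies, and the paper has to adapt the argument via the expansions of \Cref{lem:x asymptotics}, treating the two regimes of $\varepsilon$ separately. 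Citing $\hat x''(Y_c)\ne 0$ from \Cref{lem:concave critical point} does not cover this case. Until these two points are repaired your proof is incomplete, although the skeleton is the right one.
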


\begin{proof}
Assume that $\hat x$ has a critical point $Y_*\in \cV\setminus \{Y_c\}$. 
Fix an integer $k$ such that $b_k\equiv [y^k]B(y)>0$, and consider a perturbation $\varepsilon$ to the weight $b_k$. The perturbed model has a weight generating function~$B(y,\varepsilon) = B(y)+\varepsilon y^k$. Let $\hat x(Y,\varepsilon)$, $x_c(\varepsilon)$, $Y_c(\varepsilon)$ and $\V(\varepsilon)$ denote the perturbed versions of $\hat x(Y)$, $x_c$, $Y_c$ and $\V$, respectively. 

For all $\varepsilon \in \mathcal I:=(-b_k,b_k)$, the perturbed weight sequence $(b_l+\varepsilon \delta_{k,l})_{l\ge 0}$ remains nonnegative and satisfies the same assumptions (in particular, Assumption~\eqref{*}) as the non-perturbed one. Hence we can apply \Cref{lem:def V} to conclude that $\hat x(\,\cdot\,,\varepsilon)$ induces a conformal bijection from $\V(\varepsilon)$ to $\disk_{x_c(\varepsilon)}$. 
Notice that $\hat x(Y,\varepsilon)$ is meromorphic in $Y\in \disk_{x_c}$, rational in $\varepsilon$, adn finite at $(Y,\varepsilon)=(Y_*,0)$. Hence it is analytic  in an open neighborhood $\mathcal U \subseteq \complex \times \mathcal I$ of $(Y_*,0)$. \Wlg, assume that $\set{(Y,\varepsilon)}{Y\in \V(\varepsilon), \varepsilon \in \mathcal I} \subseteq \mathcal U$. Then, one can check that $\hat x(Y,\varepsilon)$, $x_c(\varepsilon)$ and $\V(\varepsilon)$ satisfy the conditions of \Cref{prop:variational method} (see also \Cref{rmk:global assumption on V}), provided that $x_c(\varepsilon)$ is differentiable at $0$.

Let us show that $x_c(\varepsilon)$ is indeed differentiable at $0$: By definition, $x_c(\varepsilon)=\hat x(Y_c(\varepsilon),\varepsilon)$, where $Y_c(\varepsilon)<\rho$ is a critical point of $\hat x(\,\cdot\,,\varepsilon)$ in the generic phase, and $Y_c(\varepsilon)=\rho$ in the non-generic phase. Recall the characterization of the phases from \Cref{prop:phase diagram}. 
\begin{itemize}
\item
If the weight generating function $B(\,\cdot\,,0)$ is in the generic phase, then so is $B(\,\cdot\,,\varepsilon)$ for all $\varepsilon$ close to zero. In this case, $Y_c(\varepsilon)$ is a critical point of $\hat x(\,\cdot\,,\varepsilon)$, which is analytic in a neighborhood of  $Y_c(\varepsilon)$. Hence we can apply \Cref{lem:critical locus}, which implies that $x_c(\varepsilon)= \hat x(Y_c(\varepsilon),\varepsilon)$ is differentiable at $0$, with $x_c'(0) = \partial_\varepsilon \hat x(Y_c(0),0)$. 

\item
If $B(\,\cdot\,,0)$ is in the non-generic dilute phase, then $\hat x(\,\cdot\,,0)$ is not analytic at $Y_c(0)=\rho$, so \Cref{lem:critical locus} no longer applies. But the proof of \Cref{lem:critical locus} can be adapted as follows: The functions $\hat x(Y,\varepsilon)$ and $\partial_Y \hat x(Y,\varepsilon)$, though not analytic, are still $C^1$ at $(Y_c(0),0)$, in particular, we have
\begin{equation}\label{eq:x dYx expansion bis}
\hat x(Y,\varepsilon) = \hat x(Y,0) + \partial_\varepsilon \hat x(Y_c(0),0) \cdot \varepsilon + o(\varepsilon)
\qtq{and}
\partial_Y \hat x(Y,\varepsilon) = \partial_Y \hat x(Y,0) + O(\varepsilon)
\end{equation} 
as $(Y,\varepsilon) \to (Y_c(0),0)$.
Let $\mathcal I_0$ be the set of values of $\varepsilon$ for which the perturbed model is in the generic phase. For $\varepsilon \in \mathcal I \setminus \mathcal I_0$, the value $Y_c(\varepsilon)=\rho$ is independent of $\varepsilon$. It follows that $\frac{ \hat x(Y_c( \varepsilon),\varepsilon) - \hat x(Y_c(0),0) }\varepsilon \to \partial_\varepsilon \hat x(Y_c(0),0)$ when $\varepsilon \to 0$ in $\mathcal I \setminus \mathcal I_0$. For $\varepsilon \in \mathcal I_0$, we have $\partial_Y \hat x(Y_c(\varepsilon),\varepsilon)=0$, hence the second expansion in \eqref{eq:x dYx expansion bis} implies that $\partial_Y \hat x(Y_c(\varepsilon),0) = O(\varepsilon)$. But from the asymptotic expansion of $\hat x(Y,0)$ and $\partial_Y \hat x(Y,0)$ in \Cref{lem:x asymptotics}, we can see that $\hat x(Y_c(0),0) - \hat x(Y,0) = o\m({ \partial_Y \hat x(Y,0) }$. Therefore we have $\hat x(Y_c(0),0) - \hat x(Y,0) = o(\varepsilon)$. Plugging this into the first expansion in \eqref{eq:x dYx expansion bis}, we obtain that $\hat x(Y_c(\varepsilon),\varepsilon) = \hat x(Y_c(0),0) + \partial_\varepsilon \hat x(Y_c(0),0)\cdot \varepsilon + o(\varepsilon)$, that is, $\frac{ \hat x(Y_c( \varepsilon),\varepsilon) - \hat x(Y_c(0),0) }\varepsilon \to \partial_\varepsilon \hat x(Y_c(0),0)$ when $\varepsilon \to 0$ in $\mathcal I_0$ as well. 
It follows that $x_c(\varepsilon) = \hat x(Y_c(\varepsilon),\varepsilon)$ is differentiable at $0$ and we have $x_c'(0) = \partial_\varepsilon \hat x(Y_c(0),0)$ as well.
\end{itemize}

We conclude that $x_c(\varepsilon)$ is indeed differentiable at $0$, and we always have $x_c'(0)=\partial_\varepsilon \hat x(Y_c(0),0)$. (This is also obviously true in the dense phase, though we do not need this fact here.) Then, \Cref{prop:variational method} states that $Y_*$ must satisfy $\Re\m({ \frac{\partial_\varepsilon \hat x(Y_*,0) }{ \hat x(Y_*,0) } } = \frac{\partial_\varepsilon \hat x(Y_c,0) }{ \hat x(Y_c,0) }$ for every $k$ such that $b_k>0$. A straightforward computation gives the explicit equation
\begin{equation}
\Re\m({ 
\frac{Y_*^k}{B(Y_*)} \frac{ 2k+1-\psi(Y_*) }{ 1+\psi(Y_*) } } = 
\frac{Y_c^k}{B(Y_c)} \frac{ 2k+1-\psi(Y_c) }{ 1+\psi(Y_c) } \,, \text{ ~where }\psi(Y) := \frac{YB'(Y)}{B(Y)}\,.
\end{equation}
In particular, we have that 
\begin{equation}
\abs{
\frac{Y_*^k}{B(Y_*)} \frac{ 2k+1-\psi(Y_*) }{ 1+\psi(Y_*) } } \ge 
\frac{Y_c^k}{B(Y_c)} \frac{ 2k+1-\psi(Y_c) }{ 1+\psi(Y_c) }
\end{equation}
By Assumption~\eqref{*}, there are infinitely many $k$ such that $b_k>0$. Hence we can take the limit $k\to \infty$ in the above inequality, which implies that $|Y_*|\ge Y_c$. But according to the previous subsection, we have $|Y_*|<Y_c$ for all $Y_*\in \cV\setminus \{Y_c\}$. Therefore $\hat x$ cannot have a critical point in $\cV\setminus \{Y_c\}$.
\end{proof}

Before moving on, let us register a useful fact whose proof uses a similar variational argument as \Cref{lem:x unique critical point}.

\begin{lemma}\label{lem:B no zero}
$B(Y)\ne 0$ and $\partials_x \phi(Y) \equiv B(Y)+YB'(Y)\ne 0$ for all $Y\in \cV$. 
\end{lemma}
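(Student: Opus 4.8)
The plan is to adapt the variational method used in \Cref{lem:x unique critical point}, applied this time to the functions $B(Y)$ and $\partials_x \phi(Y) = B(Y)+YB'(Y)$ rather than to $\hat x'(Y)$. First I would note that it suffices to prove the claim on the boundary $\partial \V$, since on the open set $\V$ both functions are nonzero: indeed $\hat x = \frac{YB(Y)}{(B(Y)+YB'(Y))^2}$ is holomorphic on $\cV$ by \Cref{lem:def V}, so $B(Y)+YB'(Y)$ has no zero in $\V$ (a zero would force a pole of $\hat x$ unless cancelled, and the argument in the proof of \Cref{lem:def V} rules out cancellation by showing $B(Y)=0$ at such a point, which then makes $\hat x(Y)=0$, impossible since $\hat x$ is injective with $\hat x(0)=0$); and $B(Y)=0$ at an interior point would similarly contradict injectivity of $\hat x$. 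The point $Y_c$ itself is handled directly: by \Cref{lem:x asymptotics} and its proof, $B(Y_c)+Y_cB'(Y_c)$ appears in the denominator of $\mu$ (in the dilute$^-$ case) and $\mu>0$ is finite, and more elementarily $B(Y_c)\ge b_0>0$ since $B$ has nonnegative coefficients and $Y_c>0$; likewise $B(Y_c)+Y_cB'(Y_c)>0$ because $B'$ also has nonnegative coefficients and $b_l>0$ for some $l\ge 1$ (if $\supp B=\{0\}$ the model is excluded).

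Now suppose, for contradiction, that $B(Y_*)+Y_*B'(Y_*)=0$ or $B(Y_*)=0$ for some $Y_*\in \partial\V\setminus\{Y_c\}$. I would run exactly the same perturbation scheme: for each $k$ with $b_k>0$, set $B(y,\varepsilon)=B(y)+\varepsilon y^k$ for $\varepsilon\in \mathcal I=(-b_k,b_k)$, and record the perturbed objects $\hat x(Y,\varepsilon)$, $x_c(\varepsilon)$, $\V(\varepsilon)$. As established in the proof of \Cref{lem:x unique critical point}, $x_c(\varepsilon)$ is differentiable at $0$ with $x_c'(0)=\partial_\varepsilon \hat x(Y_c(0),0)$, and \Cref{prop:variational method} applies, giving that every boundary point $Y_*$ of $\V$ satisfies $\Re\!\left(\frac{\partial_\varepsilon \hat x(Y_*,0)}{\hat x(Y_*,0)}\right) = \frac{\partial_\varepsilon \hat x(Y_c,0)}{\hat x(Y_c,0)}$ for all such $k$. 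A short logarithmic-derivative computation (using $\partial_\varepsilon B(y,\varepsilon)=y^k$ and the explicit form of $\hat x$) yields the same family of equations
\begin{equation}
\Re\!\left( \frac{Y_*^k}{B(Y_*)} \frac{2k+1-\psi(Y_*)}{1+\psi(Y_*)} \right) = \frac{Y_c^k}{B(Y_c)} \frac{2k+1-\psi(Y_c)}{1+\psi(Y_c)} \,, \qquad \psi(Y):=\frac{YB'(Y)}{B(Y)} \,,
\end{equation}
valid for the infinitely many $k$ with $b_k>0$ — the very same conclusion reached in \Cref{lem:x unique critical point}. The key point is that this derivation only used that $\hat x$ is a conformal bijection from $\cV$ to $\cdisk_{x_c}$ in the perturbed models, not that $Y_*$ is a critical point; so the inequality obtained by taking absolute values and letting $k\to\infty$, namely $|Y_*|\ge Y_c$, holds for \emph{every} $Y_*\in\partial\V\setminus\{Y_c\}$. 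Combined with \Cref{lem:aperiodicity Y} (which gives $|Y_*|<Y_c$ for all $Y_*\in\cV\setminus\{Y_c\}$), this is a contradiction — independent of whether $Y_*$ was a zero of $B$ or of $B+YB'$. Hence no such $Y_*$ exists.

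I expect the main obstacle to be bookkeeping rather than conceptual: one must verify that the equation above is genuinely the same as in \Cref{lem:x unique critical point} (i.e.\ that the variational identity $\Re(\partial_\varepsilon\hat x/\hat x)(Y_*) = (\partial_\varepsilon\hat x/\hat x)(Y_c)$ does not secretly degenerate when $B(Y_*)=0$ or $B(Y_*)+Y_*B'(Y_*)=0$), and that the quantities entering \Cref{prop:variational method} — in particular the regularity of $\hat x(Y,\varepsilon)$ near $(Y_*,0)$ and the differentiability of $x_c(\varepsilon)$ — still make sense. For the regularity of $\hat x(Y,\varepsilon)$ near $(Y_*,0)$: since $\hat x(\cdot,\varepsilon)$ is a conformal bijection $\cV(\varepsilon)\to\cdisk_{x_c(\varepsilon)}$ for each small $\varepsilon$, it is automatically finite and analytic on a neighborhood of $\{(Y,\varepsilon):Y\in\overline{\V(\varepsilon)}\}$ exactly as argued in \Cref{lem:x unique critical point}; the hypothetical vanishing of $B$ or $B+YB'$ at $Y_*$ for the unperturbed model does not obstruct this, since the value $\hat x(Y_*,0)\in\partial\cdisk_{x_c}$ is finite. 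Once these routine checks are in place, the contradiction with \Cref{lem:aperiodicity Y} closes the argument verbatim as above.
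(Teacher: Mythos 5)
There is a genuine gap, and it is at the heart of your argument: you apply \Cref{prop:variational method} to an \emph{arbitrary} point $Y_*\in\partial\V\setminus\{Y_c\}$, but the hypothesis of that proposition is precisely that $\partial_Y\hat x(Y_*,0)=0$. Its proof is not merely "conformal bijectivity of $\hat x(\,\cdot\,,\varepsilon)$": it tracks the perturbed \emph{critical points} $Y_*\0k(\varepsilon)$ furnished by \Cref{lem:critical locus}, uses that these are excluded from $\V(\varepsilon)$ to get the inequality $|\hat x(Y_*\0k(\varepsilon),\varepsilon)|\ge x_c(\varepsilon)$, and differentiates that inequality at $\varepsilon=0$. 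At a non-critical boundary point there is no excluded object to follow and no inequality to differentiate, so the identity $\Re\bigl(\partial_\varepsilon\hat x/\hat x\bigr)(Y_*)=(\partial_\varepsilon\hat x/\hat x)(Y_c)$ simply is not available. Your own conclusion shows something must be wrong: if every $Y_*\in\partial\V\setminus\{Y_c\}$ satisfied $|Y_*|\ge Y_c$, then combined with \Cref{lem:aperiodicity Y} the boundary of the bounded domain $\V$ would reduce to the single point $Y_c$, which is impossible. Note also that a zero of $B$ on $\partial\V$ is \emph{not} a critical point of $\hat x$ in general (and by \Cref{lem:x unique critical point} it cannot be one), so there is no way to reduce to the critical-point case: writing $YB(Y)=c_2(Y-Y_*)^2+c_3(Y-Y_*)^3+\cdots$ at a double zero one finds $\hat x(Y_*)=1/(4c_2)\ne 0$ and $\hat x'(Y_*)$ generically nonzero.

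A secondary but related flaw is the interior step: you argue that $B(Y_*)=0$ forces $\hat x(Y_*)=0$, contradicting injectivity. That only works for a \emph{simple} zero of $B$; if $Y_*$ is a double zero then $B(Y_*)+Y_*B'(Y_*)=0$ as well and $\hat x(Y_*)$ is finite and nonzero, so no contradiction arises — and this double-zero case is exactly what has to be excluded. The paper's proof proceeds differently: it first shows (from boundedness and non-vanishing of $\hat x$ on $\cV\setminus\{0\}$) that any zero of $B$ in $\cV$ must have multiplicity exactly $2$, then perturbs only the constant term, $B(y,\varepsilon)=B(y)+\varepsilon$. The double zero splits into two \emph{simple} zeros $Y_*^\pm(\varepsilon)=Y_*\pm c\,\varepsilon^{1/2}+o(\varepsilon^{1/2})$, which by the multiplicity argument cannot lie in $\cV(\varepsilon)$; using $\hat x'(Y_*)\ne 0$ (from \Cref{lem:x unique critical point}) to identify $\cV$ locally with $\{|\hat x|\le x_c\}$, one gets $|\hat x(Y_*^\pm(\varepsilon),\varepsilon)|>x_c(\varepsilon)$, whose expansion $x_c\bigl(1\pm\Re(\tilde c\,\varepsilon^{1/2})+o(\varepsilon^{1/2})\bigr)$ is incompatible with the differentiability of $x_c(\varepsilon)$ because $\arg(\tilde c\,\varepsilon^{1/2})$ rotates by $\pi/2$ when $\varepsilon$ changes sign. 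So the variational input enters only through the differentiability of $x_c(\varepsilon)$ and through \Cref{lem:x unique critical point}, not through \Cref{prop:variational method} applied at $Y_*$; your proposal as written does not close the argument, and the statement $B+YB'\ne 0$ should then be deduced from $B\ne 0$ and boundedness of $\hat x$, as in the paper.
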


\begin{proof}
Assume that $B(Y_*) = 0$ for some $Y_*\in \cV$. Since $B(0)>0$, $Y_*\ne 0$. Using the fact that $\hat x(Y)=\frac{YB(Y)}{(B(Y)+YB'(Y))^2}$ is bounded and nonzero on $\cV\setminus \{0\}$, it is not hard to see that $Y_*$ must be a zero of $B$ of multiplicity exactly 2.
Now let us show that this is impossible using the variational method:

Consider a perturbation $B(y,\varepsilon) = B(y)+\varepsilon$ to the constant term of the weight generating function, and denote by $\hat x(Y,\varepsilon)$, $x_c(\varepsilon)$ and $\V(\varepsilon)$ the perturbed versions of $\hat x(Y)$, $x_c$ and $\V$. (This is the special case $k=0$ of the perturbation considered in the proof of \Cref{lem:x unique critical point}.)
When $\varepsilon>-B(0)$, we can still apply the argument of the first paragraph to the perturbed model. It follows that the zeros of $B(\,\cdot\,,\varepsilon)$ in $\cV(\varepsilon)$ are all double zeros. 
But the critical points of $B(\,\cdot\,,\varepsilon)$ do not depend on $\varepsilon$, while its zeros do.
More precisely, since $Y_*$ is a double zero of $B(Y)$, the equation $B(Y,\varepsilon)=B(Y)+\varepsilon=0$ has two solutions $Y_*^+(\varepsilon)$ and $Y_*^-(\varepsilon)$ such that $Y_*^\pm(\varepsilon) - Y_* \!\sim \pm c \cdot \varepsilon^{1/2}$ as $\varepsilon \to 0$, where~$c =\! \sqrt{\frac2{B''(Y_*)}}$. It follows that $Y_*^\pm(\varepsilon)$ are both simple zeros of $B(\,\cdot\,,\varepsilon)$, and hence $Y_*^\pm(\varepsilon) \not \in \cV(\varepsilon)$ for all $\varepsilon\ne 0$ close to zero.

By continuity, $Y_*$ must be on the boundary of $\cV$. It is clear that $Y_*\ne Y_c$. By \Cref{lem:x unique critical point}, we have $\hat x'(Y_*)\ne 0$ and thus $\hat x$ is locally injective at $Y_*$. It follows that in a small neighborhood of $Y_*$, the preimage $\hat x^{-1}(\cdisk_{x_c})$ coincides with $\cV$, that is, $Y\in \cV$ \Iff\ $|\hat x(Y)|\le x_c$. By continuity, the same is true in the perturbed model when $\varepsilon$ is small enough. Hence $Y_*^\pm(\varepsilon) \not \in \cV(\varepsilon)$ implies that $\abs{ \hat x(Y_*^\pm(\varepsilon),\varepsilon)} > x_c(\varepsilon)$ for all $\varepsilon \ne 0$ close to $0$.
However, the asymptotics $Y_*^\pm(\varepsilon)-Y_* \sim \pm c\cdot \varepsilon^{1/2}$ implies that 
\begin{equation}
\hat x(Y_*^\pm(\varepsilon),\varepsilon)
\,=\, \hat x(Y_*) + \hat x'(Y_*) \cdot
      (Y_*^\pm(\varepsilon) - Y_*) + O(\varepsilon) 
\,=\, \hat x(Y_*) \pm c\, \hat x'(Y_*) \cdot
      \varepsilon^{1/2} + o(\varepsilon^{1/2}).
\end{equation}
Since $Y_*\in \partial \V$, we have $\abs{\hat x(Y_*)} = x_c$. It follows that
\begin{equation}\label{eq:sqrt epsilon asymp bound}
\abs{\hat x(Y_*^\pm(\varepsilon),\varepsilon) } 
\,=\, x_c \cdot \abs{ 1 \pm \tilde c \cdot \varepsilon^{1/2} + o(\varepsilon^{1/2})}
\,=\, x_c \cdot \m({ 1 \pm \Re \mn({\tilde c \cdot \varepsilon^{1/2}} + o(\varepsilon^{1/2}) },
\end{equation}
where $\tilde c = \frac{c\, \hat x'(Y_*)}{{\hat x(Y_*)}}\ne 0$. In the proof of \Cref{lem:x unique critical point}, we have shown that $x_c(\varepsilon)$ is differentiable at $\varepsilon=0$. Hence the asymptotic expansion of $\abs{ \hat x(Y_*^\pm(\varepsilon),\varepsilon)}$ and the inequality $\abs{ \hat x(Y_*^\pm(\varepsilon),\varepsilon)} > x_c(\varepsilon)$ implies that $ \Re(\tilde c \cdot \varepsilon^{1/2})=0$ for all $\varepsilon\ne 0$ close to $0$. But this is impossible, because $\arg(\tilde c\cdot \varepsilon^{1/2})$ changes by $\pi/2$ when $\varepsilon$ changes sign. We conclude by contradiction that $B(Y)\ne 0$ for all $Y \in \cV$. Since $\hat x(Y)=\frac{YB(Y)}{(B(Y)+YB'(Y))^2}$ is bounded on $\cV$, we also have $B(Y)+YB'(Y) \ne 0$ for all $Y\in \cV$.
\end{proof}

\subsection{$q(Y,y)$ does not vanish on $\cV \times \cdisk_{Y_c}$ except at $(Y_c,Y_c)$}
\label{sec:technical lemma/unique zero}

Recall that $q$ is a holomorphic function on $\cV \times \cdisk_{Y_c}$ such that $q(Y,Y)=\phi'(Y)$  and $q(Y,y)=\frac{Q(Y,y)}{(Y-y)^2}$ when $Y\ne y$. 
We start by showing that a zero of the function $Q$ in $\cV \times \cdisk_{Y_c}$ must also be a zero of both $\partials_x Q$ and $\partial_y Q$, using a variant of the quadratic method. 
The proof is complicated by the fact that $\partial_x F(x,y)$ is absolutely convergent only on $\disk_{x_c} \times \cdisk_{Y_c}$ and not on $\partial \disk_{x_c} \times \cdisk_{Y_c}$ (c.f.\ \Cref{lem:domain of convergence}). We solve this problem with a continuity argument by studying the local geometry of the zero set $\Set{(Y,y)}{Q(Y,y)=0}$.

\begin{lemma}\label{lem:Q double zero}
If $Q(Y,y)=0$ for some $(Y,y)\in \cV \times \cdisk_{Y_c}$, then $\partial_y Q(Y,y) = \partials_x Q(Y,y) = 0$ as well.
\end{lemma}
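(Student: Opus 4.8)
The plan is to exploit that $Q$ admits a single-valued holomorphic square root on $\cV\times\cdisk_{Y_c}$, so that at every interior zero of $Q$ its first derivatives vanish automatically, and then to transport this to the boundary by a continuity argument controlled by the local geometry of $Q^{-1}(0)$. First I would set $G(Y,y):=2y\,\hat F(Y,y)+\phi(Y)-y$; by \Cref{prop:parametrization} this equals $(Y-y)\sqrt{q(Y,y)}$ as a formal power series, so $G^2=Q$, and by \Cref{lem:domain of convergence} together with the conformal bijection $\hat x\colon\cV\to\cdisk_{x_c}$ and the holomorphicity of $\phi$ on $\cV$ (\Cref{lem:def V}), $G$ is holomorphic on $\cV\times\cdisk_{Y_c}$ (analytic in the interior, continuous up to the boundary); by analytic continuation from a neighbourhood of $(0,0)$ the identity $G^2=Q$ holds on all of $\cV\times\cdisk_{Y_c}$. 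I would also record that $\partial_yQ$ and $\partials_xQ$ — which by Point~(1) of \Cref{lem:algebraic properties} equals $2Y\bigl(B(Y)-YB'(Y)\bigr)+2y\bigl(B(Y)+YB'(Y)\bigr)-4yB(y)$, a polynomial in $Y,B(Y),B'(Y),y,B(y)$ — are holomorphic, in particular continuous, on $\cV\times\cdisk_{Y_c}$.

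If $y=Y$ the claim is Point~(2) of \Cref{lem:algebraic properties}, so assume $y\ne Y$. At a zero $(Y_0,y_0)$ lying in the interior $\V\times\disk_{Y_c}$, $G$ is analytic near $(Y_0,y_0)$ and $G(Y_0,y_0)^2=Q(Y_0,y_0)=0$ gives $G(Y_0,y_0)=0$; differentiating $G^2=Q$ then yields $\partial_YQ(Y_0,y_0)=\partial_yQ(Y_0,y_0)=0$, and since $\hat x$ is a conformal bijection on $\cV$ we have $\hat x'(Y_0)\ne0$, whence $\partials_xQ(Y_0,y_0)=\partial_YQ(Y_0,y_0)/\hat x'(Y_0)=0$. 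This proves the lemma at all interior zeros.

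Next I would treat the boundary zeros $(Y_0,y_0)$ with $Y_0\ne Y_c$ and $y_0\ne Y_c$. By \Cref{lem:aperiodicity Y}, $Y_0\in\disk_{Y_c}$, and by \Cref{lem:x unique critical point}, $\hat x'(Y_0)\ne0$; hence $\hat Y$ extends biholomorphically near $x_0:=\hat x(Y_0)$, and $\tilde Q(x,y):=Q(\hat Y(x),y)$ is analytic in a full $\complex^2$-neighbourhood of $(x_0,y_0)$ (using also that $B$ is analytic at $y_0$, its only possible singularity on $\cdisk_{Y_c}$ being at $Y_c$), with $\partial_x\tilde Q=\partials_xQ$ and $\partial_y\tilde Q=\partial_yQ$ under this substitution. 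For $(x,y)$ near $(x_0,y_0)$ with $x\in\disk_{x_c}$ and $y\in\disk_{Y_c}$ one has $(\hat Y(x),y)\in\V\times\disk_{Y_c}$, so by the previous paragraph every zero of $\tilde Q$ there is critical. Suppose for contradiction $\nabla\tilde Q(x_0,y_0)\ne0$; then $\tilde Q^{-1}(0)$ is a smooth analytic curve near $(x_0,y_0)$, and it cannot enter $\disk_{x_c}\times\disk_{Y_c}$ (a zero there would be critical, contradicting smoothness). Writing this curve as a graph over whichever coordinate has nonvanishing partial, restricted to a full coordinate disk centred at the base point: the graph map is analytic, sends that disk into the complement of $\disk_{x_c}$ or of $\disk_{Y_c}$, and takes a value of modulus $x_c$ (resp.\ $Y_c$), which is nonzero — so by the minimum-modulus principle, or the open mapping theorem when the base point is interior to the other factor, the graph map is constant. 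This forces $Q(Y_0,\cdot)$ or $Q(\cdot,y_0)$ to vanish identically, which is impossible: from $Q(Y_0,y)=(\phi(Y_0)+y)^2-4yB(y)\hat x(Y_0)$ and $Q(Y,y_0)=(\phi(Y)+y_0)^2-4y_0B(y_0)\hat x(Y)$ one checks, using $\phi(0)=\hat x(0)=0$, $\phi\not\equiv0$ and $b_0>0$, that neither is the zero series. Hence $\nabla\tilde Q(x_0,y_0)=0$, i.e.\ $\partials_xQ(Y_0,y_0)=\partial_yQ(Y_0,y_0)=0$.

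The main obstacle, which I expect to require the bulk of the work, is the remaining boundary zeros: those with $Y_0=Y_c$ or $y_0=Y_c$ (where $B$, hence $\hat x,\phi,Q$, is only $\Delta$-analytic and $\hat x'(Y_c)=0$, so neither the slicing nor the biholomorphic pull-back above applies near $Y_c$), together with the ``corner'' zeros sitting on $\partial\disk_{x_c}\times\partial\disk_{Y_c}$, where the soft graph argument of the previous paragraph does not by itself produce a contradiction. For these I would replace analyticity by the precise local expansions already established in \Cref{sec:func asymptotics} — \Cref{lem:x asymptotics} for $\hat x$, $\hat x'$ and the shape of $\cV$ near $Y_c$, and \Cref{lem:q asymptotics} with \Cref{lem:H_alpha bounds} for $q$, hence $Q$, near $(Y_c,Y_c)$ — run the implicit-function/minimum-modulus argument inside a $\Delta$-domain over the variable not approaching $Y_c$ (where $Q$ is still honestly analytic), and use the continuity of $G=\sqrt Q$ and of $\partial_yQ$, $\partials_xQ$ up to the boundary of $\cV\times\cdisk_{Y_c}$; the corner cases additionally need the explicit algebraic form of $Q$ to exclude the M\"obius-type graphs that the soft argument cannot rule out.
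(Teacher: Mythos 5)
Your interior argument and your treatment of the boundary zeros with $Y_0\ne Y_c$, $y_0\ne Y_c$ lying off the corner are fine, and they take a slightly different route from the paper (classical implicit function theorem plus minimum modulus, where the paper instead manufactures a sequence of \emph{interior} zeros via the modified implicit function theorem of \Cref{lem:generalized ImplicitFT} and concludes by continuity of $\partials_x Q$). But the cases you defer at the end are not a technical afterthought: they are where essentially all of the paper's proof lives, and your plan for them does not close the gap. Concretely, you never establish $\partial_y Q=0$ at the deferred zeros (the corners of $\partial\V\times\partial\disk_{Y_c}$ and the points with a coordinate equal to $Y_c$); continuity of $\partial_yQ$ up to the boundary, which is all you invoke, gives nothing unless interior zeros accumulate there, and whether they do is precisely the question. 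The paper gets this step for free and everywhere at once: by \Cref{lem:domain of convergence}, $\partial_yF$ converges absolutely on the \emph{closed} bidisk $\cdisk_{x_c}\times\cdisk_{Y_c}$, so with $T=G$ in your notation, $\partial_yT$ is bounded on all of $\cV\times\cdisk_{Y_c}$ and $Q=T^2=0$ forces $\partial_yQ=2T\,\partial_yT=0$ with no case analysis; likewise the convergence of $\partial_xF$ on $\disk_{x_c}\times\cdisk_{Y_c}$ settles $\partials_xQ$ on all of $\V\times\cdisk_{Y_c}$, including $y\in\partial\disk_{Y_c}$ and $y=Y_c$, which your paragraph on boundary zeros cannot reach (there $B$ need not be analytic at $y_0=Y_c$ in the dilute phase). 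This observation is missing from your proposal, and it is not only a shortcut: it is an essential input to the hard cases.

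For those hard cases your sketch ("run the implicit-function/minimum-modulus argument inside a $\Delta$-domain", "use the explicit algebraic form of $Q$ to exclude M\"obius-type graphs") is not yet an argument, and it does not match what actually works. At a corner zero with $Y_*\ne Y_c$ the paper uses the already-known vanishing $\partial_yQ(Y_*,y_*)=0$ to conclude that $y_*$ is a zero of order $\gamma\ge 2$ of $y\mapsto Q(Y_*,y)$ (with $\gamma=\tilde\alpha$ when $y_*=\rho$, via Assumption~\eqref{*}); \Cref{lem:generalized ImplicitFT} then gives a solution branch $\tilde Y(y)-Y_*\sim c\,(y-y_*)^{\gamma}/\partial_YQ(Y_*,y_*)$, and the multiplication of angles by $\gamma\ge 2$ maps the half-neighbourhood $\disk_{Y_c}$ at $y_*$ onto (almost) a full neighbourhood of $Y_*$, forcing the branch into $\V\times\disk_{Y_c}$ and producing interior zeros converging to the corner, contradicting $\partials_xQ(Y_*,y_*)\ne0$. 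At $Y_*=Y_c$, $y_*\ne Y_c$ one needs instead the expansions $Q(Y,y_*)\sim c\,(Y_c-Y)^{\alpha-1}$ and $\partial_y^kQ(Y,y_*)=O((Y_c-Y)^{\alpha-1})$ (which come from $\hat x'(Y)\sim \tilde c\,(Y_c-Y)^{\alpha-2}$, \Cref{lem:x asymptotics}), and the $n\ge2$ branches $\tilde y_k(Y)-y_*\sim \omega_k c'(Y_c-Y)^{(\alpha-1)/n}$ with $\omega_k$ the $n$-th roots of unity, at least one of which must enter $\disk_{Y_c}$. The "M\"obius-type graphs" you worry about at the corner are exactly the configurations that this angle/multiplicity analysis is there to exclude, and your minimum-modulus device has no counterpart for it. So as written the proof is genuinely incomplete at the corner zeros and at the zeros with $Y_0=Y_c$ or $y_0=Y_c$.
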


\begin{proof}
Let $T(Y,y) = 2y \cdot \m({F(\hat x(Y),y) - \frac12} + \phi(Y)$. A simple rearrangement of \eqref{eq:F:parametrization} shows that $Q(Y,y) = T(Y,y)^2$. 
By \Cref{lem:domain of convergence,lem:def V}, 
the power series $F(x,y)$ and $\partial_y F(x,y)$ are absolutely convergent on $\cdisk_{x_c} \times \cdisk_{Y_c}$, and $\hat x$ maps $\cV$ continuously to $\cdisk_{x_c}$. It follows that $\partial_y T(Y,y) = 2y\cdot  \partial_y F(\hat x(Y),y) + 2\m({ F(\hat x(Y),y)-\frac12 }$ is bounded on $\cV \times \cdisk_{Y_c}$. Hence $Q(Y,y)=T(Y,y)^2=0$ implies $\partial_y Q(Y,y) = 2 T(Y,y) \cdot \partial_y T(Y,y) =0$, for all $(Y,y)\in \cV \times \cdisk_{Y_c}$.
Similarly, since $\partial_x F(x,y)$ is absolutely convergent on $\disk_{x_c} \times \cdisk_{Y_c}$, the function $\partials_x T(Y,y) = 2y \cdot \partial_x F(\hat x(Y),y) + \partials_x \phi(Y)$ takes finite values on $\V \times \cdisk_{Y_c}$. Therefore $Q(Y,y)=T(Y,y)^2=0$ implies $\partials_x Q(Y,y) = 2 T(Y,y) \cdot \partials_x T(Y,y) =0$ for all $(Y,y)\in \V \times \cdisk_{Y_c}$.

It remains to show that $Q(Y,y)=0$ also implies $\partials_x Q(Y,y)=0$ for $(Y,y)\in \partial \V \times \cdisk_{Y_c}$. 

When $Y_*=y_*=Y_c$, we have $\partials_x Q(Y_c,Y_c)=0$ directly by Lemma~\refp{2}{lem:algebraic properties}. When $(Y_*,y_*) \in \partial \V \times \disk_{Y_c}$, the mapping $y\mapsto Q(Y,y)$ is analytic in a neighborhood of $y_*$ for all $Y\in \cV$. By  the generalization of the implicit function theorem in \Cref{lem:generalized ImplicitFT}, there exists a continuous function $\tilde y:\cV \to \complex$ such that $\tilde y(Y_*)=y_*$ and $Q(Y,\tilde y(Y))=0$ for all $Y\in \cV$ close enough to $Y_*$. Since $y_*$ is in the interior of $\cdisk_{Y_c}$, the graph of this function $\tilde y$ contains a sequence $(Y_j,y_j)\in \V \times \disk_{Y_c}$ that converges to $(Y_*,y_*)$. But the first paragraph of the proof ensures that $Q(Y_j,y_j) = \partials_x Q(Y_j,y_j) = 0$ for all $j$. Therefore $\partials_x Q(Y_*,y_*) = 0$ by continuity.

It remains the case where $(Y_*,y_*) \in \partial \V \times \partial \disk_{Y_c}$ and $(Y_*,y_*)\ne (Y_c,Y_c)$. Thanks to the $\Delta$-analyticity of $B(y)$, the function $Q(Y,y)$ is analytic in $Y$ when $Y\in \partial \V \setminus \{Y_c\}$, and analytic in $y$ when $y\in \partial \disk_{Y_c} \setminus \{Y_c\}$. In both cases, we can apply \Cref{lem:generalized ImplicitFT} to express locally the zero set of $Q$ as the graphs of some functions.
We will use the asymptotics of these functions provided in \Cref{lem:generalized ImplicitFT} to show that their graphs contain a sequence $(Y_j,y_j)\in \V \times \disk_{Y_c}$ that converges to $(Y_*,y_*)$. As in the previous paragraph, this implies $\partials_x Q(Y_*,y_*) = 0$ by continuity.
Actually, we will proceed by contradiction: Assume that $Q(Y_*,y_*)=0$ and $\partials_x Q(Y_*,y_*)\ne 0$ for some $(Y_*,y_*)\in \partial \V \times \partial \disk_{Y_c} \setminus \{(Y_c,Y_c)\}$. We have two cases:

When $Y_*\ne Y_c$, the mapping $Y\mapsto Q(Y,y)$ is analytic at $Y_*$ for all $y\in \cdisk_{Y_c}$. Moreover:
\begin{itemize}[topsep=0.5ex,itemsep=0ex]
\item
$\partial_Y Q(Y_*,y_*) = \partials_x Q(Y_*,y_*) \cdot \hat x'(Y_*) \ne 0$ by \Cref{lem:x unique critical point}, hence $Y_*$ is a simple zero of $Y\mapsto Q(Y,y_*)$.
\item
We have $Q(Y_*,y)\sim c\cdot (y-y_*)^\gamma$ as $y\to y_*$ in $\cdisk_{Y_c}$ for some $\gamma\ge 2$ and $c\ne 0$.

Indeed, according to the first paragraph of the proof, we have $Q(Y_*,y_*) = \partial_y Q(Y_*,y_*)=0$. If $y_*\ne \rho$, then $y\mapsto Q(Y_*,y)$ is analytic in a neighborhood of $y_*$, so its Taylor expansion gives $Q(Y_*,y)\sim c\cdot (y-y_*)^\gamma$ for some integer $\gamma\ge 2$. If $y_*=\rho$ (i.e.\ we are in the non-generic phase \emph{and} $y_*=Y_c$), then $B(y)$ is the sum of an analytic function at $\rho$ and the singular term $\cst_B\cdot (1-y/\rho)^{ \tilde \alpha} (1+o(1))$ according to Assumption~\eqref{*}. 
Since $Q(Y_*,y)$ is a polynomial of $y$ and $B(y)$, we also have $Q(Y_*,y)\sim c\cdot (y-y_*)^\gamma$ for $\gamma=\tilde \alpha>2$.
\end{itemize}
Then, \Cref{lem:generalized ImplicitFT} applied to $f(z,s) = Q(Y_*+z, y_*+s)$, $n=1$, $S=\setn{s\in \complex}{y_*+s \in \cdisk_{Y_c}}$ and $h(s)=s^\gamma$ tells us that the zero set of $Q$ coincides in a neighborhood of $(Y_*,y_*)\in \complex \times \cdisk_{Y_c}$ with the graph $\setn{(\tilde Y(y),y)}{y\in \cdisk_{Y_c}}$ of a continuous function $\tilde Y:\cdisk_{Y_c}\to \complex$ such that $\tilde Y(y)-Y_* \sim \frac{c\cdot (y-y_*)^\gamma}{\partial_Y Q(Y_*,y_*)}$ as $y\to y_*$. The last asymptotics implies that the tangent of the disk $\disk_{Y_c}$ at $y_*$ is mapped by $\tilde Y$ to an angle of size $\gamma \pi \ge 2\pi$ at $Y_*$. This means that the image $\tilde Y(\disk_{Y_c})$ contains a neighborhood of $Y_*$, possibly with a cone of arbitrarily small angle removed. In particular, $\V \cap \tilde Y(\disk_{Y_c})$ contains a cone of positive angle at $Y_*$. It follows that there exists a sequence $\seq yj$ such that $(\tilde Y (y_j),y_j)\in \V\times \disk_{Y_c}$ for all $j$ and $(\tilde Y (y_j),y_j) \to (Y_*,y_*)$ as $j\to \infty$. 

When $Y_*=Y_c$ and $y_*\ne Y_c$, the mapping $y\mapsto Q(Y,y)$ is analytic at $y_*$ for all $Y\in \cV$. Moreover:
\begin{itemize}[topsep=0.5ex,itemsep=0ex]
\item
$Q(Y_c,y_*)=\partial_y Q(Y_c,y_*) = 0$, that is, $y_*$ is a zero of $y\mapsto Q(Y_c,y)$ of some multiplicity $n\ge 2$.
\item
$\partial_y^k Q(Y,y_*) = O((Y_c-Y)^{\alpha-1})$ for all $0\le k<n$ and $Q(Y,y_*)\sim c \cdot (Y_c-Y)^{ \alpha-1}$ as $Y\to Y_c$ in $\V$.

Indeed, $\partial_y^k Q(Y,y_*)$ is a rational function of $(Y,B'(Y),B(Y))$. Due to Assumption~\eqref{*}, it is $C^1$-continuous in a neighborhood of $Y_c$ in $\cV$.
By the definition of $n$, we have $\partial_y^k Q(Y_c,y_*)=0$ for all $0\le k<n$. On the other hand, we have $\partial_Y \partial_y^k Q(Y,y_*) = \partials_x \partial_y^k Q(Y,y_*) \cdot \hat x'(Y)$ and by \Cref{lem:x asymptotics}, $\hat x'(Y)\sim \tilde c \cdot (Y_c-Y)^{\alpha-2}$ for some $\tilde c>0$ as $Y_c\to Y$. It follows that $\partial_Y \partial_y^k Q(Y,y_*) = O((Y_c-Y)^{\alpha-2})$ and therefore after integration, $\partial_y^k Q(Y,y_*) = O((Y_c-Y)^{\alpha-1})$ for all $0\le k<n$. 
When $k=0$, since $\partials_x Q(Y_c,y_*)\ne 0$ by assumption, we have $\partial_Y Q(Y,y_*) \sim \partials_x Q(Y_c,y_*) \cdot \tilde c\cdot (Y_c-Y)^{\alpha-2}$ and hence $Q(Y,y_*) \sim c\cdot (Y_c-Y)^{\alpha-1}$ for some $c\ne 0$. 
\end{itemize}
Then, \Cref{lem:generalized ImplicitFT} applied to $f(z,s)=Q(Y_c-s,y_*+z)$, $S=\setn{s\in \complex}{Y_c-s\in \V}$ and $h(s) = s^{\frac{\alpha-1}{n}}$ tells us that the zero set of $Q$ coincides in a neighborhood of $(Y_c,y_*)\in \V\times \complex$ with the graphs $\setn{ (Y,\tilde y_k(Y)) }{ Y\in \V, 1\le k\le n }$ of $n$ continuous functions $\tilde y_k:\cV \to \complex$ such that $\tilde y_k (Y)-y_* \sim \omega_k\cdot c'\cdot (Y_c-Y)^{\frac{\alpha-1}{n}}$ as $Y\to Y_c$, where $c'\ne 0$ and $\omega_1,\ldots,\omega_n$ are all the $n$-th roots of unity. The asymptotics of $\tilde y_k$ implies that, in a neighborhood of $y_*$, the union $\bigcup_{k=1}^n \tilde y_k(\V)$ contains a cone of positive angle at $y_*$, and all of its images under the rotations $z\mapsto \omega_k z$ ($k=1,\ldots,n$). Since $n\ge 2$, there is at least one $k$ for which $\disk_{Y_c} \cap \tilde y_k(\V)$ contains a cone of positive angle at $y_*$. It follows that there exists a sequence $\seq Yj$ such that $(Y_j,\tilde y_k(Y_j))\in \V\times \disk_{Y_c}$ for all $j$ and $(Y_j,\tilde y_k(Y_j)) \to (Y_*,y_*)$ as $j\to \infty$.

In both cases, the set $\V\times \disk_{Y_c}$ contains a sequence of zeros of $Q$ that converges to $(Y_*,y_*)$. As discussed before, this implies $\partials_x Q(Y_*,y_*)=0$. This completes the proof by contradiction for $(Y_*,y_*) \in \partial \V \times \partial \disk_{Y_c} \setminus \{(Y_c,Y_c)\}$.
\end{proof}

\begin{lemma}\label{lem:q no zero}
$q(Y,y)$ does not vanish on $\cV \times \cdisk_{Y_c} \setminus \{(Y_c,Y_c)\}$.
\end{lemma}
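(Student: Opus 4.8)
The statement to prove is \Cref{lem:q no zero}: $q(Y,y) \ne 0$ on $\cV \times \cdisk_{Y_c} \setminus \{(Y_c,Y_c)\}$. The plan is to split into two cases according to whether the point lies off the diagonal or on it.

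\emph{Off-diagonal case.} Suppose $q(Y_*,y_*)=0$ with $Y_*\ne y_*$. Then $Q(Y_*,y_*)=(Y_*-y_*)^2 q(Y_*,y_*)=0$, so \Cref{lem:Q double zero} applies and gives $\partial_y Q(Y_*,y_*)=\partials_x Q(Y_*,y_*)=0$ as well. The idea is that these three vanishings, together with the explicit form of $Q$, are too many constraints. Concretely, recall $Q(Y,y)=(\phi(Y)+y)^2-4yB(y)\hat x(Y)$ and $\partials_x Q(Y,y)=2(\phi(Y)+y)\partials_x\phi(Y)+\text{(}Y\text{-independent part)}$ — more precisely, by Lemma~\refp{1}{lem:algebraic properties}, $\partials_x Q(Y,y)=2(\phi(Y)+y)(B(Y)+YB'(Y))-4yB(y)$. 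Meanwhile $\partial_y Q(Y,y)=2(\phi(Y)+y)-4\frac{d}{dy}(yB(y))\cdot\hat x(Y)$. I would eliminate $\hat x(Y_*)$ and $B(y_*)$ among the three equations $Q=\partial_yQ=\partials_xQ=0$, using $\hat x(Y_*)\ne 0$ (true on $\cV$ since $\hat x$ is a conformal bijection onto $\cdisk_{x_c}$, hence never $0$ except at $Y=0$, and $Q(0,y)\ne 0$ because $\phi(0)=0$ forces $Q(0,y)=y^2\ne 0$ for $y\ne 0$; the case $y_*=0$ is excluded since $Q(Y,0)=\phi(Y)^2$ and $\phi$ has no zero on $\cV\setminus\{0\}$ by \Cref{lem:B no zero} and its proof). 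The resulting relation should force $Y_*=y_*$, contradicting the off-diagonal assumption. This is essentially the same algebraic identity that underlies $Q(Y,Y)=\partial_YQ(Y,Y)=\partial_yQ(Y,Y)=0$ from Lemma~\refp{2}{lem:algebraic properties}, read in reverse.

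\emph{Diagonal case.} Suppose $q(y_*,y_*)=0$ for some $y_*\in \cdisk_{Y_c}$, $y_*\ne Y_c$. By Lemma~\refp{3}{lem:algebraic properties}, $q(Y,Y)=\phi'(Y)$, and by Lemma~\refp{1}{lem:algebraic properties}, $\phi'(Y)=(B(Y)+YB'(Y))\cdot\hat x'(Y)$. So $q(y_*,y_*)=0$ forces either $B(y_*)+y_*B'(y_*)=0$ or $\hat x'(y_*)=0$. The first is ruled out by \Cref{lem:B no zero}; the second is ruled out by \Cref{lem:x unique critical point} since $y_*\in\cV\setminus\{Y_c\}$ (noting $\cV\subseteq\cdisk_{Y_c}$ and in fact $\cV\setminus\{Y_c\}\subseteq\disk_{Y_c}$ by \Cref{lem:aperiodicity Y}, so any diagonal point $(y_*,y_*)\in\cV\times\cdisk_{Y_c}$ other than $(Y_c,Y_c)$ indeed has $y_*\in\cV\setminus\{Y_c\}$ — wait, one must be careful: $y_*$ need only lie in $\cdisk_{Y_c}$, not necessarily in $\cV$. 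However on the diagonal we need $Y=y=y_*$ with $Y\in\cV$, so $y_*\in\cV$ automatically). Hence $q$ has no diagonal zero other than $(Y_c,Y_c)$.

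\emph{Main obstacle.} I expect the off-diagonal elimination to be the delicate step: one must check that the algebraic manipulation genuinely forces $Y_*=y_*$ and does not admit a spurious extra solution branch, and one must handle the boundary subtleties (points where $Y_*\in\partial\V$ or $y_*\in\partial\cdisk_{Y_c}$) where $B$ is only $\Delta$-analytic rather than analytic. The saving grace is that \Cref{lem:Q double zero} has already done the hard analytic work of propagating the vanishing to $\partial_yQ$ and $\partials_xQ$ uniformly up to the boundary, so what remains is a finite algebraic computation with the explicit rational expressions for $\phi$, $\hat x$ and $Q$.
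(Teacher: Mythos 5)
Your diagonal case is fine and matches the paper: $q(Y,Y)=\phi'(Y)=\partials_x\phi(Y)\cdot\hat x'(Y)$, which is nonzero on $\cV\setminus\{Y_c\}$ by \Cref{lem:x unique critical point,lem:B no zero}. The genuine gap is exactly at the step you flagged as the ``main obstacle'' and then waved through: the elimination in the off-diagonal case does \emph{not} force $Y_*=y_*$. Writing out the system $Q=\partial_yQ=\partials_xQ=0$ (as the paper does), the quotient of the first two equations forces $Y_*=y_*$ \emph{only when} $\phi(Y_*)+y_*\ne 0$; there is a second, perfectly consistent branch $\phi(Y_*)+y_*=0$, and on that branch the remaining equations give $B(y_*)=B'(y_*)=0$ rather than a contradiction. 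So the ``spurious extra solution branch'' you hoped to exclude is real, and ruling it out is not a finite algebraic computation: in the paper it occupies the bulk of the proof. One first observes that these equations make the zero set $\mathcal Z$ of $q$ discrete; then, using the modified implicit function theorem (\Cref{lem:generalized ImplicitFT}), one shows an isolated zero cannot have $Y_*\in\V$ or $y_*\in\disk_{Y_c}$, so necessarily $(Y_*,y_*)\in\partial\V\times\partial\disk_{Y_c}$ with $Y_*\ne Y_c$, $y_*\ne Y_c$; finally a Newton--Puiseux analysis of the double zero of $Y\mapsto Q(Y,y_*)$ (two branches with $\tilde Y_1'(y_*)+\tilde Y_2'(y_*)=-2/\phi'(Y_*)$), combined with the geometric constraint that these branches must avoid $\V\times\disk_{Y_c}$, yields $\psi_*=\frac{Y_*B'(Y_*)}{B(Y_*)}>1$ and hence $y_*=-\phi(Y_*)\in(0,Y_*)\subset\disk_{Y_c}$, contradicting $y_*\in\partial\disk_{Y_c}$. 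None of this analytic/geometric machinery is present in your proposal, so the argument as written does not close.

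A secondary inaccuracy: your exclusion of $y_*=0$ rests on the claim that $\phi$ has no zero on $\cV\setminus\{0\}$ ``by \Cref{lem:B no zero}''; but that lemma only controls $B(Y)$ and $B(Y)+YB'(Y)$, whereas $\phi(Y)=0$ for $Y\ne0$ requires $B(Y)-YB'(Y)=0$, which it does not address. In the paper's organization this case never needs separate treatment: on the exceptional branch one has $B(y_*)=0$, so $y_*=0$ is impossible because $B(0)=b_0>0$, and off that branch the elimination already gives $Y_*=y_*$.
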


\begin{proof}
We prove that the set $\mathcal Z := \setn{(Y,y)\in \cV\times \cdisk_{Y_c}}{ (Y,y)\ne (Y_c,Y_c) \text{ and }q(Y,y)=0 }$ is empty in two steps: First, we derive from \Cref{lem:Q double zero} that all points in $\mathcal Z$ satisfy $\phi(Y)+y=B(y)=B'(y)=0$, hence $\mathcal Z$ is a discrete set. Then, we show that a solution of the system $\phi(Y)+y=B(y)=B'(y)=0$ cannot be an isolated point in $\mathcal Z$, hence $\mathcal Z$ must be empty.

Consider $(Y_*,y_*)\in \mathcal Z$. 
We have seen in Lemma~\refp{3}{lem:algebraic properties} that $q(Y,Y) = \phi'(Y) = \partials_x \phi(Y) \cdot \hat x'(Y)$, which does not vanish on $\cV\setminus \{Y_c\}$ by \Cref{lem:x unique critical point,lem:B no zero}. Since $Y_*\in \cV$ and $(Y_c,Y_c)\not \in \mathcal Z$, we have $Y_*\ne y_*$.
One can check that $q(0,y)=1$ for all $y$, so $Y_*\ne 0$ as well. On the other hand, we have $Q(Y_*,y_*)=(Y_*-y_*)^2 q(Y_*,y_*)=0$ and hence $\partials_x Q(Y_*,y_*) = \partial_y Q(Y_*,y_*)=0$ by \Cref{lem:Q double zero}. Explicitly,
\begin{align}
           Q(Y,y) &= 0 \qquad \Leftrightarrow \hspace{-18mm}&
\m({\phi(Y)+y}^2 &= 4yB(y) \cdot \hat x(Y) 
\label{eq:Q=0} \\
\partials_x Q(Y,y) &= 0 \qquad \Leftrightarrow \hspace{-18mm}&
2\partials_x \phi(Y) \cdot \m({\phi(Y)+y} &= 4yB(y) \\
\partial_y Q(Y,y) &= 0 \qquad \Leftrightarrow \hspace{-18mm}&
2\m({\phi(Y)+y} &= 4(B(y)+yB'(y))\cdot \hat x(Y) \,.
\label{eq:partial_y Q=0}
\end{align}
If $\phi(Y_*)+y_*\ne 0$, then the quotient of the first two equations gives that $\phi(Y_*) + y_* = 2\partials_x \phi(Y_*) \cdot \hat x(Y_*)$. One can check that this simplifies to $y_*=Y_*$. This contradicts what we have shown before. Hence $\phi(Y_*) + y_* = 0$. Plugging this into \eqref{eq:Q=0} and \eqref{eq:partial_y Q=0} gives that $B(y_*)=B'(y_*)=0$. (We have $\hat x(Y_*)\ne 0$ because $Y_*\ne 0$ and $\hat x$ is injective on $\cV$.)
These equations shows that $\mathcal Z$ is a discrete set, hence $(Y_*,y_*)$ is an isolated point of $\mathcal Z$.

If $Y_*\in \V$, then $Y\mapsto q(Y,y)$ is analytic in a neighborhood of $Y_*$, so \Cref{lem:generalized ImplicitFT} tells us that in a neighborhood of $(Y_*,y_*)$, the zero set of $q$ contains the graph of a continuous function $\tilde Y:\cdisk_{Y_c} \to \complex$ such that $\tilde Y(y_*)=Y_*$. But this implies that for any sequence $\seq yj$ that converges to $y_*$ in $\cdisk_{Y_c}$, the pair $(\tilde Y(y_j),y_j)$ is in $\mathcal Z$ for $j$ large enough, and converges to $(Y_*,y_*)$ as $j\to \infty$. This contradicts the fact that $(Y_*,y_*)$ is an isolated point of $\mathcal Z$. Therefore $Y_*\not\in \V$. The same argument also shows that $y_* \not \in \disk_{Y_c}$. In addition, if $Y_*=Y_c$, then $y_* = -\phi(Y_*) = -Y_c \frac{B(Y_c)-Y_c B'(Y_c)}{B(Y_c)+ Y_c B'(Y_c)}$ would be in $\disk_{Y_c}$. So $Y_*\ne Y_c$. Since $B(y_*)=0$, we also have $y_*\ne Y_c$.

The previous paragraph proves that $Y_*\in \partial \V \setminus \{Y_c\}$ and $y_*\in \partial \disk_{Y_c} \setminus \{Y_c\}$. Thanks to the $\Delta$-analyticity of $B$, the function $Q$ is analytic at $(Y_*,y_*)$. We have $Q(Y_*,y_*) = \partial_Y Q(Y_*,y_*) = \partial_y Q(Y_*,y_*) = 0$ by \Cref{lem:Q double zero}. Moreover, the using the equations $\phi(Y_*) + y_* = B(y_*) = B'(Y_*) = 0$, one can simplify $\partial_Y^2 Q(Y_*,y_*)$ and $\partial_Y \partial_y Q(Y_*,y_*)$ to
\begin{equation}
\partial_Y^2 Q(Y_*,y_*) = 2 \phi'(Y_*)^2       \qtq{and}
\partial_Y \partial_y Q(Y_*,y_*) = 2 \phi'(Y_*) 
\,.
\end{equation}
We have seen that $\phi'$ does not vanish on $\cV \setminus \{Y_c\}$. Therefore $Y_*$ is a double zero of $Y\mapsto Q(Y,y_*)$. According to the Newton-Puiseux theorem (see e.g.\ \cite{CasasAlvero2000}), the zero set of $Q$ coincides in a neighborhood of $(Y_*,y_*)$ with the graphs of two analytic functions $\tilde Y_1,\tilde Y_2$ such that $\tilde Y_1(y_*)=\tilde Y_2(y_*)=Y_*$ and $\tilde Y_1'(y_*), \tilde Y_2'(y_*)$ are the two roots of the polynomial
\begin{equation}
\partial_Y^2 Q(Y_*,y_*) \cdot r^2 + 2\partial_Y \partial_y Q(Y_*,y_*) \cdot r + \partial_y^2 Q(Y_*,y_*) \,.
\end{equation}
In particular, $\tilde Y_1'(y_*) + \tilde Y_2'(y_*) = -\frac{2\partial_Y \partial_y Q(Y_*,y_*)}{\partial_Y^2 Q(Y_*,y_*)} = -\frac{2}{\phi'(Y_*)}$.
Since $Y_*\ne y_*$, the zero sets of $Q$ and $q$ coincide near $(Y_*,y_*)$. We have seen that $\mathcal Z$ does not contain points in $\V \times \disk_{Y_c}$. It follows that the graphs of the functions $\tilde Y_j$ do not intersect 
$\V \times \disk_{Y_c}$ in a neighborhood of $(Y_*,y_*)$, or equivalently, $\V \cap \tilde Y_j(\disk_{Y_c}) = \varnothing$ locally near $Y_*$. 
When $y\to y_*$ along a half-line, $\tilde Y_j(y)\to Y_*$ also along a half-line. In this asymptotic regime, we have
\begin{align*}
Y\in \V ~~\iff~~
\abs{\frac{\hat x(Y)}{\hat x(Y_*)}}<1  ~~\iff~~
\Re\m({ \frac{\hat x(Y)}{\hat x(Y_*)}-1 }< 0
\qtq{and}
y\in \disk_{Y_c} ~~\iff~~
\abs{\frac{y}{y_*}} < 1 ~~\iff~~
\Re\m({ \frac{y}{y_*}-1 }<0    
\end{align*}
We cannot have $\tilde Y'_j(y_*)=0$ because otherwise $\tilde Y_j(\disk_{Y_c})$ would contain a cone of angle arbitrarily close to $2\pi$ near $Y_*$, which would intersect $\V$. It follows that when $y\to y_*$,
\begin{equation}
\frac{\hat x(\tilde Y_j(y))}{\hat x(Y_*)} - 1 
\ \sim\ \frac{\hat x'(Y_*)}{\hat x(Y_*)} (\tilde Y_j(y)-Y_*)
\ \sim\ \frac{\hat x'(Y_*)}{\hat x(Y_*)} y_* \, \tilde Y_j'(y_*) \cdot \frac{y-y_*}{y_*} \,.
\end{equation}
It is not hard to see that the condition $\V \cap \tilde Y_j(\disk_{Y_c})=\varnothing$ constraints the coefficient $\frac{\hat x'(Y_*)}{\hat x(Y_*)} y_*\, \tilde Y_j'(y_*)$ to be negative. It follows that $\frac{\hat x'(Y_*)}{\hat x(Y_*)} y_* \cdot (\tilde Y_1'(y_*) + \tilde Y_2'(y_*)) < 0$. 
Using $\tilde Y_1'(y_*) + \tilde Y_2'(y_*)=-\frac{2}{\phi'(Y_*)}$ and $y_*=-\phi(Y_*)$, one can check that the inequality simplifies to $\psi_*:=\frac{Y_*B'(Y_*)}{B(Y_*)}>1$. It follows that $y_* = -\phi(Y_*) = Y_*\frac{\psi_*-1}{\psi_*+1} \in (0,Y_*)$. But since $Y_*\in \cV \subseteq \cdisk_{Y_c}$, this contradicts the result that $y_*\not \in \disk_{Y_c}$. Therefore $\mathcal Z$ must be empty.
\end{proof}

\appendix

\section{Variational method for finding the dominant singularities of an inverse}
\label{sec:variational method}

In this appendix, we discuss the variational method used in the proof of \Cref{lem:x unique critical point} to find additional constraints on the critical points of $\hat x$ on the boundary of $\V$. 
We will describe the method in a general context:
\Cref{prop:variational method} states the result of the variational method under the minimal conditions for its application, and we discuss in \Cref{rmk:global assumption on V,rmk:applications in AC} how the setting of \Cref{prop:variational method} arises naturally in analytic combinatorics.

\begin{proposition}\label{prop:variational method}
Let $x_c:\mathcal I\to \real_{>0}$ be a continuous function on an open interval $\mathcal I$ that is differentiable at~$0\in \mathcal I$. 
Let $\hat x:\mathcal U \to \complex$ be a $C^1$ function on an open domain $\mathcal U\subseteq \complex \times \mathcal I$ that is analytic in its first variable.
For each $\varepsilon \in \mathcal I$, 
let $\V(\varepsilon)$ be a connected component of the (lower) level set $L_{x_c}(\varepsilon) := \Set{Y \in \mathcal U_\varepsilon}{|\hat x(Y,\varepsilon)|<x_c(\varepsilon)}$ that does not contain any critical point of $\hat x(\,\cdot\,,\varepsilon)$, 
where $\mathcal U_\varepsilon$ denotes the set $\Set{Y\in \complex}{(Y,\varepsilon)\in \mathcal U}$. 
In addition, we assume that the family $(\V(\varepsilon))_{ \varepsilon \in \mathcal I}$ contains a continuous function $Y_0:\mathcal I \to \complex$ in the sense that $Y_0(\varepsilon) \in \V(\varepsilon)$ for all $\varepsilon \in \mathcal I$. 

Under the above conditions, if $\partial_Y \hat x(Y_*,0)=0$ for some $Y_*\in \mathcal U_0$ on the boundary of $\,\V(0)$, then we have
\begin{equation}\label{eq:variational method}
\Re\m({
\frac{\partial_\varepsilon \hat x(Y_*,0)}{\hat x(Y_*,0)} 
} = \frac{x_c'(0)}{x_c(0)} \,.
\end{equation}
\end{proposition}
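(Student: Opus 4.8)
The plan is to track how the boundary point $Y_*$ moves as $\varepsilon$ varies, and to exploit the fact that a level curve of $|\hat x(\cdot,\varepsilon)|$ cannot be crossed transversally by a point that stays on the boundary of $\V(\varepsilon)$. First I would observe that, since $\partial_Y\hat x(Y_*,0)=0$ but $\hat x(\cdot,0)$ is analytic and (by the hypothesis on $\V(0)$) not identically critical near $Y_*$, the point $Y_*$ is an isolated critical point; let $m\ge 2$ be the order of vanishing of $\partial_Y\hat x(\cdot,0)$ at $Y_*$, so that $\hat x(Y,0)=\hat x(Y_*,0)+a(Y-Y_*)^{m+1}+o\bigl((Y-Y_*)^{m+1}\bigr)$ with $a\ne 0$. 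Near $Y_*$ the sublevel set $\{|\hat x(\cdot,0)|<x_c(0)\}$ therefore looks locally like $m+1$ sectors of angular width $\pi/(m+1)$ alternating with $m+1$ complementary sectors, all meeting at $Y_*$, and $Y_*\in\partial\V(0)$ forces $\V(0)$ to occupy at least one such sector while its boundary passes through $Y_*$.

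Next I would invoke the $C^1$-dependence on $\varepsilon$ together with the analytic implicit/inverse function machinery (in the form of \Cref{lem:generalized InverseFT}, or a direct Rouché/degree argument) to describe, for small $\varepsilon$, the critical locus of $\hat x(\cdot,\varepsilon)$: the single critical point $Y_*$ of order $m$ splits into $m$ critical points $Y_*^{(1)}(\varepsilon),\dots,Y_*^{(m)}(\varepsilon)$, each at distance $O(|\varepsilon|^{1/m})$ from $Y_*$, hence the critical values $\hat x(Y_*^{(j)}(\varepsilon),\varepsilon)$ differ from $\hat x(Y_*,0)+\partial_\varepsilon\hat x(Y_*,0)\,\varepsilon$ by $o(\varepsilon)$, because the $(Y-Y_*)$-linear term of $\hat x$ vanishes at $Y_*$ and $(Y_*^{(j)}(\varepsilon)-Y_*)^{k}=O(|\varepsilon|^{k/m})=o(\varepsilon)$ for $2\le k\le m$ while $m\ge 2$ — wait, that estimate needs $k/m>1$, which fails for $k=2,m=2$; the correct statement is that the critical value expansion is $\hat x(Y_*,0)+\partial_\varepsilon\hat x(Y_*,0)\varepsilon+o(\varepsilon)$ only up to terms of size $|\varepsilon|^{(m+1)/m}$, and since $(m+1)/m>1$ this is indeed $o(\varepsilon)$, so the claim is fine. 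By hypothesis no critical point of $\hat x(\cdot,\varepsilon)$ lies in $\V(\varepsilon)$, so each $Y_*^{(j)}(\varepsilon)$ lies on $\partial\V(\varepsilon)$ or outside; combined with the continuous selection $Y_0(\varepsilon)\in\V(\varepsilon)$, the connectedness of $\V(\varepsilon)$, and the local sector picture, this pins $|\hat x(Y_*^{(j)}(\varepsilon),\varepsilon)|$ to equal $x_c(\varepsilon)$ for at least one $j$ and all small $\varepsilon$ of either sign (a point of $\partial\V$ meeting a critical point cannot slip strictly inside without the critical point entering $\V$, nor strictly outside without $\V$ disconnecting from $Y_0$).

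From $|\hat x(Y_*^{(j)}(\varepsilon),\varepsilon)|=x_c(\varepsilon)$ for all small $\varepsilon$, I would take the derivative at $\varepsilon=0$. Writing $c(\varepsilon):=\hat x(Y_*^{(j)}(\varepsilon),\varepsilon)$, we have from the previous step $c(\varepsilon)=\hat x(Y_*,0)+\partial_\varepsilon\hat x(Y_*,0)\,\varepsilon+o(\varepsilon)$, hence
\[
|c(\varepsilon)|^2=|\hat x(Y_*,0)|^2+2\varepsilon\,\Re\bigl(\overline{\hat x(Y_*,0)}\,\partial_\varepsilon\hat x(Y_*,0)\bigr)+o(\varepsilon),
\]
while $x_c(\varepsilon)^2=x_c(0)^2+2\varepsilon\,x_c(0)x_c'(0)+o(\varepsilon)$. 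Since $|\hat x(Y_*,0)|=x_c(0)$ (as $Y_*\in\partial\V(0)$), equating the linear terms gives $\Re\bigl(\overline{\hat x(Y_*,0)}\,\partial_\varepsilon\hat x(Y_*,0)\bigr)=x_c(0)\,x_c'(0)$; dividing by $|\hat x(Y_*,0)|^2=x_c(0)^2$ yields exactly \eqref{eq:variational method}. The main obstacle is the topological/combinatorial step: making rigorous that the split critical point must stay \emph{on} $\partial\V(\varepsilon)$ for both signs of $\varepsilon$, using only connectedness of $\V(\varepsilon)$, the absence of interior critical points, and the continuous interior selection $Y_0$ — this is where the local sector geometry at $Y_*$ and a careful continuity/monodromy argument must be combined, and it is the heart of why the hypotheses are stated the way they are.
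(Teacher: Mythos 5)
Your overall strategy is the same as the paper's: split the boundary critical point $Y_*$ into branches $Y_*^{(j)}(\varepsilon)$ of critical points of $\hat x(\,\cdot\,,\varepsilon)$, show their critical values expand as $\hat x(Y_*,0)+\partial_\varepsilon\hat x(Y_*,0)\,\varepsilon+o(\varepsilon)$ (this is the paper's Lemma~\ref{lem:critical locus}, and your self-corrected $|\varepsilon|^{(m+1)/m}$ estimate is the right one), extract a constraint on the moduli of the critical values relative to $x_c(\varepsilon)$, and differentiate at $\varepsilon=0$. But the step you yourself call ``the main obstacle'' is a genuine gap, and it is exactly where the paper does the real work. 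Knowing that a perturbed critical value has modulus $<x_c(\varepsilon)$ only puts the critical point in the sublevel set $L_{x_c}(\varepsilon)$; to contradict the hypothesis you must show it lies in the \emph{same connected component} $\V(\varepsilon)$, i.e.\ you must rule out that the sublevel set near $Y_*$ breaks into several local pieces of which only some belong to $\V(\varepsilon)$. The paper proves precisely this in Lemma~\ref{lem:connected level set}: for any level exceeding all local critical values, the sublevel set inside a fixed small disk around $Y_*$ is connected, proved by a deformation-retraction/ODE argument adapted from Morse theory (nontrivial because the disk has a boundary); and it separately shows $\mathcal V\cap\V(\varepsilon)\neq\varnothing$ for all small $\varepsilon$ using the continuous selection $Y_0$ together with a compact path inside $\V(0)$. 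Your ``local sector picture'' and continuity appeal do not substitute for either ingredient.

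You also overstate both what is provable and what is needed: you assert exact equality $|\hat x(Y_*^{(j)}(\varepsilon),\varepsilon)|=x_c(\varepsilon)$ for some $j$ and all small $\varepsilon$, justified by ``otherwise $\V$ disconnects from $Y_0$.'' That direction is unjustified -- if all perturbed critical values have modulus $>x_c(\varepsilon)$, the component $\V(\varepsilon)$ can simply recede from the corner at $Y_*$ with no disconnection -- and it is also unnecessary. The paper only establishes the one-sided inequality $|\hat x(Y_*^{(k(\varepsilon))}(\varepsilon),\varepsilon)|\ge x_c(\varepsilon)$ for every small $\varepsilon$ of either sign; since equality holds at $\varepsilon=0$ and the quantity $\Re\log\bigl(\hat x(Y_*^{(k)}(\varepsilon),\varepsilon)/x_c(\varepsilon)\bigr)$ is differentiable at $0$ with a derivative independent of the branch $k$, the derivative of a function that is nonnegative near $0$ and zero at $0$ must vanish, which is \eqref{eq:variational method}. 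Your closing computation (equivalent to the paper's, via $|c(\varepsilon)|^2$ instead of $\Re\log$) should therefore be run on the inequality, not on a claimed identity. A minor slip: the order of vanishing $m$ of $\partial_Y\hat x(\,\cdot\,,0)$ at $Y_*$ need only satisfy $m\ge 1$, not $m\ge 2$; this does not affect the structure of the argument.
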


\begin{remark}[Global version of the assumptions on $(\V(\varepsilon))_{\varepsilon \in \mathcal I}$]\label{rmk:global assumption on V}
\Cref{prop:variational method} states the \emph{local} version of the assumptions on the family $(\V(\varepsilon))_{\varepsilon\in \mathcal I}$. A stronger \emph{global} version goes as follows: we assume that for each $\varepsilon \in \mathcal I$, $\hat x(\,\cdot\,,\varepsilon)$ induces a conformal bijection from $\V(\varepsilon)$ to the disk $\disk_{x_c(\varepsilon)}$ such that the preimage of $0$, characterized by $\hat x(Y_0(\varepsilon),\varepsilon) = 0$ and $Y_0(\varepsilon) \in \V(\varepsilon)$, is a continuous function of $\varepsilon$.
It is clear that the global version of the assumptions implies the local one: if $\hat x(\,\cdot\,,\varepsilon)$ induces a conformal bijection from $\V(\varepsilon)$ to $\disk_{x_c(\varepsilon)}$, then $\V(\varepsilon)$ is a connected component of the lower level set $L_{x_c}(\varepsilon) = \Set{Y\in \mathcal U_\varepsilon}{|\hat x(Y,\varepsilon)|<x_c(\varepsilon)}$ that does not contain any critical point of $\hat x(\,\cdot\,,\varepsilon)$.

As explained in the proof of \Cref{lem:x unique critical point}, the domain $\V$ of the parking model and its perturbation~$\V(\varepsilon)$ satisfy the global version of the assumptions (hence also the local one).
\end{remark}

\begin{remark}[Applications in analytic combinatorics]\label{rmk:applications in AC}
The situation addressed in this appendix has also appeared in the enumeration of Ising-decorated triangulations in \cite{ChenTurunen2020}. In the proof of \cite[Lemma~13]{ChenTurunen2020}, the authors used a simplified version of the variational method discussed here to find one extra equation satisfied by the critical points of the function $\check x_R$ (the counterpart of $\hat x(\,\cdot\,,\varepsilon)$ in \cite{ChenTurunen2020}) on the boundary of the domain~$\mathcal H_0(R)$ (the counterpart of $\V(\varepsilon)$). The main simplification in \cite{ChenTurunen2020} comes from the fact that the non-trivial critical points of $\check x_R$ are known to be simple.

More generally, whenever we have a power series $\hat Y$ of radius of convergence $x_c$ whose inverse $\hat x:=\hat Y^{-1}$ has an analytic continuation on $\V:= \hat Y(\disk_{x_c})$, the function $\hat x$ will induce a conformal bijection from $\V$ to $\disk_{x_c}$. In the context of analytic combinatorics, a natural question is to ask where are the singularities of $\hat Y$ on its circle of convergence $\partial \disk_{x_c}$. If $x_*\in \partial \disk_{x_c}$ is a point such that $Y_*=\hat Y(x_*) \in \partial \V$ is well-defined and that $\hat x$ has an analytic continuation in a neighborhood $\mathcal U$ of $Y_*$, then $\hat Y$ has a singularity at $x_*$ \Iff\ $Y_*$ is a critical point of $\hat x$. 
The fact that $Y_*$ is a critical point of $\hat x(\,\cdot\,,0)$ with a critical value in $\partial \disk_{x_c}$ implies the equations
\begin{equation}\label{eq:pre-variational method}
\partial_Y \hat x(Y_*,0)=0
\qtq{and}
|\hat x(Y_*,0)|=x_c(0)\,.
\end{equation}
This is a system of three real equations on two real variables $\Re(Y_*)$ and $\Im(Y_*)$. So generically, this system should be able to eliminate all the ``unexpected'' singularities of $\hat Y$ on $\partial \disk_{x_c}$. However, if the function $\hat x$ (thus also $\hat Y$ and $\V$) depends on one or more extra (real) parameters $\varepsilon$, then the system \eqref{eq:pre-variational method} would generically have ``unexpected'' solutions for some subset of $\varepsilon$ of codimension one. 
\Cref{prop:variational method} provides a solution to this problem when the depence of $\hat x$ on the extra parameters $\varepsilon$ is $C^1$. More precisely, it provides one additional (real) equation on $Y_*$ for each (real) parameter $\varepsilon$, which is generically enough for eliminating all the ``unexpected'' solutions.
\end{remark}

The basic idea behind \Cref{prop:variational method} is the following: Since $\hat x(\,\cdot\,,\varepsilon)$ has no critical point in  $\V(\varepsilon)$ for any~$\varepsilon$, if $\hat x(\,\cdot\,,0)$ has a critical point $Y_*$ on the boundary of $\V(0)$, then the perturbation $\varepsilon$ must ``move $Y_*$ away from $\V(0)$'' for both positve and negative values of $\varepsilon$, and this gives a stationarity equation that $Y_*$ must satisfy.

The implementation of the above idea is complicated by the fact that both $\hat x(\,\cdot\,,\varepsilon)$ and the domain $\V(\varepsilon)$ change with the perturbation. For this we need to understand how the critical points and the level sets of $\hat x(\,\cdot\,,\varepsilon)$ depends on $\varepsilon$, and the interplay between the two. This is the subject of the two lemmas below. More precisely, \Cref{lem:critical locus} defines the branches of the critical points of $\hat x(\,\cdot\,,\varepsilon)$ near $(Y_*,0)$, and computes the derivative of $\hat x(\,\cdot\,,\varepsilon)$ along these branches. 
\Cref{lem:connected level set} establishes the connectedness of the level set of $\hat x(\,\cdot\,,\varepsilon)$ near $Y_*$, when the level is higher than all the critical values of $\hat x(\,\cdot\,,\varepsilon)$.

\begin{lemma}\label{lem:critical locus}
If $Y_*$ is a critical point of $\hat x(\,\cdot\,,0)$ of multiplicity $n\ge 1$, then there exists a neighborhood $\,\mathcal V \times \mathcal J$~of~$(Y_*,0)$ in which the critical points of $\hat x(\,\cdot\,,\varepsilon)$ in $\mathcal V \times \mathcal J$  are parametrized by $n$ (not necessarily distinct) continuous functions, that is, there exist $n$  continuous functions $Y_*\0k\! :\mathcal J\to \mathcal V$ such that 
\begin{equation}\label{eq:critical locus}
Y_*\01(0)= \cdots= Y_*\0n(0)=Y_* \qtq{and} 
\setb{Y\in \mathcal V}{\partial_Y \hat x(Y,\varepsilon)=0} = \m.{ Y_*\01(\varepsilon),\cdots, Y_*\0n(\varepsilon) }
\end{equation}
for all $\varepsilon \in \mathcal J$.
Moreover, for all $1\le k\le n$, we have $\od{}\varepsilon \!\! \left.\hat x(Y_*\0k(\varepsilon),\varepsilon)\right|_{\varepsilon=0} = \partial_\varepsilon \hat x(Y_*,0)$.
\end{lemma}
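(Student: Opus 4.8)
The statement naturally splits into two parts: the existence of the continuous branches $Y_*\0k(\varepsilon)$, and the derivative formula $\od{}\varepsilon\hat x(Y_*\0k(\varepsilon),\varepsilon)|_{\varepsilon=0}=\partial_\varepsilon \hat x(Y_*,0)$. I would treat the branch structure first. The point $Y_*$ is a zero of multiplicity $n$ of the analytic function $Y\mapsto \partial_Y \hat x(Y,0)$ (multiplicity $n$ as a critical point of $\hat x$ means $\partial_Y^2\hat x,\dots,\partial_Y^{n}\hat x$ vanish at $Y_*$ but $\partial_Y^{n+1}\hat x(Y_*,0)\ne 0$, i.e.\ $\partial_Y\hat x(\,\cdot\,,0)$ has an order-$n$ zero there). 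Since $\hat x$ is $C^1$ in $\varepsilon$ and analytic in $Y$ on $\mathcal U$, the function $g(Y,\varepsilon):=\partial_Y\hat x(Y,\varepsilon)$ is analytic in $Y$ and (at least) continuous in $\varepsilon$ on a neighborhood of $(Y_*,0)$. By Rouch\'e's theorem (or Hurwitz's theorem), for a small disk $\mathcal V$ around $Y_*$ on whose boundary $g(\,\cdot\,,0)$ has no zero, there is an interval $\mathcal J\ni 0$ such that $g(\,\cdot\,,\varepsilon)$ has exactly $n$ zeros (with multiplicity) in $\mathcal V$ for each $\varepsilon\in\mathcal J$. These $n$ zeros vary continuously with $\varepsilon$ in the sense of symmetric functions of the roots; to obtain genuine continuous \emph{functions} $Y_*\0k(\varepsilon)$ I would invoke the standard fact that the (unordered) roots of a monic polynomial depend continuously on its coefficients, together with a continuous selection — this is exactly the type of statement packaged in \Cref{lem:generalized ImplicitFT} of the appendix (applied to $f=g$ with $h(s)$ a suitable power), so I would cite that. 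This yields \eqref{eq:critical locus}.

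For the derivative formula, the key observation is that at a critical point the first-order variation of $\hat x$ along the critical branch sees only the \emph{explicit} $\varepsilon$-dependence, because the implicit contribution is multiplied by $\partial_Y\hat x=0$. If $n=1$ and everything were analytic, this is immediate: $\od{}\varepsilon\hat x(Y_*\01(\varepsilon),\varepsilon)=\partial_Y\hat x\cdot (Y_*\01)'(\varepsilon)+\partial_\varepsilon\hat x$, and the first term vanishes at $\varepsilon=0$. The subtlety is that for $n\ge 2$ the branches $Y_*\0k(\varepsilon)$ need not be differentiable at $0$ (they typically behave like $Y_*+c\,\varepsilon^{1/n}$), and $\hat x$ is only assumed $C^1$, not analytic, in $\varepsilon$. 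I would handle this by a direct estimate: write, using the $C^1$-regularity of $\hat x$ and its analyticity in $Y$,
\begin{equation}
\hat x(Y_*\0k(\varepsilon),\varepsilon) - \hat x(Y_*,0)
= \bigl(\hat x(Y_*\0k(\varepsilon),0)-\hat x(Y_*,0)\bigr) + \partial_\varepsilon\hat x(Y_*,0)\cdot\varepsilon + o(\varepsilon) + o(|Y_*\0k(\varepsilon)-Y_*|),
\end{equation}
where the error terms come from the $C^1$ Taylor expansion in $\varepsilon$ uniformly near $Y_*$. For the first bracket, since $\partial_Y\hat x(\,\cdot\,,0)$ has a zero of order $n$ at $Y_*$, we have $\hat x(Y,0)-\hat x(Y_*,0)=O(|Y-Y_*|^{n+1})$, hence $\hat x(Y_*\0k(\varepsilon),0)-\hat x(Y_*,0)=O(|Y_*\0k(\varepsilon)-Y_*|^{n+1})$. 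Combining with the crude bound $|Y_*\0k(\varepsilon)-Y_*|\to 0$ (continuity of the branch) — and, if needed, the sharper $|Y_*\0k(\varepsilon)-Y_*|=O(|\varepsilon|^{1/n})$ obtained from $g(Y_*\0k(\varepsilon),\varepsilon)=0$ together with $g(Y,0)\sim c(Y-Y_*)^n$ and the $C^1$-continuity of $g$ in $\varepsilon$ — one sees that $\hat x(Y_*\0k(\varepsilon),0)-\hat x(Y_*,0)=O(|\varepsilon|^{(n+1)/n})=o(\varepsilon)$ and $o(|Y_*\0k(\varepsilon)-Y_*|)=o(|\varepsilon|^{1/n})$, wait — the latter is not $o(\varepsilon)$, so I must be slightly more careful: actually $o(|Y_*\0k(\varepsilon)-Y_*|)$ with $|Y_*\0k(\varepsilon)-Y_*|\to0$ only gives $o(1)\cdot O(|\varepsilon|^{1/n})$, which \emph{is} $o(|\varepsilon|^{1/n})$ but need not be $o(\varepsilon)$. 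The resolution is that the $o(|Y-Y_*|)$ term in the $C^1$ expansion of $\hat x$ in $\varepsilon$ is actually $o(|Y-Y_*|)$ \emph{in the $\varepsilon$-derivative}, i.e.\ $\partial_\varepsilon\hat x(Y,0)-\partial_\varepsilon\hat x(Y_*,0)=o(1)$ as $Y\to Y_*$, which upon integrating in $\varepsilon$ over $[0,\varepsilon]$ contributes $o(1)\cdot\varepsilon=o(\varepsilon)$. Dividing through by $\varepsilon$ and letting $\varepsilon\to0$ then gives $\od{}\varepsilon\hat x(Y_*\0k(\varepsilon),\varepsilon)|_{\varepsilon=0}=\partial_\varepsilon\hat x(Y_*,0)$, as claimed.

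The main obstacle is precisely this last bookkeeping of error terms under the combination of merely-$C^1$ dependence on $\varepsilon$ and a high-order (degenerate) critical point in $Y$: one has to be careful that the non-differentiability of the branches $Y_*\0k$ at $\varepsilon=0$ does not spoil the derivative formula, and this works only because the branch enters $\hat x$ at a point where $\partial_Y\hat x$ vanishes to order $n$, which exactly compensates the $|\varepsilon|^{1/n}$ size of the branch displacement. I would organize the argument so that the regularity hypotheses on $\hat x$ (analytic in $Y$, $C^1$ in $\varepsilon$) are used exactly where needed: analyticity in $Y$ for Rouch\'e/Hurwitz and for the order-$(n+1)$ vanishing of $\hat x(\,\cdot\,,0)-\hat x(Y_*,0)$ near the critical point, and $C^1$-ness in $\varepsilon$ (uniformly in $Y$ near $Y_*$) for the first-order expansion that isolates $\partial_\varepsilon\hat x(Y_*,0)$. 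The case $n=1$ can be stated separately as an immediate consequence of the analytic implicit function theorem, which is all that is actually invoked in the generic-phase part of the proof of \Cref{lem:x unique critical point}.
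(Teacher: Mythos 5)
Your proposal is correct and takes essentially the same route as the paper: the continuous branches come from \Cref{lem:generalized ImplicitFT} applied to $\partial_Y\hat x$, and the derivative formula from the bound $Y_*^{(k)}(\varepsilon)-Y_*=O(\varepsilon^{1/n})$ combined with the order-$(n+1)$ vanishing of $\hat x(\,\cdot\,,0)-\hat x(Y_*,0)$ and the uniform $C^1$ expansion in $\varepsilon$, which is exactly the paper's bookkeeping. The only point the paper makes explicit that you leave implicit is why $\partial_Y\hat x$ inherits $C^1$-regularity in $\varepsilon$ (needed for your $O(\varepsilon^{1/n})$ step): the paper gets this in one line from Cauchy's integral formula applied to $\hat x$.
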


\begin{proof}
The generalization of implicit function theorem given in \Cref{lem:generalized ImplicitFT} ensures that the zero set of $\partial_Y \hat x$ defines $n$ continuous functions $Y_*\01,\ldots,Y_*\0n$ satisfying \eqref{eq:critical locus}. 

By Cauchy's integral formula, $\partial_Y \hat x(Y,\varepsilon) = \frac{1}{2\pi i}\oint \frac{\hat x(\eta,\varepsilon)}{(Y-\eta)^2}\dd \eta$. So the $C^1$-continuity of $\hat x$ implies that of $\partial_Y \hat x$.
Since both $\hat x$ and $\partial_Y \hat x$ are $C^1$ \wrt\ $(Y,\varepsilon)$, and $Y_*$ is a zero of $ Y \mapsto \partial_Y \hat x(Y,0)$ of multiplicity $n$, we have 
\begin{align*}
\hat x(Y,\varepsilon) 
   &\,=\, \hat x(Y,0) 
+ \partial_\varepsilon \hat x(Y_*,0)\cdot \varepsilon 
+ o(\varepsilon) 
\\ &\,=\, \hat x(Y_*,0) + O((Y-Y_*)^{n+1})
+ \partial_\varepsilon \hat x(Y_*,0)\cdot \varepsilon 
+ o(\varepsilon)
\\ \tq{and} 
\partial_Y \hat x(Y,\varepsilon) 
&\,=\, \partial_Y \hat x(Y,0) 
+ \partial_\varepsilon \partial_Y \hat x(Y_*,0) \cdot \varepsilon 
+ o(\varepsilon)  
\\ &\,=\, c\cdot (Y-Y_*)^n + O((Y-Y_*)^{n+1}) 
+ \partial_\varepsilon \partial_Y \hat x(Y_*,0) \cdot \varepsilon 
+ o(\varepsilon)
\end{align*}
as $(Y,\varepsilon) \to (Y_*,0)$, where $c = \frac1{n!} \partial_Y^{n+1} \hat x(Y_*,0) \ne 0$. Applying the above expansion of $\partial_Y \hat x$ to the equation $\partial_Y \hat x(Y_*\0k(\varepsilon),\varepsilon) = 0$ shows that $Y_*\0k(\varepsilon) - Y_* = O(\varepsilon^{1/n})$ as $\varepsilon \to 0$. Plugging this into the expansion of $\hat x$ then gives $\hat x(Y_*\0k(\varepsilon),\varepsilon) = \hat x(Y_*,0) + \partial_\varepsilon \hat x(Y_*,0) \cdot \varepsilon + o(\varepsilon)$, that is, the function $\varepsilon \mapsto \hat x(Y_*\0k(\varepsilon),\varepsilon)$ is differentiable at $0$, with a derivative equal to $\partial_\varepsilon \hat x(Y_*,0)$.
\end{proof}

\begin{lemma}\label{lem:connected level set}
Assume that $\hat x(Y_*,0)\ne 0$.
Then the neighborhood $\,\mathcal V\times \mathcal J$ in \Cref{lem:critical locus} can be chosen in such a way that for all $\varepsilon \in \mathcal J$ and $\,h>\!\! \max \limits_{k=1,\dots,n} \abs{\hat x(Y_*\0k(\varepsilon),\varepsilon)}$, the local level set $\,L^{\mathcal V}_h(\varepsilon) := \Set{Y\in \mathcal V}{ |\hat x(Y,\varepsilon)| < h }$ is connected.
\end{lemma}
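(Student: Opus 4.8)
The plan is to reduce the statement to a purely local normal-form computation near $Y_*$. First, since $\hat x(Y_*,0)\neq 0$, by continuity of $\hat x$ we may shrink $\mathcal V\times\mathcal J$ so that $\hat x$ is nonvanishing on $\overline{\mathcal V}\times\mathcal J$; hence $\log \hat x(\,\cdot\,,\varepsilon)$ has a well-defined analytic branch on $\mathcal V$ for each $\varepsilon$, and $Y\mapsto|\hat x(Y,\varepsilon)|^2=\hat x(Y,\varepsilon)\overline{\hat x(Y,\varepsilon)}$ is a smooth function whose critical points are exactly the critical points $Y_*\0k(\varepsilon)$ of $\hat x(\,\cdot\,,\varepsilon)$ (because $\partial_Y|\hat x|^2 = \overline{\hat x}\,\partial_Y\hat x$ and $\hat x\neq 0$). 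The idea is then classical Morse-theoretic reasoning adapted to a function of a complex variable viewed as a function of two real variables: below the lowest critical value there is no topology change, and the local sublevel set deformation-retracts onto a small ``polydisk-like'' neighborhood of $Y_*$.

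Concretely, the main step I would carry out is the following. Recall from \Cref{lem:critical locus} that $Y_*$ has multiplicity $n$, so $\hat x(Y,\varepsilon) = \hat x(Y_*,0) + \partial_\varepsilon\hat x(Y_*,0)\,\varepsilon + c\,(Y-Y_*)^{n+1}(1+o(1)) + o(\varepsilon)$ with $c\neq 0$. Writing $a(\varepsilon):=\hat x(Y_*\01(\varepsilon),\varepsilon)$ (any one of the critical values suffices up to $o(\varepsilon)$, since by \Cref{lem:critical locus} they all agree to first order), one has $\hat x(Y,\varepsilon) - a(\varepsilon) = c\,(Y-Y_*\0{k}(\varepsilon))\cdots$, i.e. after a $C^1$ change of the variable $Y$ (depending on $\varepsilon$) the function $\hat x(\,\cdot\,,\varepsilon)-a(\varepsilon)$ looks like $\eta\mapsto c\,\eta^{n+1}$ plus a controlled remainder, where $\eta$ ranges over a small disk $\disk_r$. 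The key quantitative claim is that for $h>\max_k|\hat x(Y_*\0k(\varepsilon),\varepsilon)|$ but $h$ still close to $|\hat x(Y_*,0)|$, the set $\{Y\in\mathcal V : |\hat x(Y,\varepsilon)|<h\}$ contains the ball $\{|Y-Y_*|<r'\}$ for a suitable $r'=r'(h)>0$ and, conversely, is contained in $\{|Y-Y_*|<r\}$ once $\mathcal V$ is small; and it deformation-retracts onto this inner ball by flowing along $-\nabla|\hat x(\,\cdot\,,\varepsilon)|^2$, which has no zeros in $L^{\mathcal V}_h(\varepsilon)$ by the choice of $h$ (all critical points of $|\hat x|^2$ in $\mathcal V$ are among the $Y_*\0k(\varepsilon)$, and these have modulus $<h$, hence could only lie \emph{inside} any connected component, not obstruct the flow on the region between level $h$ and the inner ball). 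Since balls are connected, so is $L^{\mathcal V}_h(\varepsilon)$.

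The cleanest way to package the gradient-flow argument, avoiding a delicate discussion of the boundary $\partial\mathcal V$, is to instead invoke the \emph{minimum modulus principle}: on any connected component $W$ of $L^{\mathcal V}_h(\varepsilon)$ that is relatively compact in $\mathcal V$, the holomorphic function $\hat x(\,\cdot\,,\varepsilon)$ attains its minimum modulus on $W$; since $\hat x\neq 0$, this minimum is attained at an interior point, which is then a critical point of $|\hat x|^2$, hence one of the $Y_*\0k(\varepsilon)$. Thus every relatively compact component contains some $Y_*\0k(\varepsilon)$. If I also arrange (by shrinking $\mathcal V$ and $\mathcal J$) that $L^{\mathcal V}_h(\varepsilon)$ has no component touching $\partial\mathcal V$ — which follows from the lower bound $|\hat x(Y,\varepsilon)|\geq |\hat x(Y_*,0)|-\delta$ on a small enough $\mathcal V$ together with taking $h$ below $|\hat x(Y_*,0)|+\delta'$, forcing $L^{\mathcal V}_h$ into a compact sub-neighborhood — then \emph{every} component contains some $Y_*\0k(\varepsilon)$. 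Finally, to merge these components into one, I use the normal form above: the functions $Y_*\0k(\varepsilon)$ all converge to $Y_*$ as $\varepsilon\to 0$, and for $h$ in the stated range the ball $\{|Y-Y_*|<r'(h)\}$ lies entirely in $L^{\mathcal V}_h(\varepsilon)$ and contains all the $Y_*\0k(\varepsilon)$ once $\mathcal V\times\mathcal J$ is small; a single ball is connected, so all the components meet it and hence coincide.

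I expect the main obstacle to be the bookkeeping of \emph{uniformity in $\varepsilon$}: one must choose $\mathcal V$ and $\mathcal J$ once and for all so that, simultaneously for every $\varepsilon\in\mathcal J$, (i) $\hat x(\,\cdot\,,\varepsilon)\neq 0$ on $\overline{\mathcal V}$, (ii) the only critical points of $\hat x(\,\cdot\,,\varepsilon)$ in $\mathcal V$ are the $Y_*\0k(\varepsilon)$, (iii) $L^{\mathcal V}_h(\varepsilon)$ stays compactly inside $\mathcal V$ for the relevant range of $h$, and (iv) the inner ball $\{|Y-Y_*|<r'(h)\}$ is contained in $L^{\mathcal V}_h(\varepsilon)$ and contains all $Y_*\0k(\varepsilon)$. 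Each of these is a consequence of the $C^1$-continuity of $\hat x$ (and of $\partial_Y\hat x$, via Cauchy's formula, as already noted in \Cref{lem:critical locus}) plus the multiplicity-$n$ local expansion, but assembling them coherently — in particular making the range of admissible $h$ in the statement genuinely nonempty and $\varepsilon$-independent, using that $\max_k|\hat x(Y_*\0k(\varepsilon),\varepsilon)|\to|\hat x(Y_*,0)|$ as $\varepsilon\to0$ — is where the care is needed. Everything else is either the minimum modulus principle or elementary point-set topology.
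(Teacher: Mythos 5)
There is a genuine gap, and it sits exactly where the real difficulty of this lemma lies. Your plan hinges on step (iii): shrinking $\mathcal V$ and restricting $h$ so that $L^{\mathcal V}_h(\varepsilon)$ is compactly contained in $\mathcal V$, so that every component is ``relatively compact'' and hence contains a critical point. This is impossible, and in fact contradicts the very minimum modulus principle you invoke: since $\hat x(\,\cdot\,,\varepsilon)$ is holomorphic, non-constant and zero-free on $\mathcal V$, its modulus has no interior local minima, so a nonempty open sublevel set $\{Y\in\mathcal V: |\hat x(Y,\varepsilon)|<h\}$ can never have compact closure inside $\mathcal V$ --- every component must reach $\partial\mathcal V$. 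Concretely, with the normal form $\hat x(Y,0)\approx \hat x(Y_*,0)\bigl(1+c'(Y-Y_*)^{n+1}\bigr)$, the set $\{|\hat x|<h\}$ for $h$ near $|\hat x(Y_*,0)|$ consists of $n+1$ sector-shaped lobes emanating from $Y_*$ and running all the way to the boundary circle, no matter how small the radius of $\mathcal V$ is. So boundary-touching components are unavoidable; they need not contain any critical point, and deciding whether spurious ones can appear depends on how the level curves of $|\hat x(\,\cdot\,,\varepsilon)|$ meet $\partial\mathcal V$. That is precisely the issue the paper's proof confronts head-on: it builds a deformation retraction of $\mathcal V$ onto the sublevel set via a gradient-type flow that is \emph{modified along $\partial\mathcal V$} (projected onto the tangent direction), and the whole technical core is the uniform-in-$\varepsilon$ choice of $r$ and $\varepsilon_0$ guaranteeing the non-degeneracy condition \eqref{eq:nabla H non-vanishing 2} on the boundary circle, which is what rules out the flow getting stuck and, with it, spurious boundary components. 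Your proposal has no substitute for this step.

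A secondary problem is the merging step. For $h$ only barely above $\max_k |\hat x(Y_*^{(k)}(\varepsilon),\varepsilon)|$, no ball of the size you need is contained in the sublevel set: a ball centred at $Y_*$ lies in $\{|\hat x(\,\cdot\,,\varepsilon)|<h\}$ only if its radius is $O\bigl((h-|\hat x(Y_*,\varepsilon)|)^{1/(n+1)}\bigr)$, while the critical points $Y_*^{(k)}(\varepsilon)$ sit at distance of order $\varepsilon^{1/n}$ from $Y_*$; since $h$ may be taken arbitrarily close to the maximal critical value (which can even be smaller than $|\hat x(Y_*,\varepsilon)|$, in which case $Y_*$ itself is outside the sublevel set), there is in general no ball that is both contained in $L^{\mathcal V}_h(\varepsilon)$ and contains all the $Y_*^{(k)}(\varepsilon)$. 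Showing that the lobes and the critical points all lie in one component is not a soft consequence of connectedness of balls; in the paper it comes out only after the retraction of the whole disk onto the sublevel set has been constructed. Finally, a minor point: at an interior minimum of $|\hat x|$ on a relatively compact component the correct conclusion is not ``it is one of the $Y_*^{(k)}$'' but that no such minimum (hence no such component) can exist at all; this does not rescue the argument, because the problematic components are the boundary-touching ones.
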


\begin{proof}
\Wlg, we assume that $Y_*=0$ and $\hat x(0,0)=1$. We choose $\mathcal V$ to be the closed ball of radius $r$ centered at $0$ and $\mathcal J=(-\varepsilon_0,\varepsilon_0)$, for some $r>0$ and $\varepsilon_0>0$ to be specified later. We assume that $r$ and $\varepsilon_0$ are small enough so that by continuity, $\hat x$ does not vanish on $\mathcal{V\times J}$.
In the rest of the proof, unless otherwise mentioned, we fix an $\varepsilon \in \mathcal J$ and drop it from the notations.

\newcommand{\lbar}[1][]{\overline L\hspace{1pt}_{h#1}^{\mathcal V}}

Let $H(Y)=\abs{\hat x(Y)}$ and $h_c = \max_{1\le k\le n} H(Y_*\0k)$. Then we have $L_h^{\mathcal V} = \Set{Y\in \mathcal V}{H(Y) < h}$, and the lemma claims that $L_h^{\mathcal V}$ is connected for all $h>h_c$. For technical reasons, we will prove the claim for the closed level set $\lbar\!:= \Set{Y\in \mathcal V}{\!H(Y) \le h}$ instead of $L_h^{\mathcal V}$. This is clearly equivalent, since we have $\lbar['] \subseteq L_h^{\mathcal V} \subseteq \lbar$ for all~$h'<h$.
Notice that the maximum $h_0=\max H(\mathcal V)$ is finite, and for all $h\ge h_0$, we have $\lbar=\mathcal V$, which is connected. 
For~the other values of $h$, we will construct a continuous mapping $\varPhi:\mathcal V \times (h_c,h_0] \to \mathcal V$ with the property:
\begin{equation}\label{eq:retraction property}
\varPhi(Y,h)=Y\text{ ~when }h\ge H(Y) \qtq{and} H(\varPhi(Y,h))=h\text{ ~when }h\le H(Y).
\end{equation}
Since $h_0$ is the maximum of $H$ on $\mathcal V$, the above property dictates that $\varPhi(\,\cdot\,,h_0):\mathcal V \to \mathcal V$ is the identity map. For general $h\in (h_c,h_0]$, it says that $\varPhi(\,\cdot\,,h)$ is equal to the identity on $\lbar$, while projects the complement of $\lbar$ to the level line $\Set{Y\in \mathcal V}{H(Y)=h} \subseteq \lbar$. These facts ensure that for each $h\in (h_c,h_0)$, the restriction $\varPhi|_{\mathcal V\times [h,h_0]}$ defines a deformation retraction from $\mathcal V$ to $\lbar$. We refer to \cite{Hatcher2002} for the definition and properties of deformation retractions. In particular, it implies that $\lbar$ is homotopy equivalent to $\mathcal V$, therefore also connected.

We construct $\varPhi$ by defining its marginals $\tilde Y\equiv \varPhi(Y,\,\cdot\,):(h_c,h_0] \to \mathcal V$ using the following backward ODE: for all $h\ge H(Y)$, let $\tilde Y(h)=Y$ (this is the initial condition), and for $h\in (h_c,H(Y)]$, let $\tilde Y(h)$ satisfy
\begin{equation}\label{eq:backward ODE}
\od{\tilde Y}{h} = \mathcal F(\tilde Y) := \begin{cases}
\frac1{ \mathbf t\cdot \nabla H(\tilde Y) }\, \mathbf t
& \text{if }  \tilde Y\in \partial \mathcal V  \text{ and } \tilde Y\cdot \nabla H(\tilde Y) <0 , \\
\frac{1}{ \norm{\nabla H(\tilde Y)}^2 }\, \nabla H(\tilde Y)
& \text{otherwise}. 
\end{cases}
\end{equation}
Here we identify $\tilde Y$ with a vector in $\real^2$ and use the notations of real vector analysis: $\nabla H(\tilde Y)$ is the gradient of the scalar function $H(\tilde Y)$, $\mathbf t$ is any nonzero vector orthogonal to $\tilde Y$ (i.e.\ a tangent vector of the circle $\partial \mathcal V$), $\mathbf a \cdot \mathbf b$ stands for the inner product of two vectors $\mathbf a$ and $\mathbf b$, and $\norm{\,\cdot\,}$ denotes  the Euclidean norm on $\real^2$. 

Intuitively, the above ODE describes how a point $Y\in \mathcal V$ should move when we lower the height $h$ from $h_0$ to $h_c$, and force $Y$ to remain in the level set $\lbar$. For large values of $h$, the point $Y$ is already in $\lbar$, so it does not have to move, that is, $\tilde Y(h)=Y$ for $h\ge H(Y)$ (the first half of property \eqref{eq:retraction property}). When $h$ decreases below $H(Y)$, we move the point $Y$ to new positions $\tilde Y(h)$ by gradient descent: In general, $\tilde Y$ moves in the direction of $-\nabla H(\tilde Y)$ as $h$ decreases (the second case in \eqref{eq:backward ODE}). But when $\tilde Y$ is on the boundary of the disk $\mathcal V$ and $-\nabla H(\tilde Y)$ points to the exterior of $\mathcal V$, we project the vector $-\nabla H(\tilde Y)$ onto the tangent of $\partial \mathcal V$, and move $\tilde Y$ in that direction instead (the first case in \eqref{eq:backward ODE}). In both cases, the movement speed is adjusted so that $\od{}h H(\tilde Y(h)) = \nabla H(\tilde Y) \cdot \od{\tilde Y}{h} \equiv 1$, which implies $H(\tilde Y(h))=h$ for all $h\le H(Y)$ (the second half of property \eqref{eq:retraction property}). 

Due to the identity $H(\tilde Y(h))=h$ for $h\le H(Y)$, only the vector field $\mathcal F$ on $\mathcal V \setminus \lbar[_c]$ is involved in determining the solution of the ODE for $h\in (h_c,h_0]$. Let us show that the vector field $\mathcal F$ is locally bounded on $\mathcal V \setminus \lbar[_c]$. More precisely, let us show that when $r>0$ and $\varepsilon_0>0$ are chosen appropriately, $\mathcal F$ is bounded on $\mathcal V \setminus L_h^{\mathcal V}$ for all $h>h_c$ and $\varepsilon\in \mathcal J$: Since $\nabla H$ is continuous on the closed set $\mathcal V \setminus L_h^{\mathcal V}$, it is not hard to see from \eqref{eq:backward ODE} that $\mathcal F$ is bounded on $\mathcal V \setminus L_h^{\mathcal V}$ \Iff\ 
\begin{equation}\label{eq:nabla H non-vanishing}
\nabla H(\tilde Y)\ne 0 \text{ for all }\tilde Y\in \mathcal V \setminus L_h^{\mathcal V}\,, 
\qtq{and}
\nabla H(\tilde Y) \not \in \real_{<0} \cdot \tilde Y \text{ for all }\tilde Y\in \partial \mathcal V \setminus L_{h}^{\mathcal V}\,. 
\end{equation}
Recall that $H=|\hat x|$. It is a simple exercise to check that under the canonical identification of $\complex =\real^2$, we have
\begin{equation}\label{eq:nabla H in terms of x}
\nabla H = \frac{\hat x}{|\hat x|} \cdot (\partial_Y \hat x)^*,
\end{equation}
where $z^*$ denotes the complex conjugate of $z$. 
By assumption, $\hat x$ do not vanish on $\mathcal V$, for all $\varepsilon \in \mathcal J$.
Moreover, $h>h_c=\max_{1\le k\le n}H(Y_*\0k)$ implies that the $n$ critical points $Y_*\01,\cdots,Y_*\0n$ of $\hat x$ are all in $L_h^{\mathcal V}$, hence $\partial_Y\hat x$ does not vanish on $\mathcal V\! \setminus L_h^{\mathcal V}$.
It follows that $\nabla H(\tilde Y)\ne 0$ for all $\tilde Y \in \mathcal V \! \setminus L_h^{\mathcal V}$, i.e.\ the first half of the condition \eqref{eq:nabla H non-vanishing} is true.
On the other hand, by \eqref{eq:nabla H in terms of x} and the definition of $L_h^{\mathcal V}$, the second half of \eqref{eq:nabla H non-vanishing} is true \Iff\
\begin{equation}
\frac{\tilde Y \partial_Y \hat x(\tilde Y)}{\hat x(\tilde Y)} \not \in \real_{<0} \text{ for all }
\tilde Y \in \partial \mathcal V \text{ such that }
|\hat x(\tilde Y)|\ge h \,.
\end{equation}
Taking the limit $h\searrow h_c$, we see that the above condition holds \emph{for all} $h>h_c$ \Iff\
\begin{equation}\label{eq:nabla H non-vanishing 2}
\m({ \frac{\tilde Y \partial_Y \hat x(\tilde Y)}{\hat x(\tilde Y)} , \abs{ \hat x(\tilde Y) } -h_c } \not \in \real_{<0} \times \real_{>0} 
\text{ for all } \tilde Y \in \partial \mathcal V .
\end{equation}
To find $r$ and $\varepsilon_0$ such that the above condition is satisfied \emph{for all} $\varepsilon\in \mathcal J= (-\varepsilon_0,\varepsilon_0)$, we would like to obtain a non-trivial uniform limit of the pair 
in \eqref{eq:nabla H non-vanishing 2} when $(r,\varepsilon_0) \to (0,0)$. A bit of thought reveals that it is convenient to take the limit $r\to 0$ after $\varepsilon_0\to 0$, and we should renormalize both components of the pair by $1/r^{n+1}$. 
Indeed, since $\hat x$, $\partial \hat x$ and $h_c$ are all (uniformly) continuous in $\varepsilon$ and $h_c(\varepsilon=0)=\hat x(Y_*,0)=1$, we have
\begin{equation}
\frac{1}{r^{n+1}} \m({ \frac{\tilde Y \partial_Y \hat x(\tilde Y,\varepsilon)}{\hat x(\tilde Y,\varepsilon)} , \abs{ \hat x(\tilde Y,\varepsilon) }-h_c(\varepsilon) }
\xrightarrow[\varepsilon_0\to 0]{}
\frac{1}{r^{n+1}} \m({ \frac{\tilde Y \partial_Y \hat x(\tilde Y,0)}{\hat x(\tilde Y,0)} , \abs{\hat x(\tilde Y,0)} -1 }
\end{equation}
uniformly in $\varepsilon\in (-\varepsilon_0,\varepsilon_0)$.
Next, since $Y_*=0$ is a zero of multiplicity $n$ of $\partial_Y \hat x(\,\cdot\,,0)$, we have $\partial_Y \hat x(\tilde Y,0) \sim c\cdot \tilde Y^n$ and $\hat x(\tilde Y,0) = 1+\frac{c}{n+1}\tilde Y^{n+1} + o(\tilde Y^{n+1})$ for some $c\in \complex \setminus \{0\}$ when $\tilde Y\to 0$. 
We parametrize the point $\tilde Y \in \partial \mathcal V$ by $\tilde Y=r e^{i\tau}$ with $\tau \in [0,2\pi)$. Then taking the limit $r\to 0$ of the previous display gives
\begin{equation}\label{eq:pair uniform cvg}
\lim_{r\to 0} \lim_{\varepsilon_0 \to 0} 
\frac{1}{r^{n+1}} \m({ \frac{
   \tilde Y \partial_Y \hat x(\tilde Y,\varepsilon)
}{ \hat x(\tilde Y,\varepsilon) } , 
\abs{\hat x(\tilde Y,\varepsilon)} -h_c(\varepsilon) }
=\
\m({ c \cdot e^{i (n+1)\tau}, 
     \frac{1}{n+1} \Re(c \cdot e^{i(n+1)\tau}) }
\end{equation}
uniformly in $\tau\in [0,2\pi)$ and $\varepsilon\in (-\varepsilon_0,\varepsilon_0)$. It is not hard to see that for any fixed $c\in \complex \setminus \{0\}$ and $n\ge 1$, the set $\set{\mn({ c e^{i (n+1)\tau}, \frac{1}{n+1} \Re(c e^{i(n+1)\tau}) }}{\tau \in [0,2\pi)}$ is bounded away from $\real_{<0} \times \real_{>0}$ in $\complex \times \real$. Then the uniform convergence \eqref{eq:pair uniform cvg} implies that there exist $r>0$ and $\varepsilon_0>0$ such that \eqref{eq:nabla H non-vanishing 2} is true for all $\varepsilon \in (-\varepsilon_0,\varepsilon_0)$. 
With this choice of $r$ and $\varepsilon_0$, the second half of \eqref{eq:nabla H non-vanishing} is also true for all $\varepsilon \in (-\varepsilon_0,\varepsilon_0)$. We conclude that the vector field $\mathcal F$ is bounded on $\mathcal V\setminus L_h^{\mathcal V}$ for each $h>h_c$.

Due to the difference between the 2 cases on the \rhs\ of \eqref{eq:backward ODE}, the vector field $\mathcal F$ is not continuous. But the discontinuity only occurs on the circle $\partial \mathcal V$, and can be avoided using the following regularization:
For~$\sigma\in (0,r)$, let $\mathcal V_\sigma$ denote the closed disk of radius $r-\sigma$ centered at $0$.  Define $\mathcal F^\circ_\sigma:\mathcal V\to \real^2$ by $\mathcal F^\circ_\sigma(\tilde Y)=\mathcal F(\tilde Y)$ for all $\tilde Y \in \mathcal V_\sigma \cup \partial \mathcal V$, and by linear interpolation on the segment $\setn{r' e^{i\tau}}{r'\in [r-\sigma,r]}$ for each $\tau\in [0,2\pi)$.
Recall that the vector field $\mathcal F$ has the property that $\mathcal F(\tilde Y) \cdot \nabla H(\tilde Y)=1$ for all $\tilde Y\in \mathcal V$ (except at its singularities), which ensures that every solution of the backward ODE \eqref{eq:backward ODE} must satisfy $H(\tilde Y(h))=h$ for all $h\in (h_c,H(Y)]$. 
The linear interpolation in the definition of $\mathcal F^\circ_\sigma$ breaks this property. To restore it, we define $\mathcal F_\sigma = \frac{1}{\mathcal F_\sigma^\circ \cdot \nabla H} \mathcal F_\sigma^\circ$. 
One can check that $\mathcal{F_\sigma = F_\sigma^\circ = F}$ on $\mathcal{V_\sigma \cup \partial V}$. In particular, $\mathcal F_\sigma$ converges to $\mathcal F$ pointwise on $\mathcal V$ as $\sigma \to 0$. 
With a close look at the proof in the previous paragraph, it is not hard to see that for $\sigma>0$ small enough, the vector field $\mathcal F_\sigma$ is bounded and Lipschitz continuous on $\mathcal V\setminus L_h^{\mathcal V}$ for all $h>h_c$. 
Then, by the Cauchy-Lipschitz theorem (a.k.a.\  Picard–Lindel\"of theorem, see e.g.\ \cite[Theorem~2.9]{Teschl2012}), the backward ODE $\od{\tilde Y}{h}=\mathcal F_\sigma(\tilde Y)$ with the initial condition $\tilde Y(h)=Y$ for $h\in [H(Y),h_0]$ has a unique solution $\tilde Y_\sigma: (h_c,h_0] \to \mathcal V$,
such that $\varPhi_\sigma(Y,h) = \tilde Y_\sigma(h)$ defines a continuous function $\varPhi_\sigma: \mathcal V \times (h_c,h_0] \to \mathcal V$. 
By construction, the vector field $\mathcal F_\sigma$ satisfies $\mathcal F_\sigma \cdot \nabla H \equiv 1$ on $\mathcal V$. Hence we have $H(\tilde Y_\sigma(h))=h$ for all $h\in (h_c,H(Y)]$ and $Y\in \mathcal V$. It follows that $\varPhi_\sigma$ satisfies the condition~\eqref{eq:retraction property}. As discussed in the second paragraph of the proof, this implies the conclusion of the lemma.
\end{proof}

\begin{remark*}
In the above proof, we expect $\tilde Y_\sigma$ to converge to a solution of the backward ODE \eqref{eq:backward ODE} when $\sigma\to 0$. However this is not needed for proving \Cref{lem:connected level set}. 
\note{Add a picture to illustrate the ODE ?}

This proof is an adaptation of the proof of a classical theorem \cite[Theorem~3.1]{Milnor1963} of Morse theory in differential topology, which states that if $H:\mathcal V\to \real$ is a smooth function on a manifold $\mathcal V$ (without boundary), and $a<b$ are such that $H^{-1}([a,b])$ is compact and contains no critical point of $H$, then there exists a deformation retraction from $H^{-1}((-\infty,b])$ to $H^{-1}((-\infty,a])$. The main difficulty in adapting the classical proof to \Cref{lem:connected level set} is that now $\mathcal V$ has a boundary.
\note{There seem to be generalizations of Morse theory that deals with manifolds with boundary. For example \href{https://www.hse.ru/data/2013/12/15/1338379877/morall6.pdf}{here} and \href{https://arxiv.org/pdf/1207.3066.pdf}{here}.
But I cannot locate a theorem that clearly implies \Cref{lem:connected level set}.
}
\end{remark*}

\begin{proof}[Proof of \Cref{prop:variational method}]
Let $Y_*\in \mathcal U$ be a critical point of $\hat x(\,\cdot\,,0)$ on the boundary of $\,\V(0)$, and let $(Y_*\0k(\varepsilon))_{1\le k\le n}$ be the critical points of $\hat x(\,\cdot\,,\varepsilon)$ in a neighborhood of $Y_*$ as defined in \Cref{lem:critical locus}.
We prove \Cref{prop:variational method} in two steps: First, we show that for each $\varepsilon $ close enough to~$0$, there exists $k\in \{1,\ldots,n\}$ such that 
$\abs{\hat x(Y_*\0k(\varepsilon),\varepsilon)} \ge x_c(\varepsilon)$. Then, we derive \eqref{eq:variational method} from the previous inequality.
The first step is topological in nature and makes crucial use of the connectedness result of \Cref{lem:connected level set},
while the second step basically calculates the derivative of the ratio $|\hat x(Y_*\0k(\varepsilon),\varepsilon)| / x_c(\varepsilon)$ at $\varepsilon=0$.

We start by showing that for each neighborhood $\mathcal V$ of $Y_*$, we have $\mathcal V \cap \V(\varepsilon) \ne \varnothing$ for all $\varepsilon$ close enough~to~$0$:
Since $Y_*\in \partial \V(0)$, we have $\mathcal V \cap \V(0)\ne 0$. 
Recall that for all $\varepsilon \in \mathcal I$, $\V(\varepsilon)$ is a connected component of the level set $L_{x_c}(\varepsilon) = \Set{Y\in \mathcal U}{|\hat x(Y,\varepsilon)|<x_c(\varepsilon)}$. In particular, $\V(\varepsilon)$ is open and connected, thus path connected. Let $\Gamma \subseteq \V(0)$ be a path that connects $Y_0(0)$ to an arbitrary point $Y_1\in \mathcal V\cap \V(0)$ (recall that $Y_0:\mathcal{I\to U}$ is a continuous function such that $Y_0(\varepsilon)\in \V(\varepsilon)$ for all $\varepsilon$), and let $K\subseteq \V(0)$ be a compact neighborhood of $Y_0(0)$. By construction, $\Gamma \cup K$ is a compact subset of $L_{x_c}(0)$.
Then the continuity of $\hat x$ and $x_c$ implies that $\Gamma \cup K \subseteq L_{x_c}(\epsilon)$ for all $|\varepsilon|<\varepsilon_0$. Up to decreasing $\varepsilon_0$, we also have $Y_0(\varepsilon) \in K$ for all $|\varepsilon|<\varepsilon_0$. Since $Y_0(\varepsilon)\in \V(\varepsilon)$ and $\Gamma \cup K$ is connected, it implies $\Gamma \cup K \subseteq \V(\varepsilon)$. In particular, we have $Y_1\in \V(\varepsilon)$ and therefore $\mathcal V \cap \V(\varepsilon) \ne \varnothing$ for all $|\varepsilon|<\varepsilon_0$.

Let $\mathcal{V\times J}$ be a neighborhood of $(Y_*,0)$ having the properties stated in \Cref{lem:connected level set}. Without loss of generality, we assume that $\mathcal J\subseteq (-\varepsilon_0,\varepsilon_0)$. Now fix $\varepsilon \in \mathcal J$.
By the previous paragraph, $\V(\varepsilon)\cap \mathcal V \ne \varnothing$. But $\V(\varepsilon)$ is a connected component of $L_{x_c}(\varepsilon)$, so it contains at least one connected component of the local level set~$L_{x_c}^{\mathcal V}(\varepsilon) = L_{x_c}(\varepsilon) \cap \mathcal V$.
If $x_c(\varepsilon)>|\hat x(Y_*\0k(\varepsilon),\varepsilon)|$ for all $k\in \{1,\ldots,n\}$, then on the one hand, \Cref{lem:connected level set} states that $L_{x_c}^{\mathcal V}(\varepsilon)$ is connected, which implies $L_{x_c}^{\mathcal V}(\varepsilon) \subseteq \V(\varepsilon)$, and on the other hand, $Y_*\0k(\varepsilon) \in L_{x_c}^{\mathcal V}(\varepsilon)$ by the definition of the level set. It follows that $\V(\varepsilon)$ must contain all the $Y_*\0k(\varepsilon)$. This contradicts the assumption that $\V(\varepsilon)$ does not contain any critical point of $\hat x(\,\cdot\,,\varepsilon)$. Therefore we must have  $x_c(\varepsilon)\le |\hat x(Y_*\0k(\varepsilon),\varepsilon)|$, or equivalently 
\begin{equation}\label{eq:epsilon variation bound}
\Re \log \mB({ \frac{\hat x(Y_*\0k (\varepsilon),\varepsilon) }{ x_c(\varepsilon) } } \ge 0
\end{equation}
for at least one $k\equiv k(\varepsilon)\in \{1,\ldots,n\}$.

Since $Y_*$ is on the boundary of $\V(0)$, we have $|\hat x(Y_*,0)|=x_c(0)$, that is, the above inequality becomes an equality at $\varepsilon=0$. So the derivative of the \lhs\ at $\varepsilon=0$, if exists, is equal to zero. But by \Cref{lem:critical locus}, we have
\begin{equation}
\left. \od{}\varepsilon  \hat x(Y_*\0k(\varepsilon),\varepsilon) \right|_{\varepsilon=0} 
=\ \lim_{\varepsilon \to 0} \frac{\hat x(Y_*\0k(\varepsilon),\varepsilon) - \hat x(Y_*,0)}{\varepsilon}
\ =\ \partial_\varepsilon \hat x(Y_*,0) \,,
\end{equation}
regardless of the choice of $k\equiv k(\varepsilon)\in \{1,\ldots,n\}$. It follows that the \lhs\ of \eqref{eq:epsilon variation bound} is indeed differentiable at $\varepsilon=0$, and the vanishing of the derivative gives Equation~\eqref{eq:variational method}.
\end{proof}

\section{Modified inverse/implicit function theorem}
\label{sec:generalized IFTs}

In this appendix we prove two analytic lemmas used in this paper. They can be viewed as modifications of the inverse function theorem and of the implicit function theorem, respectively. 

\begin{lemma}[Inverse function theorem in a cone]\label{lem:generalized InverseFT}
Let $\hat w:K\to \complex$ be a holomorphic function such that $\hat w(z)\to 0$ and $\hat w'(z)\to c$ when $z\to 0$ in $K$, where $c\in \complex \setminus \{0\}$, and $K=\setn{re^{i\tau}}{r\in (0,r_0),\, \tau\in (\tau_1,\tau_2)}$ is a truncated cone with an angle $\tau_2-\tau_1 \in (0,2\pi)$. Then there is a neighborhood $\mathcal U$ of $0$ such that $\hat w|_{K\cap \mathcal U}$ is injective and its inverse $\hat z: \hat w(K\cap \mathcal U) \to K\cap \mathcal U$ is an analytic function such that $\hat z(w)\to 0$ and $\hat z(w)\to c^{-1}$ when $w\to 0$ in $\hat w(K\cap \mathcal U)$.
Moreover, for all $K'\subseteq K$ of the form $K'=\setn{re^{i\tau}}{r\in (0,r_0),\, \tau \in (\tau_1',\tau_2')}$ with $\tau_1<\tau_1'<\tau_2'<\tau_2$, there exists a neighborhood $\mathcal V$ of $0$ such that $c\cdot (K'\cap \mathcal V) \subseteq \hat w(K \cap \mathcal U)$.
\end{lemma}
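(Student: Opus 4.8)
The plan is to reduce the statement to the classical inverse function theorem by a careful scaling argument, and then handle the cone geometry separately. First I would write $\hat w(z) = cz + R(z)$ where $R(z) = \hat w(z) - cz$. From the hypotheses, $R(z)/z \to 0$ as $z\to 0$ in $K$, and moreover $R'(z) = \hat w'(z) - c \to 0$; combining these with the fundamental theorem of calculus along radii in the (convex, for angle $<\pi$, or along a path staying in $K$ in general) cone, one gets $|R(z)| = o(|z|)$ and $|R(z_1) - R(z_2)| = o(|z_1| + |z_2|)$ as $z_1,z_2 \to 0$ in any sub-cone $K'$. The point is that on a slightly smaller cone $K'$ with $\tau_1 < \tau_1' < \tau_2' < \tau_2$, for each point $z$ there is a disk $D(z, \eta|z|) \subseteq K$ for some fixed $\eta = \eta(\tau_i,\tau_i')>0$, and Cauchy's estimate on that disk gives $|R'(z)| \le \eta^{-1}|z|^{-1}\sup_{D(z,\eta|z|)}|R| = o(1)$ uniformly; this upgrades the pointwise decay of $R$ to a genuine Lipschitz-type bound on $R$ in $K'$.

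Next, using this Lipschitz bound, I would run the standard contraction-mapping proof of the inverse function theorem on $K' \cap \mathcal U$ for a small enough neighborhood $\mathcal U$ of $0$: given $w$ small, solve $cz + R(z) = w$ by iterating $z \mapsto c^{-1}(w - R(z))$, which is a contraction on a small disk around $c^{-1}w$ because $|R'| < |c|/2$ there. Injectivity of $\hat w$ on $K \cap \mathcal U$ (not just $K'$) follows from the same Lipschitz estimate once $\mathcal U$ is chosen small: if $\hat w(z_1) = \hat w(z_2)$ then $|c||z_1 - z_2| = |R(z_1)-R(z_2)| \le \tfrac{|c|}{2}|z_1-z_2|$, forcing $z_1 = z_2$. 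The inverse $\hat z$ is holomorphic on the open set $\hat w(K\cap\mathcal U)$ by the usual argument (it is continuous and satisfies the difference-quotient limit, or invoke that a continuous local inverse of a holomorphic function with nonvanishing derivative is holomorphic); the derivative estimate $\hat w'(z)\to c$ gives $\hat z'(w) = 1/\hat w'(\hat z(w)) \to c^{-1}$, and $\hat z(w)\to 0$ is immediate from $\hat z(w) = c^{-1}(w - R(\hat z(w)))$.

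For the last sentence, I would argue that the image $\hat w(K \cap \mathcal U)$ contains a slightly rotated-and-shrunk cone: since $\hat w(z) = cz(1 + o(1))$ uniformly on $K'$, for any $K'' = \{re^{i\tau} : \tau \in (\tau_1'',\tau_2'')\}$ with $\tau_1' < \tau_1'' < \tau_2'' < \tau_2'$ and any sufficiently small $w \in c\cdot K''$, the equation $\hat w(z) = w$ has its solution $z = c^{-1}w(1+o(1))$ lying in $K' \cap \mathcal U$ (the perturbation moves the argument by $o(1)$, staying inside the slightly larger cone $K'$). Renaming $K''$ as the given $K'$ and taking $\mathcal V$ to be a small enough disk around $0$ gives $c\cdot(K'\cap\mathcal V) \subseteq \hat w(K\cap\mathcal U)$.

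The main obstacle I expect is the bookkeeping around the cone geometry when the opening angle $\tau_2 - \tau_1$ exceeds $\pi$: in that case radial segments need not stay in $K$, so the integration paths used to pass from the derivative bound $\hat w'(z)\to c$ to the increment bound on $R$, and the disks used for the Cauchy estimate, must be chosen to remain in $K$ — this forces working on a proper sub-cone $K'$ from the outset and carefully tracking which neighborhood ($\mathcal U$ vs.\ the final $\mathcal V$) each claim lives on. The analytic content is routine once these domains are fixed; the care is entirely in the nested cone/neighborhood quantifiers.
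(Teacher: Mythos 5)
Your overall strategy (write $\hat w(z)=cz+R(z)$, control $R$, then contract) is close in spirit to the paper's argument, and your handling of the limits $\hat z(w)\to 0$, $\hat z'(w)\to c^{-1}$ and of the final containment $c\cdot(K'\cap\mathcal V)\subseteq \hat w(K\cap\mathcal U)$ is workable (the paper does the latter differently, by showing that $\hat w$ maps the boundary of an intermediate truncated cone to a curve enclosing $c\cdot(K'\cap\mathcal V)$, but your solvability-plus-asymptotics argument is fine). The genuine gap is at the central step: injectivity of $\hat w$ on all of $K\cap\mathcal U$. The increment bound you actually derive on the full cone, $|R(z_1)-R(z_2)|=o(|z_1|+|z_2|)$, is of the wrong form — to rule out $\hat w(z_1)=\hat w(z_2)$ with $z_1\ne z_2$ you need a bound by $\tfrac{|c|}{2}|z_1-z_2|$, and $o(|z_1|+|z_2|)$ says nothing when $|z_1-z_2|$ is much smaller than $|z_1|$. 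The Lipschitz-type bound you then invoke is only established on a proper sub-cone $K'$ (your Cauchy-estimate step is in any case redundant, since $R'(z)=\hat w'(z)-c\to 0$ is a hypothesis), and your closing paragraph concedes that your integration paths and disks force you onto a proper sub-cone; yet the injectivity you assert, and the one the lemma requires (the inverse must be defined on $\hat w(K\cap\mathcal U)$), is on the full truncated cone. Restricting to a sub-cone does not repair this: when the opening angle exceeds $\pi$, two points lying near the two extreme edges of $K$ (or of a wide sub-cone) are Euclideanly close while every segment joining them leaves $K$, so straight-segment integration is unavailable there, and injectivity across different convex sub-pieces is exactly what remains unproved.

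The missing ingredient is the quasi-convexity of the cone, which is how the paper proceeds: since $\tau_2-\tau_1<2\pi$, there is a constant $M<\infty$ such that any $z_1,z_2\in K$ can be joined by a path inside $K$, contained in the disk of radius $\max(|z_1|,|z_2|)$, of length at most $M\,|z_1-z_2|$. Choosing $\mathcal U$ so that $|\hat w'-c|\le M^{-1}|c|$ on $K\cap\mathcal U$ and integrating $\hat w'-c$ along such a path gives
\begin{equation*}
\bigl|\hat w(z_2)-\hat w(z_1)-c\,(z_2-z_1)\bigr| < |c|\,|z_2-z_1|
\qquad\text{for all } z_1,z_2\in K\cap\mathcal U,
\end{equation*}
which yields injectivity on $K\cap\mathcal U$ and, letting $z_2\to 0$, the comparison $|\hat w(z)-cz|<|c||z|$ used for the limit statements. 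If you add this path-length estimate (or an alternative argument showing that $\hat w(z_1)=\hat w(z_2)$ with both points small forces $z_1/z_2$ so close to $1$ that a path of length comparable to $|z_1-z_2|$ between them stays in $K$), your proof goes through; note also that your contraction only produces preimages of $w$ with $c^{-1}w$ in a proper sub-cone, so for the first assertion the analyticity of the inverse should be deduced, as you suggest parenthetically, from injectivity together with the classical local inverse function theorem at points where $\hat w'\ne 0$.
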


\begin{proof}
Since the cone $K$ has an angle strictly smaller than $2\pi$, one can find a constant $M<\infty$ such that for all $z_1,z_2\in K$, there exists a smooth path $\Gamma_{z_1,z_2} \subseteq K \cap \disk_{\max(|z_1|,|z_2|)}$ of length $\abs{\Gamma_{z_1,z_2}} < M\cdot \abs{z_2-z_1}$ which connects $z_1$ and $z_2$. Let $\mathcal U$ be a ball centered at the origin such that $\abs{\hat w'(z)-c} \le M^{-1} |c|$ for all $z\in K \cap \mathcal U$. 
Then for all $z_1,z_2 \in K \cap \mathcal U$, we have $\Gamma_{z_1,z_2} \subseteq K \cap \mathcal U$. It follows that
\begin{equation}\label{eq:inverse in a cone bound}
\mb|{\hat w(z_2)-\hat w(z_1) -c(z_2-z_1)} 
\  = \ \abs{\int_{\Gamma_{z_1,z_2}} (\hat w'(z)-c)\dd z}
\ \le\ \abs{\Gamma_{z_1,z_2}} \cdot \!\! 
       \sup_{z\in \Gamma_{z_1,z_2}} \!\! \abs{\hat w'(z)-c} 
\  < \ |c|\cdot |z_2-z_1| \,.
\end{equation}
The above bound implies that $\hat w$ is injective on $K\cap \mathcal U$. By the classical inverse function theorem, the inverse $\hat z$ of $\hat w|_{K\cap \mathcal U}$ is analytic on $\hat w(K\cap \mathcal U)$. Moreover, taking $z_1=z$ and $z_2\to 0$ in the above display gives $|\hat w(z)-cz|<|cz|$, which implies that $\hat w(z)\to 0$ \Iff\ $z\to 0$. It follows that $\hat z(w)\to 0$ and $\hat z'(w) = \frac{1}{\hat w'(\hat z(w))} \to c^{-1}$ when $w\to 0$ in $\hat w(K\cap \mathcal U)$.

Given $\tau_1<\tau_1'<\tau_2'<\tau_2$, let $\delta = \frac12 \min (\tau_2-\tau_2',\tau_1'-\tau_1)$. The limits $\hat w(z)\to 0$ and $\hat w'(z)\to c$ imply that $\frac{\hat w(z)}{c z} \to 1$ when $z\to 0$ in $K$. So there exists $r_\delta>0$ such that $\abs{\frac{\hat w(z)}{c z}-1} < \delta$ for all $z\in K \cap \cdisk_{r_\delta}$. \Wlg, we assume that $\cdisk_{r_\delta} \subset \mathcal U$. 
Now take $\mathcal V=\disk_{(1-\delta)r_\delta}$ and $\tilde K = \setn{r e^{i\tau}}{ r\in (0,r_\delta], \tau\in [\tau_1+\delta,\tau_2-\delta] }$. Then we have $K'\cap \mathcal V\subseteq \tilde K \subseteq K \cap \cdisk_{r_\delta}$, where $K'=\setn{re^{i\tau}}{r\in (0,r_0),\, \tau \in (\tau_1',\tau_2')}$ as stated in the lemma. Thanks to the estimate $\abs{\frac{\hat w(z)}{c z}-1} < \delta$ for $z\in K \cap \cdisk_{r_\delta}$, the boundary of $\tilde K$ is mapped by $\hat w$ to a curve that encloses the truncated cone $c\cdot (K'\cap \mathcal V)$. Since $\hat w(\tilde K)$ is simply connected, this implies that $c\cdot (K'\cap \mathcal V) \subseteq \hat w(\tilde K) \subseteq \hat w(K\cap \mathcal U)$. 
\end{proof}

\begin{lemma}[Modified implicit function theorem]
\label{lem:generalized ImplicitFT}
Let $S$ be a topological space containing $0$ that is locally connected at $0$, and let $\mathcal U$ be a neighborhood of $(0,0)$ in $\complex \times S$.
Assume that $f:\mathcal U\to \complex$ is a continuous function which is analytic in its first variable, such that $z\mapsto f(z,0)$ has a zero of multiplicity $n\ge 1$ at $z=0$. Then the zeros set of $f$ can be parametrized by $n$ continuous functions near $(0,0)$, that is, 
there exist a neighborhood $\mathcal{V\times S}$ of $(0,0)$ and $n$ continuous functions $z_k\!:\mathcal{S\to V}$ such that for each $s\in \mathcal S$, the function $z\mapsto f(z,s)$ has exactly $n$ zeros in $\mathcal V$ (counted with multiplicity), given by $z_1(s),\cdots,z_n(s)$.

In addition, if there is a continuous function $h:S\to \complex$ such that for all $0\le m\le n$, the limit
\begin{equation}\label{eq:ImplicitFT/coeff expansion}
c_m = \frac1{m!} \, \lim_{s\to 0} \frac{\partial_z^m f(0,s)}{h(s)^{n-m}}
\end{equation}
exists in $\complex$, then we have 
\begin{equation}
\lim_{s\to 0} \frac{z_k(s)}{h(s)} = r_k
\end{equation}
for $1\le k\le n$, where $r_1,\ldots,r_n \in \complex$ are the $n$ roots of the polynomial $c_0 + c_1 r + \cdots + c_n r^n$ listed in some order.
\end{lemma}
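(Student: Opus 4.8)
The plan is to split the statement into two parts: the existence of the $n$ continuous branches $z_1,\dots,z_n$, and then the asymptotic identification of their leading behavior via the polynomial $c_0 + c_1 r + \cdots + c_n r^n$. For the first part, I would argue by a standard argument principle / Rouch\'e-type continuity argument. Since $z\mapsto f(z,0)$ has a zero of multiplicity $n$ at $z=0$, pick $\rho>0$ small enough that $f(\,\cdot\,,0)$ has no other zero in $\cdisk_\rho$ and in particular $f(z,0)\ne 0$ on $|z|=\rho$. By continuity of $f$ on $\mathcal U$ and compactness of the circle $|z|=\rho$, there is a neighborhood $\mathcal S$ of $0$ in $S$ such that $f(z,s)\ne 0$ for all $|z|=\rho$ and $s\in \mathcal S$. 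Then $N(s) := \frac1{2\pi i}\oint_{|z|=\rho} \frac{\partial_z f(z,s)}{f(z,s)}\,\dd z$ is a continuous integer-valued function of $s$ (continuity because the integrand is jointly continuous and bounded on the compact contour, with nonvanishing denominator), hence constant, equal to $N(0)=n$. So $z\mapsto f(z,s)$ has exactly $n$ zeros (with multiplicity) in $\disk_\rho$ for each $s\in\mathcal S$; set $\mathcal V=\disk_\rho$. To obtain the \emph{continuous} labeling $z_1,\dots,z_n$, I would invoke the classical fact that the unordered $n$-tuple of roots, viewed as a point in the $n$-th symmetric product $\mathrm{Sym}^n(\mathbb C)\cong \mathbb C^n$ (via elementary symmetric functions, which are continuous in $s$ by the same contour-integral representation $e_j(s)=\frac1{2\pi i}\oint \frac{z^j \partial_z f}{f}\dots$ type formulas, or Newton's identities applied to the power sums $p_k(s)=\frac1{2\pi i}\oint z^k \frac{\partial_z f}{f}\dd z$), moves continuously, and that over a space locally connected at $0$ one can choose continuous representatives of the roots — this is where local connectedness of $S$ at $0$ is used, to ensure the monodromy does not obstruct a global continuous labeling on a connected neighborhood. (Alternatively: prove it first for $S$ a small disk in $\mathbb C$ using Puiseux, then note the general locally-connected case reduces to pulling back along a path; but the symmetric-function argument is cleaner and avoids analyticity in $s$.)

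For the second part, the idea is a rescaling. Fix $s$ near $0$ and consider the polynomial-like behavior of $f(z,s)$ for $z$ of size comparable to $h(s)$. Write $f(z,s) = \sum_{m=0}^\infty \frac{\partial_z^m f(0,s)}{m!} z^m$ (valid for $|z|<\rho$). Substituting $z = h(s)\,\zeta$ and dividing by $h(s)^n$ gives
\begin{equation}
\frac{f(h(s)\zeta,\,s)}{h(s)^n} \;=\; \sum_{m=0}^{\infty} \frac{\partial_z^m f(0,s)}{m!\,h(s)^{n-m}}\,\zeta^m \;=\; \sum_{m=0}^{n} \frac{\partial_z^m f(0,s)}{m!\,h(s)^{n-m}}\,\zeta^m \;+\; h(s)\!\cdot\!\big(\text{bounded for } |\zeta|\le R\big).
\end{equation}
By hypothesis \eqref{eq:ImplicitFT/coeff expansion} the first $n+1$ coefficients converge to $c_0,\dots,c_n$ as $s\to 0$, and $c_n = \frac1{n!}\partial_z^n f(0,0)\ne 0$ since $z=0$ is a zero of exact multiplicity $n$ of $f(\cdot,0)$; the tail $m>n$ contributes $O(h(s))\to 0$ uniformly on compact $\zeta$-sets (using $\partial_z^m f(0,s)$ bounded uniformly for $s$ near $0$ and $m$ bounded, via Cauchy estimates on $|z|=\rho$). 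Hence $\zeta\mapsto f(h(s)\zeta,s)/h(s)^n$ converges locally uniformly on $\mathbb C$ to the polynomial $P(\zeta)=\sum_{m=0}^n c_m\zeta^m$, which has exactly $n$ roots $r_1,\dots,r_n$. By Hurwitz's theorem (or again the argument principle on a circle $|\zeta|=R$ enclosing all $r_k$), the roots of $f(h(s)\zeta,s)$ in $\zeta$ — which are exactly $z_k(s)/h(s)$ — converge as an unordered tuple to $\{r_1,\dots,r_n\}$; combined with the continuity of the labeling established in Part 1, we get $z_k(s)/h(s)\to r_k$ after matching the orderings. One edge case to handle: if $h(s)=0$ for some $s\ne 0$ in a sequence approaching $0$, the ratios are not defined, so I would either assume $h(s)\ne 0$ for $s\ne 0$ near $0$ (harmless in all our applications, and arguably forced by $c_0,\dots,c_{n-1}$ not all being $0$ in the nontrivial cases — if some $c_m\ne 0$ for $m<n$ then $h(s)\to 0$ forces the relevant $\partial_z^m f(0,s)\to 0$, consistent) or simply state the conclusion along sequences where $h(s)\ne 0$.

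The main obstacle I anticipate is the \emph{continuous labeling} of the roots over the general locally-connected topological space $S$: one must be careful that "continuously varying unordered $n$-tuple" genuinely lifts to "continuously varying ordered $n$-tuple" on a sufficiently small connected neighborhood of $0$. The clean way is: the map $\mathbb C^n \to \mathrm{Sym}^n(\mathbb C)$ sending an ordered tuple to its elementary symmetric functions is a proper finite branched cover, and a continuous map from a space $S$, locally connected at $0$, into $\mathrm{Sym}^n(\mathbb C)$ sending $0\mapsto (n\text{-fold point }0)$ always lifts near $0$ — essentially because one can first restrict to the dense open locus where roots are distinct, lift there (possibly after shrinking), and extend by continuity using that each $z_k(s)\to 0$. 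I would spell this out carefully since it is the one genuinely topological (as opposed to complex-analytic) ingredient; everything else is routine Rouch\'e/Hurwitz bookkeeping.
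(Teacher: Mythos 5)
Your overall strategy is the same as the paper's: count the zeros near $0$ by the argument principle on a small circle (uniformly in $s$ by continuity of $f$ and compactness of the circle), observe that the symmetric functions of the zeros are continuous in $s$ via contour-integral formulas, and then identify $\lim_{s\to 0} z_k(s)/h(s)$ with the roots of $c_0+c_1r+\cdots+c_nr^n$ by a rescaling argument. For the second part the paper works directly with the monic factor $p(z,s)=\prod_k(z-z_k(s))$, divides $p(z_k(s),s)=0$ by $h(s)^n$, shows $z_k(s)/h(s)$ stays bounded and that every limit point is a root of the limit polynomial, and then uses the local connectedness of $S$ at $0$ to show the limit point of each individual ratio is unique; your Hurwitz/locally-uniform-convergence version is essentially equivalent, but note that Hurwitz only gives convergence of the \emph{unordered} tuple of ratios, and your phrase ``after matching the orderings'' hides exactly the step for which the paper needs local connectedness. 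Your explicit handling of the case $h(s)=0$ mirrors the paper's separate treatment of $h(0)\ne 0$.

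The genuine gap is in the step you yourself single out as the key topological ingredient. The claim that a continuous map from a space $S$, locally connected at $0$, into $\mathrm{Sym}^n(\complex)$ sending $0$ to the $n$-fold point $0$ always lifts near $0$ to an ordered continuous $n$-tuple is false: take $f(z,s)=z^2-s$ with $S=\complex$, so the root multiset is $\{\sqrt s,-\sqrt s\}$; on any neighborhood of $0$ there is no continuous selection, since restricting to a small circle $|s|=\varepsilon^2$ a continuous single-valued square root does not exist (monodromy swaps the two roots). Your proposed repair --- lift over the open locus where the roots are distinct and extend by continuity --- fails for the same reason: the monodromy obstruction lives precisely on that locus, so ``possibly after shrinking'' does not help when $S$ is, say, two-dimensional. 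In other words, local connectedness at $0$ is not what makes the labeling continuous (the paper uses that hypothesis only in the second part, for the uniqueness of the limit of $z_k(s)/h(s)$); the paper's own proof is admittedly terse at the labeling step as well, asserting that the roots of $p(\cdot,s)$ can be ordered continuously, but since you present a purported justification, you should be aware that the justification as written is incorrect and the continuous-selection statement needs either additional hypotheses on $S$ (e.g.\ one-dimensional parameter sets, as in most applications) or a weaker formulation of the conclusion.
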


\begin{proof}
Fix any $r>0$ such that $z=0$ is the only zero of $f(\,\cdot\,,0)$ in $\disk_{2r}$. By the continuity of $f$, there exists a neighborhood $\mathcal S\subseteq S$ of $0$ such that $f(z,s)\ne 0$ for all $(z,s)\in \partial \disk_r \times \mathcal S$. \Wlg, we assume that $\mathcal S$ is connected. According to Cauchy's argument principle, for each $s\in \mathcal S$, the number of zeros of $f(\,\cdot\,,s)$ in $\disk_r$ is given by $N(s)=\frac{1}{2\pi i} \oint_{\partial \disk_r}\! \frac{\partial_z f(z,s)}{f(z,s)}\dd z$. Since $N(s)$ is continuous and integer, it is equal to $n$ for all $s\in \mathcal S$.
For each $s\in \mathcal S$, let $z_1(s),\cdots,z_n(s)$ be the $n$ zeros of $f(\,\cdot\,,s)$, and define 
\begin{equation}
p(z,s) \,\equiv\, 
z^n + a_{n-1}(s) z^{n-1} + \cdots + a_0(s) \,=\,
\prod_{k=1}^n \m({z-z_k(s)} \,.
\end{equation}
Similarly to Cauchy's argument principle, we can compute a sum over the zeros $z_1(s),\cdots,z_n(s)$ by an integral:
\begin{equation}
p(z,s)
\,=\, \exp \m({ \sum_{k=1}^n \log(z-z_k(s)) } 
\,=\, \exp \m({ \frac{1}{2\pi i} \oint_{\partial \disk_r}\! \frac{\partial_z f(\zeta,s)}{f(\zeta,s)} \log(z-\zeta) \dd \zeta } \,.
\end{equation}
Notice that, since the logarithm is well-defined modulo $2\pi i\integer$, the exponential is well-defined for all $z\in \disk_r$. The above expression shows that $p(z,s)$ is continuous in $(z,s)$. Thanks to Cauchy's differentiation formula, all of its $z$-derivatives are also continuous \wrt\ $(z,s)$. In particular, $a_k(s) = \frac{1}{k!}\partial_z^k p(0,s)$ is a continuous function of $s\in \mathcal S$, for all $0\le k<n$. It is well-known that the zeros of polynomial are continuous functions of its coefficients. It follows that, up to adjusting the ordering of the roots $z_1(s),\cdots,z_n(s)$ for each $s\in \mathcal S$, the functions $z_k:\mathcal S\to \disk_r$ are continuous. This proves the first part of the lemma with $\mathcal V = \disk_r$. 

Notice that since $p(z,s)$ and $f(z,s)$ have the same set of zeros with multiplicity on $\mathcal V \times \mathcal S$, their quotient $c(z,s) = \frac{f(z,s)}{p(z,s)}$ is continuous on $\mathcal V \times \mathcal S$ and does not vanish there (the joint continuity in $(z,s)$ follows from Cauchy's integral formula).

Now assume that $f$ satisfies \eqref{eq:ImplicitFT/coeff expansion}. When $m=n$, the formula gives $c_n=\frac1{n!}\partial_z^n f(0,0)$. Since $z=0$ is a zero of multiplicity $n$ for $f(\,\cdot\,,0)$, this implies $c_n \ne 0$. If $h(0)\ne 0$, then we have $c_m=\frac{1}{m!}\frac{\partial_z^m f(0,0)}{h(0)^{n-m}}=0$ for all $m<n$ and $r_k = \frac{z_s(0)}{h(0)}=0$ for all $1\le k\le n$, so the claims of the lemma are trivially true. When $h(0)=0$, the limits \eqref{eq:ImplicitFT/coeff expansion} imply that $\partial_z^j f(0,s) = O(h(s)^{n-j}) = o(h(s)^{n-m})$ for all $j<m\le n$. By differetiating $p(z,s)=\frac{f(z,s)}{c(z,s)}$, we get
\begin{equation}
a_m(s) \,=\, \frac{\partial_z^m p(0,s)}{m!}
\,=\, \frac{1}{c(0,s)} \frac{ \partial_z^m f(0,s) }{m!} + o(h(s)^{n-m})
\end{equation}
and therefore $\lim_{s\to 0} \frac{a_m(s)}{h(s)^{n-m}} = \frac{c_m}{c(0,0)}$ for all $0\le m<n$. The same calculation for $m=n$ shows that $c(0,0)=c_n$. By construction, $z_k(s)$ is a zero of the polynomial $p(z,s) = z^n + a_{n-1}(s)z^{n-1} + \cdots + a_0(s)$. It follows that
\begin{equation}\label{eq:ImplicitFT/asymp equation}
0\,=\, \frac{p(z_k(s),s)}{h(s)^n} \,=\, 
\m({ \frac{z_k(s)}{h(s)} }^n \!+ \sum_{m=0}^{n-1} \frac{a_m(s)}{h(s)^{n-m}} \cdot \m({ \frac{z_k(s)}{h(s)} }^m 
=\, \m({ \frac{z_k(s)}{h(s)} }^n \!+ \sum_{m=0}^{n-1} \m({ \frac{c_m}{c_n} +o(1)} \cdot \m({ \frac{z_k(s)}{h(s)} }^m
\end{equation}
when $s\to 0$. When $\mb|{\frac{z_k(s)}{h(s)}}$ is large, the \rhs\ is dominated by the term $\mb({ \frac{z_k(s)}{h(s)} }^n$, thus it cannot vanish. Hence the ratio $\frac{z_k(s)}{h(s)}$ must stay bounded when $s\to 0$. 
If $\frac{z_k(s)}{h(s)}\to r\in\complex$ as $s\to 0$ along some subsequence, then \eqref{eq:ImplicitFT/asymp equation} implies that $\sum_{m=0}^n c_m r^m = 0$. In other words, all limit points of the function $\frac{z_k(s)}{h(s)}$ as $s\to 0$ are roots of the polynomial $c_0+\cdots+ c_n r^n$. 
Since $\frac{z_k}{h}$ is continuous in a neighborhood of $0\in S$ and $S$ is locally connected at $0$, it is a simple exercise to check that the limit point must be unique, i.e.\ the limit $r_k=\lim_{s\to 0}\frac{z_k(s)}{h(s)}$ exists in $\complex$. 
By computing $\lim \limits_{s\to 0} \frac{p(h(s)\cdot r,s)}{h(s)^n}$ with the two expressions $\prod \limits_{k=1}^n \!\mb({z-z_k(s)} = z^n\!+\! \sum \limits_{m=0}^{n-1} a_m(s)\, z^m$ of $p(z,s)$, we see that
\begin{equation}
c_0 + c_1 r + \cdots + c_n r^n = c_n \cdot \prod_{k=1}^n (r-r_k)
\end{equation}
for all $r\in \complex$. Therefore $r_1,\ldots,r_n$ are the $n$ roots of $c_0 + c_1 r + \cdots + c_n r^n$ listed in some order.
\end{proof}

\begin{remark*}
In the proof of \Cref{lem:generalized ImplicitFT}, we constructed a local factorization of the function $f$ of the form 
\begin{equation}
f(z,s) = c(z,s) \cdot \mb({z^n + a_{n-1}(s)z^{n-1} + \cdots + a_0(s)}\,,
\end{equation}
where the functions $c$ and $a_m$ are continuous, $z\mapsto c(z,s)$ is analytic, $a_m(0)=0$ and $c(0,0)\ne 0$. This is a version of the Weierstrass preparation theorem. The classical version usually assumes that $s\in \complex^k$ and $f(z,s)$ is analytic in both variables.
Our proof can be easily amended to show that if $f$ is $C^n$-continuous or analytic \wrt\ $(z,s)$, then so are the function $c$ and $a_m$.

\note{This proof is adapted from a proof of the classicial Weierstrass preparation theorem. But I lost the reference to that proof... It was probably a book or a blog.}
\end{remark*}

\note{To do: find counter examples which demonstrate the necessity of all these complicated methods for finding singularities on the boundary of a domain}

\bibliographystyle{/Users/linxiao/tex-lib/abbrev-2021-03-29}
\bibliography{/Users/linxiao/tex-lib/bibdata-2021-03-29}
\Addresses

\end{document}